\documentclass[a4paper,10pt,final ]{amsart}
\usepackage[utf8x]{inputenc}
\usepackage{amsmath, amsthm, amssymb, enumerate, scalerel, tikz,  color, accents,
  fancybox, rotating, comment, colortbl, stmaryrd, hyperref,  cleveref, setspace, tikz-3dplot, }
\title{Symmetries of {\texorpdfstring{$\mathfrak{\lowercase{gl}}_N$}{\lowercase{gl}(N)}}-foams}
\author[Y.{} Qi]{You Qi}
\address[Y.{} Q.]{Department of Mathematics, University of Virginia,
  Charlottesville, VA 22904, USA}
\email{\href{mailto:yq2dw@virginia.edu}{yq2dw@virginia.edu}}
\author[L.-H.{} Robert]{Louis-Hadrien Robert}
 \address[L.-H.{} R.]{Université Clermont Auvergne, LMBP, Campus des Cézeaux, 3
   place Vasarely, TSA 60026, CS 60026, 63178 Aubière Cedex, France}
  \email{\href{mailto:louis\_hadrien.robert@uca.fr}{louis\_hadrien.robert@uca.fr}}
\address[L.-H.{} R.]{Laboratoire Cogitamus, \href{https://www.cogitamus.fr/indexen.html}{https://www.cogitamus.fr/}}
 \author[J.{} Sussan]{Joshua Sussan}
 \address[J.{} S.]{Department of Mathematics, CUNY Medgar Evers, Brooklyn, NY,
   11225, USA}
  \email{\href{mailto:jsussan@mec.cuny.edu}{jsussan@mec.cuny.edu}}
   \address[J.S.]{Mathematics Program, The Graduate Center, New York, NY,
   10016, USA}
  \email{\href{mailto:jsussan@gc.cuny.edu}{jsussan@gc.cuny.edu}}
 \author[E.{} Wagner]{Emmanuel Wagner}
 \address[E.{} W.]{Univ Paris Cit\'e, IMJ-PRG, Univ Paris Sorbonne, UMR 7586 CNRS,
   F-75013, Paris, France}
  \email{\href{mailto:emmanuel.wagner@imj-prg.fr}{emmanuel.wagner@imj-prg.fr}}
 \address[E.{} W.]{Laboratoire Cogitamus, \href{https://www.cogitamus.fr/indexen.html}{https://www.cogitamus.fr/}}
 \hypersetup{
    pdftoolbar=true,        
    pdfmenubar=true,        
    pdffitwindow=false,     
    pdfstartview={FitH},    
    pdftitle={\shorttitle},    
    pdfauthor={\shortauthors},     
    pdfsubject={\shorttitle},   
    pdfcreator={\shortauthors},   
    pdfproducer={\shortauthors}, 
    pdfkeywords={}, 
    pdfnewwindow=true,      
    colorlinks=true,       
    linkcolor=darkgray,          
    citecolor=teal,        
    filecolor=magenta,      
    urlcolor=violet,          
    linkbordercolor=red,
    citebordercolor=teal,
    urlbordercolor=violet,  
    linktocpage=true
    }
\makeatletter
\@namedef{subjclassname@2020}{\textup{2020} Mathematics Subject Classification}
\makeatother
    \subjclass[2020]{57K16, 17B10, 18N25, 18G35, 57K18} 
\usetikzlibrary{arrows, decorations.markings, decorations, patterns,
positioning, decorations.pathreplacing, decorations.pathmorphing, intersections}
\tikzset{->-/.style={decoration={markings, mark=at position .5 with {\arrow{>}}},postaction={decorate}}}
\tikzset{-<-/.style={decoration={markings, mark=at position .5 with {\arrow{<}}},postaction={decorate}}}
\let\oldtocsubsection\tocsubsection
\renewcommand\tocsubsection[3]{\hspace{0.5cm}\oldtocsubsection{#1}{#2}{#3}}
\let\oldtocsubsubsection\tocsubsubsection
\renewcommand\tocsubsubsection[3]{\hspace{1cm}\oldtocsubsubsection{#1}{#2}{#3}}

\newcounter{res}[section]
\numberwithin{res}{section}
\newtheorem{thm}[res]{Theorem}

\newtheorem{lem}[res]{Lemma}
\newtheorem{lem-dfn}[res]{Lemma-Definition}

\newtheorem{prop}[res]{Proposition}
\newtheorem{cor}[res]{Corollary}

\theoremstyle{definition}
\newtheorem{notation}[res]{Notation}
\newtheorem{dfn}[res]{Definition}
\newtheorem{rmk}[res]{Remark}
\newtheorem{exa}[res]{Example}

\newtheorem{conv}[res]{Convention}
\setlength{\marginparwidth}{1.2in}

\def\co{\colon\thinspace}

\newcommand{\NB}[1]{\ensuremath{\vcenter{\hbox{#1}}}}

\newcommand{\NN}{\ensuremath{\mathbb{N}}}
\newcommand{\ZZ}{\ensuremath{\mathbb{Z}}}

\newcommand{\RR}{\ensuremath{\mathbb{R}}}

\renewcommand{\SS}{\ensuremath{\mathbb{S}}}

  \newcommand{\id}{\mathrm{Id}}

\newcommand{\Ker}{\mathop{\mathrm{Ker}}\nolimits}
\newcommand{\Hom}{\mathop{\mathrm{Hom}}}

\newcommand{\gll}{\ensuremath{\mathfrak{gl}}}
\newcommand{\sll}{\ensuremath{\mathfrak{sl}}}

\renewcommand{\deg}[2][{}]{\ensuremath{\mathrm{deg}_{#1}(#2)}}

\newcommand\kup[1]{\left\langle #1 \right\rangle}

\newcommand{\qbinom}[2]{\ensuremath
\begin{bmatrix}
  #1 \\
  #2
\end{bmatrix}}

\newcommand{\qbinomsmall}[2]{\ensuremath \left[\begin{smallmatrix}
    #1 \\
    #2
  \end{smallmatrix} \right]
}

 \newcommand{\Sym}{\ensuremath{\mathrm{Sym}}}

\newcommand{\powerset}[1]{\ensuremath{\mathcal{P}\left(#1\right)}}
\newcommand{\scalars}{\ensuremath{\Bbbk}}
\newcommand{\de}{\ensuremath{\mathbf{e}}}
\newcommand{\df}{\ensuremath{\mathbf{f}}}
\renewcommand{\dh}{\ensuremath{\mathbf{h}}}
\newcommand{\Le}{\ensuremath{\mathsf{e}}}
\newcommand{\Lf}{\ensuremath{\mathsf{f}}}
\newcommand{\Lh}{\ensuremath{\mathsf{h}}}
\newcommand{\LLn}[1][n]{\ensuremath{\mathsf{L}_{#1}}}
\newcommand{\Ln}[1][n]{\ensuremath{\mathbf{L}_{#1}}}

\newcommand{\newtoni}[1][i]{\ensuremath{p_{#1}}}
\newcommand{\dotnewtoni}[1][i]{\ensuremath{\textcolor[rgb]{0,0,0.6}{\spadesuit_{#1}}}}
\newcommand{\wdotnewtoni}[1][i]{\ensuremath{\textcolor[rgb]{0,0,0.6}{\widehat{\spadesuit}_{#1}}}}
\newcommand{\witt}{\ensuremath{\mathfrak{W}}}
\newcommand{\ourwitt}{\ensuremath{\witt_{-1}^{\infty}}}
\newcommand{\spherical}{spherical{}}
\newcommand{\pigments}{\mathbb{P}}
\newcommand{\NNN}{\ensuremath{\mathbb{N}_{-1}}}
\newcommand{\bracketN}[1]{\left\langle #1 \right\rangle_{\myN}}

\newcommand{\mymovie}[3][]{
  \NB{
    \begin{tikzpicture}[#1]
\begin{scope}
  \draw[gray, thick] (0, -0.05) -- +(0, 3.1);
  \draw[gray, thick] (4, -0.05) -- +(0, 3.1);
  \draw[gray, thick] (8, -0.05) -- +(0, 3.1);
  \draw[gray, line width=1mm] (0,0) -- +(8,0);
  \draw[white, densely dotted, line width=0.6mm] (0,0) -- +(8,0);
  \draw[gray, line width=1mm] (0,3) -- +(8,0);
  \draw[white, densely dotted, line width=0.6mm] (0,3) -- +(8,0);
  \node (Frame1) at (2, 1.5) {#2};
  \node (Frame1) at (6, 1.5) {#3};
\end{scope}
    \end{tikzpicture}
    }
}

\newcommand{\mymoviefour}[5][]{
  \NB{
    \begin{tikzpicture}[#1]
\begin{scope}
  \draw[gray, thick] (0, -0.05) -- +(0, 3.1);
  \draw[gray, thick] (4, -0.05) -- +(0, 3.1);
  \draw[gray, thick] (8, -0.05) -- +(0, 3.1);
  \draw[gray, thick] (12, -0.05) -- +(0, 3.1);
  \draw[gray, thick] (16, -0.05) -- +(0, 3.1);
  \draw[gray, line width=1mm] (0,0) -- +(16,0);
  \draw[white, densely dotted, line width=0.6mm] (0,0) -- +(16,0);
  \draw[gray, line width=1mm] (0,3) -- +(16,0);
  \draw[white, densely dotted, line width=0.6mm] (0,3) -- +(16,0);
  \node (Frame1) at (2, 1.5) {#2};
  \node (Frame1) at (6, 1.5) {#3};
  \node (Frame1) at (10, 1.5) {#4};
  \node (Frame1) at (14, 1.5) {#5};
\end{scope}
    \end{tikzpicture}
    }
}

\newcommand{\tone}{{t_1}}
\newcommand{\ttwo}{{t_2}}
\newcommand{\tthree}{{t_3}}

\newcommand{\dif}{\ensuremath{\partial}}
\newcommand{\Fp}{\ensuremath{\mathbb{F}_p}}
\newcommand{\tqftfunc}[1][]{\ensuremath{\mathcal{F}_\myN^{#1}}}
\newcommand{\etqftfunc}[1][]{\ensuremath{{}^{\mathrm{E}\!}\mathcal{F}_\myN^{#1}}}
\newcommand{\ztqftfunc}[1][]{\ensuremath{{}^{\mathrm{0}\!}\mathcal{F}_\myN^{#1}}}
\newcommand{\statespaceN}[2][]{\ensuremath{\tqftfunc[#1]\left(#2\right)}}
\newcommand{\estatespaceN}[2][]{\ensuremath{\etqftfunc[#1]\left(#2\right)}}
\newcommand{\zstatespaceN}[2][]{\ensuremath{\ztqftfunc[#1]\left(#2\right)}}

\newcommand{\prototqftfunc}[1][]{\ensuremath{\mathcal{V}_\myN^{#1}}}
\newcommand{\protostatespaceN}[2][]{\ensuremath{\prototqftfunc[#1]\left(#2\right)}}

\newcommand{\RN}{\ensuremath{R_\myN}}
\newcommand{\KN}{\ensuremath{\Bbbk_\myN}}

\newcommand{\myN}{\ensuremath{N}}
\newcommand{\web}{\ensuremath{\Gamma}}
\newcommand{\foam}{\ensuremath{F}}
\newcommand{\degN}[1]{\ensuremath{\mathrm{deg}_\myN\left(#1\right)}}
\newcommand{\facet}{\ensuremath{f}}
\newcommand{\surface}{\ensuremath{\Sigma}}
\newcommand{\annulus}{\ensuremath{\mathcal{A}}}
\newcommand{\foamcat}[1][]{\ensuremath{\mathsf{Foam}_{#1}}}
\newcommand{\vfoamcat}[1][]{\ensuremath{\mathsf{vFoam}_{#1}}}
\newcommand{\vectweb}[1]{\ensuremath{V\left(#1\right)}}
\newcommand{\Czip}{\ensuremath{Z}}
\newcommand{\Cunzip}{\ensuremath{Y}}
\newcommand{\Cdigcup}{\ensuremath{V}}
\newcommand{\Cdigcap}{\ensuremath{\Lambda}}
\newcommand{\Ccup}{\ensuremath{U}}
\newcommand{\Ccap}{\ensuremath{A}}
\newcommand{\indexweb}{\ensuremath{k}}
\newcommand{\br}{\ensuremath{\mathrm{br}}}
\newcommand{\rb}{\ensuremath{\mathrm{rb}}}

\newcommand{\parone}[1]{\ensuremath{\lambda_{#1}}}
\newcommand{\partwo}[1]{\ensuremath{\mu_{#1}}}
\newcommand{\parthree}[1]{\ensuremath{\nu_{#1}}}
\newcommand{\bracketNs}[1]{\bracketN{#1}^\mathrm{s}}

\newcommand{\imagesfolder}{.}

\allowdisplaybreaks
\begin{document}
\begin{abstract}
  We give an action of a Lie subalgebra of the Witt algebra on
  foams. This action is compatible with the $\mathfrak{gl}_N$-foam
  evaluation formula. In particular, this endows states spaces
  associated with $\mathfrak{gl}_N$-webs with an
  $\mathfrak{sl}_2$-action. When working in positive characteristic,
  this can be used to define a $p$-DG structure on these state spaces.
\end{abstract}
\maketitle
\setcounter{tocdepth}{3}
\tableofcontents

\section{Introduction}
\label{sec:intro}
The quest to categorify Witten--Reshetikhin--Turaev (WRT) invariants
\cite{Witten-qft, RT-3mfd} of 3-manifolds has been active for many
years \cite{CF}. The first major and promising step was accomplished
by Khovanov~\cite{KhJones} where he defined a categorification of the
Jones polynomial, now known as Khovanov homology. This link homology
theory led to many applications in low-dimensional topology.  Since
the discovery of Khovanov homology, many other link homologies have
been constructed: Heegaard--Floer homology \cite{HFK-OS, HFK-R},
$\gll_N$-homology, triply-graded HOMFLY-PT homology~\cite{KR1, KR2,
  KR3, Roulink}, etc.

The WRT invariants of a $3$-manifold $M$ are typically defined as 
certain linear combinations of quantum link invariants
(Reshetikhin--Turaev invariants) of a link presenting the manifold $M$
as a surgery on $\SS^3$ evaluated at a root of $1$.

This raises the challenge of making sense of ``\emph{categorification
  at root of $1$}''. A strategy has been suggested by Khovanov
\cite{Hopforoots} where he introduced $p$-complexes for $p$ a prime
number. This was developed later in various works
\cite{KQ,QYHopf,QiSussan2,EQ2} leading eventually to a categorification
of the Jones polynomial \cite{QiSussanLink} and of the colored Jones
polynomial~\cite{QRSW1} at prime roots of unity. Both these
constructions are based on a new categorification of the Jones
polynomial introduced by \cite{Cautisremarks} whose definition is
closely related to that of triply-graded link homology and therefore
to Soergel bimodules.

A key feature in categorification at prime roots of $1$ is to show that these link
homologies carry additional algebraic structures ($p$-DG
structures or $H$-module structures where $H=\Bbbk[\dif]/(\dif^p)$ where the degree of $\dif$ is two). This allows one to work in the stable
category of graded $H$-modules whose Grothendieck group is
\[\ZZ[q, q^{-1}]/ (1+ q^2 + \dots + q^{2p-2}).\]
In other words, this category is a categorical incarnation of the
arithmetic of $\ZZ[e^{\pi i /p}]$. We refer to the introductions of
\cite{QiSussanLink,QRSW1} for a more detailed account.

Even more structure has been found on triply-graded homology
\cite{KRWitt} in the form of an action of the positive half of the
Witt algebra.  In a similar vein, an action of $\sll_2$ was
constructed on Soergel bimodules \cite{EQ3}.  Building upon \cite{KQ},
Khovanov extended the $p$-differential on nilHecke algebras (in
characteristic $p$) to a half-Witt action on nilHecke algebras, and
more general KLR algebras in characteristic $0$ \cite{KhovNote}.  He
suggested that these actions may extend to actions on link homologies
which could explain various observed symmetries.  In a related but
different direction, Beliakova and Cooper noticed that in
characteristic $p$ there is an action of the Steenrod algebra on
nilHecke algebras and that one could recover the $p$-DG structure on
these algebras from this point of view \cite{BCSteenrod}. Finally, the
operator $\nabla$ constructed by Wang in \cite{Wang} can be seen as a
(part of) the $\sll_2$ action described in the present paper.

The aim of this paper is to show that these algebraic structures also
show up in the TQFT functors used to define the original
categorification of the Jones polynomial (actually its equivariant
version, as defined by two of the authors of this paper \cite{RW1})
and in the $\gll_N$ generalizations. In other words, we put the
previous studies of $p$-DG structures on link homology
\cite{QiSussanLink,QRSW1} into the framework of investigating
(infinitesimal) symmetries of foam evaluations.
We tried to maintain as much
flexibility as possible to allow usage of the construction in this
paper in a large variety of contexts. This explains the presence of
parameters in the actions we define. The statement about the action of
a half of the Witt algebra is phrased in
Theorem~\ref{thm:wittaction-spherical} and
Theorem~\ref{thm:witt-acts-nons-spherical}. An $\sll_2$-action is
given in Propositions~\ref{prop:sl2action-spherical} and
\ref{prop:sl2-action-non-spherical}. The $p$-DG structure is presented
in Propositions~\ref{prop:pDG-structure} and
\ref{prop:pDG-structure-nons-perical}.

The construction is based on foams, which 
also makes sense in the context of Soergel bimodules\footnote{As in \cite{EliasQisl2}, we are also forced to utlize the $\gll_N$ realization instead of the $\sll_N$ one. This is simply due to the fact that, the Witt generators only act on the base ring of full symmetric functions, and not on the quotient ring by the ideal generated by the first elementary symmetric function.}.  Thus the
structure we exhibit here specializes to that in \cite{EQ3} and
\cite{KRWitt}. 

A forthcoming paper \cite{QRSW3} will
implement part of these structures at the level of link
homologies.

\subsection{Outline}
\label{sec:outline}
The remainder of the paper is organized as follows.
\begin{itemize}
\item Section~\ref{sec:top-prelim} gives a self-contained account
  of $\gll_N$-webs and foams, a recollection of the foam evaluation formula, a discussion of
  how to compute Euler characteristics of surfaces, and a definition of
  $\gll_N$-state spaces.
\item Section~\ref{sec:alg-prelim} presents the
  Witt Lie algebra $\witt$ as well as  $\ourwitt$, a half of $\witt$
  and describes its action on polynomial rings. It also
  explains how $\sll_2$ embeds in this algebra, and how this can be used
  to define $p$-DG-structures.
\item Section~\ref{sec:an-mathfr-acti} defines the action of
  $\ourwitt$ on foams, giving rise to $\sll_2$ and
  $p$-DG-structures. The section concludes with relations to related
  actions already occurring in the literature.
\end{itemize}

\subsection{Acknowledgments}
\label{sec:acknoledgment}
We would like to thank Lev Rozansky, Ben Elias and Joshua Wang for
interesting and enlightening conversations and to point to \cite{Wang}
and \cite{EliasTalk} as sources of inspiration.

We especially would like to thank Mikhail Khovanov who encouraged us
to search for Witt algebra actions on foams.

The anonymous referee helped us a lot to improve the exposition of the paper. We would like to thank them for their thoughtful comments.

The ideas of the present paper were already implicitly used in
\cite{QRSW1}.  The relevance of formalizing them became apparent
during the hybrid workshop ``Foam Evaluation'' held at ICERM organized
by one of the authors, Mikhail Khovanov, and Aaron Lauda.

Some figures are recycled from papers of various subsets of the authors
with or without other collaborators.

Y.Q.{} is partially supported by the Simons Foundation Collaboration Grants for Mathematicians. J.S.{} is
partially supported by the NSF grant DMS-1807161.
Y.Q.{} and J.S.{} thank the Universit\'e Paris Cit\'e  and its Programme d'invitations internationales scientifiques for their hospitality.
LH.R.{} was partially supported by the Luxembourg National Research Fund PRIDE17/1224660/GPS.
E.W.{} is partially supported by the ANR projects AlMaRe
(ANR-19-CE40-0001-01), AHA (JCJC ANR-18-CE40-0001) and CHARMS (ANR-19-CE40-0017).

\section*{Conventions}
\label{sec:conventions}
Pardon our French, $\NN$ stands for the set of non-negative
integers. Foams are read from bottom to top. We set
$\NNN= \{ k\in \ZZ, k\geq -1\}$. For a ring with unity $\scalars$, and for $x \in \scalars$, we set $\bar{x}=1-x$.
Note that this is not a ring automorphism.

The algebras $\RN=\ZZ[X_1, \dots,X_\myN]^{S_\myN}$ and $\KN = \scalars[X_1, \dots, X_\myN]^{S_\myN}$ 
will play central roles in this paper. They are non-negatively graded
by imposing that $\deg{X_i} =2$. The $i$th elementary, complete
homogeneous, and power sum symmetric polynomials in
$X_1,\dots, X_\myN$ are denoted by $E_i$, $H_i$ and $P_i$
respectively, so that
\[
  \RN=\ZZ[E_1, \dots, E_\myN] \qquad \text{and} \qquad \KN=
  \scalars[E_1, \dots, E_\myN].
\]

For $a \in \NN$, $\Sym_{a}$ denotes the ring of symmetric polynomials
in $a$ variables with $\ZZ$ coefficients, in particular
$\RN=\Sym_\myN$. When working in such a ring, we will
let
$e_i, h_i$ and $p_i$ be the $i$th elementary, complete homogeneous,
and power sum symmetric polynomials respectively without reference to
the variables.
The ring $\Sym_{a}$ is graded by imposing the $e_i$ is homogeneous of
degree $2i$. With this setting, we have:
\[ \deg{e_i} = \deg{h_i} = \deg {p_i} = 2i.\]

For $n \in \ZZ$, we let $[n]=\frac{q^n-q^{-n}}{q-q^{-1}}$, for $k \in
\NN$, we let $[k]!= \prod_{j=1}^k[j]$. Finally, for $m\in \ZZ$ and $a
\in \NN$,  define: 
\[
  \qbinom{m}{a}=\prod_{i=1}^a \frac{[m+1-i]}{[i]}.\]
Note that if $m$ is non-negative, one has $\qbinomsmall{m}{a}= \frac{[m]!}{[a]![m-a]!}$.

For a $\mathbb{Z}$-graded vector space $V$, let $V_i$ denote the subspace in degree $i$. Let $q^n V$ denote the $\mathbb{Z}$-graded vector space where $(q^n V)_i=V_{i-n}$.

\section{Topological preliminaries}
\label{sec:top-prelim}
\subsection{Webs and foams}
\label{sec:webs-foams}

\begin{dfn}
  Let $\surface$ be a surface.  A \emph{closed web} or simply a
  \emph{web}\footnote{Such a graph is sometimes referred to as a \emph{MOY graph}, but we will use the more commonly used term of \emph{web}.
} 
    is a finite, oriented, trivalent graph
  $\web = (V(\Gamma), E(\Gamma))$ embedded in the interior of
  $\surface$ and endowed with a \emph{thickness function}
  $\ell\co E(\Gamma) \to \NN$ satisfying a flow condition: vertices
  and thicknesses of their adjacent edges must be one of these two types:
  \[
    \NB{\tikz[scale=0.6]{\begin{scope}[font=\tiny]
  \begin{scope}
    \draw [-<] (0,0) -- (-90:1) node[pos =1, below] {$a+b$};
    \draw [->] (0,0) -- (30:1) node[pos =1, above] {$b$};
    \draw [->] (0,0) -- (150:1) node[pos =1, above] {$a$};
  \end{scope}
\end{scope}
}} \qquad\text{or}\qquad \NB{\tikz[scale=0.6]{\begin{scope}[font=\tiny]
    \draw [->] (0,0) -- (90:1) node[pos =1, above] {$a+b$};
    \draw [-<] (0,0) -- (-30:1) node[pos =1, below] {$b$};
    \draw [-<] (0,0) -- (-150:1) node[pos =1, below] {$a$};
\end{scope}
}}.
  \]
  The first type is called a \emph{split} vertex, the second a \emph{merge}
  vertex. In each of these types, there is one \emph{thick} edge and
  two \emph{thin} edges. Oriented circles with non-negative thickness
  are regarded as edges without vertices and can be part of a web. The
  embedding of $\web$ in $\surface$ is smooth outside its vertices,
  and at the vertices should fit with the local models above.
\end{dfn}

\begin{figure}
  \centering
  \NB{\tikz[font=\tiny]{\input{\imagesfolder/pf_exa_web}}}
    \caption{Example of a web in $\RR^2$.}
    \label{fig:exa-web}
\end{figure}

  The surfaces we will be interested in are $\RR^2$ and $\annulus= \{(x,y)
  \in \RR^2, 1\leq |x|^2+ |y|^2\leq 2\} \simeq \SS^1\times [0,1]$. In
  the latter case, we require that the web is \emph{directed}, meaning
  that the projection map $\pi\co \web \to \SS^1$ preserves
  orientation locally. Such webs are called \emph{vinyl graphs}.

\begin{rmk}
  There are neither sources nor sinks in a web. A web is not
  necessarily connected. 
\end{rmk}

\begin{dfn}\label{def:foam}
  Let $M$ be an~oriented smooth 3-manifold with a~collared boundary.
  A~\emph{foam} $\foam \subset M$ is a~collection of \emph{facets},
  that are compact oriented surfaces labeled with non-negative integers
  and glued together along their boundary points, such that every
  point $p$ of $\foam$ has a~closed neighborhood homeomorphic to one
  of the~following:
  \begin{enumerate}
  \item a~disk, when $p$ belongs to a~unique facet,
  \item \label{it:Y}$Y \times [0,1]$, where $Y$ is the neighborhood of
    a~merge or split vertex of a web, when $p$ belongs to three facets, 
  \item the~cone over the~1-skeleton of a~tetrahedron with $p$ as
    the~vertex of the~cone (so that it belongs to six facets).
  \end{enumerate}
  See Figure~\ref{fig:foam-local-model} for a pictorial representation
  of these three cases. The set of points of the~second type is
  a~collection of curves called \emph{bindings} and the~points of
  the~third type are called \emph{singular vertices}.
  The~\emph{boundary} $\partial\foam$ of $\foam$ is the~closure of
  the~set of boundary points of facets that do not belong to
  a~binding. It is understood that $\foam$ coincides with
  $\partial\foam\times[0,1]$ on the~collar of $\partial M$. For each facet
  $\facet$ of $\foam$, we denote by $\ell(\facet)$ its~label, 
  called the \emph{thickness of $\facet$}. A~foam $\foam$ is
  \emph{decorated} if each facet $\facet$ of $\foam$ is assigned
  a~symmetric polynomial $P_f \in \Sym_{\ell(\facet)}$.  In the second local
  model \ref{it:Y}, it is implicitly understood that thicknesses of
  the three facets are given by that of the edges in $Y$. In
  particular it satisfies a flow condition and locally one has a thick
  facet and two thin ones. We also require that orientations of
  bindings are induced by that of the thin facets and by the opposite
  of the thick facet. Foams are regarded up to ambient isotopy relative to
  boundary. Foams without boundary are said to be \emph{closed}.
\end{dfn}

\begin{rmk}
  \begin{itemize}
    \item Diagrammatically, decorations on facets are depicted by dots
      placed on facets 
      adorned with symmetric polynomials in the correct number of
      variables (the thickness of the facet they sit on). The
      decoration of a given facet is the product of all adornments of
      dots sitting on that facet. 
  \item Decorations will be slightly generalized a bit later. See
    Section~\ref{sec:new-decorations} and
    Convention~\ref{conv:new-dec}.
  \end{itemize}
\end{rmk}

\begin{figure}[ht]
  \centering
  \NB{\tikz[]{\input{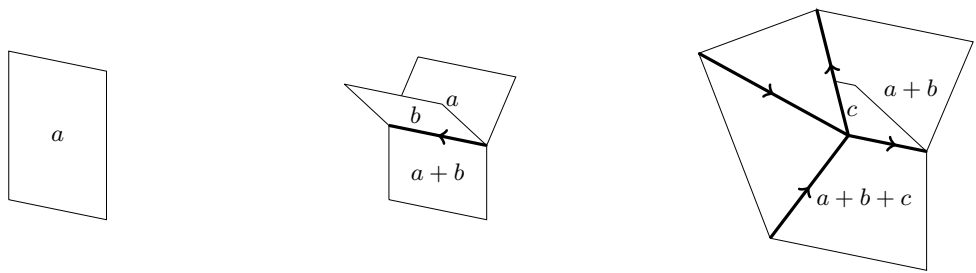}}}
  \caption{The three local models of a foam. Taking into account the
    thicknesses, the model in the middle is denoted $Y^{(a,b)}$,
    and the model on the right is denoted $T^{(a,b,c)}$.}
  \label{fig:foam-local-model}
\end{figure}

The $3$-manifolds in which we will consider foams are $\RR^3$, $\RR^2 \times
[0,1]$ and $\annulus \times [0,1]$.

\begin{notation}
  For a foam $\foam$, we write:
  \begin{itemize}
  \item $\foam^2$ for the~collection of facets of $\foam$,
  \item $\foam^1$ for the collection of bindings,
  \item $\foam^0$ for the set of singular vertices of $\foam$.
  \end{itemize}
  We partition $\foam^1$ as follows:
  $\foam^1= \foam^1_\circ\sqcup \foam^1_{-}$, where $\foam^1_\circ$ is
  the collection of circular bindings and $\foam^1_-$ is the
  collection of bindings diffeomorphic to intervals. If $s \in
  \foam^1_-$, any of its points has a neighborhood diffeomorphic to $Y^{(a,b)}$ for a given $a$ and $b$, and we set:
  \begin{equation}
    \degN{s} = ab + (a+b)(\myN-a-b).\label{eq:deg-binding}
  \end{equation}
  If $v \in
  \foam^0$, it has a neighborhood diffeomorphic to $T^{(a,b,c)}$ and we set:
  \begin{equation}
    \degN{v} = ab +bc + ac + (a+b+c)(\myN-a-b-c).\label{eq:deg-sing}
  \end{equation}
\end{notation}

\begin{dfn}\label{dfn:deg-foam}
  Let $\foam$ be a decorated foam and suppose that all decorations are
  homogeneous. For all $\myN$ in $\NN$, the \emph{$\myN$-degree of
    $\foam$} is the integer $\degN{\foam} \in \ZZ$ given by the
  following formula: \begin{equation}\label{eq:deg-foam}
    \degN{\foam}: = \sum_{\facet \in \foam^2}
    \Big(\deg{P_\facet} -\ell(\facet)(\myN-\ell(\facet))\chi(\facet)
    \Big) + \sum_{s \in \foam^1_-} \degN{s} - \sum_{v \in \foam^0} \degN{v}.
  \end{equation}
\end{dfn}

The reader may want to wait until Remark~\ref{rmk:on-foam-degree} to
see a better approach to calculating the $\myN$-degree in the case of foams
with trivial decorations.

The~boundary of a~foam $\foam\subset M$ is a~web in $\partial M$. In the
case $M = \surface\times[0,1]$ is a~thickened surface, a~generic
section $\foam_t := \foam \cap (\surface\times\{t\})$ is
a~web. The~bottom and top webs $\foam_0$ and $\foam_1$ are called the~\emph{input} and \emph{output} of $\foam$ respectively.

If $\surface$ is a surface, $\foamcat[\surface]$ is the category which
has webs in $\surface$ as objects and 
\[
  {\Hom}_{\foamcat[\surface]}\left( \web_0,\web_1\right)
  = \left\{\text{decorated foams $\foam$ in $\surface\times [0,1]$ with $\foam_i =\web_i$ for $i\in \{0,1\}$}\right\}.
\]
Composition is given by stacking foams on one another and
rescaling. Decorations behave multiplicatively. The identity of $\web$
is $\web\times [0,1]$ decorated by the constant polynomial $1$ on
every facet. The $\myN$-degree of foams is additive under composition (see for instance \cite[Lemma 3.4]{QueffelecRoseFoam}).
If $\web$ is a web in a surface $\surface$ and
$h:\surface\times[0,1] \to \surface$ is a smooth isotopy\footnote{For
  the sake of satisfying the collared condition, one should assume
  that $h_t=\id_\surface$ for $t\in [0,\epsilon[\cup]1-\epsilon, 1]$
  for an $\epsilon\in ]0,1]$.} of $\surface$, one can define the foam
$\foam(h)$ to be the trace of $h(\web)$ in $\surface\times[0,1]$: for
all $t \in [0,1]$, $\foam(h)_t=h_t(\web)$. Such foams are called
\emph{traces of isotopies}. They have degree $0$.

\begin{dfn}\label{dfn:basic-foams}
 A foam in a surface $\surface\times[0,1]$ is \emph{basic} if it is a trace of isotopy or if it
 is equal to $\web\times[0,1]$ outside a cylinder $B\times[0,1]$, and where
 it is given inside by one of the local models given in Figure~\ref{fig:basic-foam}. The \emph{non-trivial part} of a basic foam $F$ is the empty set if $F$ is the trace of an isotopy and is the part of the foam where the local model appears otherwise.  
 \begin{figure}[ht]
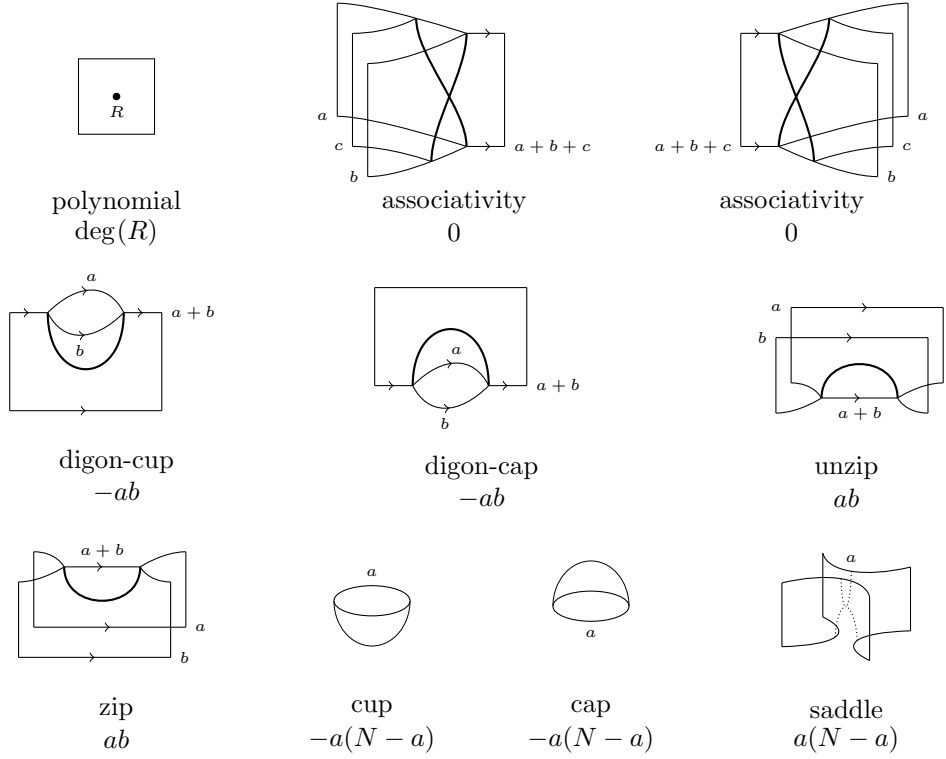

   \centering
   \begin{tikzpicture}[xscale=2.4, yscale =-3.2]
     \node (pol) at (0,0) {\NB{\tikz[font=\tiny]{\input{\imagesfolder/pf_foam-polR}}}};
     \node[yshift= -1.4cm] at (pol) {polynomial};
     \node[yshift = -1.8cm] at (pol) {$\deg{R}$};
     \node (asso) at(1.85,0) {\NB{\tikz[font=\tiny]{\input{\imagesfolder/pf_foam-assoc}}}};
     \node[yshift = -1.4cm] at (asso) {associativity};
     \node[yshift= -1.8cm] at (asso) {$0$};
     \node (coasso) at (3.7,0) {\NB{\tikz[font=\tiny]{\input{\imagesfolder/pf_foam-coassoc}}}};
     \node[yshift = -1.4cm] at (coasso) {associativity};
     \node[yshift= -1.8cm] at (coasso) {$0$};
     \node (digcup) at (0,1) {\NB{\tikz[font=\tiny]{\input{\imagesfolder/pf_foam-digon-cup}}}};
     \node[yshift = -1.64cm] at (digcup) {digon-cup};
     \node[yshift= -2.04cm] at (digcup) {$-ab$};
     \node (digcap) at (2,1.1) {\NB{\tikz[font=\tiny]{\input{\imagesfolder/pf_foam-digon-cap}}}};
     \node[yshift = -1.4cm] at (digcap) {digon-cap};
     \node[yshift= -1.8cm] at (digcap) {$-ab$};
     \node (zip) at (4,1.1) {\NB{\tikz[font=\tiny]{\input{\imagesfolder/pf_foam-unzip}}}};
     \node[yshift = -1.4cm] at (zip) {unzip};
     \node[yshift= -1.8cm] at (zip) {$ab$};
     \node (unzip) at (0,2.1) {\NB{\tikz[font=\tiny]{\input{\imagesfolder/pf_foam-zip}}}};
     \node[yshift = -1.4cm] at (unzip) {zip};
     \node[yshift= -1.8cm] at (unzip) {$ab$};
     \node (cup) at (1.4,2.1) {\NB{\tikz[font=\tiny]{\input{\imagesfolder/pf_foam-cup}}}};
     \node[yshift = -1.4cm] at (cup) {cup};
     \node[yshift= -1.8cm] at (cup) {$-a(\myN-a)$};
     \node (cap) at (2.6,2.1) {\NB{\tikz[font=\tiny]{\input{\imagesfolder/pf_foam-cap}}}};
     \node[yshift = -1.4cm] at (cap) {cap};
     \node[yshift= -1.8cm] at (cap) {$-a(\myN-a)$};
     \node (saddle) at (4,2.1) {\NB{\tikz[font=\tiny]{\input{\imagesfolder/pf_foam-saddle}}}};
     \node[yshift = -1.4cm] at (saddle) {saddle};
     \node[yshift= -1.8cm] at (saddle) {$a(\myN-a)$};
   \end{tikzpicture}
   \caption{The degree of a basic foam is given below the name of each
     of the local models.}\label{fig:basic-foam}
 \end{figure}

 A foam in $\surface \times [0,1]$ is \emph{in good position} if it is
 a composition of basic foams. A foam in $\surface \times [0,1]$ is
 \emph{\spherical} if it is isotopic to a foam in good position for
 which the saddle model is not used.

If $\web$ is a web in $\RR^2$, we denote by $\vectweb{\web}$ the free
$\scalars$-module generated by foams in good position in $\RR^2\times
[0,1]$ with $\emptyset$ as input and $\web$ as output.
\end{dfn}

\begin{rmk}
  Every foam in $\surface \times [0,1]$ is isotopic to a foam in good
  position, however, not every foam is \spherical. For instance, a torus (of arbitrary thickness) is not \spherical. 
\end{rmk}

\begin{conv}If a foam is both \spherical{} and in good position,
  we assume that no saddle appears in its decomposition as a composition
  of basic foams.
\end{conv}

\subsection{\texorpdfstring{$\gll_\myN$}{gl(N)}-foams evaluation}
\label{sec:foam evaluation}

In this subsection we briefly summarize the $\gll_\myN$-foam evaluation
introduced in \cite{RW1}.

For the rest of this section, we fix $\myN$ indeterminates $X_1,
\dots, X_\myN$. The elements of $\pigments:=\{1, \dots, \myN\}$ are
called \emph{pigments}.
A \emph{$\gll_\myN$-coloring} $c$ of a foam $\foam$ is a map $c \co \foam^2 \to
\powerset{\pigments}$, where $\mathcal{P}$ stands for powerset. It
should satisfy the following two conditions:
\begin{enumerate}[(a)]
\item for each facet $\facet \in \foam^2$, $\# c(\facet) = \ell(\facet)$,
\item around each binding, $c(\facet_{\mathrm{thick}})=
  c(\facet_{1}) \cup c(\facet_{2})$, where $\facet_{\mathrm{thick}}$
  denotes the thick facet  and $f_1$ and $f_2$ the
  thin facets at this binding.
\end{enumerate}

Given $\foam$, a decorated closed foam, and $c$ a $\gll_\myN$-coloring
of $\foam$, the \emph{colored $\gll_\myN$-evaluation of $(\foam,c)$} is
the rational function in variables $X_1, \dots, X_\myN$
defined by:
\begin{equation}\label{eq:colored-ev}
  \bracketN{\foam,c} := (-1)^{s(\foam,c)}\frac{P(\foam,c)}{Q(\foam,c)}
\end{equation}
with
\begin{align}
    P(\foam,c) &:= \prod_{\facet \in \foam^2}P_\facet(\underline{X}_{c(f)})  \qquad \textrm{and}\\
    Q(\foam,c) &:=\prod_{1\leq i < j \leq \myN}(X_i - X_j)^{\chi(\foam_{ij}(c))/2},
\end{align}
and where we have the following.
\begin{itemize}
\item $P_\facet(\underline{X}_{c(\facet)})$ is the evaluation of the polynomial
  $P_\facet$ in the indeterminates $\underline{X}_{c(\facet)}:= \{X_i | i \in c(\facet)\}$.
 \item $\foam_{ij}(c)$ is the surface formed by facets $\facet$ in $\foam^2$
   whose colors $c(\facet)$ contain either $i$ or $j$ but not both. This surface
   is called the \emph{bichrome surface of $(\foam,c)$ associated with
   $(i,j)$},
\item $s(\foam,c)$ is the integer given by the following formula:
  \begin{equation}
s(\foam,c) = \sum_{i=1}^\myN \frac{i\chi(\foam_i(c))}{2} + \sum_{1\leq i <j
      \leq \myN} \theta_{ij}^+(\foam,c),
  \end{equation}
  where we have the following. \begin{itemize}
  \item[$\circ$] $\theta_{ij}^+(\foam,c)$ counts the number of circles
    separating $i$-pigmented and $j$-pigmented regions in $\foam_{ij}(c)$ which
    are positive.  This means that the orientation of the circle and
    the relative position of the $i$-pigmented and $j$-pigmented regions are
    locally given by the local model:
    \[
       \NB{\tikz[scale=0.8, font=\small]{\begin{scope}
  \coordinate (ci) at (-0.3,-0.5);
  \coordinate (cj) at (2.3, 0);
  \draw[stealth-, thick, red!50!black] (ci) -- +(-1,0) node[left, black] {$i$-pigmented facet};
  \filldraw[ very thin, gray, fill opacity=0.2] (0,0) -- (2,1) --
  (2,2) -- (0,1) -- cycle;
  \filldraw[very thin, fill= red, opacity = 0.3,] (0,0) -- (2,1) -- (1,0) -- (-1,-1) -- cycle;
  \filldraw[very thin, fill= red, opacity = 1, pattern=dots, pattern color=red] (0,0) -- (2,1) -- (1,0) -- (-1,-1) -- cycle;
  \fill[white, opacity = 0.5] (0,0) -- (2,1) -- (3,0) -- (1,-1) -- cycle;
  \filldraw[very thin, opacity = 0.7, pattern=vertical lines, pattern
  color=blue] (0,0) -- (2,1) -- (3,0) -- (1,-1) -- cycle;
  \draw[very thick, -<-] (0,0) -- (2,1); 
  \draw[stealth-, thick, blue!50!black] (cj) -- +(1,0) node[right, black] {$j$-pigmented facet};
\end{scope}

}} .
    \]
    \item[$\circ$] $\foam_i(c)$ is the surface formed by facets $\facet$ in
    $\foam^2$ whose colors $c(\facet)$ contain $i$. These surfaces are
    called \emph{monochrome surfaces of $(F,c)$ associated with $i$.}
  \end{itemize}
\end{itemize}

Note that the symmetric group $S_\myN$ on $\myN$ letters acts both on the set
of colorings of $\foam$ (by permuting the pigments) and on the ring
of rational functions in variables  $X_1, \dots, X_\myN$. The colored evaluation
intertwines these two actions as stated by next lemma.

\begin{lem}[{\cite[Lemma 2.17]{RW1}}] \label{lem:sym-col-ev}
    If $\sigma\in S_\myN$, then
    \begin{equation}
      \label{eq:4}
      \bracketN{F, \sigma\cdot c} =\sigma \cdot \bracketN{F,c}.
    \end{equation}
\end{lem}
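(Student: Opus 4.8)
The plan is to verify \eqref{eq:4} by inspecting each of the five ingredients that enter the definition \eqref{eq:colored-ev} of $\bracketN{\foam,c}$ and checking that $\sigma$ transports the data for the coloring $c$ to the data for the coloring $\sigma\cdot c$ in a compatible way. The key observation is that $\sigma$ acts on pigments and on the indeterminates simultaneously, so it identifies the relevant surfaces attached to $c$ with those attached to $\sigma\cdot c$: concretely, for the monochrome and bichrome surfaces one has $\foam_{\sigma(i)}(\sigma\cdot c)=\foam_i(c)$ and $\foam_{\sigma(i)\sigma(j)}(\sigma\cdot c)=\foam_{ij}(c)$ as subsurfaces of $\foam$, since a facet $\facet$ has $i\in c(\facet)$ if and only if $\sigma(i)\in(\sigma\cdot c)(\facet)$, and similarly for the ``exactly one of $i,j$'' condition.

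First I would treat the polynomial part $P(\foam,c)$: since $(\sigma\cdot c)(\facet)=\sigma(c(\facet))$ and $\sigma$ acts on rational fractions by permuting the $X_k$, we get $P_\facet(\underline X_{\sigma(c(\facet))})=\sigma\cdot\big(P_\facet(\underline X_{c(\facet)})\big)$ facet by facet, hence $P(\foam,\sigma\cdot c)=\sigma\cdot P(\foam,c)$. Next the denominator $Q(\foam,c)$: using $\foam_{\sigma(i)\sigma(j)}(\sigma\cdot c)=\foam_{ij}(c)$ and hence equality of Euler characteristics, the product defining $Q(\foam,\sigma\cdot c)$ is obtained from that of $Q(\foam,c)$ by relabeling the index pair $(i,j)$ to $(\sigma(i),\sigma(j))$; comparing with $\sigma\cdot Q(\foam,c)=\prod_{i<j}(X_{\sigma(i)}-X_{\sigma(j)})^{\chi(\foam_{ij}(c))/2}$ and reindexing the product over unordered pairs, the two agree up to the sign coming from those pairs $\{i,j\}$ with $\sigma(i)>\sigma(j)$, i.e.\ up to $(-1)^{\sum \chi(\foam_{ij}(c))/2}$ over inverted pairs. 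This sign discrepancy must then be absorbed by the change in the sign exponent $s(\foam,c)$, so the heart of the argument is bookkeeping how $s(\foam,\sigma\cdot c)$ differs from $s(\foam,c)$.

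For the sign exponent I would split $s$ into its two summands. For $\sum_i i\,\chi(\foam_i(c))/2$, replacing $c$ by $\sigma\cdot c$ and using $\foam_{\sigma(i)}(\sigma\cdot c)=\foam_i(c)$ turns this into $\sum_i i\,\chi(\foam_{\sigma^{-1}(i)}(c))/2=\sum_i \sigma(i)\chi(\foam_i(c))/2$, so the change is $\sum_i(\sigma(i)-i)\chi(\foam_i(c))/2$ modulo $2$; a standard manipulation rewrites this modulo $2$ as $\sum_{i<j,\ \sigma(i)>\sigma(j)}\chi(\foam_{ij}(c))/2$, which is exactly the sign needed to correct the $Q$-discrepancy, provided one also checks the circle-count term $\theta^+_{ij}$ is invariant. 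For the latter, $\theta^+_{\sigma(i)\sigma(j)}(\foam,\sigma\cdot c)$ counts the same geometric circles in $\foam_{ij}(c)$ with the same ``positivity'' local model (the local picture only sees which side is $i$-pigmented versus $j$-pigmented, and $\sigma$ relabels consistently), so $\sum_{i<j}\theta^+_{ij}$ is unchanged as a sum over unordered pairs. I expect the main obstacle to be precisely this last sign reconciliation — making sure the contribution $(-1)^{\#\{i<j:\sigma(i)>\sigma(j)\}\text{-weighted by }\chi/2}$ from reordering $Q$ cancels against the change in $\sum_i i\chi(\foam_i)/2$ — since it requires care about parity (the $\chi/2$ are integers, but one must confirm the relevant Euler characteristics are even and track signs consistently); everything else is a direct unwinding of definitions. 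It suffices to check the identity for $\sigma$ a transposition of adjacent pigments and then invoke that such transpositions generate $S_\myN$, which simplifies the parity computation considerably.
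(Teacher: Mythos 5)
Your overall strategy (check each ingredient of \eqref{eq:colored-ev} separately, reduce to adjacent transpositions) is sound --- the paper itself gives no proof but cites \cite[Lemma 2.17]{RW1}, whose argument has exactly this shape, and your treatment of $P$ and of the reindexing of $Q$ is correct --- but the two parity claims at the heart of your sign bookkeeping are both false as stated, and the lemma holds only because their failures cancel. First, the sum $\sum_{i<j}\theta^+_{ij}$ is \emph{not} invariant: positivity of a circle is defined with respect to the ordered pair (the local model distinguishes the $i$-pigmented side from the $j$-pigmented side, and the sum runs over $i<j$), so for an inverted pair ($i<j$ but $\sigma(i)>\sigma(j)$) the circles counted for $(\foam,\sigma\cdot c)$ are exactly the \emph{negative} circles of $(\foam,c)$; that contribution changes from $\theta^+_{ij}(\foam,c)$ to $\theta_{ij}(\foam,c)-\theta^+_{ij}(\foam,c)$, i.e.\ by the total number $\theta_{ij}$ of separating circles modulo $2$. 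Second, the ``standard manipulation'' you invoke is not available: for an adjacent transposition $\sigma=(i\;\,i{+}1)$ the change in $\sum_k k\,\chi(\foam_k(c))/2$ equals $\bigl(\chi(\foam_i(c))-\chi(\foam_{i+1}(c))\bigr)/2$, which differs from the required $\chi(\foam_{i,i+1}(c))/2$ by $\chi(B)$, where $B\subset \foam_{i,i+1}(c)$ is the union of facets containing $i+1$ but not $i$; this is an orientable surface with boundary, so $\chi(B)$ is odd whenever its number of boundary circles, namely $\theta_{i,i+1}(\foam,c)$, is odd --- which already happens in the simplest examples.

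The missing idea is precisely the identity that makes these two defects cancel: since $\chi(\foam_i(c))-\chi(\foam_{i+1}(c))-\chi(\foam_{i,i+1}(c))=-2\chi(B)$ and $\chi(B)\equiv \theta_{i,i+1}(\foam,c) \pmod 2$ (the Euler characteristic of a compact orientable surface has the parity of its number of boundary components), the change in the first summand of $s(\foam,c)$ combined with the flip $\theta^+\mapsto\theta^-$ is congruent mod $2$ to $\chi(\foam_{i,i+1}(c))/2$, exactly compensating the sign produced by reordering $Q$. Establishing this cancellation is the actual content of the proof in \cite{RW1}; as written, your argument asserts two statements that are individually wrong and never proves that their errors offset, so the sign verification --- which you yourself identify as the crux --- is not carried out.
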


 Finally define the \emph{$\gll_\myN$-evaluation} of a foam $\foam$ by:
 \begin{equation}
\bracketN{\foam}:= \sum_c\bracketN{\foam,c},\end{equation}
where the sum runs over all $\gll_\myN$-colorings of $\foam$.

\begin{prop}[{\cite[Proposition 2.19]{RW1}}] \label{prop:evaluation}
  For any decorated closed foam, $ \bracketN{\foam}$ is a symmetric polynomial
  in $X_1, \dots, X_N$. If decorations are homogeneous,
  \[\deg{\bracketN{\foam}} = \degN{\foam}. \]
\end{prop}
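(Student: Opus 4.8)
The plan is to prove the three assertions in the statement in turn: that $\bracketN{\foam}$ is symmetric, that it is a polynomial (a priori it is only a rational fraction, since each summand $\bracketN{\foam,c}$ has the denominator $Q(\foam,c)$ which is supported on the hyperplanes $X_i=X_j$), and that, when the decorations $P_\facet$ are homogeneous, it is homogeneous of degree $\degN{\foam}$. Symmetry will come for free from Lemma~\ref{lem:sym-col-ev}; polynomiality will follow from a pole-cancellation argument; and the degree will follow from the stronger fact that \emph{each} colored summand $\bracketN{\foam,c}$ is already homogeneous of degree $\degN{\foam}$.

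For symmetry, first I would observe that $S_\myN$ permutes the set of $\gll_\myN$-colorings of $\foam$ via $c\mapsto\sigma\cdot c$, so re-indexing the defining sum and applying \eqref{eq:4} yields
\[
  \sigma\cdot\bracketN{\foam}=\sum_{c}\sigma\cdot\bracketN{\foam,c}=\sum_{c}\bracketN{\foam,\sigma\cdot c}=\bracketN{\foam},
\]
which is the $S_\myN$-invariance of $\bracketN{\foam}$.

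For polynomiality, since each $\bracketN{\foam,c}$ lies in $\ZZ[X_1,\dots,X_\myN][(X_i-X_j)^{-1}\,:\,i<j]$, it is enough to show that $\bracketN{\foam}$ has no pole along any hyperplane $X_i=X_j$, and by the symmetry just proved it is enough to treat $X_1=X_2$. Such a pole can only be created by colorings $c$ for which the bichrome surface $\foam_{12}(c)$ has a component of negative Euler characteristic, and the claim is that these contributions cancel. The plan is to make this precise with an involutive recoloring: given $c$ and a connected component $\Sigma$ of $\foam_{12}(c)$, exchanging the pigments $1$ and $2$ on all facets of $\foam$ meeting $\Sigma$ produces another valid $\gll_\myN$-coloring, as one checks that condition (b) is preserved at each binding and singular vertex touching $\Sigma$. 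The commuting involutions attached to the various components of $\foam_{12}(c)$ partition the colorings into orbits; summing $\sum_c\bracketN{\foam,c}$ over each orbit, while keeping track of how the recolorings affect $P(\foam,c)$, $Q(\foam,c)$ and the sign $(-1)^{s(\foam,c)}$, should display each orbit-sum as regular along $X_1=X_2$. Verifying this regularity --- i.e.\ that the $(X_1-X_2)$-adic valuation of every orbit-sum is non-negative --- is the technical heart of the proof, and the step I expect to be the main obstacle.

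For the degree, assume the $P_\facet$ homogeneous. Since $\deg(X_i-X_j)=2$, each $\bracketN{\foam,c}=(-1)^{s(\foam,c)}P(\foam,c)/Q(\foam,c)$ is a quotient of homogeneous polynomials, hence homogeneous, of degree $\sum_{\facet\in\foam^2}\deg{P_\facet}-\sum_{1\le i<j\le\myN}\chi(\foam_{ij}(c))$. It then remains to show that $\sum_{i<j}\chi(\foam_{ij}(c))$ equals $\sum_{\facet}\ell(\facet)(\myN-\ell(\facet))\chi(\facet)-\sum_{s\in\foam^1_-}\degN{s}+\sum_{v\in\foam^0}\degN{v}$, which by comparison with \eqref{eq:deg-foam} is exactly what is required and is manifestly independent of $c$. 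The plan here is to stratify the closed surface $\foam_{ij}(c)$ by the cells of $\foam$ it contains --- the interiors of its facets, the bindings lying on it, and the singular vertices lying on it --- noting that every binding or singular vertex meeting $\foam_{ij}(c)$ is in fact a smooth point of it, so this is a genuine locally closed decomposition. Additivity of the Euler characteristic along this stratification (the interior of a facet $\facet$ contributing $\chi(\facet)$, since its boundary is a union of circles; an interval binding contributing $-1$; a circular binding $0$; a singular vertex $1$) gives
\[
  \sum_{1\le i<j\le\myN}\chi(\foam_{ij}(c))=\sum_{\facet}n(\facet)\,\chi(\facet)-\sum_{s\in\foam^1_-}m(s)+\sum_{v\in\foam^0}m(v),
\]
where $n(\facet)$, $m(s)$, $m(v)$ are the numbers of pairs $i<j$ for which $\facet$, $s$, $v$ respectively lie on $\foam_{ij}(c)$. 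A short local count then finishes the argument: $n(\facet)=\ell(\facet)(\myN-\ell(\facet))$, and $s$ (resp.\ $v$) lies on $\foam_{ij}(c)$ exactly when $i$ and $j$ fall in different blocks of the partition of $\pigments$ determined at that binding (resp.\ singular vertex) by its thin facets together with their complement, so $m(s)=\degN{s}$ and $m(v)=\degN{v}$ by \eqref{eq:deg-binding} and \eqref{eq:deg-sing}. Therefore every $\bracketN{\foam,c}$ is homogeneous of degree $\degN{\foam}$, and hence so is $\bracketN{\foam}$.
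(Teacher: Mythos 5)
Your overall plan tracks the proof in \cite{RW1} closely, and the degree computation you carry out is correct and complete: the stratification of the closed surface $\foam_{ij}(c)$ into facet interiors, open interval bindings, circular bindings, and singular vertices is exactly the argument of \cite[Lemma 2.15]{RW1} (cf.\ Remark~\ref{rmk:on-foam-degree}), and your local counts $n(\facet)=\ell(\facet)(\myN-\ell(\facet))$, $m(s)=\degN{s}$, $m(v)=\degN{v}$ are what is needed to identify $-\sum_{i<j}\chi(\foam_{ij}(c))$ with the facet/binding/vertex terms of~\eqref{eq:deg-foam}. The symmetry argument via Lemma~\ref{lem:sym-col-ev} is standard and right.

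The polynomiality step, however, contains an error and is left incomplete. You assert that ``such a pole can only be created by colorings $c$ for which the bichrome surface $\foam_{12}(c)$ has a component of \emph{negative} Euler characteristic.'' This is backwards. Since $Q(\foam,c)=\prod_{i<j}(X_i-X_j)^{\chi(\foam_{ij}(c))/2}$ sits in the denominator, the summand $\bracketN{\foam,c}$ has a pole along $X_1=X_2$ precisely when the \emph{total} $\chi(\foam_{12}(c))$ is positive --- which already happens in the most basic case, when every component of $\foam_{12}(c)$ is a sphere. A component of negative Euler characteristic produces a zero of $Q$, not a pole. So the poles you must cancel arise generically, not in some degenerate stratum, and the cancellation argument must engage the sphere components head-on. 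Beyond this mis-statement, you explicitly flag the orbit-sum regularity as ``the main obstacle'' and do not carry it out; yet this is exactly the technical heart of the proposition. In \cite{RW1} it is handled by pairing a coloring $c$ with the coloring $c'$ obtained by swapping pigments $1$ and $2$ on a chosen component $\Sigma$ of $\foam_{12}(c)$, then showing that $P(\foam,c)$ and $P(\foam,c')$ agree modulo $(X_1-X_2)$ while the signs $(-1)^{s(\foam,c)}$ and $(-1)^{s(\foam,c')}$ differ (via the parity of $\theta^+_{12}$), so that $\bracketN{\foam,c}+\bracketN{\foam,c'}$ has strictly lower pole order, and one iterates. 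You should also be careful with the claim that the recoloring involutions ``partition the colorings into orbits'': the components of $\foam_{12}(c)$ depend on $c$, so one has to organize the induction on pole order more carefully than by a single global group action. Until those points are addressed, the proposal establishes symmetry and the degree formula but not polynomiality.
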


\begin{rmk}[{\cite[Lemma 2.15]{RW1}}] \label{rmk:on-foam-degree}
  \begin{enumerate}
  \item Let $\foam$ be a (not-necessarily closed) foam with trivial
    decoration and $c$ a coloring of $\foam$. The following identity
    holds
    \begin{equation}\label{eq:deg-col}
      \degN{\foam} = -\sum_{1\leq i<j \leq \myN} \chi(F_{ij}(c)).
    \end{equation}
    \item Note that contrary to \cite[Definition 6.2]{BN} or
      \cite[Definition 3.1]{QueffelecRoseFoam}, the formula
      \eqref{eq:deg-col} only involves Euler characteristic of surfaces
      and has no contribution from webs on the  boundary. This is
      because we chose only to work with closed webs, for which these
      contributions vanish.
  \end{enumerate}
\end{rmk}

\subsection{New decorations}
\label{sec:new-decorations}

For our purposes it will be convenient to have decorations of a new
type. In formula \eqref{eq:colored-ev}, the decoration of a facet is evaluated on
variables corresponding to pigments in the color of the
facet. This is equivalent to saying that one has the following local relation on the evaluation:
\begin{align}\label{eq:normal-dec-ev}
  \bracketN{\NB{\tikz[font=\small]{\begin{scope}
  \draw (0,0) rectangle (1,1) coordinate [midway] (A);
  \fill (A) circle (0.5mm) node[below] {$R$};
\end{scope}}},c} = R(\underline{X}_{c(\facet)}) \bracketN{\NB{\tikz[]{\begin{scope}
  \draw (0,0) rectangle (1,1) coordinate [midway] (A);
\end{scope}}},c}, 
\end{align}
where $\facet$ denotes the facet decorated by $R$. 

Following~\cite{ETW}, one can also introduce decorations which are
polynomials to be evaluated on variables corresponding to pigments
that are not in the color of that facet. Although we gain flexibility
with this new decoration, it does not affect the result from the
previous subsection, as we shall explain below. If $\facet$ is a facet
in a foam $\foam$ and $c$ is a coloring of $\foam$, let
$\widehat{c}(\facet)$ be the complement of $c(\facet)$ in $\pigments$.

\begin{lem}\label{lem:bubble}
  The colored evaluation satisfies the following local relation:
  \begin{equation}
  (-1)^{(\myN-a)(\myN-a+1)/2}\bracketN{\NB{\tikz[scale =0.8]{\begin{scope}[font = \tiny]
  \draw (0,0) -- (0,2) -- (1,2.5) -- (1,0.5) -- cycle;
  \draw[thick,-<-] (0.5, 1.25) circle (0.2 and 0.5);
  \filldraw[fill =white, fill opacity =0.8, very thin] (0.5, 0.75) .. controls +(1, -0.5) and +(1, 0.5)
  .. (0.5, 1.75);
  \fill[pattern=north west lines, opacity = 0.5, pattern color= gray]
  (0.5, 1.25) circle (0.2 and 0.5);
  \node[font= \normalsize, scale=0.5] at (0.2, 0.3) {$a$};
  \node[font= \normalsize, scale=0.5] at (0.75, 1.65) {$\myN-a$};
  \fill (0.85, 1.25) circle (0.5mm) node[below, font= \normalsize, scale=0.6] {$R$};
\end{scope}
}},c} =
    R(\underline{X}_{\widehat{c}(\facet)}) \bracketN{\NB{\tikz[scale
    =0.8]{\begin{scope}[font = \tiny]
  \draw (0,0) -- (0,2) -- (1,2.5) -- (1,0.5) -- cycle;
  \node[font= \normalsize, scale=0.5] at (0.2, 0.3) {$a$};
\end{scope}
}} \ ,c},\label{eq:newdec-lem}     
  \end{equation}
  where:
  \begin{itemize}
  \item we have abused notation and let $c$ be the coloring on both sides,
    (this is legitimate since there is a canonical $1$-to-$1$
    correspondence between the set of $\gll_N$-colorings of the foams on
    both sides: take the complementary pigments for the coloring of the extra bubble); 
  \item $\facet$ is the facet locally represented by the rectangle of thickness a on
both sides of the equation;
  \item the hashed facet on the left-hand side has thickness $\myN$.
  \end{itemize}
\end{lem}

\begin{proof} 
  This follows directly from 
  formula \eqref{eq:colored-ev}. Let us denote by $\foam$ and $G$ the foam on the left-hand
  side and the on the right-hand side of \eqref{eq:newdec-lem}
  respectively. Only the sign discussion is not trivial. By Lemma~\ref{lem:sym-col-ev}, one can suppose that the bubble is
  colored by $\{1, \dots, N-a\}$. In that case,  $\theta_{ij}^+(\foam)
  = \theta_{ij}^+(G)$ for all $i, j$ and
  \begin{equation}
  \chi(\foam_i(c)) =
  \begin{cases}
    \chi(G_i(c)) +2 & \text{if $1\leq i\leq \myN-a$,} \\
    \chi(G_i(c)) & \text{if $\myN - a+1 \leq i\leq \myN$}.
  \end{cases}  \qedhere
\end{equation}
\end{proof}
 
\begin{rmk}
In Lemma~\ref{lem:bubble}, if the ``bubble'' were glued on the
  other side, the formula would have an extra $(-1)^{a(\myN-a)}$
  factor. This is  because, keeping the same notations as in the proof, one would have 
\begin{equation}
  \theta_{ij}^+(\foam, c) =
  \begin{cases}
    \theta_{ij}^+(G, c) +1 & \text{if $1\leq i\leq \myN-a$ and $\myN - a+1 \leq j\leq \myN$ }, \\
    \theta_{ij}^+(G, c) & \text{otherwise.} \\
  \end{cases}
\end{equation}
This trick of using bubbles was already used in \cite{RW1} to write
  down the formula for the neck-cutting relation.
\end{rmk}

Lemma~\ref{lem:bubble} allows us to make sense of new kinds of
decorations on facets. A decoration of a facet $\facet$ of thickness
$a$ can now be a product of a symmetric polynomial in $a$ variables (as
before) with a symmetric polynomial in $\myN-a$ variables, or a sum of
such expressions. In other words, a decoration of a facet $\facet$ of thickness $a$
is an element $P_f$ of $\scalars[x_1, \dots, x_{\myN}]^{S_a\times
  S_{\myN-a}}\cong \scalars[x_1,\dots, x_a]^{S_a}\otimes
\scalars[x_1,\dots, x_{\myN-a}]^{S_{\myN-a}}$. The formula for
$P(\foam,c)$ becomes:
\begin{align}
P(\foam,c) &:= \prod_{\facet \in \foam^2}P_\facet(\underline{X}_{c(f)}, \underline{X}_{\widehat{c}(f)})  
\end{align}
where $P_\facet(\underline{X}_{c(f)}, \underline{X}_{\widehat{c}(f)})$ is the evaluation of
$P_f$ on $\underline{X}_{c(f)}$ for the first $a=\ell(\facet)$ variables and
$\underline{X}_{\widehat{c}(f)}$ for the last $\myN-a$ variables.

Proposition~\ref{prop:evaluation} remains valid with these more general
decorations, since one can see these decorations as shortcuts for foams
with extra decorated glued bubbles as explained by Lemma~\ref{lem:bubble}.

\begin{conv} \label{conv:new-dec}
  From now on, decorations of foams are of this more general form.
\end{conv}

\subsection{Euler characteristics of surfaces}
\label{sec:euler-char-surf}

A key ingredient in formula~\eqref{eq:colored-ev} giving the colored
$\gll_\myN$-evaluation is the Euler characteristic of bichrome
surfaces. In this subsection, we inspect how this quantity can be
computed for foams in good position. We will as well
consider monochrome surfaces. Let us fix a surface $\surface$ and consider a colored foam $(\foam,c)$ in
$\surface\times[0,1]$. Suppose furthermore that 
$\foam$ is in good position and as such is a
composition of $\foam^{(1)}, \dots, F^{(\ell)}$. 
We say that
$\foam^{(k)}$, a basic foam which is not the trace of an isotopy,
\emph{involves} a pigment $i$ if $i$ belongs to the color of at least one of the
facets in the non-trivial part of $F^{(k)}$ (see Definition~\ref{dfn:basic-foams}).

\begin{lem}
  \label{lem:cupcap-mono}
  Let $\foam$ be a \spherical{} closed foam in good position, and $c$
  a $\gll_\myN$-coloring of $\foam$.
  For any pigment $i$, the number of cups involving $i$ equals the
  number of caps involving $i$.
\end{lem}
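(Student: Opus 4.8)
The plan is to track the monochrome surface $\foam_i(c)$ through the decomposition of $\foam$ into basic foams and observe that it is a closed surface whose topology only changes at the basic foams involving $i$. First I would fix the pigment $i$ and set $G = \foam_i(c)$, which is a closed surface since $\foam$ is closed. Reading $\foam$ from bottom to top as a composition $\foam^{(1)}, \dots, \foam^{(\ell)}$ of basic foams, each generic horizontal slice $\foam_t$ is a web, and the corresponding slice of $G$ is a union of the edges of $\foam_t$ whose color contains $i$; since $\foam$ is closed this slice is a disjoint union of circles (there are no sources or sinks, and the endpoints at $t=0,1$ are empty). So $G$ is built up slice-by-slice, starting from the empty $1$-manifold and ending at the empty $1$-manifold.

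Next I would go through the basic foam models in Figure~\ref{fig:basic-foam} one at a time and record the effect of each on the $i$-colored slice, using the coloring conditions (a) and (b). Traces of isotopies do not change the number of $i$-circles. The polynomial, associativity/coassociativity, zip, unzip, digon-cup and digon-cap models also leave the number of $i$-colored circles unchanged: in each case the local modification either does not involve $i$, or it merges/splits a thick $i$-containing edge with a thin one in a way that is a planar isotopy at the level of the $i$-colored part (one checks this directly from $c(\facet_{\mathrm{thick}}) = c(\facet_1)\cup c(\facet_2)$, noting $i$ lies in the thick facet iff it lies in exactly one thin facet, and the binding is then parallel to the $i$-colored part). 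The only models that change the count of $i$-colored circles are: a cup involving $i$, which creates one new $i$-circle, and a cap involving $i$, which destroys one — and, crucially, since $\foam$ is \spherical, no saddle appears in the decomposition, so no model can change the number of $i$-circles in any other way. I would then conclude: starting and ending with zero $i$-colored circles, and since only cups-involving-$i$ add one and caps-involving-$i$ remove one, the total number of cups involving $i$ must equal the total number of caps involving $i$.

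The main obstacle I expect is the bookkeeping in the middle step: one must be careful about what ``involves $i$'' means for a cup or a cap — a cup of thickness $a$ creates a facet whose color has $a$ pigments, and it ``involves $i$'' precisely when $i$ is among those $a$ pigments, in which case exactly one new $i$-colored circle is born. The subtle point is to verify that the zip/unzip and digon models genuinely cannot alter the $i$-count even when $i$ sits on the thick facet; this is where the flow condition and condition (b) on colorings do the work, and it is worth stating explicitly that at such a binding the $i$-colored locus is an arc running parallel to the binding, so the $i$-colored slice changes only by an isotopy (a Reidemeister-zero type move on the embedded circles), not by a $0$- or $1$-handle attachment. Once that is pinned down, the Euler-characteristic-free count of circles is preserved except at $i$-cups and $i$-caps, and the lemma follows.
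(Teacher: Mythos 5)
Your proposal is correct and is essentially the paper's own argument: the paper uses the projection onto $[0,1]$ as a Morse function on the closed monochrome surface $\foam_i(c)$, notes that sphericality excludes index-$1$ critical points (saddles), and concludes that minima (cups involving $i$) and maxima (caps involving $i$) occur in equal number. Your slice-by-slice count of $i$-colored circles, with the case check that the other basic foams only reroute the $i$-colored curve, is just the explicit level-set unpacking of that same Morse-theoretic argument.
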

\begin{proof}
  Since $\foam$ is in good position, the projection $\pi$ onto $[0,1]$
  provides a Morse function. Since
  $\foam$ is \spherical, the surface $\foam_i(c)$ contains no
  critical point for $\pi$ of index $1$, hence the number of maxima is
  equal to the number of minima.  This implies the number of cups equals the number of caps.
\end{proof}

Consider a closed foam in good position $\foam$, and $c$ a
$\gll_\myN$-coloring $c$ of $\foam$. Fix $i<j$ two pigments. Since
$\foam$ is in good position, we can present the bichrome surface
$\foam_{ij}(c)$ as a movie. Pieces of $\foam$ which are traces of isotopies correspond to isotopies in the movie of $\foam_{ij}(c)$, and so there are
many basic foams for which nothing really happens in
$\foam_{ij}(c)$.
In Table~\ref{tab:bicol-cob}, we gather the interesting
pieces of $\foam$ and translate them into (movie) Morse moves for
$\foam_{ij}(c)$. Additionally, we make explicit the local contributions of
these various pieces to $\chi(F_{ij}(c))$. Finally, the last column
of the table gives notation to the number of basic foams of each particular type in $(\foam,c)$:
$\Czip_{ij}$, $\Czip_{ji}$, $\Cunzip_{ij}$, $\Cunzip_{ji}$
$\Cdigcup_{ij}$, $\Cdigcup_{ji}$, $\Cdigcap_{ij}$, $\Cdigcap_{ji}$,
$\Ccup_{ij}$, $\Ccup_{ji}$, $\Ccap_{ij}$, $\Ccap_{ji}$.

\begin{table}\begin{tabular}{|c|c|c|c|} 
  \hline
  Basic foam
  &Movie
  &$\chi(F_{ij}(c))$
  & Symbol
  \\\hline
  \NB{\tikz[font=\tiny]{\begin{scope}
  \begin{scope}
    \coordinate (L) at (0,0);
    \coordinate (R) at (2,0);
    \coordinate (ML) at (0.5, 0);
    \coordinate (MR) at (1.5, 0);
 \begin{scope}[yshift = -1.3cm]
    \coordinate (LB) at (0,0);
    \coordinate (RB) at (2,0);
    \draw[->-] (LB) -- (RB);
  \end{scope}
  \fill[gray, opacity=0.2] (LB) -- (RB) -- (R) -- (MR)  .. controls
  +(0, -1) and +(0, -1) .. (ML) -- (L) -- (LB); 
    \fill[blue, opacity = 0.7, pattern=vertical lines, pattern color=blue] (ML).. controls + (0.4, 0.4) and
    +(-0.2, 0.4) .. (MR) .. controls +(0, -1) and +(0, -1) .. (ML);
    \fill[red, fill opacity = 0.3,] (ML).. controls + (0.2, -0.4) and
    +(-0.4, -0.4) .. (MR) .. controls +(0, -1) and +(0, -1) .. (ML);
    \fill[red, opacity = 1, pattern=dots, pattern color=red] (ML).. controls + (0.2, -0.4) and
    +(-0.4, -0.4) .. (MR) .. controls +(0, -1) and +(0, -1) .. (ML);
    \draw[->-] (L) -- (ML);
    \draw[->-] (MR) -- (R) node[right] {$a+b$};
    \draw[->-] (ML).. controls + (0.4, 0.4) and +(-0.2, 0.4) .. (MR)
    node[above, midway] {$a$};
    \draw[->-] (ML).. controls + (0.2, -0.4) and +(-0.4, -0.4) .. (MR)
    node[below, midway] {$b$};
  \end{scope}  
  \draw (R) -- (RB);
  \draw (L) -- (LB);
  \draw[thick] (ML) .. controls +(0, -1) and +(0, -1) .. (MR);
\end{scope}

}}
  &\mymovie[scale=0.5]{\NB{\tikz[yscale=1]{
\begin{scope}[scale=1]
  \draw[->-, gray, densely dotted] (-0.9, 0) -- (0.9, 0);
\end{scope}
}}}{\NB{\tikz[yscale=1]{\begin{scope}[scale=1]
  \draw[ blue, decorate, decoration={zigzag, segment length=1.5mm,  amplitude=0.3mm},
  thick] (-0.5, 0)  arc (180:0:0.5cm);
  \draw[ very thick, red, dotted, line cap= round](-0.5, 0) arc (-180:0:0.5cm);
  \draw[-<-, gray, densely dotted] (-0.5, 0) -- +(-0.4, 0);
  \draw[->-, gray,densely dotted] ( 0.5, 0) -- +( 0.4, 0);
\end{scope}
  &+1
  &$\Cdigcup_{ij}$
  \\[0.5cm] \hline
  \NB{\tikz[font=\tiny]{\begin{scope}
  \begin{scope}
    \coordinate (L) at (0,0);
    \coordinate (R) at (2,0);
    \coordinate (ML) at (0.5, 0);
    \coordinate (MR) at (1.5, 0);
 \begin{scope}[yshift = -1.3cm]
    \coordinate (LB) at (0,0);
    \coordinate (RB) at (2,0);
    \draw[->-] (LB) -- (RB);
  \end{scope}
  \fill[gray, opacity=0.2] (LB) -- (RB) -- (R) -- (MR)  .. controls
  +(0, -1) and +(0, -1) .. (ML) -- (L) -- (LB); 
    \fill[red, fill opacity = 0.3,] (ML).. controls + (0.4, 0.4) and
    +(-0.2, 0.4) .. (MR) .. controls +(0, -1) and +(0, -1) .. (ML);
    \fill[red, opacity = 1, pattern=dots, pattern color=red] (ML).. controls + (0.4, 0.4) and
    +(-0.2, 0.4) .. (MR) .. controls +(0, -1) and +(0, -1) .. (ML);
    \fill[white, opacity = 0.6] (ML).. controls + (0.2, -0.4) and
    +(-0.4, -0.4) .. (MR) .. controls +(0, -1) and +(0, -1) .. (ML);
    \fill[blue, opacity = 1, pattern=vertical lines, pattern color=blue] (ML).. controls + (0.2, -0.4) and
    +(-0.4, -0.4) .. (MR) .. controls +(0, -1) and +(0, -1) .. (ML);
    \draw[->-] (L) -- (ML);
    \draw[->-] (MR) -- (R) node[right] {$a+b$};
    \draw[->-] (ML).. controls + (0.4, 0.4) and +(-0.2, 0.4) .. (MR)
    node[above, midway] {$a$};
    \draw[->-] (ML).. controls + (0.2, -0.4) and +(-0.4, -0.4) .. (MR)
    node[below, midway] {$b$};
  \end{scope}  
  \draw (R) -- (RB);
  \draw (L) -- (LB);
  \draw[thick] (ML) .. controls +(0, -1) and +(0, -1) .. (MR);
\end{scope}

}}
  &\mymovie[scale =0.5]{\NB{\tikz[yscale=1]{}}}{\NB{\tikz[yscale=-1]{\begin{scope}[scale=1]
  \draw[ blue, decorate, decoration={zigzag, segment length=1.5mm,  amplitude=0.3mm},
  thick] (-0.5, 0)  arc (180:0:0.5cm);
  \draw[ very thick, red, dotted, line cap= round](-0.5, 0) arc (-180:0:0.5cm);
  \draw[-<-, gray, densely dotted] (-0.5, 0) -- +(-0.4, 0);
  \draw[->-, gray,densely dotted] ( 0.5, 0) -- +( 0.4, 0);
\end{scope}
  &+1
  &$\Cdigcup_{ji}$
  \\[0.5cm]\hline
  \NB{\tikz[font=\tiny]{\begin{scope}
  \begin{scope}
 \begin{scope}[yshift = 1.3cm]
    \coordinate (LB) at (0,0);
    \coordinate (RB) at (2,0);
    \draw (LB) -- (RB);
  \end{scope}  
    \coordinate (L) at (0,0);
    \coordinate (R) at (2,0);
    \coordinate (ML) at (0.5, 0);
    \coordinate (MR) at (1.5, 0);
    \draw[->-] (L) -- (ML);
    \draw[->-] (MR) -- (R) node[right] {$a+b$};
    \draw[->-] (ML).. controls + (0.4, 0.4) and +(-0.2, 0.4) .. (MR)
    node[above, midway] {$a$};
    \draw[->-] (ML).. controls + (0.2, -0.4) and +(-0.4, -0.4) .. (MR)
    node[below, midway] {$b$};
    \fill[gray, opacity=0.2] (LB) -- (RB) -- (R) -- (MR)  .. controls
  +(0, 1) and +(0, 1) .. (ML) -- (L) -- (LB); 
    \fill[white, opacity = 0.4] (ML).. controls + (0.4, 0.4) and
    +(-0.2, 0.4) .. (MR) .. controls +(0, 1) and +(0, 1) .. (ML);
    \fill[blue, opacity = 1, pattern=vertical lines, pattern color=blue] (ML).. controls + (0.4, 0.4) and
    +(-0.2, 0.4) .. (MR) .. controls +(0, 1) and +(0, 1) .. (ML);
    \fill[red, fill opacity = 0.3,] (ML).. controls + (0.2, -0.4) and
    +(-0.4, -0.4) .. (MR) .. controls +(0, 1) and +(0, 1) .. (ML);
    \fill[red, opacity = 1, pattern=dots, pattern color=red] (ML).. controls + (0.2, -0.4) and
    +(-0.4, -0.4) .. (MR) .. controls +(0, 1) and +(0, 1) .. (ML);
  \end{scope}  
  \draw (R) -- (RB);
  \draw (L) -- (LB);
  \draw[thick] (ML) .. controls +(0, 1) and +(0, 1) .. (MR);
\end{scope}

}}
  &\mymovie[scale
    =0.5]{\NB{\tikz[yscale=1]{\begin{scope}[scale=1]
  \draw[ blue, decorate, decoration={zigzag, segment length=1.5mm,  amplitude=0.3mm},
  thick] (-0.5, 0)  arc (180:0:0.5cm);
  \draw[ very thick, red, dotted, line cap= round](-0.5, 0) arc (-180:0:0.5cm);
  \draw[-<-, gray, densely dotted] (-0.5, 0) -- +(-0.4, 0);
  \draw[->-, gray,densely dotted] ( 0.5, 0) -- +( 0.4, 0);
\end{scope}
  &+1
  &$\Cdigcap_{ij}$
  \\[0.5cm]\hline
  \NB{\tikz[font=\tiny]{\begin{scope}
  \begin{scope}
 \begin{scope}[yshift = 1.3cm]
    \coordinate (LB) at (0,0);
    \coordinate (RB) at (2,0);
    \draw (LB) -- (RB);
  \end{scope}  
    \coordinate (L) at (0,0);
    \coordinate (R) at (2,0);
    \coordinate (ML) at (0.5, 0);
    \coordinate (MR) at (1.5, 0);
    \draw[->-] (L) -- (ML);
    \draw[->-] (MR) -- (R) node[right] {$a+b$};
    \draw[->-] (ML).. controls + (0.4, 0.4) and +(-0.2, 0.4) .. (MR)
    node[above, midway] {$a$};
    \draw[->-] (ML).. controls + (0.2, -0.4) and +(-0.4, -0.4) .. (MR)
    node[below, midway] {$b$};
    \fill[gray, opacity=0.2] (LB) -- (RB) -- (R) -- (MR)  .. controls
  +(0, 1) and +(0, 1) .. (ML) -- (L) -- (LB); 
    \fill[red, fill opacity = 0.3,] (ML).. controls + (0.4, 0.4) and
    +(-0.2, 0.4) .. (MR) .. controls +(0, 1) and +(0, 1) .. (ML);
    \fill[red, opacity = 1, pattern=dots, pattern color=red] (ML).. controls + (0.4, 0.4) and
    +(-0.2, 0.4) .. (MR) .. controls +(0, 1) and +(0, 1) .. (ML);
    \fill[white, opacity = 0.6] (ML).. controls + (0.2, -0.4) and
    +(-0.4, -0.4) .. (MR) .. controls +(0, 1) and +(0, 1) .. (ML);
    \fill[blue, opacity = 1, pattern=vertical lines, pattern color=blue] (ML).. controls + (0.2, -0.4) and
    +(-0.4, -0.4) .. (MR) .. controls +(0, 1) and +(0, 1) .. (ML);
  \end{scope}  
  \draw (R) -- (RB);
  \draw (L) -- (LB);
  \draw[thick] (ML) .. controls +(0, 1) and +(0, 1) .. (MR);
\end{scope}

}}
  &\mymovie[scale =0.5]{\NB{\tikz[yscale=-1]{\begin{scope}[scale=1]
  \draw[ blue, decorate, decoration={zigzag, segment length=1.5mm,  amplitude=0.3mm},
  thick] (-0.5, 0)  arc (180:0:0.5cm);
  \draw[ very thick, red, dotted, line cap= round](-0.5, 0) arc (-180:0:0.5cm);
  \draw[-<-, gray, densely dotted] (-0.5, 0) -- +(-0.4, 0);
  \draw[->-, gray,densely dotted] ( 0.5, 0) -- +( 0.4, 0);
\end{scope}
  &+1
  &$\Cdigcap_{ji}$
  \\[0.5cm]\hline
  \NB{\tikz[font=\tiny]{\begin{scope}
  \begin{scope}
    \coordinate (L1) at (0.2,0.4);
    \coordinate (L2) at (0,0);
    \coordinate (R1) at (2.2,0.4);
    \coordinate (R2) at (2,0);
    \coordinate (ML) at (0.6, 0.2);
    \coordinate (MR) at (1.6, 0.2);
  \begin{scope}[yshift = -1cm]
    \coordinate (L1B) at (0.2,0.4);
    \coordinate (L2B) at (0,0);
    \coordinate (R1B) at (2.2,0.4);
    \coordinate (R2B) at (2,0);
  \end{scope}
  \end{scope}  
    \draw[->-] (L1B) .. controls +( 0, 0) and +(0,0) .. (R1B) node [right, pos
    = 1] {$a$};
    \draw[->-] (L2B) .. controls +( 0, 0) and +(0,0) .. (R2B) node [right, pos
    = 1] {$b$};
  \draw (R1) -- (R1B);
  \draw (R2) -- (R2B);
  \draw (L1) -- (L1B);
  \draw (L2) -- (L2B);
  \draw[thick] (ML) .. controls +(0, -0.6) and +(0, -0.6) .. (MR);
    \fill[white, opacity = 0.6] (L1) .. controls +( 0.3, 0) and
    +(0,0) .. (ML) .. controls +(0, -0.6) and +(0, -0.6) .. (MR)
    .. controls +(0, 0) and +(-0.3,0) .. (R1)--(R1B) -- (L1B) -- (L1); 
    \fill[blue, opacity = 1, pattern=vertical lines, pattern
    color=blue] (L1) .. controls +( 0.3, 0) and
    +(0,0) .. (ML) .. controls +(0, -0.6) and +(0, -0.6) .. (MR)
    .. controls +(0, 0) and +(-0.3,0) .. (R1)--(R1B) -- (L1B) -- (L1); 
  \fill[gray, opacity=0.2] (ML) .. controls +(0, -0.6) and +(0, -0.6) .. (MR);
    \fill[red, fill opacity = 0.3] (L2) .. controls +( 0.3, 0) and
    +(0,0) .. (ML) .. controls +(0, -0.6) and +(0, -0.6) .. (MR)
    .. controls +(0, 0) and +(-0.3,0) .. (R2)--(R2B) -- (L2B) -- (L2);
    \fill[red, opacity = 1, pattern=dots, pattern color=red] (L2) .. controls +( 0.3, 0) and
    +(0,0) .. (ML) .. controls +(0, -0.6) and +(0, -0.6) .. (MR)
    .. controls +(0, 0) and +(-0.3,0) .. (R2)--(R2B) -- (L2B) -- (L2);
  \draw[->-] (ML) -- (MR) node[above, midway] {$a+b$};
    \draw (MR) .. controls +(0, 0) and +(-0.3,0) .. (R1) ;
    \draw (MR) .. controls +(0, 0) and +(-0.3,0) .. (R2);
    \draw (L1) .. controls +( 0.3, 0) and +(0,0) .. (ML);
    \draw (L2) .. controls +( 0.3, 0) and +(0,0) .. (ML);
\end{scope}

}}
  &\mymovie[scale =0.5]{\NB{\tikz[yscale=1]{\begin{scope}[scale=1]
  \draw[ blue, decorate, decoration={zigzag, segment length=1.5mm,  amplitude=0.3mm},
  thick] (-0.7, 0.3) -- +(1.4,0);
  \draw[ very thick, red, dotted, line cap= round]  (-0.7, -0.3) -- +(1.4,0);
\end{scope}
  \draw[ blue, decorate, decoration={zigzag, segment length=1.5mm,  amplitude=0.3mm},
  thick]
  (-0.7, 0.3)
  .. controls +(0.3, 0) and +(-0.1,0.1) ..
  (-0.2, 0);
  \draw[ blue, decorate, decoration={zigzag, segment length=1.5mm,  amplitude=0.3mm},
  thick] (0.7, 0.3)
   .. controls +(-0.3, 0) and +(0.1,0.1) ..
  ( 0.2, 0);
  \draw[ very thick, red, dotted, line cap= round]
  (-0.7, -0.3) .. controls +(0.3, 0) and +(-0,0) .. (-0.2, 0);
  \draw[ very thick, red, dotted, line cap= round] 
  (0.7, -0.3) .. controls +(-0.3, 0) and +(0 ,-0) .. ( 0.2, 0);
  \draw[->-, gray, densely dotted] (-0.2, 0) -- +(0.4, 0);
\end{scope}
}}}
  &-1
  &$\Czip_{ij}$
  \\ [0.5cm]\hline
  \NB{\tikz[font=\tiny]{\begin{scope}
  \begin{scope}
    \coordinate (L1) at (0.2,0.4);
    \coordinate (L2) at (0,0);
    \coordinate (R1) at (2.2,0.4);
    \coordinate (R2) at (2,0);
    \coordinate (ML) at (0.6, 0.2);
    \coordinate (MR) at (1.6, 0.2);
  \begin{scope}[yshift = -1cm]
    \coordinate (L1B) at (0.2,0.4);
    \coordinate (L2B) at (0,0);
    \coordinate (R1B) at (2.2,0.4);
    \coordinate (R2B) at (2,0);
  \end{scope}
  \end{scope}  
    \draw[->-] (L1B) .. controls +( 0, 0) and +(0,0) .. (R1B) node [right, pos
    = 1] {$a$};
    \draw[->-] (L2B) .. controls +( 0, 0) and +(0,0) .. (R2B) node [right, pos
    = 1] {$b$};
  \draw (R1) -- (R1B);
  \draw (R2) -- (R2B);
  \draw (L1) -- (L1B);
  \draw (L2) -- (L2B);
  \draw[thick] (ML) .. controls +(0, -0.6) and +(0, -0.6) .. (MR);
  \fill[gray, opacity=0.2] (ML) .. controls +(0, -0.6) and +(0, -0.6) .. (MR);
    \fill[red, fill opacity = 0.3] (L1) .. controls +( 0.3, 0) and
    +(0,0) .. (ML) .. controls +(0, -0.6) and +(0, -0.6) .. (MR)
    .. controls +(0, 0) and +(-0.3,0) .. (R1)--(R1B) -- (L1B) -- (L1);
    \fill[red, opacity = 1, pattern=dots, pattern color=red] (L1) .. controls +( 0.3, 0) and
    +(0,0) .. (ML) .. controls +(0, -0.6) and +(0, -0.6) .. (MR)
    .. controls +(0, 0) and +(-0.3,0) .. (R1)--(R1B) -- (L1B) -- (L1);
    \fill[white, opacity = 0.6] (L2) .. controls +( 0.3, 0) and
    +(0,0) .. (ML) .. controls +(0, -0.6) and +(0, -0.6) .. (MR)
    .. controls +(0, 0) and +(-0.3,0) .. (R2)--(R2B) -- (L2B) -- (L2); 
    \fill[blue, opacity = 1, pattern=vertical lines, pattern
    color=blue] (L2) .. controls +( 0.3, 0) and
    +(0,0) .. (ML) .. controls +(0, -0.6) and +(0, -0.6) .. (MR)
    .. controls +(0, 0) and +(-0.3,0) .. (R2)--(R2B) -- (L2B) -- (L2); 
  \draw[->-] (ML) -- (MR) node[above, midway] {$a+b$};
    \draw (MR) .. controls +(0, 0) and +(-0.3,0) .. (R1) ;
    \draw (MR) .. controls +(0, 0) and +(-0.3,0) .. (R2);
    \draw (L1) .. controls +( 0.3, 0) and +(0,0) .. (ML);
    \draw (L2) .. controls +( 0.3, 0) and +(0,0) .. (ML);
\end{scope}

}}
  &\mymovie[scale =0.5]{\NB{\tikz[yscale=-1]{\begin{scope}[scale=1]
  \draw[ blue, decorate, decoration={zigzag, segment length=1.5mm,  amplitude=0.3mm},
  thick] (-0.7, 0.3) -- +(1.4,0);
  \draw[ very thick, red, dotted, line cap= round]  (-0.7, -0.3) -- +(1.4,0);
\end{scope}
  &-1
  &$\Czip_{ji}$
  \\ [0.5cm]\hline
  \NB{\tikz[font=\tiny]{\begin{scope}
  \begin{scope}
    \coordinate (L1) at (0.2,0.4);
    \coordinate (L2) at (0,0);
    \coordinate (R1) at (2.2,0.4);
    \coordinate (R2) at (2,0);
    \coordinate (ML) at (0.6, 0.2);
    \coordinate (MR) at (1.6, 0.2);
    \draw[->-] (ML) -- (MR) node[below, midway] {$a+b$};
    \draw (MR) .. controls +(0, 0) and +(-0.3,0) .. (R1) ;
    \draw (MR) .. controls +(0, 0) and +(-0.3,0) .. (R2);
    \draw (L1) .. controls +( 0.3, 0) and +(0,0) .. (ML);
    \draw (L2) .. controls +( 0.3, 0) and +(0,0) .. (ML);
  \end{scope}  
 \begin{scope}[yshift = 1cm]
    \coordinate (L1B) at (0.2,0.4);
    \coordinate (L2B) at (0,0);
    \coordinate (R1B) at (2.2,0.4);
    \coordinate (R2B) at (2,0);
    \draw[->-] (L1B) .. controls +( 0, 0) and +(0,0) .. (R1B) node [left, pos
    = 0] {$a$};
    \draw[->-] (L2B) .. controls +( 0, 0) and +(0,0) .. (R2B) node [left, pos
    = 0] {$b$};
 \end{scope}  
  \draw (R1) -- (R1B);
  \draw (R2) -- (R2B);
  \draw (L1) -- (L1B);
  \draw (L2) -- (L2B);
  \draw[thick] (ML) .. controls +(0, 0.6) and +(0, 0.6) .. (MR);
  \fill[gray, opacity=0.2] (ML) .. controls +(0, 0.6) and +(0, 0.6) .. (MR);
    \fill[white, opacity = 0.6] (L1) .. controls +( 0.3, 0) and
    +(0,0) .. (ML) .. controls +(0, 0.6) and +(0, 0.6) .. (MR)
    .. controls +(0, 0) and +(-0.3,0) .. (R1)--(R1B) -- (L1B) -- (L1); 
    \fill[blue, opacity = 1, pattern=vertical lines, pattern
    color=blue] (L1) .. controls +( 0.3, 0) and
    +(0,0) .. (ML) .. controls +(0, 0.6) and +(0, 0.6) .. (MR)
    .. controls +(0, 0) and +(-0.3,0) .. (R1)--(R1B) -- (L1B) -- (L1); 
    \fill[red, fill opacity = 0.3] (L2) .. controls +( 0.3, 0) and
    +(0,0) .. (ML) .. controls +(0, 0.6) and +(0, 0.6) .. (MR)
    .. controls +(0, 0) and +(-0.3,0) .. (R2)--(R2B) -- (L2B) -- (L2);
    \fill[red, opacity = 1, pattern=dots, pattern color=red] (L2) .. controls +( 0.3, 0) and
    +(0,0) .. (ML) .. controls +(0, 0.6) and +(0, 0.6) .. (MR)
    .. controls +(0, 0) and +(-0.3,0) .. (R2)--(R2B) -- (L2B) -- (L2);

\end{scope}

}}
  &\mymovie[scale =0.5]{\NB{\tikz[yscale=1]{}}}{\NB{\tikz[yscale=1]{\begin{scope}[scale=1]
  \draw[ blue, decorate, decoration={zigzag, segment length=1.5mm,  amplitude=0.3mm},
  thick] (-0.7, 0.3) -- +(1.4,0);
  \draw[ very thick, red, dotted, line cap= round]  (-0.7, -0.3) -- +(1.4,0);
\end{scope}
  &-1
  &$\Cunzip_{ij}$
  \\ [0.5cm]\hline
  \NB{\tikz[font=\tiny]{\begin{scope}
  \begin{scope}
    \coordinate (L1) at (0.2,0.4);
    \coordinate (L2) at (0,0);
    \coordinate (R1) at (2.2,0.4);
    \coordinate (R2) at (2,0);
    \coordinate (ML) at (0.6, 0.2);
    \coordinate (MR) at (1.6, 0.2);
    \draw[->-] (ML) -- (MR) node[below, midway] {$a+b$};
    \draw (MR) .. controls +(0, 0) and +(-0.3,0) .. (R1) ;
    \draw (MR) .. controls +(0, 0) and +(-0.3,0) .. (R2);
    \draw (L1) .. controls +( 0.3, 0) and +(0,0) .. (ML);
    \draw (L2) .. controls +( 0.3, 0) and +(0,0) .. (ML);
  \end{scope}  
 \begin{scope}[yshift = 1cm]
    \coordinate (L1B) at (0.2,0.4);
    \coordinate (L2B) at (0,0);
    \coordinate (R1B) at (2.2,0.4);
    \coordinate (R2B) at (2,0);
    \draw[->-] (L1B) .. controls +( 0, 0) and +(0,0) .. (R1B) node [left, pos
    = 0] {$a$};
    \draw[->-] (L2B) .. controls +( 0, 0) and +(0,0) .. (R2B) node [left, pos
    = 0] {$b$};
 \end{scope}  
  \draw (R1) -- (R1B);
  \draw (R2) -- (R2B);
  \draw (L1) -- (L1B);
  \draw (L2) -- (L2B);
  \draw[thick] (ML) .. controls +(0, 0.6) and +(0, 0.6) .. (MR);
  \fill[gray, opacity=0.2] (ML) .. controls +(0, 0.6) and +(0, 0.6) .. (MR);
    \fill[red, fill opacity = 0.3] (L1) .. controls +( 0.3, 0) and
    +(0,0) .. (ML) .. controls +(0, 0.6) and +(0, 0.6) .. (MR)
    .. controls +(0, 0) and +(-0.3,0) .. (R1)--(R1B) -- (L1B) -- (L1);
    \fill[red, opacity = 1, pattern=dots, pattern color=red] (L1) .. controls +( 0.3, 0) and
    +(0,0) .. (ML) .. controls +(0, 0.6) and +(0, 0.6) .. (MR)
    .. controls +(0, 0) and +(-0.3,0) .. (R1)--(R1B) -- (L1B) -- (L1);
    \fill[white, opacity = 0.6] (L2) .. controls +( 0.3, 0) and
    +(0,0) .. (ML) .. controls +(0, 0.6) and +(0, 0.6) .. (MR)
    .. controls +(0, 0) and +(-0.3,0) .. (R2)--(R2B) -- (L2B) -- (L2); 
    \fill[blue, opacity = 1, pattern=vertical lines, pattern
    color=blue] (L2) .. controls +( 0.3, 0) and
    +(0,0) .. (ML) .. controls +(0, 0.6) and +(0, 0.6) .. (MR)
    .. controls +(0, 0) and +(-0.3,0) .. (R2)--(R2B) -- (L2B) -- (L2);

\end{scope}

}}
  &\mymovie[scale =0.5]{\NB{\tikz[yscale=-1]{}}}{\NB{\tikz[yscale=-1]{\begin{scope}[scale=1]
  \draw[ blue, decorate, decoration={zigzag, segment length=1.5mm,  amplitude=0.3mm},
  thick] (-0.7, 0.3) -- +(1.4,0);
  \draw[ very thick, red, dotted, line cap= round]  (-0.7, -0.3) -- +(1.4,0);
\end{scope}
  &-1
  &$\Cunzip_{ji}$
  \\ [0.5cm]\hline
  \NB{\tikz[font=\tiny]{\begin{scope}
  \fill[red, fill opacity = 0.3] (0,0) arc (180:0: 0.5cm and 0.2cm)
  arc(0:180: 0.5cm and -0.6cm);
  \fill[red, opacity = 1, pattern=dots, pattern color=red] (0,0) arc (180:0: 0.5cm and 0.2cm)
  arc(0:180: 0.5cm and -0.6cm);
  \fill[red, fill opacity = 0.3] (0,0) arc (180:0: 0.5cm and -0.2cm)
  arc(0:180: 0.5cm and -0.6cm);
  \fill[red, opacity = 1, pattern=dots, pattern color=red] (0,0) arc (180:0: 0.5cm and -0.2cm)
  arc(0:180: 0.5cm and -0.6cm);
  \draw (0,0) arc (180 :0: 0.5cm and 0.2cm) node[above, pos =
  0.5] {$a$};
  \draw[very thin] (0,0) arc (180 :0: 0.5cm and -0.6cm);
  \draw (0,0) arc (180 :0: 0.5cm and -0.2cm);
\end{scope}

  &\mymovie[scale =0.3]{$\emptyset$}{\NB{\tikz[scale=0.6]{
\begin{scope}[scale=1]
  \draw[ very thick, red, dotted, line cap= round] (-0.5, 0)  arc (180:-180:0.5cm);
\end{scope}
}}}
  &+1
  &$\Ccup_{ij}$
  \\ [0.5cm]\hline
  \NB{\tikz[font=\tiny]{\begin{scope}
  \fill[white, opacity = 0.6] (0,0) arc (180:0: 0.5cm and 0.2cm)
  arc(0:180: 0.5cm and -0.6cm);
  \fill[blue, opacity = 1, pattern=vertical lines, pattern
    color=blue] (0,0) arc (180:0: 0.5cm and 0.2cm)
  arc(0:180: 0.5cm and -0.6cm);
  \fill[white, opacity = 0.6] (0,0) arc (180:0: 0.5cm and -0.2cm)
  arc(0:180: 0.5cm and -0.6cm);
  \fill[blue, opacity = 1, pattern=horizontal lines, pattern
    color=blue] (0,0) arc (180:0: 0.5cm and -0.2cm)
  arc(0:180: 0.5cm and -0.6cm);
  \draw (0,0) arc (180 :0: 0.5cm and 0.2cm) node[above, pos =
  0.5] {$a$};
  \draw[very thin] (0,0) arc (180 :0: 0.5cm and -0.6cm);
  \draw (0,0) arc (180 :0: 0.5cm and -0.2cm);
\end{scope}

  &\mymovie[scale =0.3]{$\emptyset$}{\NB{\tikz[scale=0.6]{
\begin{scope}[scale=1]
   \draw[ blue, decorate, decoration={zigzag, segment length=1.5mm,  amplitude=0.3mm},
   thick] (0, 0) circle (0.5cm);
\end{scope}
}}}
  &+1
  &$\Ccup_{ji}$
  \\ [0.5cm]\hline
  \NB{\tikz[font=\tiny]{\begin{scope}
  \fill[red, fill opacity = 0.3] (0,0) arc (180:0: 0.5cm and 0.2cm)
  arc(0:180: 0.5cm and  0.6cm);
  \fill[red, opacity = 1, pattern=dots, pattern color=red] (0,0) arc (180:0: 0.5cm and 0.2cm)
  arc(0:180: 0.5cm and  0.6cm);
  \fill[red, fill opacity = 0.3] (0,0) arc (180:0: 0.5cm and -0.2cm)
  arc(0:180: 0.5cm and  0.6cm);
  \fill[red, opacity = 1, pattern=dots, pattern color=red] (0,0) arc (180:0: 0.5cm and -0.2cm)
  arc(0:180: 0.5cm and  0.6cm);
  \draw (0,0) arc (180 :0: 0.5cm and 0.2cm);
  \draw[very thin] (0,0) arc (180 :0: 0.5cm and 0.6cm);
  \draw (0,0) arc (180 :0: 0.5cm and -0.2cm) node[below, pos =
  0.5] {$a$};
\end{scope}

  &\mymovie[scale =0.3]{\NB{\tikz[scale=0.6]{}}}{$\emptyset$}
  & +1
  &$\Ccap_{ij}$
  \\ [0.5cm]\hline
  \NB{\tikz[font=\tiny]{\begin{scope}
  \fill[white, opacity = 0.6] (0,0) arc (180:0: 0.5cm and 0.2cm)
  arc(0:180: 0.5cm and 0.6cm);
  \fill[blue, opacity = 1, pattern=vertical lines, pattern
    color=blue] (0,0) arc (180:0: 0.5cm and 0.2cm)
  arc(0:180: 0.5cm and 0.6cm);
  \fill[white, opacity = 0.6] (0,0) arc (180:0: 0.5cm and -0.2cm)
  arc(0:180: 0.5cm and  0.6cm);
  \fill[blue, opacity = 1, pattern=horizontal lines, pattern
    color=blue] (0,0) arc (180:0: 0.5cm and -0.2cm)
  arc(0:180: 0.5cm and  0.6cm);
  \draw (0,0) arc (180 :0: 0.5cm and 0.2cm);
  \draw[very thin] (0,0) arc (180 :0: 0.5cm and 0.6cm);
  \draw (0,0) arc (180 :0: 0.5cm and -0.2cm)node[below, pos =
  0.5] {$a$};
\end{scope}

  &\mymovie[scale =0.3]{\NB{\tikz[scale=0.6]{}}}{$\emptyset$}
  & +1
  &$\Ccap_{ji}$
  \\ [0.5cm]\hline
\end{tabular}
\caption{In this table, the red dotted regions/lines represent facets 
  with $i$ in their colors in $F_{ij}(c)$; the blue hashed surface/zigzag lines
  represent facets with $j$ in their colors.}
\label{tab:bicol-cob}
\end{table}
Note that depending on the orientations and local configurations of $i$ and
$j$, each basic foam of interest comes in two flavors. 

With these notations, if $F$ is a spherical foam in good position, one has:
\begin{equation}
\label{eq:euler-basic}
  \chi(\foam_{ij}(c))= \Ccap_{ij} + \Ccap_{ji} + \Ccup_{ij}+\Ccup_{ji}
   + \Cdigcap_{ij}  + \Cdigcap_{ji} - \Czip_{ij} - \Czip_{ji}\\
   + \Cdigcup_{ij}  + \Cdigcup_{ji}-\Cunzip_{ij} - \Cunzip_{ji} .
\end{equation}

\begin{lem}\label{lem:cupcap-bi}
  For any \spherical{} foam $\foam$ in good position, any coloring $c$
  and any two pigments $i$ and $j$, the following identities hold:
  \begin{align}
    \Ccup_{ij} + \Ccap_{ji}= \Ccup_{ji} + \Ccap_{ij}. \end{align}
\end{lem}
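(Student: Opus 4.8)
The plan is to deduce the identity from Lemma~\ref{lem:cupcap-mono}, applied to the pigments $i$ and $j$ in turn, together with the dictionary recorded in Table~\ref{tab:bicol-cob}. First I would fix a decomposition of $\foam$ as a composition of basic foams. For $k\in\{i,j\}$, count the \emph{cup} (resp.\ \emph{cap}) basic foams \emph{involving} $k$; since such a basic foam has a single non-trivial facet $\facet$, this is precisely the number of cup (resp.\ cap) basic foams with $k\in c(\facet)$. Lemma~\ref{lem:cupcap-mono} then gives that these two numbers coincide, for each fixed $k$.

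Next I would split each of these four counts according to whether the \emph{other} pigment also lies in $c(\facet)$. Let $B^{\mathrm{cup}}$ (resp.\ $B^{\mathrm{cap}}$) be the number of cup (resp.\ cap) basic foams whose facet has color containing both $i$ and $j$. Reading off Table~\ref{tab:bicol-cob}, a cup basic foam whose facet has exactly one of $i,j$ in its color contributes to $\Ccup_{ij}$ if that pigment is $i$ and to $\Ccup_{ji}$ if it is $j$, and symmetrically for caps; if its color meets $\{i,j\}$ in both elements it is counted by $B^{\mathrm{cup}}$ (resp.\ $B^{\mathrm{cap}}$), and if in neither it involves neither $i$ nor $j$. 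Hence the number of cups involving $i$ equals $\Ccup_{ij}+B^{\mathrm{cup}}$ and the number of cups involving $j$ equals $\Ccup_{ji}+B^{\mathrm{cup}}$, with the \emph{same} term $B^{\mathrm{cup}}$, and likewise the numbers of caps involving $i$ and $j$ are $\Ccap_{ij}+B^{\mathrm{cap}}$ and $\Ccap_{ji}+B^{\mathrm{cap}}$.

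Finally I would combine these observations: $\Ccup_{ij}-\Ccup_{ji}$ equals the number of cups involving $i$ minus the number involving $j$ (the $B^{\mathrm{cup}}$ terms cancel), which by Lemma~\ref{lem:cupcap-mono} equals the number of caps involving $i$ minus the number involving $j$, namely $\Ccap_{ij}-\Ccap_{ji}$; rearranging gives $\Ccup_{ij}+\Ccap_{ji}=\Ccup_{ji}+\Ccap_{ij}$, as desired.

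I do not expect a serious obstacle: the real content sits in Lemma~\ref{lem:cupcap-mono}, and the remainder is elementary bookkeeping. The one point that must be handled with care is the translation between the monochrome notion ``basic foam involving $k$'' used in Lemma~\ref{lem:cupcap-mono} and the bichrome symbols $\Ccup_{ij},\Ccup_{ji}$ of Table~\ref{tab:bicol-cob}: one must verify that every cup basic foam falls into exactly one of the three classes (color meeting $\{i,j\}$ in $\{i\}$, in $\{j\}$, or in $\{i,j\}$), or else involves neither pigment, so that the counts decompose cleanly and the ``both'' terms cancel upon subtraction. An essentially identical argument would also yield the analogous identities for the other paired basic-foam types should they be needed.
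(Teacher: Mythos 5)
Your proposal is correct and follows essentially the same route as the paper's proof: apply Lemma~\ref{lem:cupcap-mono} to each of the pigments $i$ and $j$, decompose the count of cups (resp.\ caps) involving a given pigment into those whose facet color contains exactly one of $\{i,j\}$ plus a common term $B^{\mathrm{cup}}$ (resp.\ $B^{\mathrm{cap}}$) for those containing both, and subtract the two resulting identities so the common terms cancel. The paper denotes your $B^{\mathrm{cup}}, B^{\mathrm{cap}}$ by $\Ccup, \Ccap$ and compresses the bookkeeping to two lines, but the argument is the same.
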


\begin{proof}
  Let us denote momentarily $\Ccup$ and $\Ccap$ for the number of caps and
  cups involving both $i$ and $j$. 
  Lemma~\ref{lem:cupcap-mono}, gives:
  \begin{align*}
    \Ccup + \Ccup_{ij} &= \Ccap + \Ccap_{ij} \qquad \text{and} \\
    \Ccup + \Ccup_{ji} &= \Ccap + \Ccap_{ji}.
  \end{align*}
  The identity of the lemma follows from the difference of these two identities.
\end{proof}

\begin{lem}\label{lem:zip-unzip-merge-split}
  For any (not-necessarily \spherical{}) foam $\foam$ in good position, any coloring $c$
  and any two pigments $i$ and $j$, the following identities hold:
  \label{lem:merge-zip-in-bichrome}
  \begin{align}
    \Czip_{ij} + \Cdigcup_{ij} &=  \Cunzip_{ij} +
                                 \Cdigcap_{ij}, \label{eq:Euler1} \\
    \Czip_{ji} + \Cdigcup_{ji} &=  \Cunzip_{ji} + \Cdigcap_{ji}. \label{eq:Euler2}
  \end{align}
\end{lem}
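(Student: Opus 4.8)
The plan is to track each bichrome surface $\foam_{ij}(c)$ as a movie and read off a conserved quantity, just as in Lemma~\ref{lem:cupcap-mono} and Lemma~\ref{lem:cupcap-bi}, but now recording \emph{which components of the web present thick edges} rather than counting critical points of a monochrome surface. The relevant structure is the following: in the movie of $\foam_{ij}(c)$, look at those circles (components of a generic slice) that bound an $i$-pigmented region on one side in the configuration of the digon/zip pictures, that is, the circles appearing as the ``small circle'' next to a thick edge in rows $\Cdigcup_{ij}, \Cdigcap_{ij}, \Czip_{ij}, \Cunzip_{ij}$ of Table~\ref{tab:bicol-cob}. I want to count births and deaths of such circles.

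First I would fix $i<j$ and restrict attention to the basic foams of type $\Czip_{ij}$, $\Cunzip_{ij}$, $\Cdigcup_{ij}$, $\Cdigcap_{ij}$; every other basic foam listed in Table~\ref{tab:bicol-cob} (the $ji$-flavors, the monochrome cups and caps $\Ccup_{*}$, $\Ccap_{*}$) either involves a circle of the ``opposite flavor'' or a monochrome circle, and so is invisible to the count I am setting up. Next I would identify the movie moves precisely: reading Table~\ref{tab:bicol-cob}, a $\Cdigcup_{ij}$ move \emph{creates} a small $ij$-circle of the relevant flavor out of a thick edge, a $\Czip_{ij}$ move also creates one (the dumbbell acquires a small circle compared to the two parallel strands), while $\Cdigcap_{ij}$ and $\Cunzip_{ij}$ \emph{annihilate} one. (One must be careful that in the $\Czip_{ij}$/$\Cunzip_{ij}$ pictures it is genuinely the same flavor of circle being created/destroyed as in the digon moves; this is exactly the content of the two-flavor remark following the table, and checking it against the local models is the one place where care is needed.)

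Then the argument is a conservation law: the foam $\foam$ is closed, so the movie of $\foam_{ij}(c)$ starts and ends with the empty web; hence over the whole movie the number of births of such circles equals the number of deaths. Births are counted by $\Czip_{ij} + \Cdigcup_{ij}$ and deaths by $\Cunzip_{ij} + \Cdigcap_{ij}$, which gives \eqref{eq:Euler1}; applying the same count to the opposite flavor (circles appearing next to a thick edge on the $j$-side) gives \eqref{eq:Euler2}. Note this argument never uses that $\foam$ is \spherical{}: a saddle (the only basic foam excluded in the \spherical{} case) does not create or destroy any $ij$-circle of either flavor, it only changes a monochrome $i$- or $j$-circle count, so it is transparent to this bookkeeping — which is why the hypothesis can be dropped here, in contrast to Lemmas~\ref{lem:cupcap-mono} and~\ref{lem:cupcap-bi}.

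The main obstacle I anticipate is purely the combinatorial verification in the previous paragraph that the ``small circle next to a thick edge'' is consistently the \emph{same} flavor in the zip/unzip rows as in the digon-cup/digon-cap rows of Table~\ref{tab:bicol-cob}, so that the four quantities $\Czip_{ij}, \Cunzip_{ij}, \Cdigcup_{ij}, \Cdigcap_{ij}$ really do partition into two births and two deaths of a single type of circle; once one stares at the local models $Y^{(a,b)}$ and the induced bichrome pictures this is immediate, but it is the step that actually carries the content. Everything else is the closed-foam conservation principle already used twice above.
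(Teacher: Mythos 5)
There is a genuine gap, and it sits exactly at the step you flagged as ``the one place where care is needed.'' Your conserved quantity is wrong: the zip and unzip are \emph{saddle} moves of the bichrome surface $\foam_{ij}(c)$, not births or deaths of circles. This is already recorded in Table~\ref{tab:bicol-cob}: the zip/unzip rows contribute $-1$ to $\chi(\foam_{ij}(c))$ (an index-one critical point), while the digon-cup/digon-cap rows contribute $+1$ (a birth/death of a circle). Concretely, the bichrome restriction of the output web of a zip is not a small circle next to the thick edge: the thick edge is omitted from $\foam_{ij}(c)$, so near the new thick stretch the slice consists of two wedges (the $i$-arc and $j$-arc meeting at the merge vertex, and again at the split vertex), i.e.\ the two parallel strands have been reconnected the other way. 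So the claim ``a $\Czip_{ij}$ move also creates one [small circle]'' is false, and no local conservation law for circle components can be set up in its place: a saddle increases or decreases the number of components of the slice by one depending on \emph{global} data, so whether a given zip ``creates'' or ``destroys'' a circle is not determined by the local model, whereas your bookkeeping needs it to count uniformly as a birth. (A secondary symptom of the same problem: $\Ccup_{ij}$ and $\Ccap_{ij}$ also create and destroy circles of $\foam_{ij}(c)$, so ``circle components'' cannot be blind to those rows either.)

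The argument is repaired by counting zero-dimensional objects instead of circles: track, in the web slices of the movie, the split (and merge) vertices at which a thin edge whose color contains $i$ but not $j$ meets a thin edge whose color contains $j$ but not $i$, keeping track of orientation so as to separate the two flavors corresponding to $ij$ and $ji$. Such a vertex is created exactly by a digon-cup or a zip of the matching flavor, destroyed exactly by a digon-cap or an unzip of that flavor, and is unaffected by every other basic foam (traces of isotopy, polynomials, associativity moves --- which only slide such a vertex along --- cups, caps, and in particular saddles, which is why sphericality is indeed not needed). Since $\foam$ is closed, the movie begins and ends with the empty web, so for each flavor the number of births equals the number of deaths, which is precisely \eqref{eq:Euler1} and \eqref{eq:Euler2}. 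This is the paper's proof; your overall strategy (movie plus a closed-foam conservation law, and the observation about why the \spherical{} hypothesis can be dropped) is the right one, but it only goes through once the counted object is the $i$--$j$ vertex on the bindings rather than a circle.
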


\begin{proof}
  In the movie describing the surface $\foam_{ij}(c)$, we can track
  the split vertices at which pigment $i$ and pigment $j$
  meet. Keeping track of orientations, they come in two flavors:
  \[
    \NB{\tikz[]{\begin{scope}[scale=1]
  \draw[ blue, decorate, decoration={zigzag, segment length=1.5mm,  amplitude=0.3mm},
  thick] (0.7, 0.3)
   .. controls +(-0.3, 0) and +(0.1,0.1) ..
  ( 0.2, 0)node[pos=0, blue, scale=0.8] {$\blacktriangleright$};
  \draw[ very thick, red, dotted, line cap= round, ] 
  (0.7, -0.3) .. controls +(-0.3, 0) and +(0 ,-0) .. ( 0.2, 0)node[pos=0, red, scale=0.8] {$\blacktriangleright$};
  \draw[->-, gray, densely dotted] (-0.2, 0) -- +(0.4, 0);
\end{scope}
}}\qquad \text{and} \qquad
    \NB{\tikz[yscale=-1]{}}.
  \]
  Since
  $\foam_{ij}(c)$ is a closed surface, at the beginning and at the end
  of the movie, there are no such vertices. Hence the number of births of
  these vertices is equal to the number of deaths of them. The
  identities \eqref{eq:Euler1} and \eqref{eq:Euler2} reflect that fact for each of the two flavors.
\end{proof}

Recall from Section \ref{sec:conventions} the convention that for $s$ in $\scalars$, $\bar{s} = 1 -s$. 

\begin{cor} 
For any \spherical{} foam $\foam$ in good position, any coloring $c$, any two pigments $i$ and $j$ and any $s$ in $\scalars$, the following identities hold:
\begin{equation}
    \begin{split}
    \frac{\chi(\foam_{ij}(c))}{2}=
 & \frac12\left(\Ccap_{ij}  + \Ccup_{ij} + \Ccap_{ji}
   +\Ccup_{ji}\right) \\ &+s\left(\Cdigcap_{ij}+\Cdigcap_{ji}-\Czip_{ij}-\Czip_{ji}\right)
 +\bar{s}\left(\Cdigcup_{ij}+\Cdigcup_{ji}-\Cunzip_{ij}-\Cunzip_{ji}\right) .\end{split}
\end{equation}
\end{cor}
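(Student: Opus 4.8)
The plan is to start from the basic-foam expression \eqref{eq:euler-basic} for $\chi(\foam_{ij}(c))$, which is available because $\foam$ is \spherical{} and in good position (so no saddle occurs in its decomposition, hence no extra term enters the Euler characteristic count), and simply divide it by two. Dividing \eqref{eq:euler-basic} by $2$ produces the term $\tfrac12\bigl(\Ccap_{ij}+\Ccup_{ij}+\Ccap_{ji}+\Ccup_{ji}\bigr)$ verbatim, so that the whole problem reduces to matching the remaining contribution
\[
  \tfrac12\bigl(\Cdigcap_{ij}+\Cdigcap_{ji}-\Czip_{ij}-\Czip_{ji}\bigr)
  + \tfrac12\bigl(\Cdigcup_{ij}+\Cdigcup_{ji}-\Cunzip_{ij}-\Cunzip_{ji}\bigr)
\]
with the $s$-weighted expression $s\bigl(\Cdigcap_{ij}+\Cdigcap_{ji}-\Czip_{ij}-\Czip_{ji}\bigr)+\bar s\bigl(\Cdigcup_{ij}+\Cdigcup_{ji}-\Cunzip_{ij}-\Cunzip_{ji}\bigr)$ appearing on the right-hand side of the corollary.

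Next I would invoke Lemma~\ref{lem:zip-unzip-merge-split}: adding the two identities \eqref{eq:Euler1} and \eqref{eq:Euler2} gives
\[
  \Cdigcap_{ij}+\Cdigcap_{ji}-\Czip_{ij}-\Czip_{ji}
  = \Cdigcup_{ij}+\Cdigcup_{ji}-\Cunzip_{ij}-\Cunzip_{ji} ,
\]
so that the two ``digon'' combinations coincide; call their common value $D$. Then the expression coming from \eqref{eq:euler-basic} equals $\tfrac12 D + \tfrac12 D = D$, while the target expression equals $sD + \bar s D = (s+\bar s)D = D$ since $\bar s = 1-s$. Combining with the cup–cap term computed above yields exactly the claimed identity.

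There is no genuine obstacle here: the content of the statement is just the observation that, once Lemma~\ref{lem:zip-unzip-merge-split} forces the two ``digon'' combinations to agree, \emph{every} affine combination of them with coefficients summing to $1$ returns the same number, which is precisely the freedom that the parameter $s$ records. The only point requiring a little care is the bookkeeping: making sure that the \spherical{} hypothesis is used exactly where \eqref{eq:euler-basic} is invoked, whereas Lemma~\ref{lem:zip-unzip-merge-split} is applied in its full generality (it holds for any foam in good position), and that the orientation-dependent ``$ij$'' versus ``$ji$'' versions of each basic foam are all accounted for.
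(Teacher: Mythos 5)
Your argument is correct and is exactly the intended one: the paper states the corollary without a separate proof precisely because it follows from halving \eqref{eq:euler-basic} (valid under the \spherical{} and good-position hypotheses) and then using Lemma~\ref{lem:zip-unzip-merge-split} to replace the two coefficients $\tfrac12$ on the equal ``digon'' combinations by any $s$ and $\bar s = 1-s$. Your bookkeeping of where each hypothesis is used (sphericality only for \eqref{eq:euler-basic}, the lemma in full generality) matches the paper's setup, so there is nothing to add.
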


\subsection{\texorpdfstring{$\gll_\myN$}{gl(N)}-state spaces}
\label{sec:state-spaces}

In this section, we briefly recall the universal construction of TQFTs
\cite{BHMV-tqft} in our context and construct $\gll_\myN$-state spaces
associated with webs as well as functors from foamy categories to
algebraic categories. We follow \cite{RW1}.
From now on we only consider webs in $\RR^2$ and $\foamcat$ refers to $\foamcat[\RR^2]$.

Let $\web$ be a web and denote by $\protostatespaceN{\web}$ the free
$\RN$-module generated by $\Hom_{\foamcat}\left(\emptyset, \web
\right)$. It is graded by the degree of foams \eqref{eq:deg-foam}.
Consider the $\RN$-bilinear form $\bracketN{\cdot;\cdot}$ on
$\protostatespaceN{\web}$ defined on
foams by:
\begin{equation}
  \label{eq:6}
  \bracketN{\foam;G} := \bracketN{\overline{G}\circ \foam},
\end{equation}
where $\overline{G}$ is the foam $G\co \web \to \emptyset$
obtained by mirroring $G$ along
$\RR^2\times\left\{\frac{1}{2}\right\}$, so that
$\overline{G}\circ
\foam$  is a closed foam and $\bracketN{\cdot; \cdot}$ is well-defined.

For any web in $\RR^2$, define $\statespaceN{\web}$ to be the quotient:
\begin{equation}
  \label{eq:2}
  \statespaceN{\web} := \protostatespaceN{\web}\Big/\Ker \bracketN{\cdot;\cdot}.
\end{equation}

As part of the universal construction, this extends for free to a
functor $\tqftfunc\co \foamcat \to \RN\mathsf{-Mod}_{\mathrm{gr}}$. The categories $\foamcat$ and $\RN\mathsf{-Mod}_{\mathrm{gr}}$ are both endowed with a monoidal structure. In $\foamcat$, the tensor product is given by disjoint union of webs and foams. The tensor product on $\RN\mathsf{-Mod}_{\mathrm{gr}}$ is taking tensor product over $\RN$ (using the commutativity of $\RN$, one can view any $\RN$-module as an $(\RN,\RN)$-bimodule).

\begin{exa}[Dot migration] \label{exa:dot-migration}
  Let $R \in \ZZ[x_1, \dots, x_a, y_1, \dots, y_b]^{S_{a+b}}$ be a
  polynomial in $a+b$ variables. Since
  \[
    \ZZ[x_1, \dots, x_a, y_1, \dots, y_b]^{S_{a+b}}
\subseteq \ZZ[x_1, \dots, x_a, y_1, \dots, y_b]^{S_{a}\times S_{b}},
\]
the latter space being isomorphic to $\ZZ[x_1, \dots, x_a]^{S_a} \otimes_\ZZ \ZZ[y_1, \dots, y_b]^{S_b}$,
one can write:
\[
  R = \sum_{j=1}^k R^{(1)}_j \otimes R^{(2)}_j
\]
with $R_j^{(1)}$ in  $\ZZ[x_1, \dots, x_a]^{S_a}$ and $R_j^{(2)}$ in
$\ZZ[y_1, \dots, y_b]^{S_b}$ for all $1\leq j \leq k$.

The following local relations\footnote{\label{ft:loc-rel}By \emph{local relation}, we mean
  that given a collection of foams which make sense in the given
  context (here they should be closed) and  are identical except in a
  ball where they are given by the model in the relation, then the
  relation holds for these foams.} hold for the colored evaluation:
\begin{equation}
  \label{eq:17}
  \bracketN{\ \NB{\tikz[]{\begin{scope}[font =\tiny, scale =1.5]
  \coordinate (OT)  at ( 0,   0);
  \coordinate (ABT) at (-1,   0);
  \coordinate (BT)  at ( 1, .5);
  \coordinate (AT)  at ( 1,-.5);
  \coordinate (OB)  at ( 0,   -.7);
  \coordinate (ABB) at (-1,   -.7);
  \coordinate (BB)  at ( 1, -0.2);
  \coordinate (AB)  at ( 1,-1.2);
  \draw[->-] (ABT) -- (OT) node[above, pos=0.3] {$a+b$};
  \draw[->-] (OT) -- (BT) node[above, pos=0.3] {$b$};
  \draw[thin] (BB) -- (BT);
  \draw[->-] (OB) -- (BB);
  \fill[opacity=0.7, white] (OT) -- (OB) -- (AB) -- (AT) -- cycle;
  \draw[->-] (OT) -- (AT);
  \draw[->-] (ABB) -- (OB);
  \draw[->-] (OB) -- (AB)  node[below, pos=0.3] {$a$};
  \draw[thick] (OB) -- (OT);
  \draw[thin] (ABB) -- (ABT);
  \draw[thin] (AB) -- (AT);
  \fill (-0.5, -0.2) circle (0.3mm) node [below] {$R$};
\end{scope}

}}\ , c} = \sum_{j=1}^k
  \bracketN{\ \NB{\tikz[]{\begin{scope}[font =\tiny, scale =1.5]
  \coordinate (OT)  at ( 0,   0);
  \coordinate (ABT) at (-1,   0);
  \coordinate (BT)  at ( 1, .5);
  \coordinate (AT)  at ( 1,-.5);
  \coordinate (OB)  at ( 0,   -.7);
  \coordinate (ABB) at (-1,   -.7);
  \coordinate (BB)  at ( 1, -0.2);
  \coordinate (AB)  at ( 1,-1.2);
  \draw[->-] (ABT) -- (OT) node[above, pos=0.3] {$a+b$};
  \draw[->-] (OT) -- (BT) node[above, pos=0.3] {$b$};
  \draw[thin] (BB) -- (BT);
  \draw[->-] (OB) -- (BB);
  \fill[opacity=0.7, white] (OT) -- (OB) -- (AB) -- (AT) -- cycle;
  \draw[->-] (OT) -- (AT);
  \draw[->-] (ABB) -- (OB);
  \draw[->-] (OB) -- (AB)  node[below, pos=0.3] {$a$};
  \draw[thick] (OB) -- (OT);
  \draw[thin] (ABB) -- (ABT);
  \draw[thin] (AB) -- (AT);
  \fill (0.7, 0.2) circle (0.3mm) node [below] {$R_j^{(2)}$};
  \fill (0.7, -0.6) circle (0.3mm) node [below] {$R_j^{(1)}$};
\end{scope}

}}\ , c}.
\end{equation}
This follows from the very definition of the colored
evaluation. Therefore, the same relation holds for the evaluation:
  \begin{equation}
\bracketN{\ \NB{\tikz[]{}}\ } = \sum_{j=1}^k \bracketN{\
        \NB{\tikz[]{}}\ }.
  \end{equation}
This implies that for any web $\web$, the following
local\footnote{Following footnote \ref{ft:loc-rel}, here to \emph{make
    sense} means that foams have boundary equal to $\web$. } relation holds in $\statespaceN{\web}$:
  \begin{equation}
\NB{\tikz[]{}}\  = \sum_{j=1}^k \ \NB{\tikz[]{}}\ .
  \end{equation}
  Above, foams represent their equivalence classes in
  $\statespaceN{\web}$. Many other local relations can be found in
  \cite[Section 3]{RW1}.
\end{exa}

We can do the same construction restricting to \spherical{} foams. In
that case, the functor obtained is denoted $\tqftfunc[s]$. It is not
obvious \emph{a priori} how to compare these two functors. Indeed
$\protostatespaceN[s]{\web}$ is contained in
$\protostatespaceN{\web}$, but it is then modded out by a smaller space
than that for constructing $\statespaceN{\web}$. It is not clear
whether or not the functor $\tqftfunc[s]$ is monoidal.

\begin{prop}[{\cite[Proof of Theorem 3.30]{RW1}}] \label{prop:cat-MOY-calc}
  The functor $\tqftfunc$ is monoidal and satisfies the following
  local relations (and their mirror images):
  \begin{align}
    \statespaceN{\emptyset} \cong& \RN \label{eq:cat-empty};\\
    \statespaceN{\NB{\tikz[font=\tiny]{\begin{scope}
  \draw[->-] (0,0) arc (0:360:0.3) node[midway, left] {$a$};
\end{scope}
}}}\cong & \qbinom{N}{a} \statespaceN{\emptyset}
                                                  \label{eq:cat-circle};\\
		\statespaceN{\NB{\tikz[font=\tiny]{\begin{scope}
  \coordinate (a) at (-1, 0.5);
  \coordinate (b) at (-1, 0 );
  \coordinate (c) at (-1,-0.5);
  \coordinate (abc) at (0.4, 0);
  \coordinate (r) at (1, 0);
  \coordinate (ab) at (-0.3, 0.25);
  \draw[>-] (c) -- (abc) node[pos =0, left] {$c$};
  \draw[>-] (a) -- (ab) node[pos =0, left] {$a$};
  \draw[>-] (b) -- (ab) node[pos =0, left] {$b$};
  \draw[->-] (ab) --(abc) node[pos=0.5, above] {$a+b$};
  \draw[->] (abc) -- (r) node[pos =1, right] {$a+b+c$};
\end{scope}}}}
			\cong&
		\statespaceN{\NB{\tikz[font=\tiny, xscale= 0.9]{\begin{scope}
  \coordinate (a) at (-1, 0.5);
  \coordinate (b) at (-1, 0 );
  \coordinate (c) at (-1,-0.5);
  \coordinate (abc) at (0.4, 0);
  \coordinate (r) at (1, 0);
  \coordinate (bc) at (-0.3, -0.25);
  \draw[>-] (c) -- (bc) node[pos =0, left] {$c$};
  \draw[>-] (a) -- (abc) node[pos =0, left] {$a$};
  \draw[>-] (b) -- (bc) node[pos =0, left] {$b$};
  \draw[->-] (bc) --(abc) node[pos=0.5, below] {$b+c$};
  \draw[->] (abc) -- (r) node[pos =1, right] {$a+b+c$};
\end{scope}}}} \label{eq:cat-assoc};
	\\
		\statespaceN{\NB{\tikz[font=\tiny,xscale=1.2]{\begin{scope}
\draw[>-] (0,0) -- +(0.2,0) node[pos=0, left]
      {$a+b$};
      \draw[->] (0.8,0) -- +(0.2, 0) node[pos=1, right]
      {$a+b$};
      \draw[->-] (0.2, 0) .. controls +(0.3,0.3) and +(-0.3, 0.3)
      .. (0.8,0) node[pos =0.5, above] {$a$};
      \draw[->-] (0.2, 0) .. controls +(0.3,-0.3) and +(-0.3, -0.3)
      .. (0.8,0) node[pos =0.5, below] {$b$};
\end{scope}}}} 
			\cong&
		\qbinom{a+b}{a}
                   \statespaceN{\NB{\tikz[font=\tiny,scale=1.2]{\begin{scope}
  \draw[->-] (0,0) -- +(1,0) node[pos= 0.5, above] {$a+b$} node[pos=
  0.5, below, white] {$a+b$};
\end{scope}}}} \label{eq:cat-digon};
	\\
		\statespaceN{\NB{\tikz[font=\tiny,xscale=1.2]{\begin{scope}
\draw[>-] (0,0) -- +(0.2,0) node[pos=0, left]
      {$a$};
      \draw[->] (0.8,0) -- +(0.2, 0) node[pos=1, right]
      {$a$};
      \draw[-<-] (0.2, 0) .. controls +(-0.1,0.4) and +( 0.1, 0.4)
      .. (0.8,0) node[pos =0.5, above] {$b$};
      \draw[->-] (0.2, 0) .. controls +(0.3,-0.1) and +(-0.3, -0.1)
      .. (0.8,0) node[pos =0.5, below] {$a+b$};
\end{scope}
}}} 
			\cong&
		\qbinom{N-a}{b}
                   \statespaceN{\NB{\tikz[font=\tiny,scale=1.2]{\begin{scope}
  \draw[->-] (0,0) -- +(1,0) node[pos= 0.5, above] {$a$} node[pos=
  0.5, below, white] {$a$};
\end{scope}}}} \label{eq:cat-bad-digon};
	\\
    \begin{split}
      \statespaceN{\NB{\tikz[font=\tiny,xscale=1]{\begin{scope}
  \coordinate (t1) at (-1, 0.5);
  \coordinate (t2) at (-0.3, 0.5);
  \coordinate (t3) at ( 0.3, 0.5);
  \coordinate (t4) at ( 1, 0.5);
  \coordinate (b1) at (-1,   -0.5);
  \coordinate (b2) at (-0.5, -0.5);
  \coordinate (b3) at ( 0.5, -0.5);
  \coordinate (b4) at ( 1,   -0.5);
  \draw[<-]  (t1) -- (t2) node[pos =0, left] {$1$};
  \draw[->-] (t2) -- (t3) node[pos =0.5, above] {$m$};
  \draw[-<]  (t3) -- (t4) node[pos =1, right] {$1$};
  \draw[>-]  (b1) -- (b2) node[pos =0, left] {$m$};
  \draw[-<-] (b2) -- (b3) node[pos =0.5, below] {$1$};
  \draw[->]  (b3) -- (b4) node[pos =1, right] {$m$};
  \draw[->-] (b2) -- (t2) node[pos = 0.5, left] {$m+1$};
  \draw[->-] (t3) -- (b3) node[pos = 0.5, right] {$m+1$};
\end{scope}
}}} \cong&
      \statespaceN{\NB{\tikz[font=\tiny,scale=1]{\begin{scope}
  \coordinate (t1) at (-1, 0.5);
  \coordinate (t2) at (-0.3, 0.5);
  \coordinate (t3) at ( 0.3, 0.5);
  \coordinate (t4) at ( 1, 0.5);
  \coordinate (b1) at (-1,   -0.5);
  \coordinate (b2) at (-0.5, -0.5);
  \coordinate (b3) at ( 0.5, -0.5);
  \coordinate (b4) at ( 1,   -0.5);
  \draw[-<-]  (t1) -- (t4) node[pos =0.5, above] {$1$};
  \draw[->-]  (b1) -- (b4) node[pos =0.5, below] {$m$};
\end{scope}
}}} \\ \oplus
      [N-m-1]& \statespaceN{\NB{\tikz[font=\tiny,scale=1]{\begin{scope}
  \coordinate (t1) at (-1, 0.5);
  \coordinate (t2) at (-0.3, 0.5);
  \coordinate (t3) at ( 0.3, 0.5);
  \coordinate (t4) at ( 1, 0.5);
  \coordinate (m2) at ( -0.4, 0);
  \coordinate (m3) at ( 0.4, 0);
  \coordinate (b1) at (-1,   -0.5);
  \coordinate (b2) at (-0.5, -0.5);
  \coordinate (b3) at ( 0.5, -0.5);
  \coordinate (b4) at ( 1,   -0.5);
  \draw[<-]  (t1) .. controls +(0.3, 0)  and +(0,0) .. (m2) node[pos =0, left] {$1$};
  \draw[-<]  (m3)  .. controls +(0.0, 0)  and +(-0.3,0) ..  (t4) node[pos =1, right] {$1$};
  \draw[>-]  (b1)  .. controls +(0.3, 0)  and +(0,0) .. (m2) node[pos =0, left] {$m$};
  \draw[->]  (m3)  .. controls +(0, 0)  and +(-0.3,0) ..  (b4) node[pos =1, right] {$m$};
  \draw[->-] (m2) -- (m3) node[pos = 0.5, above] {$m-1$};
\end{scope}
}}};
      \label{eq:cat-bad-square}
    \end{split}
    \\
		\statespaceN{\NB{\tikz[font=\tiny,xscale=0.8]{\begin{scope}
  \coordinate (t1) at (-1, 0.5);
  \coordinate (t2) at (-0.3, 0.5);
  \coordinate (t3) at ( 0.3, 0.5);
  \coordinate (t4) at ( 1, 0.5);
  \coordinate (b1) at (-1,   -0.5);
  \coordinate (b2) at (-0.5, -0.5);
  \coordinate (b3) at ( 0.5, -0.5);
  \coordinate (b4) at ( 1,   -0.5);
  \draw[>-]  (t1) -- (t2) node[pos =0, left] {$a$};
  \draw[->-] (t2) -- (t3) node[pos =0.5, above] {$a+d$};
  \draw[->]  (t3) -- (t4) node[pos =1, right] {$b$};
  \draw[>-]  (b1) -- (b2) node[pos =0, left] {$b+c$};
  \draw[->-] (b2) -- (b3) node[pos =0.5, below] {$b+c-d$};
  \draw[->]  (b3) -- (b4) node[pos =1, right] {$a+c$};
  \draw[->-] (b2) -- (t2) node[pos = 0.5, left] {$d$};
  \draw[->-] (t3) -- (b3) node[pos = 0.5, right] {$a+d-b$};
\end{scope}}}}
			\cong
		\bigoplus_{j = \max(0, b-a)}^b
			&\qbinom{c}{d-j}
                   \statespaceN{\NB{\tikz[font=\tiny,xscale=0.8]{\begin{scope}
  \coordinate (t1) at (-1, 0.5);
  \coordinate (t2) at (-0.5, 0.5);
  \coordinate (t3) at ( 0.5, 0.5);
  \coordinate (t4) at ( 1, 0.5);
  \coordinate (b1) at (-1,   -0.5);
  \coordinate (b2) at (-0.3, -0.5);
  \coordinate (b3) at ( 0.3, -0.5);
  \coordinate (b4) at ( 1,   -0.5);
  \draw[>-]  (t1) -- (t2) node[pos =0, left] {$a$};
  \draw[->-] (t2) -- (t3) node[pos =0.5, above] {$b-j$};
  \draw[->]  (t3) -- (t4) node[pos =1, right] {$b$};
  \draw[>-]  (b1) -- (b2) node[pos =0, left] {$b+c$};
  \draw[->-] (b2) -- (b3) node[pos =0.5, below] {$a+c+j$};
  \draw[->]  (b3) -- (b4) node[pos =1, right] {$a+c$};
  \draw[-<-] (b2) -- (t2) node[pos = 0.5, left] {$a+j-b$};
  \draw[-<-] (t3) -- (b3) node[pos = 0.5, right] {$j$};
\end{scope}}}} \label{eq:cat-square}.
	\end{align}
        These isomorphisms (except the first one) are realized
        as images of (linear combinations of) foams under $\tqftfunc$.
\end{prop}

  Wu \cite[Theorem 2.4]{Wu} proves that the relations given in
  Proposition~\ref{prop:cat-MOY-calc} are enough to reduce any web
  to the empty web $\emptyset$. One has $\statespaceN{\emptyset}
  \cong \RN$ (see \cite[Claim 3.32]{RW1}) which is a finitely generated projective $\RN$-module.
  Since being
  projective and finitely-generated is preserved under
  finite direct sums and finite direct summands, the $\gll_N$-state space
  of any web is a finitely generated projective $\RN$-module.

\begin{cor}[{\cite[Corollary 3.31]{RW1}}]
  The functor $\tqftfunc$ takes value in
  $\RN\mathsf{\textsf{-}proj}_{\mathrm{gr}}$, the category of finitely
  generated, graded, projective (and therefore free) $\RN$-modules.
\end{cor}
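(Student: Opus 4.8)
The plan is to induct on a measure of complexity of the web, using the monoidality of $\tqftfunc$ established in Proposition~\ref{prop:cat-MOY-calc} together with the MOY-type isomorphisms listed there. The base case is immediate: by~\eqref{eq:cat-empty} we have $\statespaceN{\emptyset}\cong \RN$, which is a finitely generated free graded $\RN$-module. For the inductive step I would invoke Wu's theorem~\cite{Wu}, to the effect that the local relations collected in Proposition~\ref{prop:cat-MOY-calc} suffice to reduce any web to $\emptyset$. Concretely, this furnishes a well-founded notion of complexity on webs with respect to which each of~\eqref{eq:cat-circle}--\eqref{eq:cat-square} rewrites $\statespaceN{\web}$ in terms of state spaces of strictly simpler webs: relations~\eqref{eq:cat-circle}, \eqref{eq:cat-digon}, \eqref{eq:cat-bad-digon}, \eqref{eq:cat-bad-square}, and~\eqref{eq:cat-square} present it as a finite direct sum of grading-shifted such state spaces (the shifts being read off from the $q$-binomial coefficients), while the associativity relation~\eqref{eq:cat-assoc} gives an outright isomorphism with a simpler one.

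It then suffices to observe that the class of finitely generated graded projective $\RN$-modules is closed under grading shift $q^n(-)$, under finite direct sums, and under passage to direct summands. Feeding the inductive hypothesis into each branch of Wu's reduction, one concludes that $\statespaceN{\web}$ is finitely generated, graded, and projective for every web $\web$; in fact one sees directly that it is a finite direct sum of grading-shifted copies of $\RN$, hence free. The freeness can also be obtained after the fact from projectivity: $\RN=\ZZ[E_1,\dots,E_\myN]$ is a polynomial ring over the principal ideal domain $\ZZ$, so finitely generated projective $\RN$-modules are free by Quillen--Suslin, and a homogeneous basis is available because $\RN$ is non-negatively graded with degree-zero part $\ZZ$.

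The only point that requires care is the bookkeeping: one must pin down the partial order on webs for which the moves of~\cite{Wu} are strictly decreasing, so that the induction is well founded, and one must verify that the isomorphisms of Proposition~\ref{prop:cat-MOY-calc} are isomorphisms of \emph{graded} $\RN$-modules — which is precisely what the $q$-binomial grading shifts encode. The closure properties of finitely generated graded projective (free) modules, and the passage from projective to free, are standard and present no real obstacle.
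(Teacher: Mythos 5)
Your proposal is correct and follows essentially the same route as the paper: Wu's reduction of any web to the empty web via the relations of Proposition~\ref{prop:cat-MOY-calc}, the base case $\statespaceN{\emptyset}\cong\RN$, and closure of finitely generated graded projective modules under (shifted) finite direct sums and direct summands. Your additional remarks on the well-founded complexity order and on deducing freeness (graded projective over $\RN$ with degree-zero part $\ZZ$, or Quillen--Suslin) only flesh out details the paper leaves implicit in the phrase ``(and therefore free)''.
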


\begin{prop} \label{prop:cat-MOY-spherical}
  Relations \eqref{eq:cat-empty}, \eqref{eq:cat-circle},
  \eqref{eq:cat-assoc}, \eqref{eq:cat-digon},
  \eqref{eq:cat-bad-digon} and \eqref{eq:cat-square} are
    satisfied by $\tqftfunc[s]$.  
\end{prop}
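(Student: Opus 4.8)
The plan is to obtain the six relations from Proposition~\ref{prop:cat-MOY-calc} by a soft \emph{transfer principle}, followed by one bookkeeping verification. The point is that $\statespaceN{\web}$ and $\statespaceN[s]{\web}$ both arise from the universal construction applied to the \emph{same} foam evaluation $\bracketN{\cdot}$, the only difference being the class of foams used. For a web $\web$ in $\RR^2$ one has $\protostatespaceN[s]{\web}\subseteq\protostatespaceN{\web}$, and the bilinear form defining $\statespaceN[s]{\web}$ is the restriction of $\bracketN{\cdot;\cdot}$, so
\[
\Ker\bracketN{\cdot;\cdot}\cap\protostatespaceN[s]{\web}\ \subseteq\ \Ker^{\mathrm{s}},
\]
where $\Ker^{\mathrm{s}}\subseteq\protostatespaceN[s]{\web}$ is the kernel of the restricted form (an element killed by pairing against $\protostatespaceN{\web}$ is a fortiori killed by pairing against the subspace $\protostatespaceN[s]{\web}$). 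I would also record that the identity foam $\web\times[0,1]$ is spherical and that a composition of spherical foams is spherical (concatenate their saddle-free good-position decompositions), so the spherical foams in $\RR^2\times[0,1]$ form a genuine subcategory on which $\tqftfunc[s]$ is defined. Together these give the transfer principle: if $\phi$ and $\psi$ are morphisms in that subcategory with $\tqftfunc(\phi)=\tqftfunc(\psi)$, then $\tqftfunc[s](\phi)=\tqftfunc[s](\psi)$ as well, since for every spherical $F$ the element $(\phi-\psi)\circ F$ is built from spherical foams and lies in $\Ker\bracketN{\cdot;\cdot}$, hence in $\Ker^{\mathrm{s}}$, so it vanishes in the spherical state space.

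By the transfer principle it then suffices to check that the isomorphisms \eqref{eq:cat-empty}, \eqref{eq:cat-circle}, \eqref{eq:cat-assoc}, \eqref{eq:cat-digon}, \eqref{eq:cat-bad-digon} and \eqref{eq:cat-square} of Proposition~\ref{prop:cat-MOY-calc} are realized by \emph{spherical} foams. Granting this, the structure maps $\iota_i,\pi_i$ lie in that subcategory, the identities $\tqftfunc[s](\pi_j)\circ\tqftfunc[s](\iota_i)=\delta_{ij}\Id$ and $\sum_i\tqftfunc[s](\iota_i)\circ\tqftfunc[s](\pi_i)=\Id$ follow from their $\tqftfunc$-counterparts in Proposition~\ref{prop:cat-MOY-calc}, and the same direct-sum decompositions hold for $\tqftfunc[s]$. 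So the work left is to revisit \cite[Proof of Theorem 3.30]{RW1} and see that no saddle is used to realize these six relations: \eqref{eq:cat-empty} is immediate; \eqref{eq:cat-circle} is realized by decorated cups and caps; \eqref{eq:cat-assoc} by the associativity foam (or a trace of isotopy); \eqref{eq:cat-digon} and \eqref{eq:cat-bad-digon} by decorated digon-cups and digon-caps, possibly pre- and post-composed with zips and unzips; and \eqref{eq:cat-square} by the $j$-indexed combinations of zips, unzips and digon foams of \cite{RW1}. Every elementary piece that occurs is one of the basic foams of Figure~\ref{fig:basic-foam} other than the saddle, or a trace of isotopy, hence spherical, and composites of spherical foams are spherical.

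The main obstacle is precisely this last sphericality check: one must be sure that the categorified MOY moves \eqref{eq:cat-circle}--\eqref{eq:cat-square} — the square relation \eqref{eq:cat-square} and the bad digon \eqref{eq:cat-bad-digon} above all — can be implemented over $\RR^2$ without any saddle. This is also why \eqref{eq:cat-bad-square} is absent from the statement: its realization in \cite{RW1} genuinely involves a saddle, so the transfer principle gives no information about whether $\tqftfunc[s]$ satisfies it.
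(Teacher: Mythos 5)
Your proof is correct and shares the paper's key observation: the foams realizing the isomorphisms in \cite[Proof of Theorem 3.30]{RW1} are spherical. The paper compresses the remainder into ``the proof applies \emph{mutatis mutandis}''; you make this precise with a transfer principle built on the inclusion $\Ker\bracketN{\cdot;\cdot}\cap\protostatespaceN[s]{\web}\subseteq\Ker^{\mathrm{s}}$, which lets you deduce the identities for $\tqftfunc[s]$ directly from the statements of Proposition~\ref{prop:cat-MOY-calc} rather than by re-examining the internals of the cited proof. This is a modest gain in modularity and rigor at essentially no extra cost, and both arguments still reduce to the same sphericality check on the realizing foams. Your closing remark — that \eqref{eq:cat-bad-square} is excluded precisely because its realization requires a saddle — correctly identifies the one relation the transfer cannot reach.
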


\begin{proof}[Sketch of proof]
  The foams used in \cite{RW1} to define these isomorphisms are
  \spherical{}. Therefore the proof applies {\sl mutatis
    mutandis} to $\tqftfunc[s]$. 
\end{proof}

One of the foams used for the categorification of relation \eqref{eq:cat-bad-square} is not \spherical{}. This is why it is excluded from the statement. 

Vinyl graphs are a special kind of webs for which $\tqftfunc$ and
$\tqftfunc[s]$ coincide. We will not use this coincidence later but
we think this might be of independent interest.

Recall that vinyl graphs are directed webs in
$\annulus$. Note that any vinyl graph $\web$ has a well-defined index
$\indexweb$: that is the sum of thicknesses of edges intersected by a
generic ray, see Figure~\ref{fig:exa-vinyl-graphs}. Collections of
concentric positively oriented circles of various thicknesses provide
examples of vinyl graphs. They are denoted $\SS_{\underline{k}}$ where
$\underline{k}$ is the list of thicknesses, read from the center of the annulus going outwards, (see Figure~\ref{fig:exa-vinyl-graphs}).

\begin{figure}[ht]
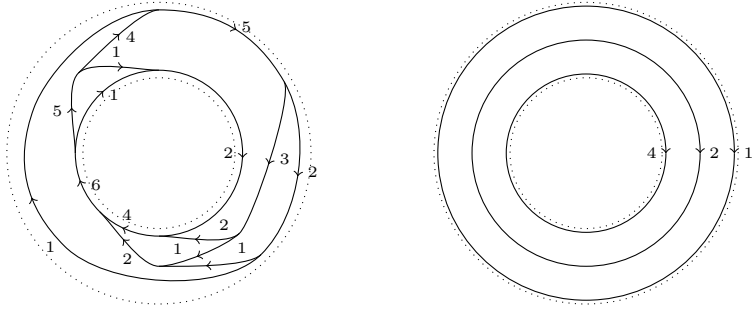

  \centering
\NB{\tikz[]{\begin{scope}[font = \tiny, scale= 0.5]
  \coordinate (O) at (0, 0);
  \draw[dotted] (O) circle (2cm);
  \draw[dotted] (O) circle (4cm);
  \coordinate (F1) at (30:3.8);
  \coordinate (F2) at (90:3.8);
  \coordinate (F3) at (-45:3.8);
  \coordinate (M1) at (135:3);
  \coordinate (M2) at (-90:3);
  \coordinate (M3) at (-45:3);
  \coordinate (I1) at (90:2.2);
  \coordinate (I2) at (180:2.2);
  \coordinate (I3) at (225:2.2);
  \coordinate (I4) at (-90:2.2);
  \draw[-<-] (I1) arc (90:180:2.2)  node[midway, right] {$1$};
  \draw[-<-] (I2) arc (180:225:2.2)  node[midway, right] {$6$};
  \draw[-<-] (I3) arc (-135:-90:2.2)  node[midway, above] {$4$};;
  \draw[-<-] (I4) arc (-90:90:2.2) node[midway, left] {$2$};
  \draw[-<-] (M3) .. controls +(45:0.5) and +(-60:0.5) .. (F1) node[midway, right] {$3$};
  \draw[-<-] (F2) .. controls +(180:0.5) and +(45:0.5) .. (M1)  node[midway, right] {$4$};
  \draw[-<-] (I1) .. controls +(180:0.5) and +(45:0.5) .. (M1) node[midway, above] {$1$};
  \draw[-<-] (M1) .. controls +(225:0.5) and +(90:0.5) .. (I2)  node[midway, left] {$5$};
  \draw[-<-] (I3) .. controls +(-45:0.5) and +(180:0.5) .. (M2)  node[midway, below] {$2$};
  \draw[-<-] (I4) .. controls +(0:0.5) and +(225:0.5) .. (M3)  node[near end, above] {$2$};
  \draw[-<-] (M2) .. controls +(0:0.5) and +(225:0.5) .. (M3) node[near start, above] {$1$};
  \draw[-<-] (M2) .. controls +(0:0.5) and +(225:0.5) .. (F3)  node[near end, above] {$1$};
  \draw[-<-] (F1) .. controls +(120:1.5) and +(0:1.5) .. (F2)  node[midway, right] {$5$};
  \draw[-<-] (F2) .. controls +(180:1.5) and  +(70:1) .. (160:3.5) .. controls +(250:1.5) and +(135:1.5) .. (225:3.5) node[left] {$1$} .. controls +(-45:1.5) and +(225:1.5) .. (F3);
  \draw[-<-] (F3) .. controls +(45:1.5) and +(-60:1.5) .. (F1)  node[midway, right] {$2$};
\end{scope}}}\qquad\qquad \NB{\tikz[]{\begin{scope}[font = \tiny, scale= 0.5]
  \draw[dotted] (0,0) circle (2cm);
  \draw[dotted] (0,0) circle (4cm);
  \draw[->] (0:2.1cm) arc (0:-360:2.1cm) node[left] {$4$};
  \draw[->] (0:3cm) arc (0:-360:3cm) node[right] {$2$};
  \draw[->] (0:3.9cm) arc (0:-360:3.9cm) node[right] {$1$};
\end{scope}}}
  \caption{Two vinyl graphs of index $7$, the one on the
    right-hand side is $\SS_{(4,2,1)}$.}
  \label{fig:exa-vinyl-graphs}
\end{figure}

\begin{dfn}\label{dfn-vinyl-foam}
  Let $\web_0$ and $\web_1$ be two vinyl graph
  A \emph{vinyl foam} $\foam\co \web_0 \to \web_1$ is a foam in
  $\annulus \times[0,1]$ such that the projection  onto $\SS^1\times
  [0,1]$ has no critical points. As usual, these foams are regarded up
  to ambient isotopy. Note that in such a situation, $\web_0$ and
  $\web_1$ have necessarily the same index $\indexweb$. We then say that
  $\foam$ has \emph{index} $\indexweb$.
\end{dfn}

A vinyl foam can be decomposed into basic foams without any cups, caps or
saddles. Note that for a foam, being vinyl is even more restrictive
than being \spherical{}.

Vinyl graphs and vinyl foams of a given index $\indexweb$ fit into a
category that we denote by $\vfoamcat[k]$. Taking the disjoint unions of
these categories for $k \in \NN$, we can form a category $\vfoamcat[]$
which is endowed with a monoidal structure given by taking concentric
disjoint union. Note however that this category is not symmetric.

\begin{prop}
  When restricted to the category $\vfoamcat$, the functors
  $\tqftfunc$ and $\tqftfunc[s]$ are isomorphic.
\end{prop}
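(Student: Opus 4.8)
The plan is to show that $\tqftfunc$ and $\tqftfunc[s]$ agree on $\vfoamcat$ by exhibiting a natural isomorphism, which amounts to two things: first, that the two proto-state spaces coincide on a vinyl graph, and second, that the subspaces one quotients by coincide. For a vinyl graph $\web$, recall that $\protostatespaceN{\web}$ is freely generated by $\Hom_{\foamcat}(\emptyset,\web)$, i.e.\ by foams in good position with output $\web$, while $\protostatespaceN[s]{\web}$ is generated by the \spherical{} such foams. The key geometric observation is that a foam with empty input and vinyl-graph output, once put in good position, can be taken to use no saddles: its monochrome surfaces have no index-$1$ critical points because the output web is directed (the projection to $\SS^1$ has no critical points on $\web$), so by a Morse-theoretic argument analogous to Lemma~\ref{lem:cupcap-mono} every such foam is already \spherical{}. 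Hence $\protostatespaceN{\web} = \protostatespaceN[s]{\web}$ as graded $\RN$-modules for every vinyl graph $\web$.

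Next I would compare the kernels of the bilinear forms. The pairing $\bracketN{\foam;G} = \bracketN{\overline G \circ \foam}$ used to define $\statespaceN{\web}$ ranges over $G \in \Hom_{\foamcat}(\web,\emptyset)$, whereas for $\statespaceN[s]{\web}$ one restricts to \spherical{} $G$. But the mirror $\overline G$ of a foam $\web \to \emptyset$ is a foam $\emptyset \to \web$, and by the observation of the previous paragraph every foam $\emptyset \to \web$ in good position is \spherical{}; dually every foam $\web \to \emptyset$ is \spherical{}. Therefore the set of test foams $G$ is the same in both constructions, the two bilinear forms on $\protostatespaceN{\web} = \protostatespaceN[s]{\web}$ literally coincide, and hence so do their kernels. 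This gives $\statespaceN{\web} \cong \statespaceN[s]{\web}$ for every vinyl graph $\web$.

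Finally I would check naturality: given a vinyl foam $\foam\co \web_0 \to \web_1$, it decomposes into basic foams with no cups, caps, or saddles, so in particular it is \spherical{}, and it induces the same map on state spaces whether viewed in $\foamcat$ or in the \spherical{} setting; more precisely, post-composition with $\foam$ agrees with the corresponding operation under $\tqftfunc[s]$ because composing a \spherical{} foam $\emptyset\to\web_0$ with the \spherical{} foam $\foam$ yields a \spherical{} foam $\emptyset\to\web_1$, already accounted for on the other side. Assembling the isomorphisms from the previous paragraph into a natural transformation and invoking that it is an isomorphism objectwise completes the argument. The main obstacle is the Morse-theoretic step: one must argue carefully that \emph{any} good-position representative of a foam with vinyl output has no index-$1$ monochrome critical points, equivalently that such a foam is isotopic to one in good position with no saddle — this is where the directedness of vinyl graphs is essential and where the argument genuinely differs from the non-vinyl case.
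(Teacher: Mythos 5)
Your proof breaks down at the first step, and the mistake is fatal to the whole strategy. You claim that every foam $\emptyset \to \web$ for a vinyl graph $\web$, once in good position, can be taken to have no saddles. This is false: the directedness of the output web $\web$ constrains only the boundary slice of the foam, not the topology of its interior. Already with $\web = \SS_{(1)}$ a single circle of thickness $1$, there are foams $\emptyset \to \web$ whose unique facet is a genus-$g$ surface with one boundary circle; any good-position representative of such a foam has $2g$ index-$1$ critical points, and they cannot be removed by an isotopy since they carry the genus. The paper itself points this out just before the statement: ``$\protostatespaceN[s]{\web}$ is contained in $\protostatespaceN{\web}$, but it is then modded out by a smaller space than that for constructing $\statespaceN{\web}$'' --- in other words, the two proto-state spaces are \emph{not} equal in general even for vinyl $\web$, which is precisely why the proposition requires a proof rather than being a tautology. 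The appeal to Lemma~\ref{lem:cupcap-mono} is also circular: that lemma \emph{assumes} the foam is \spherical{} and draws a conclusion about its cups and caps; it cannot be used to show a foam \emph{is} \spherical{}. Finally, your Morse-theoretic reasoning confuses the Morse function for the facets of the foam (projection to the $[0,1]$ factor) with the directedness condition on vinyl graphs (which concerns projection to $\SS^1$); the latter says nothing about index-$1$ critical points of the former.

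The paper's proof takes an entirely different route: rather than comparing generating sets, it uses the Queffelec--Rose--Sartori algorithm to reduce any vinyl graph to a disjoint union of circles via the local relations \eqref{eq:cat-assoc}, \eqref{eq:cat-digon}, \eqref{eq:cat-square}, and then reduces circles by \eqref{eq:cat-circle}. All of these relations are realized by \spherical{} foams and so hold both for $\tqftfunc$ and $\tqftfunc[s]$ (Proposition~\ref{prop:cat-MOY-spherical}). Chaining the resulting isomorphisms gives $\statespaceN{\web} \cong \bigoplus_i q^{n_i}\RN \cong \statespaceN[s]{\web}$ for the same multiplicities on both sides, and since the isomorphisms are realized by foams, this assembles into a natural isomorphism of functors on $\vfoamcat$. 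The point is a comparison at the level of images after decategorified identifications, not a set-theoretic identification of foam generating sets.
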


\begin{proof}[Sketch of proof.]
  This is a direct consequence of the Queffelec--Rose--Sartori algorithm
  \cite{QRS} which rephrases Wu's results in the vinyl setting. It
  says that one can reduce any vinyl graph to a collection of circles using
  relations~\eqref{eq:cat-assoc}, \eqref{eq:cat-digon}
  and \eqref{eq:cat-square}.  Once dealing with circles one can use
  relation \eqref{eq:cat-circle}. All these relations are valid for
  $\tqftfunc$ and $\tqftfunc[s]$ which concludes the
  argument. For a more detailed proof of a similar argument, we refer
  to \cite[Propositions 2.25 and 4.11]{BPRW}.
\end{proof}

The restriction to vinyl graphs and vinyl foams is common and
relates to the presentations of links as braid closures. For instance,
triply-graded homology~\cite{Roulink, KR3}, symmetric
$\gll_N$-homologies \cite{Cautisremarks, QRS, RW2} and $\gll_0$-homology \cite{RW3} are defined using this framework. On the one hand, this is
topologically quite restrictive since in this context, link cobordisms
are not very interesting. On the other hand the relation with Soergel
bimodules and the power of representation theory are very insightful
there. Soergel bimodules can be understood via foams (see
\cite[Proposition 3.4]{WedrichExponential},
\cite[Proposition 4.15]{RW2}, \cite[Proposition 2.29]{BPRW}), so that
part of what we say here can be applied to that context. See
Section~\ref{sec:comp-lit} for an actual comparison.

\subsection{Base change}
\label{sec:base-change}
The discussion in this subsection is classical and holds for both $\tqftfunc$ and $\tqftfunc[s]$. For
brevity, we only discuss $\tqftfunc$.

The construction of Section~\ref{sec:state-spaces} can be deformed
using a unital ring $S$ and a unital ring morphism
$\phi\co \RN \to S$. One can construct a functor $\tqftfunc[\phi]$
from the category $\foamcat$ to $S$-$\mathsf{mod}$ by considering
$\protostatespaceN[\phi]{\web}$ to be the free $S$-module generated by
foams from the empty foam to $\web$ and modding out this space by the
$S$ bilinear induced by $\phi \circ \bracketN{\cdot;\cdot}$. If $S$ is
non-negatively graded and $\phi$ respects gradings, then
$\tqftfunc[\phi]$ takes values in $S$-$\mathsf{mod}_{\mathrm{gr}}$. We
say that the functor $\tqftfunc[\phi]$ is obtained from
$\tqftfunc[\phi]$ by \emph{base change}.

The isomorphisms stated in Proposition~\ref{prop:cat-MOY-calc} still
hold since their proofs are based on identities of evaluations of
closed foams and these identities are preserved under ring
morphisms $\phi$.

Two special base changes will be of interest to us. Suppose $\scalars$
is a unital ring and consider it as graded and concentrated in degree
$0$. Consider $\varphi\co \ZZ \to \scalars$ the unique unital ring
morphism. This induces a ring morphism $\varphi_{\mathrm{e}}$ from
$\RN$ to
$\KN =\scalars[X_1, \dots, X_\myN]^{S_\myN}=\scalars[E_1, \dots
E_\myN]$ by mapping $E_i$ to $E_i$ for all $1\leq i\leq \myN$.  The
functor induced by this base change is denoted $\etqftfunc[\scalars]$
where the letter `$\mathrm{E}$' stands for {\bf
  e}quivariant. 

The same morphism $\varphi$ induces a morphism $\varphi_0$ from $\RN$ to
$\scalars$ by mapping $E_i$ to $0$ for all $1\leq i\leq \myN$. As
before, this morphism preserves respects the grading.
This base change is denoted by $\ztqftfunc[\scalars]$.

\begin{rmk}
We would like to make a couple of comments about notation.
\begin{enumerate}
    \item The superscripts of the functors and modules discussed in this notation contains information about ring morphisms $\phi$ and also whether or not spherical versions of foams are being used.  Sometimes only one these superscripts appears and we hope that the reader will understand what it is referring to from context.
    \item On several occasions, the superscript referring to the ring morphism is replaced by the target of the morphism.
\end{enumerate}
\end{rmk}

\section{Algebraic preliminaries}
\label{sec:alg-prelim}
\subsection{Symmetries of foam evaluations}
We start by investigating automorphisms of foam evaluations.  We intentionally avoid the technical details to keep this part handwavy and concise for motivations. For now,
let us temporarily forget the gradings involved. By an
\emph{automorphism} of $\gll_\myN$-foams, we mean an invertible
operation that respects the 2-categorical structure of
$\foamcat$, that is, such an operation respects forming disjoint
unions (monoidal structure) and gluing of foams along
boundaries. 
For this discussion,  foams are only considered up to ambient isotopies that do not create extra singular curves or points.
Foams will be considered in their isotopy classes in this sense, and an \emph{automorphism} of foams will be considered up to this isotopy.

As an example, the involution of turning an arbitrary foam upside down
is an automorphism of $\gll_\myN$-foams. Similarly, fix $a\in \Bbbk$,
and changing any polynomial $R(X_1,\dots, X_k)$ occurring as foam
decoration into $R(X_1-a,\dots, X_k-a)$ is an (ungraded) automorphism.

Let $G$ be a subgroup of automorphisms of $\foamcat$. Suppose that $G$
also acts as $\ZZ$-linear automorphisms on the ground ring $R_N$. We
say that $G$ is \emph{compatible} with foam evaluations if, for any
$g\in G$ and closed foam $\Gamma$, we have
\begin{subequations}
\begin{equation} \label{eqn:groupversion}
    \bracketN{g \cdot \foam}= g\cdot \bracketN{\foam}.
\end{equation}
Clearly, such compatible automorphisms preserve the kernel of \eqref{eq:2}, and descend to automorphisms of state spaces of webs.

In particular, if $\foam_1$, $\foam_2$ are disjoint closed foams, preserving
the 2-categorical structure/monoidal leads to 
\begin{equation} \label{eqn:groupversion1} \bracketN{g \cdot
    (\foam_1\sqcup \foam_2)}= (g\cdot \bracketN{\foam_1}) (g\cdot
  \bracketN{ \foam_2}) .
\end{equation}
Furthermore, if $\foam$ and $G$ are foams that share a common
boundary web $\Gamma$, we have
\begin{equation}\label{eqn:groupversion2}
  \bracketN{(g\cdot \foam);(g\cdot G)} = g\cdot \bracketN{\foam;G}.
\end{equation}
\end{subequations}

Infinitesimally, if $g=e^{t\ell}$ is a one-parameter family of
compatible automorphisms, where $t$ is a formal parameter, then taking
derivatives of equations \eqref{eqn:groupversion},
\eqref{eqn:groupversion1}, \eqref{eqn:groupversion2} and evaluating at
$t=0$ results in the condition of a Lie algebra element acting on foam
evaluations. For instance, fix $a\in \Bbbk$, and consider the action
by changing any polynomial decoration $R(X_1,\dots, X_k)$ occurring on
foams into $R(X_1-a,\dots, X_k-a)$. Linearizing this action gives us
the differential action by the vector field
$\sum \partial/\partial X_i$.

If $\ell$ is an \emph{infinitesimal symmetry} of foam evaluations, then it must satisfy
\begin{subequations}
 \begin{equation}
    \bracketN{\ell\cdot \foam} = \ell\cdot \bracketN{\foam}.
\end{equation}
Furthermore, with respect to the monoidal structures, there are
Leibniz rules that are linearized from \eqref{eqn:groupversion1} and
\eqref{eqn:groupversion2}: 
\begin{equation} \label{eqn:LAversion1}
        \bracketN{\ell \cdot (\foam_1\sqcup \foam_2)}= (\ell \cdot \bracketN{\foam_1})    \bracketN{\foam_2} + 
         \bracketN{\foam_1} (\ell \cdot     \bracketN{\foam_2}).
\end{equation}
\begin{equation}\label{eqn:LAversion2}
\ell \cdot \bracketN{\foam;G}  = \bracketN{\ell \cdot \foam; G} + \bracketN{ \foam; \ell\cdot G}  .
\end{equation}
\end{subequations}
Here $\ell$ is treated as a $\Bbbk$-linear endormorphism on the $\Bbbk$-span of foams up to ambient isotopy. We will axiomatize the Lie algebra action in what follows.
 
\subsection{Smash products}
\label{sec:smash}
\newcommand{\lie}{\ensuremath{L}}
\newcommand{\alg}{\ensuremath{A}}
\newcommand{\Ue}{\ensuremath{\mathrm{U}}}
\newcommand{\Der}{\ensuremath{\mathrm{Der}}}
\newcommand{\wittp}[1][p]{\witt_p}

In what follows, $\lie$ is a Lie algebra and $\alg$ is an associative
algebra both over a common commutative ring $\scalars$. Unadorned
scalar products are over $\scalars$. The category of $\lie$-modules is
naturally endowed with a monoidal structure: for any $\lie$-modules
$M$ and  $N$, any $m \in M$, $n \in N$ and $\ell \in \lie$, define:
\begin{equation}
  \label{eq:11}
  \ell \cdot (m \otimes n) := (\ell \cdot m)\otimes n + m\otimes (\ell
  \cdot n),
\end{equation}
and declare that $\lie$ acts by $0$ on $\scalars$ (which is the
monoidal unit).
This is equivalent to the statement that the universal enveloping algebra $\Ue(\lie)$ is endowed with a Hopf
algebra structure by defining for all $\ell$ in $\lie$:
\begin{equation}
  \label{eq:Delta-Lie}
  \Delta(\ell) = 1 \otimes \ell + \ell \otimes 1 \qquad \epsilon(\ell)=0 \qquad S(\ell)=-\ell .
\end{equation}

The algebra $\alg = (\alg, \mu, \eta)$ is an \emph{$\lie$-module
  algebra} if $\alg$ is an $\lie$-module
and the maps $\mu \co \alg \otimes \alg \to \alg$ and $\eta\co
\scalars \to \alg$ are morphisms of $\lie$-modules. In other words,
$\alg$ is an algebra object in $\lie\textrm{-}\mathsf{mod}$. 

When $\alg$ is an $\lie$-module algebra, any element $\ell \in \lie$
defines a $\scalars$-linear operator $\widehat{\ell}$ on $A$, which because of
\eqref{eq:Delta-Lie} satisfies the Leibniz rule, that is, for any
$a_1, a_2 \in \alg$, one has:
\begin{equation}
  \label{eq:13}
  \widehat{\ell}(a_1 a_2) = \widehat{\ell}(a_1)a_2 + a_1\widehat{\ell}(a_2).
\end{equation}

\begin{exa}
  For any associative algebra $\alg$, the Lie algebra of derivations $\Der(\alg)$
  acts on $\alg$ naturally and $\alg$ is a $\Der(\alg)$-module
  algebra. Actually any $\lie$-module algebra structure on $\alg$
  arises from a Lie algebra morphism $\varphi\co \lie  \to \Der(\alg)$.
\end{exa}

Fix $\alg$, an $\lie$-module algebra. Define the associative algebra
$\alg\#\Ue(\lie)$ as follows. As a $\scalars$-module,
$\alg\#\Ue(\lie)$ is equal to $\alg \otimes\Ue(\lie)$. The multiplication
on $\alg\#\Ue(\lie)$ is defined by:
\begin{equation}
  (a \otimes \ell) \cdot   (b \otimes h) := \sum
  a(\ell_{(1)}\cdot b) \otimes \ell_{(2)}h,  \label{eq:mult-smash}
\end{equation}
where we used Sweedler's notation for the coproduct on $\Ue(\lie)$.
The algebra $\alg\#\Ue(\lie)$ is called the \emph{smash
  product algebra of $\alg$ and $\lie$}.
Note that both $\alg$ and $\Ue(\lie)$ lie in $\alg\#\Ue(\lie)$ as
subalgebras (as $\alg\otimes 1_{\Ue(\lie)}$ and $1_\alg \otimes
\Ue(\lie)$ respectively).

An $\alg\#\Ue(\lie)$-module is called an \emph{$\lie$-equivariant
  $\alg$-module}. An $\lie$-equivariant $\alg$-module $M$ is an
$A$-module with an $\lie$-action compatible with the action of $\alg$
(remember that $\lie$ acts on $\alg$ by derivations) in the following
sense. For any $a\in \alg, \ell \in \lie$ and $m\in M$:
\begin{equation}
  \ell\cdot (a\cdot m) = (\ell\cdot a)\cdot m + a\cdot (\ell \cdot
  m).\label{eq:15}
\end{equation}

If $H$ is a Hopf algebra over $\scalars$ acting on $\alg$, we adopt the same
terminology and let $\alg\# H$ be the associative algebra which as
a $\scalars$-module is equal to $A\otimes H$ and whose multiplication is
given by formula \eqref{eq:mult-smash}, (replacing $\ell_1, \ell_2$
by $h_1, h_2 \in H$). For a more systematic account on smash products,
we refer the reader to \cite[Chapter VII]{SweedlerBookHopfAlgebras}.

\subsection{$p$-DG structure}
\label{sec:p-dg-structure}
Here we recall some definitions from \cite[Section 2.1]{QRSW1}.
Let $H^\prime = \ZZ[\dif]$  be the graded polynomial algebra generated
by a degree $2$ generator $\dif$.  Define on $H^\prime$ a \emph{comultiplication} $\Delta: H^\prime \rightarrow H^\prime \otimes H^\prime$ by
setting
\begin{subequations}\label{eqn-Hopf-algebra-H-prime}
  \begin{equation}
    \Delta (\dif) = \dif \otimes 1 + 1\otimes \dif.
  \end{equation}
  Also set the {\sl counit} $\epsilon: H^\prime \rightarrow \scalars $ to be
  \begin{equation}
    \epsilon(\dif)=0,
  \end{equation}
  and {\sl antipode} $S:H^\prime \rightarrow H^\prime$ to be
  \begin{equation}
    S(\dif)=-\dif.
  \end{equation}
\end{subequations}
Then $H^\prime$ is a graded Hopf algebra.

The ideal $(\dif^p, p)\subset H^\prime$ is a Hopf ideal, in the sense
that, on the top of being an ideal, it is closed under $\Delta$,
$\epsilon$ and $S$. The graded quotient $H^\prime/(\dif^p,p)$ inherits
a graded Hopf algebra structure over $\Fp$ and is denoted by
$H$. The structure maps of $H$ are
still denoted $\Delta$, $\epsilon$ and $S$. An $H$-module algebra is also called \emph{$p$-DG algebra}.

The element $\dif \in H^\prime$ acts on
algebras as derivations, in other words, any derivation on an algebra $A$
induces an $H^\prime$-module structure on $A$. More generally, if $M$ is an
$A$-module with $A$ endowed with a derivation, any derivation on $M$ (compatible with that on $A$)
gives rise to an $H^\prime$-equivariant $A$-module structure on $M$. 

\begin{lem}\label{lem:action-H-polynomial}
  Mapping $\dif$ to
  $
    \sum_{k=1}^a x_k^2 \frac{\partial}{\partial x_k}
  $
  induces an action of the Hopf algebra $H$ on
  $\Fp[x_1, \dots x_a]$.
\end{lem}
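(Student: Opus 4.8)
The statement to prove is Lemma~\ref{lem:action-H-polynomial}: that $\dif \mapsto \sum_{k=1}^a x_k^2 \partial/\partial x_k$ induces an action of the Hopf algebra $H = H'/(\dif^p,p)$ on $\Fp[x_1,\dots,x_a]$. The plan is to first observe that the operator $\partial := \sum_k x_k^2 \partial/\partial x_k$ is a $\scalars$-linear derivation of the polynomial ring $\scalars[x_1,\dots,x_a]$ for any commutative ring $\scalars$, hence automatically extends to an action of the free Hopf algebra $H' = \ZZ[\dif]$ (with $\dif$ acting as $\partial$), as recalled in the paragraph immediately preceding the lemma. So the only real content is to check that this $H'$-action descends to the quotient $H = H'/(\dif^p,p)$ after base change to $\Fp$: equivalently, that the ideal $(\dif^p, p)$ annihilates $\Fp[x_1,\dots,x_a]$, i.e. that $\partial^p = 0$ on $\Fp[x_1,\dots,x_a]$ (the $p$ is killed since we work over $\Fp$).

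\textbf{Key steps.} First I would reduce to the one-variable case: since $\partial = \sum_k \partial_k$ with $\partial_k := x_k^2 \partial/\partial x_k$, and since the $\partial_k$ pairwise commute (each $\partial_k$ involves only the variable $x_k$, and $\partial_k(x_\ell) = 0$ for $\ell \neq k$, so $[\partial_k,\partial_\ell]=0$), one has in characteristic $p$ the ``freshman's dream'' for commuting operators:
\[
  \partial^p = \Bigl(\sum_{k=1}^a \partial_k\Bigr)^p = \sum_{k=1}^a \partial_k^p,
\]
because all mixed multinomial coefficients $\binom{p}{i_1,\dots,i_a}$ with some $i_j < p$ are divisible by $p$. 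Then it suffices to show $\partial_k^p = 0$ on $\Fp[x_1,\dots,x_a]$ for each $k$, and since $\partial_k$ acts only in the variable $x_k$ (fixing the others), this reduces to the single-variable claim: $\bigl(x^2 \tfrac{d}{dx}\bigr)^p = 0$ on $\Fp[x]$. For this I would compute $\bigl(x^2 \tfrac{d}{dx}\bigr)^m (x^n)$ by induction on $m$: one checks $x^2\tfrac{d}{dx}(x^n) = n x^{n+1}$, and inductively $\bigl(x^2\tfrac{d}{dx}\bigr)^m(x^n) = n(n+1)\cdots(n+m-1)\, x^{n+m}$. Taking $m = p$ gives the coefficient $n(n+1)\cdots(n+p-1)$, which is a product of $p$ consecutive integers, hence divisible by $p!$ and in particular by $p$, so it vanishes in $\Fp$. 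Since the monomials $x^n$ span $\Fp[x]$, this gives $\bigl(x^2\tfrac{d}{dx}\bigr)^p = 0$, completing the argument. Finally, one notes that the $H'$-module structure is compatible with the Hopf structure by construction (the comultiplication on $H'$ encodes the Leibniz rule, which $\partial$ satisfies being a derivation), and the quotient map $H' \to H$ is a Hopf algebra map, so the descended action is an action of $H$ as a Hopf algebra.

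\textbf{Main obstacle.} There is essentially no obstacle here; the lemma is elementary. The only point requiring a moment's care is the reduction $\partial^p = \sum_k \partial_k^p$, which uses both that the $\partial_k$ commute and that we are in characteristic $p$ --- a small but genuine use of the hypothesis that the ground ring is $\Fp$ (over $\ZZ$ or $\QQ$ the operator $\partial^p$ is certainly nonzero, which is precisely why one must pass to the quotient $H$ and cannot stay with $H'$). One should also make explicit that since $n(n+1)\cdots(n+p-1)$ is a product of $p$ consecutive integers it is divisible by $p$ regardless of the residue of $n$ modulo $p$ --- indeed one of the $p$ consecutive integers is itself a multiple of $p$.
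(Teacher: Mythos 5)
Your proof is correct but takes a genuinely different route from the paper's, and the difference is worth noting. The paper's proof is one line: since $\dif$ is a derivation, the Leibniz rule (more precisely, the standard fact that in characteristic $p$ the $p$-th power of a derivation is again a derivation, because all intermediate binomial coefficients $\binom{p}{i}$ vanish) shows that $\dif^p$ is determined by its values on the ring generators $x_k$, and then $\dif^p(x_k) = p!\,x_k^{p+1} = 0$ finishes. You instead split $\dif = \sum_k \dif_k$ into commuting single-variable pieces, use the ``freshman's dream'' $\dif^p = \sum_k \dif_k^p$ for commuting operators in characteristic $p$, and then verify $\dif_k^p = 0$ by computing the action on an entire monomial basis $x^n$ of $\Fp[x]$, where the coefficient $n(n+1)\cdots(n+p-1)$ is a product of $p$ consecutive integers. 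Both arguments use characteristic $p$ twice and are equally elementary; the paper's is shorter because checking on generators $x_k$ is the special case $n=1$ of your monomial computation, and the fact that $\dif^p$ is a derivation lets one stop there. Your route has the small advantage of not appealing to that (slightly less widely known) fact about $p$-th powers of derivations, at the cost of verifying the vanishing on a full basis rather than just on generators. Either way, the result holds.
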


\begin{proof}
  The only slightly non-trivial thing to show is that
  $\dif^p=0$. Because we are in characteristic $p$, $\dif^p0$ is a derivation. Hence it is enough to show that $\dif^px_k =0$ for all
  $1\leq k \leq a$. This follows from the formula:
  \[
    \dif^jx_k = j!x_k^{j+1}
  \]
  which is easily proved by induction, and the fact that we are working over a field of characteristic $p$.
\end{proof}

For this section, we let $A$ be the $p$-DG algebra $\Fp[x_1, \dots,
x_a]$. If $a_1 + a_2+\dots + a_\ell =a$ is a decomposition of $a$ into positive
integers, $G:= S_{a_1}\times S_{a_2} \times \cdots \times S_{a_\ell}$ acts
naturally on $A$ by permuting variables. Since the definition of $\dif$
is symmetric in the variables, one has the following corollary.

\begin{cor}
  The $H$-action on $A$ induces an $H$-action on $A^G$.
\end{cor}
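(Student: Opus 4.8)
The plan is to reduce the corollary to a statement about the explicit derivation $\dif = \sum_{k=1}^a x_k^2\,\partial/\partial x_k$ acting on the invariant subalgebra $A^G$, and then invoke the previous lemma and corollary. First I would observe that by Lemma~\ref{lem:action-H-polynomial}, the map $\dif \mapsto \sum_{k=1}^a x_k^2 \partial/\partial x_k$ already defines an $H$-action on all of $A = \Fp[x_1,\dots,x_a]$; the content of the corollary is that this action restricts to the subalgebra $A^G$ of $G$-invariants. Since $H$ is generated as an algebra by $\dif$, it suffices to check that the single operator $\dif$ preserves $A^G$, i.e.\ that if $f \in A$ satisfies $\sigma\cdot f = f$ for all $\sigma\in G$, then $\sigma\cdot(\dif f) = \dif f$ as well.

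The key step is the equivariance of $\dif$ with respect to the $G$-action by permutation of variables. For a permutation $\sigma$ (viewed inside $S_a$, acting on $A$ by $(\sigma\cdot f)(x_1,\dots,x_a) = f(x_{\sigma^{-1}(1)},\dots,x_{\sigma^{-1}(a)})$), a direct chain-rule computation gives
\begin{equation*}
  \sigma \cdot \Bigl( \sum_{k=1}^a x_k^2 \frac{\partial f}{\partial x_k} \Bigr) = \sum_{k=1}^a x_k^2 \frac{\partial (\sigma\cdot f)}{\partial x_k},
\end{equation*}
because the vector field $\sum_k x_k^2 \partial/\partial x_k$ has the same shape in every coordinate, so relabeling the variables leaves it invariant. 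Hence $\dif$ commutes with the $G$-action, and in particular $\dif(A^G) \subseteq A^G$. Applying the same reasoning to the iterated operators $\dif^j$ (or simply using that $A^G$ is now a $\dif$-stable subspace on which $\dif^p = 0$ automatically, since $\dif^p = 0$ already on $A$), we conclude that $A^G$ is an $H^\prime$-submodule, and since $(\dif^p, p)$ already acts as zero, it is an $H$-submodule.

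The only mild subtlety worth spelling out is that one should note $A^G$ is a subalgebra (closed under multiplication), so that the induced $\dif$-action on $A^G$ satisfies the Leibniz rule there, making $A^G$ a $p$-DG algebra in its own right and not merely a $p$-complex; this is immediate since $A^G \subseteq A$ is a subalgebra and $\dif$ restricts to it. I do not anticipate any real obstacle here: the whole argument is the observation that the defining vector field is symmetric in $x_1,\dots,x_a$ — which is exactly the remark already made in the text just before the corollary — together with the fact that the quotient Hopf ideal $(\dif^p,p)$ is handled uniformly. The proof is therefore essentially one line, and the main ``work'' is just recording the chain-rule equivariance $\sigma\circ\dif = \dif\circ\sigma$ for $\sigma \in G$.
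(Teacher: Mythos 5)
Your argument is correct and is exactly the paper's (implicit) justification: the paper offers no separate proof beyond the remark that ``the definition of $\dif$ is symmetric in the variables,'' which is precisely your chain-rule equivariance $\sigma\circ\dif=\dif\circ\sigma$ for $\sigma\in G\subseteq S_a$, from which stability of $A^G$ and the vanishing of $\dif^p$ there follow immediately.
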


Let $t$ be a homogeneous polynomial in $x_1, \dots, x_a$ of
degree\footnote{Remember that variables have degree $2$, so that $t$ is a linear polynomial.} $2$. Then define $A^t$ to be equal to $A$ as an $A$-module, but
endowed with an $H'$-action defined for any $P \in A^t$ by:
\begin{equation} \label{eq:deftwistact}
  \dif_{A^t}(P) = \dif_A(P) + tP. 
\end{equation}
We say that the $H'$-module structure is \emph{twisted} by $t$. Note that $t =\dif_{A^t}(1)$.
\begin{lem} \label{lem:twist-pDG-alg}
  For any homogeneous polynomial $t$ of degree $2$,
  $\dif_{A^t}^p = 0$. 
\end{lem}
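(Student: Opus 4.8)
The plan is to reduce the statement $\dif_{A^t}^p = 0$ to the already-established fact that $\dif_A^p = 0$ (Lemma \ref{lem:action-H-polynomial}), together with a computation of how the twisting term interacts with powers of $\dif_A$. First I would observe that $\dif_{A^t} = \dif_A + L_t$, where $L_t$ denotes left multiplication by $t$, viewed as an operator on $A$. The key structural input is that $A$, being the smash product situation over $\Fp$, is an $H$-module algebra: since $\dif_A$ satisfies the Leibniz rule, the operator $\dif_{A^t}$ also satisfies a (twisted) Leibniz-type identity, but more importantly we can compute $\dif_{A^t}^p$ by exploiting the restricted-Lie-algebra / Jacobson formula structure available in characteristic $p$.

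The cleanest route I would take is the following. Consider the derivation $\dif_A$ and the operator $L_t$ inside $\End_{\Fp}(A)$. I would first compute the commutator $[\dif_A, L_t] = L_{\dif_A(t)}$, which holds because $\dif_A$ is a derivation: $\dif_A(tP) - t\dif_A(P) = \dif_A(t)P$. Now $\dif_A(t)$ is again a homogeneous polynomial of degree $4$; crucially, in the polynomial ring the operators $L_t$ and $L_{\dif_A(t)}$ commute with each other (they are both multiplication operators on a commutative ring), and more generally all the iterated terms $L_{\dif_A^k(t)}$ commute pairwise. So the subalgebra of $\End_{\Fp}(A)$ generated by $\dif_A$ and $L_t$ has a manageable structure: it is a kind of Ore-type extension where $\dif_A$ acts on the commutative subalgebra generated by the $L_{\dif_A^k(t)}$. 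Then I would invoke the standard characteristic-$p$ identity for such a situation, namely that for $D$ a derivation and $a$ an element, $(D + L_a)^p = D^p + L_{a^{[p]}}$ where $a^{[p]}$ is a suitable "$p$-th power" expression — concretely, by Jacobson's formula $(D+L_a)^p = D^p + L_a^p + (\text{sum of commutator terms})$, and here $L_a^p = L_{a^p}$ while the commutator terms assemble into $L$ of the polynomial $D^{p-1}(a) + (\text{lower})$. The upshot is that $\dif_{A^t}^p = \dif_A^p + L_{\,t^p \,+\, \dif_A^{p-1}(t)\,+\,\cdots}$, and since $\dif_A^p = 0$ it remains to check that the polynomial appearing inside this $L$ vanishes in $\Fp[x_1,\dots,x_a]$.

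To finish, I would verify that $t^p + \dif_A^{p-1}(t) + (\text{intermediate terms}) = 0$ by a direct computation, reducing (by the Leibniz rule and linearity, exactly as in the proof of Lemma \ref{lem:action-H-polynomial}) to the case $t = x_k$ for a single variable. Using the formula $\dif_A^j x_k = j!\, x_k^{j+1}$ from that proof, one gets $\dif_A^{p-1}(x_k) = (p-1)!\, x_k^{p} = -x_k^p$ by Wilson's theorem, which cancels against the $L_{x_k^p}$ term coming from $L_{x_k}^p$; the intermediate commutator terms vanish because they involve binomial coefficients $\binom{p}{j}$ with $0<j<p$, all divisible by $p$. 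I expect the main obstacle to be bookkeeping: correctly identifying which commutator/Jacobson terms survive and assembling the cancellation cleanly, rather than any conceptual difficulty — the characteristic-$p$ combinatorics ($(p-1)! \equiv -1$, $p \mid \binom{p}{j}$) does all the real work once the operator identity is set up. An alternative, perhaps more elementary presentation would simply induct on $a$ and reduce to $a=1$, where $A^t$ with $t = \lambda x_1$ can be analyzed by hand, but the smash-product perspective makes the structural reason transparent.
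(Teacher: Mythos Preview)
Your approach is correct but takes a different route from the paper. The paper avoids the Jacobson/operator-algebra machinery entirely: it first proves the Leibniz-type expansion
\[
\dif_{A^t}^k(P) = \sum_{j=0}^k \binom{k}{j}\,\dif_A^j(P)\,\dif_{A^t}^{k-j}(1),
\]
which for $k=p$ reduces the problem to showing $\dif_{A^t}^p(1) = 0$ (using $\dif_A^p = 0$ and vanishing of the middle binomial coefficients). A second, analogous expansion in the twist parameter then reduces to $t = \mu_i x_i$, and the closed form $\dif_{A^t}^k(1) = \bigl(\prod_{j=0}^{k-1}(\mu_i+j)\bigr)\,x_i^k$ vanishes at $k=p$ because the product ranges over a complete set of residues modulo $p$.

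Your route is more conceptual, but be careful with one point of justification: the ``intermediate commutator terms'' in $(D+L_t)^p$ do \emph{not} vanish because of binomial coefficients $\binom{p}{j}$. Rather, iterating the bracket relations $[D,L_a]=L_{Da}$ and $[L_a,L_b]=0$ shows that the $(p-1)$-fold iterated commutator appearing in Jacobson's formula equals $-\lambda^{p-2}L_{D^{p-1}t}$, so only the term $s_{p-1}$ survives, giving the clean identity $(D+L_t)^p = D^p + L_{t^p + D^{p-1}t}$ with no further terms. Once you have this, your Wilson-theorem cancellation and the reduction to $t = c_k x_k$ (via Frobenius on $t^p$ and linearity of $D^{p-1}$) are exactly right. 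Both arguments ultimately rest on the same characteristic-$p$ arithmetic; the paper's is more elementary and self-contained, while yours makes the restricted-Lie-algebra structure behind the twist explicit.
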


\begin{proof}
  On has for all $P \in A$,
  \begin{equation}
    \dif_{A^{t}}^k(P) = \sum_{j=0}^k\begin{pmatrix} k \\j \end{pmatrix}\dif_A^j(P)\dif^{k-j}_{A^t}(1).
  \end{equation}
  We already know that $\dif^p_A(P) =0$, so it is enough to
  show that $\dif^p_{A^t}(1) =0$ since $\begin{pmatrix} p \\ j \end{pmatrix} =0$ for $j=1,\ldots, p-1$.
  Moreover, if $t_1$ and $t_2$ are two homogeneous polynomials of degree
  $2$, one has:
  \begin{equation}
    \label{eq:3}
     \dif_{A^{t_1 + t_2}}^k(1) = \sum_{j=0}^k\begin{pmatrix} k \\j \end{pmatrix}\dif_{A^{t_1}}^j(1)\dif^{k-j}_{A^{t_2}}(1),
  \end{equation}
  so that it is enough to prove the statement in the case $t=\mu_i
  x_i$ for $1\leq i \leq a$.
  In this case, an easy induction shows:
  \begin{equation}
    \dif_{A^{t}}^k(1) = \left(\prod_{j=0}^{k-1}(\mu_i+j)\right) x_i^k.
  \end{equation}
  The proof is now complete since the quantity $\prod_{j=0}^{p-1}(\mu+j)$ is $0$ in $\Fp$ for any $\mu
  \in \Fp$.
\end{proof}

\begin{rmk} \label{rmk:twist-works-on-invariants}
  If $t$ is invariant under the action of $G:= S_{a_1}\times S_{a_2} \times \cdots \times
S_{a_\ell}$, the  action defined by \eqref{eq:deftwistact} preserves $A^G$ and one has a
well-defined $H$-action on $(A^t)^G$.
\end{rmk}

\subsection{One-half of the Witt algebra}
\label{sec:Witt-alg-char-zero}

In what follows we will define actions of the Lie algebras $\sll_2$
and of a part of the Witt algebra $\ourwitt$. 
We briefly recall how these Lie algebras are
defined.

The Lie algebra $\sll_2$ over $\scalars$ is generated by symbols $\Le,
\Lf$ and $\Lh$ subject to the relations:
\begin{align}  \label{eq:sl2-relations}  
  [\Le, \Lf] = \Lh,
  \quad [\Lh, \Le] = 2 \Le,
  \quad [\Lh, \Lf ] = -2 \Lf.
\end{align}

As a $\scalars$-module, it is free of rank $3$ and can be graded by
declaring that $\deg \Le = -2, \deg \Lh =0$ and $\deg \Lf = 2$ (or by a
scaling of that grading).

The Lie algebra $\witt$ is generated by symbols $(\LLn)_{n \in \ZZ}$
subject to the relations\footnote{In \cite{KRWitt}, the presentation
  is different: $\LLn^{\text{\cite{KRWitt}}} \leftrightarrow -\LLn$.}
\begin{align}
  \label{eq:witt-relations}
 [\LLn, \LLn[m]] = (n-m) \LLn[m+n]
\end{align}
for all $n, m\in \ZZ$.

As a $\scalars$-module, it is free of countable rank and can be graded by
declaring that $\deg \LLn = 2n$ for all $n \in \ZZ$ (or by scaling of that). 

We will be interested in the Lie subalgebra $\ourwitt$ generated by
symbols $(\LLn)_{n\in \NNN}$, where $\NNN = \{n \in \ZZ | n \geq
-1\}$ since, as we shall see, it acts on polynomial rings, see \eqref{eq:witt-on-poly}. 

\begin{lem}
  \label{lem:sl2-to-witt}
  The map:
\[ 
   \iota\co \left\{
    \begin{array}{rcl}
     \Le & \mapsto & \LLn[-1] \\
      \Lh & \mapsto & 2\LLn[0] \\
      \Lf & \mapsto & -\LLn[1] \\
    \end{array}
    \right.
  \]
  induces a morphism of Lie algebras from $\sll_2$ to $\witt$ whose
  image is in $\ourwitt$. If $2$ is not a zero divisor in
  $\scalars$, the map is injective. 
\end{lem}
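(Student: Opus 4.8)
The plan is to verify directly that $\iota$ respects the defining relations \eqref{eq:sl2-relations} of $\sll_2$, using the bracket relations \eqref{eq:witt-relations} of $\witt$, and then to settle injectivity by a linear-algebra argument. First I would check the three relations. Using \eqref{eq:witt-relations} with the appropriate indices: $[\LLn[-1],\LLn[1]] = (-1-1)\LLn[0] = -2\LLn[0]$, so $[\iota(\Le),\iota(\Lf)] = [\LLn[-1],-\LLn[1]] = 2\LLn[0] = \iota(\Lh)$, as required. Next, $[\LLn[0],\LLn[-1]] = (0-(-1))\LLn[-1] = \LLn[-1]$, hence $[\iota(\Lh),\iota(\Le)] = [2\LLn[0],\LLn[-1]] = 2\LLn[-1] = 2\iota(\Le)$. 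Finally $[\LLn[0],\LLn[1]] = (0-1)\LLn[1] = -\LLn[1]$, so $[\iota(\Lh),\iota(\Lf)] = [2\LLn[0],-\LLn[1]] = 2\LLn[1] = -2(-\LLn[1]) = -2\iota(\Lf)$. Since a Lie algebra map out of $\sll_2$ is determined by the images of the generators and these images satisfy the three relations, $\iota$ extends to a well-defined Lie algebra homomorphism; its image lies in the subalgebra generated by $\LLn[-1],\LLn[0],\LLn[1]$, all of which have index $\geq -1$, so the image is contained in $\ourwitt$.

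For injectivity, I would observe that $\sll_2$ is free of rank $3$ over $\scalars$ with basis $\Le,\Lh,\Lf$, and that $\iota$ sends this basis to $\LLn[-1], 2\LLn[0], -\LLn[1]$, which are part of the free generating set $(\LLn)_{n\in\NNN}$ of $\ourwitt$ as a $\scalars$-module. So the matrix of $\iota$ on these basis elements is diagonal with entries $1, 2, -1$. A $\scalars$-linear combination $\alpha\Le + \beta\Lh + \gamma\Lf$ maps to $\alpha\LLn[-1] + 2\beta\LLn[0] - \gamma\LLn[1]$, which vanishes iff $\alpha = 2\beta = \gamma = 0$ in $\scalars$; if $2$ is not a zero divisor this forces $\alpha=\beta=\gamma=0$, giving injectivity.

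The only subtlety — and the step I would be most careful about — is the claim that a Lie algebra map out of $\sll_2$ is uniquely determined by (and exists for) any triple of elements satisfying the three relations; over a general commutative ring $\scalars$ this is the statement that $\sll_2$ has the expected presentation by generators and relations, which is standard but worth citing or noting, especially since the relations are given over $\scalars$ in \eqref{eq:sl2-relations}. A clean way to phrase it is: the universal enveloping algebra $\Ue(\sll_2)$ is presented over $\scalars$ by $\Le,\Lf,\Lh$ modulo \eqref{eq:sl2-relations}, so specifying a homomorphism amounts to specifying images satisfying those relations, which we have just done. Everything else is the short computation above, and there is no real obstacle.
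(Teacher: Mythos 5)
Your proof is correct and follows the same route as the paper: a direct verification that $\iota(\Le)$, $\iota(\Lh)$, $\iota(\Lf)$ satisfy the relations \eqref{eq:sl2-relations} using the bracket \eqref{eq:witt-relations}, combined with the fact that $\sll_2$ is defined by exactly those generators and relations. The paper's proof stops at the relation check; your additional injectivity argument (freeness of $\sll_2$ and of $\ourwitt$ as $\scalars$-modules, so that $\alpha\LLn[-1]+2\beta\LLn[0]-\gamma\LLn[1]=0$ forces $\alpha=\gamma=0$ and $2\beta=0$, whence $\beta=0$ when $2$ is not a zero divisor) correctly supplies the part of the statement that the paper leaves implicit.
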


\begin{proof}
  It is straightforward to check that the relations given in
  \eqref{eq:sl2-relations} are satisfied by $\iota(\Le)$, $\iota(\Lh)$
  and $\iota(\Lf)$:
  \begin{align*}
    [\iota(\Le), \iota(\Lf)]&=[\LLn[-1], -\LLn[1]] = [\LLn[1], \LLn[-1]]
                              = 2\LLn[0] = \iota(\Lh) \\
    [\iota(\Lh), \iota(\Le)] &= [2\LLn[0], \LLn[-1]]= 2 [\LLn[0],
                               \LLn[-1]] = 2 \LLn[-1] = 2 \iota(\Le), \\
    [\iota(\Lh), \iota(\Lf)] &= [2\LLn[0], -\LLn[1]]= -2 [\LLn[0],
                               \LLn[1]] = 2 \LLn[1] = -2 \iota(\Lf). \qedhere
  \end{align*}
\end{proof}

\begin{rmk}
  Another possible embedding of $\sll_2$ in $\ourwitt$ is given by: 
   \[
    \begin{array}{rcl}
      \iota'\co\Le & \mapsto & -\LLn[1] \\
      \Lh & \mapsto & -2\LLn[0] \\
      \Lf & \mapsto & \LLn[-1]  \\
    \end{array}.
  \]
  The morphism $\iota$ and $\iota'$ are related by the
  $\sll_2$-automorphism
     \[
    \begin{array}{rcl}
      \Le & \mapsto & \Lf \\
      \Lh & \mapsto & -\Lh \\
      \Lf & \mapsto & \Le \\
    \end{array}.
  \]
\end{rmk}

The Lie algebra $\ourwitt$ acts on the polynomial ring $\scalars[z]$
by setting $\LLn \cdot Q(z) = -z^{n+1}Q'(z)$. For any
positive integer $k$, this action generalizes to $\scalars[z_1, \dots,
z_k]$ as follows. For any $Q \in \scalars[z_1, \dots, z_k]$, set
\begin{align}\label{eq:witt-on-poly}
  \LLn \cdot Q = -\sum_{i=1}^k
  z_i^{n+1}\frac{\partial Q}{\partial z_i}.
\end{align}
Note that
$A_k=\scalars[z_1, \dots, z_k]^{S_k}$ is a sub-$\ourwitt$-module for
this action. One can naturally extend this action on $\scalars(z_1,\dots
z_k)$ by imposing the Leibniz rule:
\[
\LLn \cdot \frac{Q_1}{Q_2} = \frac{(\LLn\cdot Q_1)Q_2 -
Q_1(\LLn \cdot Q_2)}{Q_2 ^2}.
\]

\begin{exa}
  If $x$ and $y$ are two variables, then for all $n\geq 0$,
\begin{align}
  \LLn \cdot (x-y) =& -(x^{n+1} - y^{n+1}) = -(x-y)\left(\sum_{i+j=n}
    x^iy^j \right)\\ =&- (x-y)\left(\sum_{i+j=n}
    \newtoni[i](x)\newtoni[j](y) \right) = -(x-y)h_n(x,y)
\end{align}
where $\newtoni[i](x)$ and $\newtoni[j](y)$ denote the $i$th and the
$j$th power sum
symmetric
polynomials in variables $x$ and $y$ respectively (that is to say $x^i$
and $y^j$), and $h_n$ is the $n$th complete homogeneous symmetric polynomial.

  Let $\underline{x}:=\{x_1, \dots, x_a\}$ be $a$ indeterminates and $\underline{y}:= \{y_1, \dots,
  y_b\}$ be $b$ other indeterminates. Set $\nabla(\underline{x},\underline{y})=
  \prod_{i=1}^a\prod_{j=1}^b (x_i - y_j).$ One has 
  \begin{align}
    \begin{split}
    \LLn \cdot \nabla(\underline{x},\underline{y})
    &=-\sum_{i=1}^a\sum_{j=1}^b\sum_{k+\ell=n}\newtoni[k](x_i)\newtoni[\ell](y_j)\nabla(\underline{x},\underline{y}) \\
    &=- \sum_{k+\ell=n}\newtoni[k](\underline{x})\newtoni[\ell](\underline{y})\nabla(\underline{x},\underline{y});
  \end{split}
  \end{align}
and more generally, for any non-negative integer $\alpha$:
    \begin{align}\label{eq:Ln-acts-on-Nabla}
    \LLn \cdot \nabla(\underline{x},\underline{y})^\alpha =- \alpha
    \sum_{k+\ell=n}\newtoni[k](\underline{x})\newtoni[\ell](\underline{y}) \nabla(\underline{x},\underline{y})^\alpha.
  \end{align}
\end{exa}

\subsection{Twists}
\label{sec:twists}

As we have seen in the previous subsection, $\ourwitt$ acts naturally
on polynomial rings. We will see that this action can be twisted. This machinery will be used in a sequel to this paper.

\begin{dfn}[\cite{KRWitt}]
  Let $A$ be an $\ourwitt$-module algebra.
 A family of elements $\tau:=(\tau_n)_{n \in \NNN} \in A^{\NNN}$ is
  \emph{$\ourwitt$-flat} if for all $m$ and $n$ in $\NNN$,
  \begin{equation}
    \label{eq:8}
    \LLn[n] \cdot \tau_m - \LLn[m] \cdot \tau_n = (n-m)\tau_{m+n}.
  \end{equation}
\end{dfn}

\begin{exa}
  \begin{enumerate}
  \item For any $i \in \{1, \dots, k\}$, the family
    $\left((n+1)x_i^n\right)_{n\in \NNN} \in \scalars[x_1, \dots, x_k]^{\NNN}$ is $\ourwitt$-flat.
   \item For any  $i\neq j\in \{1, \dots, k\}$, the family
      $\left(h_n(x_i, x_j)\right)_{n\in \NNN}\in \scalars[x_1, \dots, x_k]^{\NNN}$   is $\ourwitt$-flat.
    \end{enumerate}
\end{exa}

\begin{rmk}
  In general the defect of $\ourwitt$-flatness of a sequence $\tau$,
  encoded by \begin{equation}
    \label{eq:9}
        \LLn[n] \cdot \tau_m - \LLn[m] \cdot \tau_n - (n-m)\tau_{m+n}
      \end{equation}
      is called the \emph{$\ourwitt$-curvature} of $\tau$ and is
      denoted $\kappa(\tau)$.
\end{rmk}

The set of $\ourwitt$-flat sequences of an $\ourwitt$-module $A$ is a
$\scalars$-submodule of $A^{\NNN}$ since for arbitrary sequences $\tau$ and
$\tau'$ in $A^{\NNN}$, one has $\kappa(\tau + \tau') = \kappa(\tau) + \kappa(\tau')$.

Let $A$ be a commutative $\ourwitt$-module algebra and $\tau$ be an
$\ourwitt$-flat sequence. For any $n \in \NNN$, define the operator
$\Ln^{(\tau)}$ on $A$ by:
\begin{equation}
  \label{eq:10}
  \Ln^{(\tau)}(a) = \LLn\cdot a + \tau_na.
\end{equation}
for all $a\in A$.

\begin{lem}\label{lem:alg-twist}
  Mapping $\LLn$ to $\Ln^{(\tau)}$ endows $A$ with a (new)
  $\ourwitt$-module structure. 
\end{lem}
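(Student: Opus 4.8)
The plan is to verify directly that the operators $\Ln^{(\tau)}$ satisfy the Witt bracket relations \eqref{eq:witt-relations}, that is, to show that for all $n, m \in \NNN$,
\[
  [\Ln[n]^{(\tau)}, \Ln[m]^{(\tau)}] = (n-m)\Ln[n+m]^{(\tau)}
\]
as operators on $A$. Since the bracket on the left is the commutator of operators and everything in sight is $\scalars$-linear, it suffices to evaluate both sides on an arbitrary $a \in A$ and compare.

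First I would expand $\Ln[n]^{(\tau)}\Ln[m]^{(\tau)}(a)$ using the definition \eqref{eq:10}: this equals $\LLn[n]\cdot(\LLn[m]\cdot a + \tau_m a) + \tau_n(\LLn[m]\cdot a + \tau_m a)$. Now I would use that $A$ is an $\ourwitt$-module algebra, so $\LLn[n]$ acts as a derivation (the Leibniz rule \eqref{eq:13}), giving $\LLn[n]\cdot(\tau_m a) = (\LLn[n]\cdot \tau_m)a + \tau_m(\LLn[n]\cdot a)$. Collecting terms, one gets
\[
  \Ln[n]^{(\tau)}\Ln[m]^{(\tau)}(a) = \LLn[n]\cdot\LLn[m]\cdot a + (\LLn[n]\cdot\tau_m)a + \tau_m(\LLn[n]\cdot a) + \tau_n(\LLn[m]\cdot a) + \tau_n\tau_m a.
\]
Then I would subtract the same expression with $n$ and $m$ swapped. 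The term $\tau_n\tau_m a$ is symmetric and cancels; the cross terms $\tau_m(\LLn[n]\cdot a) + \tau_n(\LLn[m]\cdot a)$ are symmetric in $n \leftrightarrow m$ and cancel as well. What remains is
\[
  [\Ln[n]^{(\tau)}, \Ln[m]^{(\tau)}](a) = [\LLn[n], \LLn[m]]\cdot a + \big((\LLn[n]\cdot\tau_m) - (\LLn[m]\cdot\tau_n)\big)a.
\]
Finally I would invoke the Witt relation \eqref{eq:witt-relations} for the original action, $[\LLn[n], \LLn[m]]\cdot a = (n-m)\LLn[n+m]\cdot a$, together with the $\ourwitt$-flatness hypothesis \eqref{eq:8}, $(\LLn[n]\cdot\tau_m) - (\LLn[m]\cdot\tau_n) = (n-m)\tau_{n+m}$, to rewrite the right-hand side as $(n-m)\big(\LLn[n+m]\cdot a + \tau_{n+m}a\big) = (n-m)\Ln[n+m]^{(\tau)}(a)$, which is exactly what we wanted. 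I should also note at the end that $\Ln[n]^{(\tau)}$ is $\scalars$-linear (immediate from linearity of the original action and of multiplication by $\tau_n$), so that this genuinely defines an $\ourwitt$-module structure. There is no real obstacle here: the only thing to be careful about is bookkeeping of the derivation property and making sure the symmetric terms are correctly identified as cancelling; the flatness condition is precisely engineered to make the anomalous term match $(n-m)\tau_{n+m}$.
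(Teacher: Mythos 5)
Your proof is correct and follows essentially the same route as the paper's: expand $\Ln[n]^{(\tau)}\Ln[m]^{(\tau)}(a)$ via the definition and the Leibniz rule, cancel the terms symmetric in $n\leftrightarrow m$, and conclude using the Witt bracket together with the $\ourwitt$-flatness of $\tau$.
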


We write $A^{\tau}$ to encode this new $\ourwitt$-module structure, 
and we say that $A$ is twisted by $\tau$. Note that in general
$A^{\tau}$ is not anymore a $\ourwitt$-module algebra, but rather an
$\ourwitt$-equivariant $A$-module (isomorphic to $A$ as an $A$-module).

\begin{proof}
  This is a straightforward computation. For any $m,n \in \NNN$, one has:

  \begin{align}
    \begin{split}
    \Ln[n]^{(\tau)}(\Ln[m]^{(\tau)}(a)) =& \Ln[n]^{(\tau)}(\LLn[m]\cdot a + \tau_ma)\\
    =&\LLn[n] \cdot (\LLn[m] \cdot a) + \LLn[n]\cdot (\tau_m a)+ \tau_n
       (\LLn[m]\cdot a + \tau_ma) \\
    =& \LLn[n] \cdot (\LLn[m] \cdot a) + (\LLn[n]\cdot\tau_m)a +
       \tau_m\LLn[n]\cdot a +  \tau_n \LLn[m]\cdot a  + \tau_n\tau_m a.
\end{split}
     \end{align}
  Thus
  \begin{align}
    \begin{split}
    [\Ln[n]^{(\tau)},\Ln[m]^{(\tau)}] (a) 
    =& [\LLn[n], \LLn[m]] \cdot a + (\LLn[n](\tau_m) - \LLn[m](\tau_n)) a \\
    =& (n-m)\LLn[n+m](a) + (n-m)\tau_{m+n} a
    \end{split}
    \\
    =& (n-m) \Ln[n+m]^{(\tau)}(a).
    \qedhere
  \end{align}
\end{proof}
Suppose furthermore that $M$ is a $\ourwitt$-equivariant $A$-module,
then the $\ourwitt$-module $A^\tau\otimes_AM$ is denoted $M^{\tau}$.
  If $\tau$ and $\tau'$ are two $\ourwitt$-flat sequences, then
  \begin{equation}
  (M^{\tau'})^{\tau} = A^\tau \otimes_A M^{\tau'} = A^\tau \otimes_A
    (A^{\tau'} \otimes_A M) \cong A^{\tau + \tau'} \otimes M = M^{\tau +
    \tau'}.\label{eq:16}
\end{equation}

\section{Action on foams}
\label{sec:an-mathfr-acti}
\subsection{Action of one-half of the Witt algebra}
\label{sec:action-from-witt}

For simplicity we will
suppose that 2 is invertible in $\scalars$. However, this hypothesis
is not always necessary. See Remark~\ref{rmk:coefficients}.

Consider a set of indeterminates $\underline{x}=\{x_1,\ldots,x_a \}$.  

\begin{dfn}
  A \emph{Witt-sequence} $(\lambda_n)_{n \in \NNN} \in
  \scalars^{\NNN}$ is a sequence such that $\lambda_{-1}=0$ and for any  $m,n \in \NN$,
  \begin{equation}
    \label{eq:5}
n\lambda_{n} - m\lambda_{m}  = (n-m)\lambda_{m+n}.
\end{equation}
\end{dfn}

For any $\lambda \in \scalars$, the sequence given by
$\lambda_n = \lambda (n+1)$ is a Witt sequence.

Recall that for any $i$ in $\NN$, the power sum polynomial $p_i(\underline{x})$ is
defined as:
\begin{equation}
  \label{eq:7}
  \newtoni(\underline{x})= x_1^i+\cdots+x_a^i .  
\end{equation}
Decorations of foams that we will consider will often be power sums, so 
we use the following notation: 
\begin{equation}
  \NB{\tikz[scale=1.5, font=\small]{\begin{scope}
  \draw (0,0) rectangle (1,1) coordinate [midway] (A);
  \node at (A) {$\dotnewtoni$};
\end{scope}}} \ =\
  \NB{\tikz[scale=1.5, font=\small]{\begin{scope}
  \draw (0,0) rectangle (1,1) coordinate [midway] (A);
  \fill (A) circle (0.5mm) node[below] {$p_i$};
\end{scope}}}.
\end{equation}
Note that in particular, on a facet of thickness $a$,
$\dotnewtoni[0]=a$. Following Section~\ref{sec:new-decorations}, let
$\wdotnewtoni$ denote the $i$th power sum in the variables which are
not in the facet. In other words, \begin{equation}
  \label{eq:18}
  \NB{\tikz[scale=1.5, font=\small]{\begin{scope}
  \draw (0,0) rectangle (1,1) coordinate [midway] (A);
  \node at (A) {$\wdotnewtoni$};
\end{scope}}} \ =\
  P_i\cdot\ \NB{\tikz[scale=1.5, font=\small]{\begin{scope}
  \draw (0,0) rectangle (1,1) coordinate [midway] (A);
\end{scope}}}\   -\ 
  \NB{\tikz[scale=1.5, font=\small]{}},
\end{equation}
where, as stated in the conventions, $P_i$ denotes the $i$th power sum polynomial in $X_1,\dots, X_N$.

For the rest of this section, fix an element
$s \in \scalars$, and three Witt-sequences
$(\parone{n})_{n\in\NNN}, (\partwo{n})_{n\in\NNN}$ and
$(\parthree{n})_{n\in\NNN}$.

We now define a sequence of operators $(\Ln)_{n \in \NNN}$ acting on basic foams. For
 $n\in \NNN$ set: \begin{gather}\label{eq:Ln-on-pol}
  \Ln\left(~\NB{\tikz[scale=1.5, font=\small]{}}~\right)
  =\NB{\tikz[scale=1.5, font=\small]{\begin{scope}
  \draw (0,0) rectangle (1,1) coordinate [midway] (A);
  \fill (A) circle (0.5mm) node[below] {$\Ln(R)$};
\end{scope}}} \\
\label{eq:Ln-on-MP}
  \Ln\left(\NB{\tikz[scale=0.6, font=\tiny]{\begin{scope}[xscale = -1 ]
  \begin{scope}
    \coordinate (LL) at (0,0);
    \coordinate (L) at (0.5,0);
    \coordinate (R1) at (2.2,0.4);
    \coordinate (R2) at (2,0);
    \coordinate (R3) at (1.8, -0.4);
    \draw[-<-] (LL) -- (L);
    \draw (L) .. controls +(0,0) and +(-0.5, 0) .. (R1)
    coordinate[pos =0.5] (M);
    \draw (L) .. controls +(0,0) and +(-0.5, 0) .. (R3) ;
    \draw (M) .. controls +(0,0) and +(-0.5, 0) .. (R2) ;
  \end{scope}  
 \begin{scope}[yshift = -1.5cm]
    \coordinate (LLB) at (0,0);
    \coordinate (LB) at (0.5,0);
    \coordinate (R1B) at (2.2,0.4);
    \coordinate (R2B) at (2,0);
    \coordinate (R3B) at (1.8, -0.4);
    \draw[-<-] (LLB) -- (LB)  node[right, pos =0] {$a+b+c$};
    \draw (LB) .. controls +(0,0) and +(-0.5, 0) .. (R1B)    node[left] {$a$};
    \draw (LB) .. controls +(0,0) and +(-0.5, 0) .. (R3B)   node[left] {$b$}
    coordinate[pos =0.5] (MB);
    \draw (MB) .. controls +(0,0) and +(-0.5, 0) .. (R2B)    node[left] {$c$};
  \end{scope}  
  \draw (LL) -- (LLB);
  \draw (R1) -- (R1B);
  \draw (R2) -- (R2B);
  \draw (R3) -- (R3B);
  \draw[name path=path1, thick] (M) .. controls +(0,-0.5) and +(0,
  0.5) .. (LB);
  \draw[name path=path2, thick] (L) .. controls +(0,-0.5) and +(0,
  0.5) .. (MB);
  \path [name intersections={of=path1 and path2,by=O}];
  
\end{scope}}}\right)=
  \Ln\left(\NB{\tikz[scale=0.6,font=\tiny]{\begin{scope}
  \begin{scope}
    \coordinate (LL) at (0,0);
    \coordinate (L) at (0.5,0);
    \coordinate (R1) at (2.2,0.4);
    \coordinate (R2) at (2,0);
    \coordinate (R3) at (1.8, -0.4);
    \draw[->-] (LL) -- (L);
    \draw (L) .. controls +(0,0) and +(-0.5, 0) .. (R1)
    coordinate[pos =0.5] (M);
    \draw (L) .. controls +(0,0) and +(-0.5, 0) .. (R3) ;
    \draw (M) .. controls +(0,0) and +(-0.5, 0) .. (R2) ;
  \end{scope}  
 \begin{scope}[yshift = -1.5cm]
    \coordinate (LLB) at (0,0);
    \coordinate (LB) at (0.5,0);
    \coordinate (R1B) at (2.2,0.4);
    \coordinate (R2B) at (2,0);
    \coordinate (R3B) at (1.8, -0.4);
    \draw[->-] (LLB) -- (LB)  node[left, pos =0] {$a+b+c$};
    \draw (LB) .. controls +(0,0) and +(-0.5, 0) .. (R1B)    node[right] {$a$};
    \draw (LB) .. controls +(0,0) and +(-0.5, 0) .. (R3B)   node[right] {$b$}
    coordinate[pos =0.5] (MB);
    \draw (MB) .. controls +(0,0) and +(-0.5, 0) .. (R2B)    node[right] {$c$};
  \end{scope}  
  \draw (LL) -- (LLB);
  \draw (R1) -- (R1B);
  \draw (R2) -- (R2B);
  \draw (R3) -- (R3B);
  \draw[name path=path1, thick] (M) .. controls +(0,-0.5) and +(0,
  0.5) .. (LB);
  \draw[name path=path2, thick] (L) .. controls +(0,-0.5) and +(0,
  0.5) .. (MB);
  \path [name intersections={of=path1 and path2,by=O}];  
\end{scope}
}} \right) =0 \\
\begin{split}\label{eq:Ln-on-dig-cup}
  \Ln\left( \NB{\tikz[font=\tiny]{\begin{scope}
  \begin{scope}
    \coordinate (L) at (0,0);
    \coordinate (R) at (2,0);
    \coordinate (ML) at (0.5, 0);
    \coordinate (MR) at (1.5, 0);
    \draw[->-] (L) -- (ML);
    \draw[->-] (MR) -- (R) node[right] {$a+b$};
    \draw[->-] (ML).. controls + (0.4, 0.4) and +(-0.2, 0.4) .. (MR)
    node[above, midway] {$a$};
    \draw[->-] (ML).. controls + (0.2, -0.4) and +(-0.4, -0.4) .. (MR)
    node[below, midway] {$b$};
  \end{scope}  
 \begin{scope}[yshift = -1.3cm]
    \coordinate (LB) at (0,0);
    \coordinate (RB) at (2,0);
    \draw[->-] (LB) -- (RB);
  \end{scope}  
  \draw (R) -- (RB);
  \draw (L) -- (LB);
  \draw[thick] (ML) .. controls +(0, -1) and +(0, -1) .. (MR);
\end{scope}
}}\right)\ =\
  & \parone{n}\cdot\ \NB{\tikz[font=\tiny]{\begin{scope}[font=\tiny]
  \begin{scope}
    \coordinate (L) at (0,0);
    \coordinate (R) at (2,0);
    \coordinate (ML) at (0.5, 0);
    \coordinate (MR) at (1.5, 0);
    \draw[->-] (L) -- (ML);
    \draw[->-] (MR) -- (R) node[right] {$a+b$};
    \draw[->-] (ML).. controls + (0.4, 0.4) and +(-0.2, 0.4) .. (MR)
    node[above, pos=0.7 ] {$a$} node[below, pos =0.3] {$\dotnewtoni[n]$};
    \draw[->-] (ML).. controls +(0.2, -0.4) and +(-0.4, -0.4) .. (MR)
    node[below, pos =0.3] {$b$} node[below, pos =0.75] {$\dotnewtoni[0]$};
  \end{scope}  
 \begin{scope}[yshift = -1.3cm]
    \coordinate (LB) at (0,0);
    \coordinate (RB) at (2,0);
    \draw[->-] (LB) -- (RB);
  \end{scope}  
  \draw (R) -- (RB);
  \draw (L) -- (LB);
  \draw[thick] (ML) .. controls +(0, -1) and +(0, -1) .. (MR);
\end{scope}

}} 
  + \partwo{n}\cdot \ \NB{\tikz[font=\tiny]{\begin{scope}[font=\tiny]
  \begin{scope}
    \coordinate (L) at (0,0);
    \coordinate (R) at (2,0);
    \coordinate (ML) at (0.5, 0);
    \coordinate (MR) at (1.5, 0);
    \draw[->-] (L) -- (ML);
    \draw[->-] (MR) -- (R) node[right] {$a+b$};
    \draw[->-] (ML).. controls + (0.4, 0.4) and +(-0.2, 0.4) .. (MR)
    node[above, pos=0.7 ] {$a$} node[below, pos =0.3] {$\dotnewtoni[0]$};
    \draw[->-] (ML).. controls +(0.2, -0.4) and +(-0.4, -0.4) .. (MR)
    node[below, pos =0.3] {$b$} node[below, pos =0.75] {$\dotnewtoni[n]$};
  \end{scope}  
 \begin{scope}[yshift = -1.3cm]
    \coordinate (LB) at (0,0);
    \coordinate (RB) at (2,0);
    \draw[->-] (LB) -- (RB);
  \end{scope}  
  \draw (R) -- (RB);
  \draw (L) -- (LB);
  \draw[thick] (ML) .. controls +(0, -1) and +(0, -1) .. (MR);
\end{scope}

}}  \\
  &+{s} \sum_{k+\ell=n} \NB{\tikz[font=\tiny]{\begin{scope}[font=\tiny]
  \begin{scope}
    \coordinate (L) at (0,0);
    \coordinate (R) at (2,0);
    \coordinate (ML) at (0.5, 0);
    \coordinate (MR) at (1.5, 0);
    \draw[->-] (L) -- (ML);
    \draw[->-] (MR) -- (R) node[right] {$a+b$};
    \draw[->-] (ML).. controls + (0.4, 0.4) and +(-0.2, 0.4) .. (MR)
    node[above, pos=0.7 ] {$a$} node[below, pos =0.3] {$\dotnewtoni[k]$};
    \draw[->-] (ML).. controls +(0.2, -0.4) and +(-0.4, -0.4) .. (MR)
    node[below, pos =0.3] {$b$} node[below, pos =0.75] {$\dotnewtoni[\ell]$};
  \end{scope}  
 \begin{scope}[yshift = -1.3cm]
    \coordinate (LB) at (0,0);
    \coordinate (RB) at (2,0);
    \draw[->-] (LB) -- (RB);
  \end{scope}  
  \draw (R) -- (RB);
  \draw (L) -- (LB);
  \draw[thick] (ML) .. controls +(0, -1) and +(0, -1) .. (MR);
\end{scope}

}}
  \end{split}
  \\
\begin{split}\label{eq:Ln-on-digcap}
  \Ln\left( \NB{\tikz[font=\tiny]{\begin{scope}
  \begin{scope}
    \coordinate (L) at (0,0);
    \coordinate (R) at (2,0);
    \coordinate (ML) at (0.5, 0);
    \coordinate (MR) at (1.5, 0);
    \draw[->-] (L) -- (ML);
    \draw[->-] (MR) -- (R) node[right] {$a+b$};
    \draw[->-] (ML).. controls + (0.4, 0.4) and +(-0.2, 0.4) .. (MR)
    node[above, midway] {$a$};
    \draw[->-] (ML).. controls + (0.2, -0.4) and +(-0.4, -0.4) .. (MR)
    node[below, midway] {$b$};
  \end{scope}  
 \begin{scope}[yshift = 1.3cm]
    \coordinate (LB) at (0,0);
    \coordinate (RB) at (2,0);
    \draw (LB) -- (RB);
  \end{scope}  
  \draw (R) -- (RB);
  \draw (L) -- (LB);
  \draw[thick] (ML) .. controls +(0, 1) and +(0, 1) .. (MR);
\end{scope}
}}\right)\ =\
  &-  \parone{n}\cdot\ \NB{\tikz[font=\tiny]{\begin{scope}[font =\tiny]
  \begin{scope}
    \coordinate (L) at (0,0);
    \coordinate (R) at (2,0);
    \coordinate (ML) at (0.5, 0);
    \coordinate (MR) at (1.5, 0);
    \draw[->-] (L) -- (ML);
    \draw[->-] (MR) -- (R) node[right] {$a+b$};
    \draw[->-] (ML).. controls + (0.4, 0.4) and +(-0.2, 0.4) .. (MR)
    node[above, pos =0.7] {$a$}     node[above, pos =0.3] {$\dotnewtoni[n]$};
    \draw[->-] (ML).. controls + (0.2, -0.4) and +(-0.4, -0.4) .. (MR)
    node[left, pos =0.3] {$b$} node[above, pos =0.7, yshift = -0.5mm] {$\dotnewtoni[0]$};
  \end{scope}  
 \begin{scope}[yshift = 1.3cm]
    \coordinate (LB) at (0,0);
    \coordinate (RB) at (2,0);
    \draw (LB) -- (RB);
  \end{scope}  
  \draw (R) -- (RB);
  \draw (L) -- (LB);
  \draw[thick] (ML) .. controls +(0, 1) and +(0, 1) .. (MR);
\end{scope}

}}  
  \!\!\!\!- \partwo{n}\cdot \ \NB{\tikz[font=\tiny]{\begin{scope}[font =\tiny]
  \begin{scope}
    \coordinate (L) at (0,0);
    \coordinate (R) at (2,0);
    \coordinate (ML) at (0.5, 0);
    \coordinate (MR) at (1.5, 0);
    \draw[->-] (L) -- (ML);
    \draw[->-] (MR) -- (R) node[right] {$a+b$};
    \draw[->-] (ML).. controls + (0.4, 0.4) and +(-0.2, 0.4) .. (MR)
    node[above, pos =0.7] {$a$}     node[above, pos =0.3] {$\dotnewtoni[0]$};
    \draw[->-] (ML).. controls + (0.2, -0.4) and +(-0.4, -0.4) .. (MR)
    node[left, pos =0.3] {$b$} node[above, pos =0.7, yshift = -0.5mm] {$\dotnewtoni[n]$};
  \end{scope}  
 \begin{scope}[yshift = 1.3cm]
    \coordinate (LB) at (0,0);
    \coordinate (RB) at (2,0);
    \draw (LB) -- (RB);
  \end{scope}  
  \draw (R) -- (RB);
  \draw (L) -- (LB);
  \draw[thick] (ML) .. controls +(0, 1) and +(0, 1) .. (MR);
\end{scope}

}}  \\
  &+\bar{s}\sum_{k+\ell=n} \NB{\tikz[font=\tiny]{\begin{scope}[font =\tiny]
  \begin{scope}
    \coordinate (L) at (0,0);
    \coordinate (R) at (2,0);
    \coordinate (ML) at (0.5, 0);
    \coordinate (MR) at (1.5, 0);
    \draw[->-] (L) -- (ML);
    \draw[->-] (MR) -- (R) node[right] {$a+b$};
    \draw[->-] (ML).. controls + (0.4, 0.4) and +(-0.2, 0.4) .. (MR)
    node[above, pos =0.7] {$a$}     node[above, pos =0.3] {$\dotnewtoni[k]$};
    \draw[->-] (ML).. controls + (0.2, -0.4) and +(-0.4, -0.4) .. (MR)
    node[left, pos =0.3] {$b$} node[above, pos =0.7, yshift = -0.5mm] {$\dotnewtoni[\ell]$};
  \end{scope}  
 \begin{scope}[yshift = 1.3cm]
    \coordinate (LB) at (0,0);
    \coordinate (RB) at (2,0);
    \draw (LB) -- (RB);
  \end{scope}  
  \draw (R) -- (RB);
  \draw (L) -- (LB);
  \draw[thick] (ML) .. controls +(0, 1) and +(0, 1) .. (MR);
\end{scope}

}} 
  \end{split}
  \\
\begin{split} \label{eq:Ln-on-zip}
  \Ln\left( \NB{\tikz[font=\tiny]{\begin{scope}
  \begin{scope}
    \coordinate (L1) at (0.2,0.4);
    \coordinate (L2) at (0,0);
    \coordinate (R1) at (2.2,0.4);
    \coordinate (R2) at (2,0);
    \coordinate (ML) at (0.6, 0.2);
    \coordinate (MR) at (1.6, 0.2);
    \draw[->-] (ML) -- (MR) node[above, midway] {$a+b$};
    \draw (MR) .. controls +(0, 0) and +(-0.3,0) .. (R1) ;
    \draw (MR) .. controls +(0, 0) and +(-0.3,0) .. (R2);
    \draw (L1) .. controls +( 0.3, 0) and +(0,0) .. (ML);
    \draw (L2) .. controls +( 0.3, 0) and +(0,0) .. (ML);
  \end{scope}  
 \begin{scope}[yshift = -1cm]
    \coordinate (L1B) at (0.2,0.4);
    \coordinate (L2B) at (0,0);
    \coordinate (R1B) at (2.2,0.4);
    \coordinate (R2B) at (2,0);
    \draw[->-] (L1B) .. controls +( 0, 0) and +(0,0) .. (R1B) node [right, pos
    = 1] {$a$};
    \draw[->-] (L2B) .. controls +( 0, 0) and +(0,0) .. (R2B) node [right, pos
    = 1] {$b$};
 \end{scope}  
  \draw (R1) -- (R1B);
  \draw (R2) -- (R2B);
  \draw (L1) -- (L1B);
  \draw (L2) -- (L2B);
  \draw[thick] (ML) .. controls +(0, -0.6) and +(0, -0.6) .. (MR);
\end{scope}

}}\right)\ =\
  &   \parone{n}\cdot\ \NB{\tikz[font=\tiny]{\begin{scope}
  \begin{scope}
    \coordinate (L1) at (0.2,0.4);
    \coordinate (L2) at (0,0);
    \coordinate (R1) at (2.2,0.4);
    \coordinate (R2) at (2,0);
    \coordinate (ML) at (0.6, 0.2);
    \coordinate (MR) at (1.6, 0.2);
    \draw[->-] (ML) -- (MR) node[above, midway] {$a+b$};
    \draw (MR) .. controls +(0, 0) and +(-0.3,0) .. (R1) ;
    \draw (MR) .. controls +(0, 0) and +(-0.3,0) .. (R2);
    \draw (L1) .. controls +( 0.3, 0) and +(0,0) .. (ML);
    \draw (L2) .. controls +( 0.3, 0) and +(0,0) .. (ML);
  \end{scope}  
 \begin{scope}[yshift = -1cm]
    \coordinate (L1B) at (0.2,0.4);
    \coordinate (L2B) at (0,0);
    \coordinate (R1B) at (2.2,0.4);
    \coordinate (R2B) at (2,0);
    \draw[->-] (L1B) .. controls +( 0, 0) and +(0,0) .. (R1B) node [right, pos
    = 1] {$a$};
    \draw[->-] (L2B) .. controls +( 0, 0) and +(0,0) .. (R2B) node [right, pos
    = 1] {$b$}   node [pos = 0.2, above] {$\dotnewtoni[0]$};
 \end{scope}  
  \draw (R1) -- (R1B) node [pos = 0.2, left] {$\dotnewtoni[n]$};
  \draw (R2) -- (R2B);
  \draw (L1) -- (L1B);
  \draw (L2) -- (L2B);
  \draw[thick] (ML) .. controls +(0, -0.6) and +(0, -0.6) .. (MR);
\end{scope}

}} 
  +  \partwo{n}\cdot \ \NB{\tikz[font=\tiny]{\begin{scope}
  \begin{scope}
    \coordinate (L1) at (0.2,0.4);
    \coordinate (L2) at (0,0);
    \coordinate (R1) at (2.2,0.4);
    \coordinate (R2) at (2,0);
    \coordinate (ML) at (0.6, 0.2);
    \coordinate (MR) at (1.6, 0.2);
    \draw[->-] (ML) -- (MR) node[above, midway] {$a+b$};
    \draw (MR) .. controls +(0, 0) and +(-0.3,0) .. (R1) ;
    \draw (MR) .. controls +(0, 0) and +(-0.3,0) .. (R2);
    \draw (L1) .. controls +( 0.3, 0) and +(0,0) .. (ML);
    \draw (L2) .. controls +( 0.3, 0) and +(0,0) .. (ML);
  \end{scope}  
 \begin{scope}[yshift = -1cm]
    \coordinate (L1B) at (0.2,0.4);
    \coordinate (L2B) at (0,0);
    \coordinate (R1B) at (2.2,0.4);
    \coordinate (R2B) at (2,0);
    \draw[->-] (L1B) .. controls +( 0, 0) and +(0,0) .. (R1B) node [right, pos
    = 1] {$a$};
    \draw[->-] (L2B) .. controls +( 0, 0) and +(0,0) .. (R2B) node [right, pos
    = 1] {$b$}   node [pos = 0.2, above] {$\dotnewtoni[n]$};
 \end{scope}  
  \draw (R1) -- (R1B) node [pos = 0.2, left] {$\dotnewtoni[0]$};
  \draw (R2) -- (R2B);
  \draw (L1) -- (L1B);
  \draw (L2) -- (L2B);
  \draw[thick] (ML) .. controls +(0, -0.6) and +(0, -0.6) .. (MR);
\end{scope}

}}  \\
  &-\bar{s} \sum_{k+\ell=n} \NB{\tikz[font=\tiny]{\begin{scope}
  \begin{scope}
    \coordinate (L1) at (0.2,0.4);
    \coordinate (L2) at (0,0);
    \coordinate (R1) at (2.2,0.4);
    \coordinate (R2) at (2,0);
    \coordinate (ML) at (0.6, 0.2);
    \coordinate (MR) at (1.6, 0.2);
    \draw[->-] (ML) -- (MR) node[above, midway] {$a+b$};
    \draw (MR) .. controls +(0, 0) and +(-0.3,0) .. (R1) ;
    \draw (MR) .. controls +(0, 0) and +(-0.3,0) .. (R2);
    \draw (L1) .. controls +( 0.3, 0) and +(0,0) .. (ML);
    \draw (L2) .. controls +( 0.3, 0) and +(0,0) .. (ML);
  \end{scope}  
 \begin{scope}[yshift = -1cm]
    \coordinate (L1B) at (0.2,0.4);
    \coordinate (L2B) at (0,0);
    \coordinate (R1B) at (2.2,0.4);
    \coordinate (R2B) at (2,0);
    \draw[->-] (L1B) .. controls +( 0, 0) and +(0,0) .. (R1B) node [right, pos
    = 1] {$a$};
    \draw[->-] (L2B) .. controls +( 0, 0) and +(0,0) .. (R2B) node [right, pos
    = 1] {$b$}   node [pos = 0.2, above] {$\dotnewtoni[\ell]$};
 \end{scope}  
  \draw (R1) -- (R1B) node [pos = 0.2, left] {$\dotnewtoni[k]$};
  \draw (R2) -- (R2B);
  \draw (L1) -- (L1B);
  \draw (L2) -- (L2B);
  \draw[thick] (ML) .. controls +(0, -0.6) and +(0, -0.6) .. (MR);
\end{scope}

}}
  \end{split}
  \\
\begin{split}\label{eq:Ln-on-unzip}
  \Ln\left( \NB{\tikz[font=\tiny]{\begin{scope}
  \begin{scope}
    \coordinate (L1) at (0.2,0.4);
    \coordinate (L2) at (0,0);
    \coordinate (R1) at (2.2,0.4);
    \coordinate (R2) at (2,0);
    \coordinate (ML) at (0.6, 0.2);
    \coordinate (MR) at (1.6, 0.2);
    \draw[->-] (ML) -- (MR) node[below, midway] {$a+b$};
    \draw (MR) .. controls +(0, 0) and +(-0.3,0) .. (R1) ;
    \draw (MR) .. controls +(0, 0) and +(-0.3,0) .. (R2);
    \draw (L1) .. controls +( 0.3, 0) and +(0,0) .. (ML);
    \draw (L2) .. controls +( 0.3, 0) and +(0,0) .. (ML);
  \end{scope}  
 \begin{scope}[yshift = 1cm]
    \coordinate (L1B) at (0.2,0.4);
    \coordinate (L2B) at (0,0);
    \coordinate (R1B) at (2.2,0.4);
    \coordinate (R2B) at (2,0);
    \draw[->-] (L1B) .. controls +( 0, 0) and +(0,0) .. (R1B) node [left, pos
    = 0] {$a$};
    \draw[->-] (L2B) .. controls +( 0, 0) and +(0,0) .. (R2B) node [left, pos
    = 0] {$b$};
 \end{scope}  
  \draw (R1) -- (R1B);
  \draw (R2) -- (R2B);
  \draw (L1) -- (L1B);
  \draw (L2) -- (L2B);
  \draw[thick] (ML) .. controls +(0, 0.6) and +(0, 0.6) .. (MR);
\end{scope}
}}\right)\ =\
  & - \parone{n}\cdot\ \NB{\tikz[font=\tiny]{\begin{scope}
  \begin{scope}
    \coordinate (L1) at (0.2,0.4);
    \coordinate (L2) at (0,0);
    \coordinate (R1) at (2.2,0.4);
    \coordinate (R2) at (2,0);
    \coordinate (ML) at (0.6, 0.2);
    \coordinate (MR) at (1.6, 0.2);
    \draw[->-] (ML) -- (MR) node[below, midway] {$a+b$};
    \draw (MR) .. controls +(0, 0) and +(-0.3,0) .. (R1) ;
    \draw (MR) .. controls +(0, 0) and +(-0.3,0) .. (R2);
    \draw (L1) .. controls +( 0.3, 0) and +(0,0) .. (ML);
    \draw (L2) .. controls +( 0.3, 0) and +(0,0) .. (ML) node [above,
    pos=0.2] {$\dotnewtoni[0]$};
  \end{scope}  
 \begin{scope}[yshift = 1cm]
    \coordinate (L1B) at (0.2,0.4);
    \coordinate (L2B) at (0,0);
    \coordinate (R1B) at (2.2,0.4);
    \coordinate (R2B) at (2,0);
    \draw[->-] (L1B) .. controls +( 0, 0) and +(0,0) .. (R1B) node [left, pos
    = 0] {$a$} node
  [pos=0.8, below] {$\dotnewtoni[n]$};
    \draw[->-] (L2B) .. controls +( 0, 0) and +(0,0) .. (R2B) node [left, pos
    = 0] {$b$};
 \end{scope}  
  \draw (R1) -- (R1B);
  \draw (R2) -- (R2B);
  \draw (L1) -- (L1B);
  \draw (L2) -- (L2B);
  \draw[thick] (ML) .. controls +(0, 0.6) and +(0, 0.6) .. (MR);
\end{scope}

}} 
   -  \partwo{n}\cdot \ \NB{\tikz[font=\tiny]{\begin{scope}
  \begin{scope}
    \coordinate (L1) at (0.2,0.4);
    \coordinate (L2) at (0,0);
    \coordinate (R1) at (2.2,0.4);
    \coordinate (R2) at (2,0);
    \coordinate (ML) at (0.6, 0.2);
    \coordinate (MR) at (1.6, 0.2);
    \draw[->-] (ML) -- (MR) node[below, midway] {$a+b$};
    \draw (MR) .. controls +(0, 0) and +(-0.3,0) .. (R1) ;
    \draw (MR) .. controls +(0, 0) and +(-0.3,0) .. (R2);
    \draw (L1) .. controls +( 0.3, 0) and +(0,0) .. (ML);
    \draw (L2) .. controls +( 0.3, 0) and +(0,0) .. (ML) node [above,
    pos=0.2] {$\dotnewtoni[n]$};
  \end{scope}  
 \begin{scope}[yshift = 1cm]
    \coordinate (L1B) at (0.2,0.4);
    \coordinate (L2B) at (0,0);
    \coordinate (R1B) at (2.2,0.4);
    \coordinate (R2B) at (2,0);
    \draw[->-] (L1B) .. controls +( 0, 0) and +(0,0) .. (R1B) node [left, pos
    = 0] {$a$} node
  [pos=0.8, below] {$\dotnewtoni[0]$};
    \draw[->-] (L2B) .. controls +( 0, 0) and +(0,0) .. (R2B) node [left, pos
    = 0] {$b$};
 \end{scope}  
  \draw (R1) -- (R1B);
  \draw (R2) -- (R2B);
  \draw (L1) -- (L1B);
  \draw (L2) -- (L2B);
  \draw[thick] (ML) .. controls +(0, 0.6) and +(0, 0.6) .. (MR);
\end{scope}

}}  \\
  &-s \sum_{k+\ell=n} \NB{\tikz[font=\tiny]{\begin{scope}
  \begin{scope}
    \coordinate (L1) at (0.2,0.4);
    \coordinate (L2) at (0,0);
    \coordinate (R1) at (2.2,0.4);
    \coordinate (R2) at (2,0);
    \coordinate (ML) at (0.6, 0.2);
    \coordinate (MR) at (1.6, 0.2);
    \draw[->-] (ML) -- (MR) node[below, midway] {$a+b$};
    \draw (MR) .. controls +(0, 0) and +(-0.3,0) .. (R1) ;
    \draw (MR) .. controls +(0, 0) and +(-0.3,0) .. (R2);
    \draw (L1) .. controls +( 0.3, 0) and +(0,0) .. (ML);
    \draw (L2) .. controls +( 0.3, 0) and +(0,0) .. (ML) node [above,
    pos=0.2] {$\dotnewtoni[k]$};
  \end{scope}  
 \begin{scope}[yshift = 1cm]
    \coordinate (L1B) at (0.2,0.4);
    \coordinate (L2B) at (0,0);
    \coordinate (R1B) at (2.2,0.4);
    \coordinate (R2B) at (2,0);
    \draw[->-] (L1B) .. controls +( 0, 0) and +(0,0) .. (R1B) node [left, pos
    = 0] {$a$} node
  [pos=0.8, below] {$\dotnewtoni[\ell]$};
    \draw[->-] (L2B) .. controls +( 0, 0) and +(0,0) .. (R2B) node [left, pos
    = 0] {$b$};
 \end{scope}  
  \draw (R1) -- (R1B);
  \draw (R2) -- (R2B);
  \draw (L1) -- (L1B);
  \draw (L2) -- (L2B);
  \draw[thick] (ML) .. controls +(0, 0.6) and +(0, 0.6) .. (MR);
\end{scope}

}} 
  \end{split}
  \\
\begin{split}   \label{eq:Ln-on-cup}
  \Ln\left( \NB{\tikz[font=\tiny, scale=1.2]{\begin{scope}
  \draw (0,0) arc (180 :0: 0.5cm and 0.2cm) node[above, pos =
  0.5] {$a$};
  \draw[very thin] (0,0) arc (180 :0: 0.5cm and -0.6cm);
  \draw (0,0) arc (180 :0: 0.5cm and -0.2cm);
\end{scope}

}}\right)\ =\
  &  \parthree{n}\cdot\ \NB{\tikz[font=\tiny, scale=1.2]{\begin{scope}
  \draw (0,0) arc (180 :0: 0.5cm and 0.2cm) node[above, pos =
  0.5] {$a$};
  \draw[very thin] (0,0) arc (180 :0: 0.5cm and -0.6cm) node[pos=0.5,
  above] {$\dotnewtoni[0] \wdotnewtoni[n]$};
  \draw (0,0) arc (180 :0: 0.5cm and -0.2cm);
\end{scope}

}} 
  -  \parthree{n} \cdot\ \NB{\tikz[font=\tiny, scale=1.2]{\begin{scope}
  \draw (0,0) arc (180 :0: 0.5cm and 0.2cm) node[above, pos =
  0.5] {$a$};
  \draw[very thin] (0,0) arc (180 :0: 0.5cm and -0.6cm) node[pos=0.5,
  above] {$\dotnewtoni[n] \wdotnewtoni[0]$};
  \draw (0,0) arc (180 :0: 0.5cm and -0.2cm);
\end{scope}

}}
  +\frac{1}{2} \sum_{k+\ell=n} \NB{\tikz[font=\tiny, scale=1.2]{\begin{scope}
  \draw (0,0) arc (180 :0: 0.5cm and 0.2cm) node[above, pos =
  0.5] {$a$};
  \draw[very thin] (0,0) arc (180 :0: 0.5cm and -0.6cm) node[pos=0.5,
  above] {$\dotnewtoni[k] \wdotnewtoni[\ell]$};
  \draw (0,0) arc (180 :0: 0.5cm and -0.2cm);
\end{scope}

}}
  \end{split}
  \\[3pt]
\begin{split} \label{eq:Ln-on-cap}
  \Ln\left( \NB{\tikz[font=\tiny, scale=1.2]{\begin{scope}
  \draw (0,0) arc (180 :0: 0.5cm and 0.2cm);
  \draw[very thin] (0,0) arc (180 :0: 0.5cm and 0.6cm);
  \draw (0,0) arc (180 :0: 0.5cm and -0.2cm)node[ below, pos =
  0.5] {$a$};
\end{scope}

}}\right)\ =\
  &-   \parthree{n}\cdot\ \NB{\tikz[font=\tiny, scale=1.2]{\begin{scope}
  \draw (0,0) arc (180 :0: 0.5cm and 0.2cm);
  \draw[very thin] (0,0) arc (180 :0: 0.5cm and 0.6cm) node[pos=0.5,
  below] {$\dotnewtoni[0] \wdotnewtoni[n]$};
  \draw (0,0) arc (180 :0: 0.5cm and -0.2cm)node[ below, pos =
  0.5] {$a$};
\end{scope}

}} 
  +  \parthree{n}\cdot\ \NB{\tikz[font=\tiny, scale=1.2]{\begin{scope}
  \draw (0,0) arc (180 :0: 0.5cm and 0.2cm);
  \draw[very thin] (0,0) arc (180 :0: 0.5cm and 0.6cm) node[pos=0.5,
  below] {$\dotnewtoni[n] \wdotnewtoni[0]$};
  \draw (0,0) arc (180 :0: 0.5cm and -0.2cm)node[ below, pos =
  0.5] {$a$};
\end{scope}

}} 
 +\frac{1}{2} \sum_{k+\ell=n}\NB{\tikz[font=\tiny,
 scale=1.2]{\begin{scope}
  \draw (0,0) arc (180 :0: 0.5cm and 0.2cm);
  \draw[very thin] (0,0) arc (180 :0: 0.5cm and 0.6cm) node[pos=0.5,
  below] {$\dotnewtoni[k] \wdotnewtoni[\ell]$};
  \draw (0,0) arc (180 :0: 0.5cm and -0.2cm)node[ below, pos =
  0.5] {$a$};
\end{scope}

}}
 \end{split}
\end{gather}
Finally extend the action of $\Ln$ on all foams in good position by imposing the
Leibniz rule.

On the one hand, the fact that $(\parone{n})_{n\in \NNN}, (\partwo{n})_{n\in \NNN}$ and $(\parthree{n})_{n\in \NNN}$ are Witt sequences is key to proving that these definitions give indeed an action of the Witt algebra (see proof of Lemma~\ref{lem:witt-acts-good-position}). On the other hand, the relations between various parameters appearing in equations \eqref{eq:Ln-on-pol} -- \eqref{eq:Ln-on-cap} are there to ensure compatibility of this action with the topology of foams. In particular we want these definitions to be invariant under ambient isotopies of foams. For instance we
would like that: 
\[
\Ln \left( \NB{\tikz[font=\tiny, scale =0.8]{\begin{scope}
  \begin{scope}[yshift=1cm]
    \coordinate (L1T) at (0.2,0.4);
    \coordinate (L2T) at (0,0);
    \coordinate (R1T) at (2.2,0.4);
    \coordinate (R2T) at (2,0);
    \coordinate (MLT) at (0.6, 0.2);
    \coordinate (MRT) at (2.4, 0.2);
    \draw[->-] (MLT) -- (MRT) node[above, midway] {$a+b$};
    \draw[->-] (L1T) .. controls +( 0.3, 0) and +(0,0) .. (MLT)  node[pos =0, left] {$b$};
    \draw[->-] (L2T) .. controls +( 0.3, 0) and +(0,0) .. (MLT)
    node[pos =0, left] {$a$};
  \end{scope}

  \begin{scope}[yshift=0cm]
    \coordinate (L1) at (0.2,0.4);
    \coordinate (L2) at (0,0);
    \coordinate (R1) at (2.2,0.4);
    \coordinate (R2) at (2,0);
    \coordinate (ML) at (0.6, 0.2);
    \coordinate (MR1) at (1.3, 0.2);
    \coordinate (MR2) at (2, 0.2);
    \coordinate (MR) at (2.4, 0.2);
    \draw[densely dotted] (ML) -- (MR1);
    \draw[densely dotted] (MR2) -- (MR);
    \draw[densely dotted] (L1) .. controls +( 0.3, 0) and +(0,0) .. (ML);
    \draw[densely dotted] (L2) .. controls +( 0.3, 0) and +(0,0)
    .. (ML);
    \draw[densely dotted] (MR1) .. controls +( 0.3, -0.2) and +(-0.3,-0.2) .. (MR2);
    \draw[densely dotted] (MR1) .. controls +( 0.3, 0.2) and +(-0.3 ,0.2) .. (MR2);

  \end{scope}

  \begin{scope}[yshift = -1cm]
    \coordinate (L1B) at (0.2,0.4);
    \coordinate (L2B) at (0,0);
    \coordinate (R1B) at (2.2,0.4);
    \coordinate (R2B) at (2,0);
    \coordinate (MR2B) at (2, 0.2);
    \coordinate (MR3B) at (2.4, 0.2);
    \draw[->-] (L1B) .. controls +( 1.2, 0) and +(0,0) .. (MR2B);
    \draw[->-] (L2B) .. controls +( 1.2, 0) and +(0,0) .. (MR2B);
    \draw[->-] (MR2B) -- (MR3B);
 \end{scope}  
  \draw (MRT) -- (MR3B);
  \draw (L1T) -- (L1B);
  \draw (L2T) -- (L2B);
  \draw[thick] (MLT) -- (ML).. controls +(0, -0.6) and +(0, -0.6)
  .. (MR1) .. controls +(0, 0.6) and +(0, 0.6) .. (MR2) -- (MR2B);
\end{scope}

}}\right) = 0.
\]
The parameters are chosen so that such relations hold. Note however
that we do not have a full list of moves to be checked to ensure
consistency of these definitions. Instead we will use the power of the
universal construction (see Proposition~\ref{prop:witt-acts-good-pos}
and the proof of Theorem~\ref{thm:wittaction-spherical}).

\begin{lem}\label{lem:witt-acts-good-position}
  Mapping $\LLn$ to $\Ln$ for all $n\in \NNN$ defines an action of $\ourwitt$ on the
  $\scalars$-module generated by \spherical{} foams in good position.
\end{lem}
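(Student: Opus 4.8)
The plan is to verify the defining relations of $\ourwitt$ by reducing them to a check on the basic foams, and then to recognise each $\Ln$, restricted to a fixed basic foam carrying an arbitrary decoration, as the twist of the $\ourwitt$-action on polynomials of Section~\ref{sec:Witt-alg-char-zero} by an $\ourwitt$-flat sequence, after which Lemma~\ref{lem:alg-twist} finishes the job.

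First I would record that, on the basic foams $\eqref{eq:Ln-on-pol}$--$\eqref{eq:Ln-on-cap}$, each $\Ln$ outputs an $\scalars$-linear combination of basic foams of the same boundary bearing (products of) power sums; in particular no saddle is ever created, so the span of \spherical{} foams in good position is preserved, and $\Ln$ obeys the Leibniz rule for composition by construction. Since every foam in good position is, by definition, a composition of basic foams, it suffices to check $[\Ln[n],\Ln[m]]=(n-m)\Ln[n+m]$ on each basic foam: the commutator of two composition-derivations is again a composition-derivation, so $[\Ln[n],\Ln[m]]-(n-m)\Ln[n+m]$ is a derivation for composition, and a derivation vanishing on all basic foams vanishes on every composition of them, hence on the whole module. (Note that this conclusion concerns an operator which is \emph{zero} on generators, so it is insensitive to the choice of decomposition; the genuine independence of the individual $\Ln$ from the chosen decomposition is the separate ``consistency'' point, settled later via the universal construction in Proposition~\ref{prop:witt-acts-good-pos} and the proof of Theorem~\ref{thm:wittaction-spherical}, and is not needed here.)

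Two families of basic foams are immediate. On a polynomial-decoration foam $\eqref{eq:Ln-on-pol}$ the operator $\Ln$ acts on the decorating symmetric polynomial through the $\ourwitt$-action $\eqref{eq:witt-on-poly}$, already known from Section~\ref{sec:Witt-alg-char-zero} to be an $\ourwitt$-action; and on the two (co)associativity foams $\Ln$ acts by $0$ by $\eqref{eq:Ln-on-MP}$, so both sides vanish. For each of the remaining six basic foams $B$ -- digon-cup, digon-cap, zip, unzip, cup, cap -- the ring $A$ of decorations of $B$ is an $\ourwitt$-module algebra for the action $\eqref{eq:witt-on-poly}$ on the ambient variables, and reading $\eqref{eq:Ln-on-dig-cup}$--$\eqref{eq:Ln-on-cap}$ in the notation of $\eqref{eq:7}$ and $\eqref{eq:18}$ exhibits a sequence $\tau^{B}=(\tau^{B}_n)_{n\in\NNN}$ in $A$ -- for the digon-cup, $\tau^{B}_n=\parone{n}\,\newtoni[n](\underline{x})+\partwo{n}\,\newtoni[n](\underline{y})+s\sum_{k+\ell=n}\newtoni[k](\underline{x})\,\newtoni[\ell](\underline{y})$ on the two thin facets $\underline{x},\underline{y}$, while for the cup the ``own'' and ``complement'' power sums $\dotnewtoni$, $\wdotnewtoni$ enter via $\eqref{eq:18}$ -- such that $\Ln(P\cdot B)=\bigl(\LLn\cdot P+\tau^{B}_n\,P\bigr)\cdot B$ for every decoration $P$. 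In other words $\Ln$ on ``$B$ with a decoration'' is the twisted operator $\Ln[n]^{(\tau^{B})}$ of $\eqref{eq:10}$, and by Lemma~\ref{lem:alg-twist} together with the ensuing remark on equivariant modules it remains only to see that each $\tau^{B}$ is $\ourwitt$-flat in the sense of $\eqref{eq:8}$.

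This flatness check is the heart of the argument and the only computation of substance. From $\LLn\cdot\newtoni[m]=-m\,\newtoni[n+m]$ -- an immediate consequence of the computation of $\LLn\cdot(x-y)$ in Section~\ref{sec:Witt-alg-char-zero}, valid equally for the ``own'', the ``complement'', and the total power sums -- one reads off that a sequence of the shape $\bigl(c_n\,\newtoni[n](\underline{x})\bigr)_{n}$ is $\ourwitt$-flat \emph{exactly} when $nc_n-mc_m=(n-m)c_{n+m}$, that is, exactly when $(c_n)_n$ is a Witt-sequence $\eqref{eq:5}$; this is precisely why Witt-sequences were defined that way, and it also dictates which instances of the identity (including those involving $\LLn[-1]$, for which $\parone{-1}=\partwo{-1}=\parthree{-1}=0$) are needed. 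The remaining summands of each $\tau^{B}$ -- the $s$-, $\bar{s}$-, and $\tfrac12$-parts -- equal $\scalars$-scalar multiples of finite sums of the $\ourwitt$-flat sequences $\bigl(h_n(x_i,x_j)\bigr)_n$, hence are $\ourwitt$-flat by the examples of Section~\ref{sec:twists} and the fact that $\ourwitt$-flat sequences form a $\scalars$-submodule (the invertibility of $2$ entering only through the $\tfrac12$). Since each $\tau^{B}$ is an $\scalars$-linear combination of these two kinds of pieces, it is $\ourwitt$-flat, and the lemma follows. The only real obstacle is therefore bookkeeping: in each of the six singular cases one must correctly match each power-sum decoration in $\eqref{eq:Ln-on-dig-cup}$--$\eqref{eq:Ln-on-cap}$ to its facet (or to the ``own''/``complement'' variable set) and keep the signs and the coefficients $s$, $\bar{s}$, $\tfrac12$ straight; once this is organised, every case collapses to the single statement that $\tau^{B}$ is $\ourwitt$-flat.
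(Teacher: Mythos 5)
Your proof is correct, but it takes a genuinely different route from the paper's. The paper proves the lemma by the same Leibniz reduction to basic foams and then verifies the bracket relation $[\Ln,\Ln[m]]=(n-m)\Ln[n+m]$ by a direct, case-by-case coefficient computation: it writes out $\Ln\circ\Ln[m]$ on the cup foam, splits the resulting sum over $i+j=m+n$ into three ranges, and checks each coefficient equals $\tfrac{n-m}{2}$, then declares the remaining five singular basic foams ``similar.'' You instead recognise each $\Ln$ on a decorated basic foam $B$ as the twisted operator $\Ln^{(\tau^B)}$ of \eqref{eq:10}, invoke Lemma~\ref{lem:alg-twist}, and reduce the entire check to the $\ourwitt$-flatness of the sequence $\tau^B$, which in turn decomposes into Witt-sequence multiples of power sums (flat by exactly the defining identity \eqref{eq:5}) plus scalar multiples of the flat sequences $\bigl(h_n(x_i,x_j)\bigr)_n$ from Section~\ref{sec:twists}. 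This buys a uniform treatment of all six singular basic foams, makes transparent why the Witt-sequence axiom was imposed on $\parone{},\partwo{},\parthree{}$, and re-uses machinery the paper already built. Interestingly, your route is exactly the one the paper itself adopts \emph{later}, in the proof of the $p$-DG statement (Proposition~\ref{prop:pDG-structure}), where the decorated digon-cup module $M$ over $A_{(a,b)}$ is introduced and the argument is phrased in terms of twists and Lemma~\ref{lem:twist-pDG-alg}; you have simply applied that idea one section earlier.

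One small caveat, which you do flag as ``bookkeeping'': when you view ``$B$ with a decoration'' as a rank-one module over ``the ring $A$ of decorations of $B$,'' the module of formal $\scalars$-linear combinations of decorated $B$'s (before any quotient) is free over the appropriate tensor product of per-facet decoration rings, but one should say precisely what $A$ is (it differs between the digon/zip family, where both thin facets carry independent decorations, and the cup/cap, where the ``own'' and ``complement'' variables via \eqref{eq:18} must be accounted for). The paper's later $p$-DG proof handles this by naming $A_{(a,N-a)}$ explicitly for cups and caps; it would strengthen your write-up to do the same here, but it is not a gap in the logic.
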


\begin{proof}
  We need to prove that $\Ln \circ \Ln[m] - \Ln[m]\circ \Ln[m]= (n-m) \Ln[n+m]$
  for any two $n,m$ in $\NNN$. Without loss of generality, we may fix
  $m$ and $n$ in $\NNN$ with $m\geq n$. Note that for any two \spherical{}
  foams $F$ and $G$ in good position,
  \begin{align}
    &[\Ln, \Ln[m]](F\circ G)
    = (\Ln\circ \Ln[m])(F\circ G)
      -  (\Ln[m] \circ \Ln[m])(F\circ G)\\
    &\qquad = \Ln(\Ln[m](F)\circ G + F\circ \Ln[m](G))
      - \Ln[m](\Ln(F)\circ G + F\circ \Ln(G)) \\
    &\qquad= (\Ln(\Ln[m](F))  - \Ln[m](\Ln(F)))\circ G + F\circ
      (\Ln(\Ln[m](G))  - \Ln[m](\Ln(G))) \\ & \qquad =
                                              ([\Ln, \Ln[m]](F))\circ
                                              G) + F\circ ([\Ln, \Ln[m]](G)) .
  \end{align}
  Thus $\Ln\circ \Ln[m] - \Ln[m] \circ \Ln[m]$ satisfies the Leibniz rule as well as
  $(n-m)\Ln[n+m]$. 
  
  Therefore, it is enough to check that the
  relations hold on basic webs. For traces of isotopies, this is
  trivial. For polynomials this follows from the fact that $\ourwitt$
  acts on the ring of symmetric polynomials defined in \eqref{eq:witt-on-poly}. The
  remaining basic foams to inspect are the zip, unzip, digon-cup,
  digon-cap, cap and cup foams. In all cases, this is a relatively
  straightforward computation (and all computations are similar). We
  treat the cup foam and leave the rest to the reader.

  \begin{align}
    \begin{split}
  &\Ln\circ\Ln[m]\left( \NB{\tikz[font=\tiny, scale=1.1]{}}\right)\ =\
 \Ln\left(    \parthree{m}  \cdot\ \NB{\tikz[font=\tiny, scale=1.1]{\begin{scope}
  \draw (0,0) arc (180 :0: 0.5cm and 0.2cm) node[above, pos =
  0.5] {$a$};
  \draw[very thin] (0,0) arc (180 :0: 0.5cm and -0.6cm) node[pos=0.5,
  above] {$\dotnewtoni[0] \wdotnewtoni[m]$};
  \draw (0,0) arc (180 :0: 0.5cm and -0.2cm);
\end{scope}

}}
  - \parthree{m} \cdot\ \NB{\tikz[font=\tiny, scale=1.1]{\begin{scope}
  \draw (0,0) arc (180 :0: 0.5cm and 0.2cm) node[above, pos =
  0.5] {$a$};
  \draw[very thin] (0,0) arc (180 :0: 0.5cm and -0.6cm) node[pos=0.5,
  above] {$\dotnewtoni[m] \wdotnewtoni[0]$};
  \draw (0,0) arc (180 :0: 0.5cm and -0.2cm);
\end{scope}

}} 
 +\frac{1}{2} \sum_{k+\ell=m}\NB{\tikz[font=\tiny,
    scale=1.1]{}}\right) \\
    &=
   -m\parthree{m} \cdot\ \NB{\tikz[font=\tiny, scale=1.3]{\begin{scope}
  \draw (0,0) arc (180 :0: 0.5cm and 0.2cm) node[above, pos =
  0.5] {$a$};
  \draw[very thin] (0,0) arc (180 :0: 0.5cm and -0.6cm) node[pos=0.5,
  above] {$\dotnewtoni[0] \wdotnewtoni[m+n]$};
  \draw (0,0) arc (180 :0: 0.5cm and -0.2cm);
\end{scope}

}}
  + m\parthree{m} \cdot\ \NB{\tikz[font=\tiny, scale=1.3]{\begin{scope}
  \draw (0,0) arc (180 :0: 0.5cm and 0.2cm) node[above, pos =
  0.5] {$a$};
  \draw[very thin] (0,0) arc (180 :0: 0.5cm and -0.6cm) node[pos=0.5,
  above] {$\dotnewtoni[m+n] \wdotnewtoni[0]$};
  \draw (0,0) arc (180 :0: 0.5cm and -0.2cm);
\end{scope}

}} 
 -\frac{1}{2} \sum_{k+\ell=m}k\cdot\ \NB{\tikz[font=\tiny,
      scale=1.3]{\begin{scope}
  \draw (0,0) arc (180 :0: 0.5cm and 0.2cm) node[above, pos =
  0.5] {$a$};
  \draw[very thin] (0,0) arc (180 :0: 0.5cm and -0.6cm) node[pos=0.5,
  above] {$\dotnewtoni[k+n] \wdotnewtoni[\ell]$};
  \draw (0,0) arc (180 :0: 0.5cm and -0.2cm);
\end{scope}

}}
+\ell\cdot\ \NB{\tikz[font=\tiny,
    scale=1.3]{\begin{scope}
  \draw (0,0) arc (180 :0: 0.5cm and 0.2cm) node[above, pos =
  0.5] {$a$};
  \draw[very thin] (0,0) arc (180 :0: 0.5cm and -0.6cm) node[pos=0.5,
  above] {$\dotnewtoni[k] \wdotnewtoni[\ell+n]$};
  \draw (0,0) arc (180 :0: 0.5cm and -0.2cm);
\end{scope}

}}
    \\ 
    &\quad  + \ \text{terms symmetric in $n$ and $m$.}
\end{split}
  \end{align}

The reader may have noticed that if $m=-1$, $\dotnewtoni[m] =0$  and
therefore is it is not true in this case that $L_n(\dotnewtoni[m]) =
-m\dotnewtoni[m+n]$. However since $\nu_{-1}=0$, the identity is
also correct in this case. Similar phenomena happens when $m=0$.
  
  Hence we obtain: \begin{align} \label{eqcomm}
    \begin{split}
    [\Ln, \Ln[m]] \left( \NB{\tikz[font=\tiny, scale=1.1]{}}\right)\ =\
     (n\parthree{n} - m\parthree{m})\cdot\   
    \NB{\tikz[font=\tiny, scale=1.3]{}}\    
    -(n\parthree{n} - m\parthree{m})\cdot\   
    \NB{\tikz[font=\tiny, scale=1.3]{}} \\ 
    \quad-\frac{1}{2} \sum_{k+\ell=m}k\cdot\ \NB{\tikz[font=\tiny,
      scale=1.3]{}}
      +\ell\cdot\ \NB{\tikz[font=\tiny,
      scale=1.3]{}} \ +
      \frac{1}{2} \sum_{k+\ell=n}k\cdot\ \NB{\tikz[font=\tiny,
      scale=1.3]{\begin{scope}
  \draw (0,0) arc (180 :0: 0.5cm and 0.2cm) node[above, pos =
  0.5] {$a$};
  \draw[very thin] (0,0) arc (180 :0: 0.5cm and -0.6cm) node[pos=0.5,
  above] {$\dotnewtoni[k+m] \wdotnewtoni[\ell]$};
  \draw (0,0) arc (180 :0: 0.5cm and -0.2cm);
\end{scope}

}}
      +\ell\cdot\ \NB{\tikz[font=\tiny,
      scale=1.3]{\begin{scope}
  \draw (0,0) arc (180 :0: 0.5cm and 0.2cm) node[above, pos =
  0.5] {$a$};
  \draw[very thin] (0,0) arc (180 :0: 0.5cm and -0.6cm) node[pos=0.5,
  above] {$\dotnewtoni[k] \wdotnewtoni[\ell+m]$};
  \draw (0,0) arc (180 :0: 0.5cm and -0.2cm);
\end{scope}

}}. 
  \end{split}
  \end{align}
  One has $(n\parthree{n} - m\parthree{m}) =
  (n-m)\parthree{n+m}$ because $(\parthree{n})_{n\in\NNN}$ is a Witt-sequence.
Let us now deal the terms in the second
  line of \eqref{eqcomm}. This
  is a linear combination of   
  \[
x_{ij}:=\NB{\tikz[font=\tiny,
      scale=1.3]{\begin{scope}
  \draw (0,0) arc (180 :0: 0.5cm and 0.2cm) node[above, pos =
  0.5] {$a$};
  \draw[very thin] (0,0) arc (180 :0: 0.5cm and -0.6cm) node[pos=0.5,
  above] {$\dotnewtoni[i] \wdotnewtoni[j]$};
  \draw (0,0) arc (180 :0: 0.5cm and -0.2cm);
\end{scope}

}} 
  \]
  where $i$ and $j$ are non-negative integers with $i+j=m+n$. We treat
  three cases: $0 \leq i \leq n$, $n < i < m$ and $m\leq i \leq m+n$. 
\begin{itemize}
  \item If $0 \leq i \leq n$, then $m\leq j \leq m+n$. In this case, the
  coefficient of $x_{ij}$ is
  \[
    \frac{1}2\left(-(j-n) + (j-m)\right) = \frac{n-m}{2}.
  \]

  \item If $n < i < m$, then $n< j < m$. In this case, the
  coefficient of $x_{ij}$ is
  \[
    \frac{1}2\left(-(i-n) - (j-n)\right) = \frac{2n -(i+j)}{2} =\frac{n-m}{2}.
  \]
  
  \item If $ m \leq i \leq m+n,$, then $0\leq j \leq n$. In this case, the
  coefficient of $x_{ij}$ is
  \[
    \frac{1}2\left(-(i-n) + (i-m)\right) = \frac{n-m}{2}.
  \]
\end{itemize}
  Hence we have:
  \begin{align}
    \begin{split}
    [\Ln, \Ln[m]] \left( \NB{\tikz[font=\tiny, scale=1.1]{}}\right)=\
    & (n-m)\parthree{m+n}\cdot\   
    \NB{\tikz[font=\tiny, scale=1.3]{}}\    
    -(n-m)\parthree{m+n}\cdot\   
    \NB{\tikz[font=\tiny, scale=1.3]{}} \\ 
    &\quad+ \frac{n-m}{2} \sum_{i+j=m+n}\cdot\ \NB{\tikz[font=\tiny,
      scale=1.3]{}}
    \end{split} \\
    =&(n-m)\Ln[m+n]\left( \NB{\tikz[font=\tiny,
         scale=1.1]{}}\right) . \qedhere      
  \end{align}
\end{proof}

\begin{prop}\label{prop:witt-acts-good-pos}
  Let $\foam$ be a closed \spherical{} foam in good position, then for all $n\in \NNN$
  \begin{equation} \bracketN{\Ln \cdot \foam}= \LLn\cdot \bracketN{\foam} .\end{equation}
\end{prop}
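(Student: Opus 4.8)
The plan is to reduce the statement to the level of a single $\gll_\myN$-coloring, rewrite the derivative of the colored evaluation as a product over local contributions, and then match these contributions basic-foam-by-basic-foam, using Table~\ref{tab:bicol-cob} together with Lemmas~\ref{lem:cupcap-mono}, \ref{lem:cupcap-bi} and \ref{lem:zip-unzip-merge-split} to dispose of the extra ``defect'' terms that the operators $\Ln$ produce.

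First I would reduce to a fixed coloring. Write $D_n := -\sum_{i=1}^{\myN}X_i^{n+1}\frac{\partial}{\partial X_i}$ for the derivation of $\scalars(X_1,\dots,X_\myN)$ implementing the action of $\LLn$ via \eqref{eq:witt-on-poly}. By construction $\Ln\cdot\foam$ is a $\scalars$-linear combination of foams topologically identical to $\foam$ that differ from it only in their decorations, so the $\gll_\myN$-colorings of $\foam$ canonically index the colorings of every summand; hence $\bracketN{\Ln\cdot\foam} = \sum_c\bracketN{\Ln\cdot\foam,c}$, the sum being over colorings $c$ of $\foam$. Since also $\LLn\cdot\bracketN{\foam} = D_n\sum_c\bracketN{\foam,c} = \sum_c D_n\bracketN{\foam,c}$, it is enough to prove $\bracketN{\Ln\cdot\foam,c} = D_n\bracketN{\foam,c}$ for each fixed $c$.

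Next, fix $c$. The sign $(-1)^{s(\foam,c)}$ of \eqref{eq:colored-ev} is a scalar and $D_n$ is a derivation, so, using $D_n(X_i-X_j) = -(X_i^{n+1}-X_j^{n+1}) = -(X_i-X_j)h_n(X_i,X_j)$ (the Example in Section~\ref{sec:Witt-alg-char-zero}) together with the chain rule,
\[
 D_n\bracketN{\foam,c} = \bracketN{\foam,c}\left(\;\sum_{\facet\in\foam^2}\frac{(\LLn\cdot P_\facet)(\underline{X}_{c(\facet)},\underline{X}_{\widehat{c}(\facet)})}{P_\facet(\underline{X}_{c(\facet)},\underline{X}_{\widehat{c}(\facet)})} \;+\; \sum_{1\leq i<j\leq\myN}\frac{\chi(\foam_{ij}(c))}{2}\,h_n(X_i,X_j)\right).
\]
Now decompose $\foam = \foam^{(\ell)}\circ\cdots\circ\foam^{(1)}$ into basic foams. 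Both $\Ln$ (by definition) and the operator $\foam\mapsto D_n\bracketN{\foam,c}/\bracketN{\foam,c}$ obey the Leibniz rule along this composition — the latter because $\chi(\foam_{ij}(c))$ is the sum of the local contributions of the $\foam^{(k)}$ tabulated in Table~\ref{tab:bicol-cob} and \eqref{eq:euler-basic}, and because the decorations $P_\facet$ are carried by polynomial basic foams — so it suffices to compare, piece by piece, the contribution of each $\foam^{(k)}$ to $\bracketN{\Ln\cdot\foam,c}/\bracketN{\foam,c}$ with its contribution to the parenthesised expression. Traces of isotopies and the associativity foams contribute $0$ on both sides by \eqref{eq:Ln-on-MP}; a polynomial basic foam carrying $R$ is sent by \eqref{eq:Ln-on-pol} to the same foam carrying $\LLn\cdot R$, which evaluates to exactly the matching term of the first sum; and for the structural basic foams (zip, unzip, digon-cup, digon-cap, cup, cap), evaluating against $c$ the power-sum decorations added in \eqref{eq:Ln-on-dig-cup}--\eqref{eq:Ln-on-cap} shows that their ``$\sum_{k+\ell=n}$'' parts reproduce precisely the $h_n(X_i,X_j)$-terms — weighted by $s$, $\bar{s}$ or $\tfrac12$ — demanded by the corollary following Lemma~\ref{lem:zip-unzip-merge-split} for the digon/zip/cup/cap parts of $\tfrac12\chi(\foam_{ij}(c))$, once that corollary is rewritten with the roles of $s$ and $\bar{s}$ interchanged by means of $\Cdigcap_{ij}-\Czip_{ij}=\Cdigcup_{ij}-\Cunzip_{ij}$ (Lemma~\ref{lem:zip-unzip-merge-split}).

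The main obstacle will be showing that the leftover ``defect'' terms cancel: the operators $\Ln$ also add pure power-sum dots $\pm\parone{n}\,p_n(\underline{X}_A)$ and $\pm\partwo{n}\,p_n(\underline{X}_B)$ to the digon and zip/unzip foams (with $A$, $B$ the colours of the two thin facets) and $\pm\parthree{n}\,p_n(\underline{X}_A)$ to the cup/cap foams, and these do not cancel piece by piece. Grouping the coefficient of $X_i^n$ at a fixed pigment $i$, these leftovers become signed counts of structural basic foams that involve $i$ (on the appropriate facet and in the appropriate flavour), and it is exactly here that the hypotheses ``closed'' and ``\spherical{}'' enter: Lemma~\ref{lem:cupcap-mono} (equal numbers of cups and caps involving $i$), Lemma~\ref{lem:cupcap-bi}, and Lemma~\ref{lem:zip-unzip-merge-split} force these counts to vanish. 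Carrying this out demands careful tracking of the two flavours of each structural foam (the $ij$ versus $ji$ distinction of Table~\ref{tab:bicol-cob}) and of which thin facet each added dot sits on; this bookkeeping is the delicate part. Once it is complete, $\bracketN{\Ln\cdot\foam,c}=D_n\bracketN{\foam,c}$ for every $c$, and summing over $c$ yields the proposition.
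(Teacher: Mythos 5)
Your proposal is correct and follows essentially the same route as the paper: reduce to a fixed $\gll_\myN$-coloring, split $D_n\bracketN{\foam,c}$ into a $P$-part and a $Q$-part via the quotient rule, regroup the decorations that $\Ln$ adds into $p_n(X_i)$-terms and $h_n(X_i,X_j)$-terms, and invoke Lemmas~\ref{lem:cupcap-mono}, \ref{lem:cupcap-bi} and \ref{lem:zip-unzip-merge-split} (together with the corollary expressing $\tfrac12\chi(\foam_{ij}(c))$ with an $s$/$\bar{s}$ weighting) to kill the leftover $p_n$ terms and identify the $h_n$ coefficients with $\tfrac12\chi(\foam_{ij}(c))$. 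The one piece you flag as ``delicate bookkeeping'' --- tracking the $ij$/$ji$ flavours and which thin facet carries each dot --- is precisely the explicit computation of the signed counts $r_i$ and $r_{ij}$ that the paper records in its table and subsequent display, so you have located the right mechanism even if you stop short of writing out the tallies.
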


\begin{proof}
  Note that for all $n \in \NNN$, $\Ln\cdot \foam$ is a (linear
  combination of) foam(s) which is (are) equal to $\foam$ if we ignore
  the decorations. Hence the colorings of $\foam$ are in $1$-to-$1$
  correspondence with the colorings of $\Ln\cdot\foam$. 
  If $c$ is a   coloring of $\foam$, then the corresponding coloring of $\Ln\cdot
  \foam$ is still denoted $c$. For every coloring $c$ of $\foam$, one
  has: $Q(\Ln(\foam),c) = Q(\foam,c)$ and $s(\foam,c) = s(\Ln(\foam),
  c)$. 
  We will prove that:
  \begin{equation}\LLn\cdot \bracketN{\foam,c} = \bracketN{\Ln \cdot \foam,c},\end{equation} 
which then implies the proposition by summing over all colorings of
  $\foam$.

  We compute:
  \begin{align}
    \begin{split}
    \LLn(\bracketN{\foam,c})
    &= \LLn\cdot\left((-1)^{s(\foam,c)}\frac{P(\foam,c)}{Q(\foam,c)}\right)
    \\ 
    &= (-1)^{s(\foam,c)}\left( \frac{\LLn \cdot P(\foam,c)}{Q(\foam,c)} -
      \frac{P(\foam,c)(\Ln \cdot Q(\foam,c))}{Q(\foam,c)^2}\right) .
    \end{split}
\end{align}
We focus on $\LLn \cdot Q(\foam,c)$:
  \begin{align}
    \LLn \cdot Q(\foam,c)
    &= \LLn \cdot \left(
      \prod_{1\leq i <j \leq \myN}(X_i -X_j)^{\chi(F_{ij}(c))/2}
      \right)\\
    &= -\sum_{1\leq i<j\leq\myN}
      \frac{\chi(F_{ij}(c))}{2}\left(
      \sum_{k+\ell=n}\newtoni[k](X_i)\newtoni[\ell](X_j)Q(\foam,c)\right),
  \end{align}
  where the last step follows from \eqref{eq:Ln-acts-on-Nabla}. Finally, following the definition of complete symmetric polynomials in two variables, we get:
  \begin{align}
        \LLn \cdot \bracketN{\foam,c} =
   \frac{\displaystyle{\LLn \cdot P(\foam,c) +\sum_{1\leq i<j\leq\myN}
      \frac{\chi(F_{ij}(c))}{2} h_n(X_i, X_j)P(\foam,c) }}{(-1)^{s(\foam,c)} Q(\foam,c)}.
  \end{align}
We now look at $P(\Ln(\foam),c)$. Note that for each basic foam,
  $\Ln(\foam)$ is equal to $\foam$ with some additional
  decorations. Due to the Leibniz rule used to define the operator
  $\Ln$, one has that:
  \begin{align}P(\Ln(\foam),c) = \Ln(P(\foam,c)) + R(\foam,c)P(\foam
    ,c)\end{align}
  with $R(\foam,c)$ a sum of polynomials, one for
  each basic foam, evaluated using the colors associated by $c$ to the
  facets of these basic foams.

\begin{table}
\begin{tabular}{|c|c|c|c|}
  \hline
  Basic foam
  &Contribution to $R(\foam,c)$
  \\\hline
  \NB{\tikz[font=\tiny]{}}
  &$\displaystyle{s\sum_{i\in I}\sum_{j\in J} h_n(X_i,X_j)  +
    \parone{n} \newtoni[n](\underline{X}_I) + \partwo{n}\newtoni[n](\underline{X}_J)}$
  \\[0.5cm]\hline
  \NB{\tikz[font=\tiny]{}}
  &$\displaystyle{\overline{s}\sum_{i\in I}\sum_{j\in J} h_n(X_i,X_j) -
    \parone{n} \newtoni[n](\underline{X}_I) - \partwo{n}\newtoni[n](\underline{X}_J)}$
  \\[0.5cm]\hline
  \NB{\tikz[font=\tiny]{}}
  &$\displaystyle{-\overline{s}\sum_{i\in I}\sum_{j\in J} h_n(X_i,X_j) +
    \parone{n} \newtoni[n](\underline{X}_I) + \partwo{n}\newtoni[n](\underline{X}_J)}$
  \\ [0.5cm]\hline
  \NB{\tikz[font=\tiny]{}}
  &$\displaystyle{-s\sum_{i\in I}\sum_{j\in J} h_n(X_i,X_j) -
    \parone{n} \newtoni[n](\underline{X}_I) - \partwo{n}\newtoni[n](\underline{X}_J)}$
  \\ [0.5cm]\hline
  \NB{\tikz[font=\tiny]{\begin{scope}
  \fill[white, opacity = 0.6] (0,0) arc (180:0: 0.5cm and 0.2cm)
  arc(0:180: 0.5cm and -0.6cm);
  \fill[blue, opacity = 1, pattern=vertical lines, pattern
    color=blue] (0,0) arc (180:0: 0.5cm and 0.2cm)
  arc(0:180: 0.5cm and -0.6cm);
  \fill[white, opacity = 0.6] (0,0) arc (180:0: 0.5cm and -0.2cm)
  arc(0:180: 0.5cm and -0.6cm);
  \fill[blue, opacity = 1, pattern=horizontal lines, pattern
    color=blue] (0,0) arc (180:0: 0.5cm and -0.2cm)
  arc(0:180: 0.5cm and -0.6cm);
  \draw (0,0) arc (180 :0: 0.5cm and 0.2cm) node[above, pos =
  0.5] {$a$};
  \draw[very thin] (0,0) arc (180 :0: 0.5cm and -0.6cm);
  \draw (0,0) arc (180 :0: 0.5cm and -0.2cm);
\end{scope}

  &$\displaystyle{\frac{1}{2}\sum_{i\in I}\sum_{j\notin I}
    h_n(X_i,X_j) - (N-a)\parthree{n} \newtoni[n](\underline{X}_I)} + a\parthree{n}\newtoni[n](\underline{X}_{\widehat{I}})$ 
  \\ [0.5cm]\hline
  \NB{\tikz[font=\tiny]{\begin{scope}
  \fill[white, opacity = 0.6] (0,0) arc (180:0: 0.5cm and 0.2cm)
  arc(0:180: 0.5cm and 0.6cm);
  \fill[blue, opacity = 1, pattern=vertical lines, pattern
    color=blue] (0,0) arc (180:0: 0.5cm and 0.2cm)
  arc(0:180: 0.5cm and 0.6cm);
  \fill[white, opacity = 0.6] (0,0) arc (180:0: 0.5cm and -0.2cm)
  arc(0:180: 0.5cm and  0.6cm);
  \fill[blue, opacity = 1, pattern=horizontal lines, pattern
    color=blue] (0,0) arc (180:0: 0.5cm and -0.2cm)
  arc(0:180: 0.5cm and  0.6cm);
  \draw (0,0) arc (180 :0: 0.5cm and 0.2cm);
  \draw[very thin] (0,0) arc (180 :0: 0.5cm and 0.6cm);
  \draw (0,0) arc (180 :0: 0.5cm and -0.2cm)node[below, pos =
  0.5] {$a$};
\end{scope}

  &$\displaystyle{\frac{1}{2}\sum_{i\in I}\sum_{j\notin I}
    h_n(X_i,X_j) + (N-a)\parthree{n} \newtoni[n](\underline{X}_I)} - a\parthree{n}\newtoni[n](\underline{X}_{\widehat{I}})$
  \\ [0.5cm]\hline
\end{tabular}
\caption{In this table, the blue hashed surface always has thickness $a$,
  the red dotted ones always has thickness $b$. We denote by $I\subseteq
  \pigments$, the color of facets with thickness $a$ and by $J \subseteq
  \pigments$, the color of facets with thickness $b$ for the coloring
  $c$. In particular, $\#I = a$, $\#J = b$ and $I\cap J =
  \emptyset$. Finally $\underline{X}_I$ denotes $\{X_{i} |{i\in I}\}$
  and $\widehat{I} = \pigments \setminus I$.} \label{tab:contrib-R}
\end{table}
  Table~\ref{tab:contrib-R}  summarizes the
  contributions to $R(\foam,c)$ of basic foams (with colorings).

  Note that in Table~\ref{tab:contrib-R}, contributions to $R$ always consist of
  sums of polynomials of the form $\sum_{k+\ell =n}
  \newtoni[k](X_i)\newtoni[\ell](X_j) =X_i^k X_j^\ell= h_n(X_i, X_j)$  for some $1\leq i<j\leq \myN$
  and of the form $\newtoni[n](X_i)$ for some $1\leq i \leq \myN$.
  Let us write\footnote{If $n=1$, this decomposition is not
    unique since $h_1(X_i, X_j) = \newtoni[1](X_i) + \newtoni[1](X_j)$ . However in
    what follows, we can think of $n$ as a purely formal
    variable, so that the decomposition is well-defined.}:
  \begin{align*}
    R(\foam,c) = \sum_{i=1}^\myN r_i \newtoni[n](X_i) + \sum_{1\leq i < j\leq
    \myN} r_{ij}h_n(X_i, X_j).
  \end{align*}
The polynomials $\newtoni[n](X_i)$ and $h_n(X_i, X_j)$ appear in the
  contribution to $R(\foam,c)$ of a basic foam precisely when this basic foam contributes to
  $\Ccup_{ij}$, $\Ccup_{ji}$, $\Ccap_{ij}$, $\Ccap_{ji}$,
  $\Cdigcup_{ij}$, $\Cdigcup_{ji}$, $\Cdigcap_{ij}$,
  $\Cdigcap_{ji}$, $\Czip_{ij}$, $\Czip_{ji}$, $\Cunzip_{ij}$ or
  $\Cunzip_{ji}$. More precisely, one has:

  \begin{align}
    \begin{split}
    r_{ij} =& s\left(\Cdigcup_{ij} + \Cdigcup_{ji}-
    \Cunzip_{ij} -\Cunzip_{ji}\right) +\bar{s}\left(
\Cdigcap_{ij} + \Cdigcap_{ji}-
    \Czip_{ij} -\Czip_{ji}
    \right) \\ &+ \frac{1}{2}\left( \Ccup_{ij} + \Ccup_{ji} +\Ccap_{ij} +
      \Ccap_{ji}  \right) ,
    \end{split}
    \\
    \begin{split}
    r_i=& \sum_{j\neq i}\Big( \parone{n}\left(\Cdigcup_{ij}
    +\Czip_{ij} -\Cdigcap_{ij}-\Cunzip_{ij}\right) +
    \partwo{n}\left(\Cdigcup_{ji} +\Czip_{ji}
    -\Cdigcap_{ji}-\Cunzip_{ji}\right) \\ &
    +\parthree{n}\left( \Ccap_{ij} -\Ccap_{ji} - \Ccup_{ij}+ \Ccup_{ji} \right)
    \Big) .
    \end{split}
  \end{align}
  Using Lemmas~\ref{lem:cupcap-bi} and~\ref{lem:zip-unzip-merge-split}, we conclude that for all $1\leq i\leq \myN$,
  $r_i = 0$ and that for all $1\leq i<j \leq \myN$,
  $r_{ij} = \frac{\chi(\foam_{ij}(c))}{2}$.
  In summary, this means that:
  \begin{align}
  P(\Ln(\foam),c) = \LLn\cdot P(\foam,c) +\sum_{1\leq i<j\leq\myN} \frac{\chi(F_{ij}(c))}{2} h_n(X_i, X_j)P(\foam,c). 
  \end{align}
  Finally, we conclude
  \begin{align}
\bracketN{\Ln(\foam), c}
    &= \frac{\displaystyle{\LLn \cdot P(\foam,c) +\sum_{1\leq i<j\leq\myN}
  \frac{\chi(F_{ij}(c))}{2} h_n(X_i,X_j)P(\foam,c) }}{(-1)^{s(\foam,c)}Q(F,c)} \\
    &= \LLn \cdot \bracketN{\foam,c}.       \qedhere
\end{align}

\end{proof}

\begin{thm}\label{thm:wittaction-spherical}
  For any web $\web$, the operators $(\Ln)_{n\in \NNN}$ induce an
  action of $\ourwitt$ on the equivariant state space $\estatespaceN[\scalars, s]{\web}$. \end{thm}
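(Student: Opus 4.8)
The plan is to obtain the $\ourwitt$-action on $\estatespaceN[\scalars, s]{\web}$ from the universal construction, with Lemma~\ref{lem:witt-acts-good-position} and Proposition~\ref{prop:witt-acts-good-pos} as the only real inputs. The point is that we deliberately do \emph{not} try to check directly that the $\Ln$ are insensitive to ambient isotopy of foams (there is no convenient finite list of moves to verify); this will instead be automatic once we know the $\Ln$ preserve the kernel of the bilinear form.

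First I would isolate the presenting module. Let $V$ be the free $\KN$-module generated by \spherical{} foams in good position from $\emptyset$ to $\web$; by construction of $\etqftfunc[\scalars, s]$ there is a surjection $V\twoheadrightarrow\estatespaceN[\scalars, s]{\web}$ whose kernel $K$ is $\Ker\bracketN{\cdot;\cdot}$ for the $\KN$-bilinear form $\bracketN{\foam;G}=\bracketN{\overline{G}\circ\foam}$ of \eqref{eq:6}. I extend each $\Ln$ from \spherical{} good-position foams to all of $V$ by the twisted Leibniz rule $\Ln(Q\cdot v)=(\LLn\cdot Q)\,v+Q\cdot\Ln(v)$ for $Q\in\KN$, where $\LLn$ acts on $\KN=\scalars[X_1,\dots,X_\myN]^{S_\myN}$ through the Witt action \eqref{eq:witt-on-poly}. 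Combining Lemma~\ref{lem:witt-acts-good-position}, the Witt relations on $\KN$, and the Leibniz rule, one gets $[\Ln,\Ln[m]]=(n-m)\Ln[n+m]$ on $V$, so $V$ is an $\ourwitt$-module; it remains to show $\Ln(K)\subseteq K$.

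The heart of the argument is the identity, valid for every $z\in V$ and every \spherical{} good-position foam $y\co\emptyset\to\web$,
\[
  \bracketN{\Ln(z);y}=\LLn\cdot\bracketN{z;y}-\bracketN{\Ln(\overline{y})\circ z}.
\]
This comes from applying $\Ln$ to $\overline{y}\circ z$, which is a $\KN$-combination of closed \spherical{} foams in good position: the Leibniz rule gives $\Ln(\overline{y}\circ z)=\Ln(\overline{y})\circ z+\overline{y}\circ\Ln(z)$, while Proposition~\ref{prop:witt-acts-good-pos} (together with the Leibniz rule for $\LLn$ on $\KN$) rewrites $\bracketN{\Ln(\overline{y}\circ z)}$ as $\LLn\cdot\bracketN{\overline{y}\circ z}=\LLn\cdot\bracketN{z;y}$. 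Now take $z\in K$. The first term vanishes, since $\bracketN{z;y'}=0$ for all $y'$. For the second, write $\Ln(\overline{y})=\sum_j c_j\,G_j$ with $c_j\in\scalars$ and each $G_j\co\web\to\emptyset$ obtained from $\overline{y}$ by adjoining decorations; because mirroring is an involution sending \spherical{} good-position foams to \spherical{} good-position foams, $\overline{G_j}\in V$, hence $\bracketN{G_j\circ z}=\bracketN{z;\overline{G_j}}=0$. So $\bracketN{\Ln(z);y}=0$ for all $y$, that is $\Ln(z)\in K$. Therefore each $\Ln$ descends to a $\scalars$-linear operator on $\estatespaceN[\scalars, s]{\web}$, the bracket relations survive in the quotient, and the twisted Leibniz rule makes the result an $\ourwitt$-equivariant $\KN$-module in the sense of \eqref{eq:15}. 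In particular, since $K$ already contains the difference of any two isotopic good-position foams (they pair identically against everything), the induced operators are automatically isotopy-invariant --- exactly the subtlety flagged right after \eqref{eq:Ln-on-cap}.

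I expect the remaining work to be bookkeeping rather than a genuine obstacle: one has to confirm that mirroring is compatible with good-position decompositions and with the dotted foams produced by the $\Ln$, so that $\overline{y}\circ z$ and $G_j\circ z$ really are closed \spherical{} foams in good position eligible for Proposition~\ref{prop:witt-acts-good-pos}, and that the $\KN$-equivariant extension of the $\Ln$ is coherent with the defining formulas \eqref{eq:Ln-on-pol}--\eqref{eq:Ln-on-cap}. All the genuine content --- compatibility of the $\Ln$ with $\gll_\myN$-foam evaluation --- is already packaged in Proposition~\ref{prop:witt-acts-good-pos}.
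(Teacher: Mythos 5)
Your proposal is correct and is essentially the paper's own argument: the paper likewise presents $\estatespaceN[\scalars,s]{\web}$ as the quotient of the free module on \spherical{} good-position foams by the kernel of the pairing, extends $\Ln$ to $\KN$-linear combinations by the twisted Leibniz rule, and shows kernel-preservation by applying $\LLn$ to the closed evaluation $\bracketN{G\circ(\sum_i\lambda_i\foam_i)}$, expanding via the Leibniz rule and Proposition~\ref{prop:witt-acts-good-pos}, and using that the kernel element also annihilates the decorated foams appearing in $\Ln(G)$. Your mirror-argument justification of that last vanishing and the remark that isotopy invariance comes for free from the universal construction are just slightly more explicit versions of what the paper does.
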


\begin{proof}
  We need to prove that if a $\scalars[X_1, \dots, X_N]$-linear
  combination $\sum_i \gamma_i \foam_i$ of \spherical{} $\web$-foams in
  good position is equal to $0$ in $\bracketNs{\web}$, then for all $n
  \in \NNN$,
  \begin{align}
    \Ln\cdot \left(\sum_i \gamma_i \foam_i\right) :=
     \sum_i (\LLn\cdot\gamma_i) \foam_i  + \sum_i \gamma_i \Ln(\foam_i)      
  \end{align}
   is equal to $0$ in
  $\bracketNs{\web}$.  In other words, we need to prove that for any
  \spherical{} foam $G\co \web \to \emptyset$, that
  $\sum_i
  (\LLn\cdot \gamma_i) \bracketN{G\circ \foam_i} + \sum_i \gamma_i \bracketN{G\circ \Ln(\foam_i)}=0$. This is a direct
  consequence of Proposition~\ref{prop:witt-acts-good-pos}. Indeed, since $\sum_i \gamma_i
  \foam_i= 0$, one has
  \begin{align}
    0=\LLn \cdot \left(\sum_i \gamma_i \bracketN{G \circ \foam_i} \right)=&
    \sum_i \left( \LLn\gamma_i\right) \bracketN{G \circ \foam_i} 
+    \sum_i \gamma_i \bracketN{\Ln\left(G\right) \circ \foam_i}\\ &
+    \sum_i \gamma_i \bracketN{G \circ \Ln\left(\foam_i\right)} 
  \end{align}
   and $\sum_i \gamma_i \bracketN{\Ln(G) \circ \foam_i} =0$. From this
   we deduce that
   \begin{align}
 \sum_i
  (\LLn \cdot \gamma_i) \bracketN{G\circ \foam_i} + \sum_i \gamma_i \bracketN{G\circ \Ln(\foam_i)}=0
   \end{align}
   as desired. 
\end{proof}

\begin{exa} Let us explain how to compute the action of $\Ln$ on the
  following piece of foam. \[
\NB{\tikz[scale=0.8]{\begin{scope}[scale =1]
  \begin{scope}
    \coordinate (BL1) at (-1, 1);
    \coordinate (BL2) at ( 0, 0);
    \coordinate (BR2) at ( 3, 1.5);
    \coordinate (BR1) at ( 2, 2.5);
    \coordinate (BM1) at ($0.3*(BR1) + 0.7*(BL1)$);
    \coordinate (BM2) at ($0.3*(BL2) + 0.7*(BR2)$);   
    \coordinate (TL1) at (-1,3);
    \coordinate (TL2) at (0,2);
    \coordinate (TR2) at (3,3.5);
    \coordinate (TR1) at (2,4.5);
    \coordinate (TML) at (0.5,3);
    \coordinate (TMR) at (1.5,3.5);
    \coordinate (c) at   ($0.25*(BM1) + 0.25*(BM2) + 0.25*(TML) + 0.25*(TMR)$);
    
  \end{scope}

  \begin{scope}[very thin, font= \tiny]
    \draw[->] (BL1) -- (BR1) node[pos =0, left] {$a+c$} node[pos =1, right] {$a$};
    \draw[->] (BL2) -- (BR2) node[pos =0, left] {$b$} node[pos =1, right] {$b+c$};
    \draw[->-] (BM1) -- (BM2) node [pos =0.5, below] {$c$};
    \draw[->-] (TL1) -- (TML);
    \draw[->-] (TL2) -- (TML);
    \draw[->-] (TML) -- (TMR) node [pos =0.5, above, sloped ] {$a+b+c$};
    \draw[->] (TMR) -- (TR1);
    \draw[->] (TMR) -- (TR2);
    \draw (BL1) -- (TL1);
    \draw (BL2) -- (TL2);
    \draw (BR1) -- (TR1);
    \draw (BR2) -- (TR2);
  \end{scope}
  \foreach \i in {(BM1), (BM2), (TML), (TMR)}
  \draw[thick] (c) -- \i;
\end{scope}}}
\]
This foam is \emph{not} in good position, so we first isotope it slightly. As such,
it appears as a composition, that we represent as a movie.
\[
  \mymoviefour[xscale =0.6]{\NB{\tikz[]{\begin{scope}[font =\tiny, yscale =0.7, rotate = 90]
  \coordinate (BL) at (-1, -0.5);
  \coordinate (BR) at ( 1, -0.5);
  \coordinate (TL) at (-1,  0.5);
  \coordinate (TR) at ( 1,  0.5);
  \coordinate (BM) at ($0.8*(BL) + 0.2*(BR)$);
  \coordinate (TM) at ($0.9*(TL) + 0.1*(TR)$);   

  \draw[>->] (BL) -- (BR) node [pos = 0, below] {$b$} node [pos =1, above]
  {$b+c$};
  \draw[>->] (TL) -- (TR) node [pos = 0, below] {$a+c$} node [pos =1, above] {$a$};
  \draw[->-] (TM) -- (BM) node [pos =0.5, below]{$c$};
\end{scope}}}}{\NB{\tikz[]{\begin{scope}[font =\tiny, yscale =0.7, rotate =90]
  \coordinate (BL) at (-1, -0.5);
  \coordinate (BR) at ( 1, -0.5);
  \coordinate (ML) at ( 0.1, -0);
  \coordinate (MR) at ( 0.6, -0);
  \coordinate (TL) at (-1,  0.5);
  \coordinate (TR) at ( 1,  0.5);
  \coordinate (BM) at ($0.8*(BL) + 0.2*(BR)$);
  \coordinate (TM) at ($0.9*(TL) + 0.1*(TR)$);   
  
  \draw[->-] (ML) -- (MR);
  \draw[->] (MR) .. controls +(0,0) and +(-0.3, 0).. (TR) node [pos =1, above]
  {$a$};
  \draw[->] (MR) .. controls +(0,0) and +(-0.3, 0) .. (BR) node [pos =1, above]
  {$b+c$};
  \draw[>-] (BL) -- (BM) node [pos = 0, below] {$b$};
  \draw[>-] (TL) -- (TM) node [pos = 0, below] {$a+c$};
  \draw (ML) .. controls +(0,0) and +(0.3, 0).. (TM);
  \draw (ML) .. controls +(0,0) and +(0.3, 0).. (BM);  
  \draw[->-] (TM) -- (BM) node [pos =0.5, below]{$c$};
\end{scope}}}}{\NB{\tikz[]{\begin{scope}[font =\tiny, yscale =1, rotate =90]
  \coordinate (BL) at (-1, -0.5);
  \coordinate (BR) at ( 1, -0.5);
  \coordinate (ML) at ( 0.1, -0);
  \coordinate (MR) at ( 0.6, -0);
  \coordinate (TL) at (-1,  0.5);
  \coordinate (TR) at ( 1,  0.5);
  
  \draw[->-] (ML) -- (MR);
  \draw[->] (MR) .. controls +(0,0) and +(-0.3, 0).. (TR) node [pos =1, above]
  {$a$};
  \draw[->] (MR) .. controls +(0,0) and +(-0.3, 0) .. (BR) node [pos =1, above]
  {$b+c$};
  \draw[-<] (ML) .. controls +(0,0) and +(0.3, 0).. (BL) node [pos = 1, below] {$b$};
  \draw[-<] (ML) .. controls +(0,0) and +(0.3, 0).. (TL) node [pos =
  1, below] {$a+c$} coordinate[pos =0.8] (T1)  coordinate[pos =0.2]
  (T2);
  \draw[->-] (T1) .. controls +(0.2, -0.4) and +(-0.2, -0.2) .. (T2)
  node[pos=0.5, below] {$c$};
\end{scope}
}}}{\NB{\tikz[]{\begin{scope}[font =\tiny, yscale =1, rotate =90]
  \coordinate (BL) at (-1, -0.5);
  \coordinate (BR) at ( 1, -0.5);
  \coordinate (ML) at ( 0.1, -0);
  \coordinate (MR) at ( 0.6, -0);
  \coordinate (TL) at (-1,  0.5);
  \coordinate (TR) at ( 1,  0.5);
  
  \draw[->-] (ML) -- (MR);
  \draw[->] (MR) .. controls +(0,0) and +(-0.3, 0).. (TR) node [pos =1, above]
  {$a$};
  \draw[->] (MR) .. controls +(0,0) and +(-0.3, 0) .. (BR) node [pos =1, above]
  {$b+c$};
  \draw[-<] (ML) .. controls +(0,0) and +(0.3, 0).. (BL) node [pos = 1, below] {$b$};
  \draw[-<] (ML) .. controls +(0,0) and +(0.3, 0).. (TL) node [pos =
  1, below] {$a+c$} coordinate[pos =0.8] (T1)  coordinate[pos =0.2]
  (T2);
\end{scope}
}}}
  \]
  Now we can use equations \eqref{eq:Ln-on-zip}, \eqref{eq:Ln-on-MP} and
\eqref{eq:Ln-on-digcap} to compute the action of $\Ln$. Isotoping
back and using the dot migration (see ~\eqref{eq:18}) we obtain:
\begin{align}
  &\Ln\left(~\NB{\tikz[scale=0.8]{}}~\right)  =
    \parone{n}\NB{\tikz[scale=0.8]{\begin{scope}[scale =1, very thin, font= \tiny]
  \begin{scope}
    \coordinate (BL1) at (-1, 1);
    \coordinate (BL2) at ( 0, 0);
    \coordinate (BR2) at ( 3, 1.5);
    \coordinate (BR1) at ( 2, 2.5);
    \coordinate (BM1) at ($0.3*(BR1) + 0.7*(BL1)$);
    \coordinate (BM2) at ($0.3*(BL2) + 0.7*(BR2)$);   
    \coordinate (TL1) at (-1,3);
    \coordinate (TL2) at (0,2);
    \coordinate (TR2) at (3,3.5);
    \coordinate (TR1) at (2,4.5);
    \coordinate (n1) at (1.9, 3.8);
    \coordinate (n2) at (0.4, 0.4);
    \coordinate (TML) at (0.5,3);
    \coordinate (TMR) at (1.5,3.5);
    \coordinate (c) at   ($0.25*(BM1) + 0.25*(BM2) + 0.25*(TML) + 0.25*(TMR)$);
    
  \end{scope}

  \begin{scope}
    \draw[->] (BL1) -- (BR1) node[pos =0, left] {$a+c$} node[pos =1, right] {$a$};
    \draw[->] (BL2) -- (BR2) node[pos =0, left] {$b$} node[pos =1, right] {$b+c$};
    \draw[->-] (BM1) -- (BM2) node [pos =0.5, below] {$c$};
    \draw[->-] (TL1) -- (TML);
    \draw[->-] (TL2) -- (TML);
    \draw[->-] (TML) -- (TMR) node [pos =0.5, above, sloped ] {$a+b+c$};
    \draw[->] (TMR) -- (TR1);
    \draw[->] (TMR) -- (TR2);
    \draw (BL1) -- (TL1);
    \draw (BL2) -- (TL2);
    \draw (BR1) -- (TR1);
    \draw (BR2) -- (TR2);
  \end{scope}
  \foreach \i in {(BM1), (BM2), (TML), (TMR)}
  \draw[thick] (c) -- \i;
  \node at (n1) {$\dotnewtoni[n]$};
  \node at (n2) {$\dotnewtoni[0]$};
\end{scope}}} \\ & \quad + \partwo{n}
  \NB{\tikz[scale=0.8]{\begin{scope}[scale =1, very thin, font= \tiny]
  \begin{scope}
    \coordinate (BL1) at (-1, 1);
    \coordinate (BL2) at ( 0, 0);
    \coordinate (BR2) at ( 3, 1.5);
    \coordinate (BR1) at ( 2, 2.5);
    \coordinate (BM1) at ($0.3*(BR1) + 0.7*(BL1)$);
    \coordinate (BM2) at ($0.3*(BL2) + 0.7*(BR2)$);   
    \coordinate (TL1) at (-1,3);
    \coordinate (TL2) at (0,2);
    \coordinate (TR2) at (3,3.5);
    \coordinate (TR1) at (2,4.5);
    \coordinate (n1) at (1.9, 3.8);
    \coordinate (n2) at (0.4, 0.4);
    \coordinate (TML) at (0.5,3);
    \coordinate (TMR) at (1.5,3.5);
    \coordinate (c) at   ($0.25*(BM1) + 0.25*(BM2) + 0.25*(TML) + 0.25*(TMR)$);
    
  \end{scope}

  \begin{scope}
    \draw[->] (BL1) -- (BR1) node[pos =0, left] {$a+c$} node[pos =1, right] {$a$};
    \draw[->] (BL2) -- (BR2) node[pos =0, left] {$b$} node[pos =1, right] {$b+c$};
    \draw[->-] (BM1) -- (BM2) node [pos =0.5, below] {$c$};
    \draw[->-] (TL1) -- (TML);
    \draw[->-] (TL2) -- (TML);
    \draw[->-] (TML) -- (TMR) node [pos =0.5, above, sloped ] {$a+b+c$};
    \draw[->] (TMR) -- (TR1);
    \draw[->] (TMR) -- (TR2);
    \draw (BL1) -- (TL1);
    \draw (BL2) -- (TL2);
    \draw (BR1) -- (TR1);
    \draw (BR2) -- (TR2);
  \end{scope}
  \foreach \i in {(BM1), (BM2), (TML), (TMR)}
  \draw[thick] (c) -- \i;
  \node at (n1) {$\dotnewtoni[0]$};
  \node at (n2) {$\dotnewtoni[n]$};
\end{scope}}} - \bar{s}\sum_{k+\ell =n} \NB{\tikz[scale=0.8]{\begin{scope}[scale =1, very thin, font= \tiny]
  \begin{scope}
    \coordinate (BL1) at (-1, 1);
    \coordinate (BL2) at ( 0, 0);
    \coordinate (BR2) at ( 3, 1.5);
    \coordinate (BR1) at ( 2, 2.5);
    \coordinate (BM1) at ($0.3*(BR1) + 0.7*(BL1)$);
    \coordinate (BM2) at ($0.3*(BL2) + 0.7*(BR2)$);   
    \coordinate (TL1) at (-1,3);
    \coordinate (TL2) at (0,2);
    \coordinate (TR2) at (3,3.5);
    \coordinate (TR1) at (2,4.5);
    \coordinate (n1) at (1.9, 3.8);
    \coordinate (n2) at (0.4, 0.4);
    \coordinate (TML) at (0.5,3);
    \coordinate (TMR) at (1.5,3.5);
    \coordinate (c) at   ($0.25*(BM1) + 0.25*(BM2) + 0.25*(TML) + 0.25*(TMR)$);
    
  \end{scope}

  \begin{scope}
    \draw[->] (BL1) -- (BR1) node[pos =0, left] {$a+c$} node[pos =1, right] {$a$};
    \draw[->] (BL2) -- (BR2) node[pos =0, left] {$b$} node[pos =1, right] {$b+c$};
    \draw[->-] (BM1) -- (BM2) node [pos =0.5, below] {$c$};
    \draw[->-] (TL1) -- (TML);
    \draw[->-] (TL2) -- (TML);
    \draw[->-] (TML) -- (TMR) node [pos =0.5, above, sloped ] {$a+b+c$};
    \draw[->] (TMR) -- (TR1);
    \draw[->] (TMR) -- (TR2);
    \draw (BL1) -- (TL1);
    \draw (BL2) -- (TL2);
    \draw (BR1) -- (TR1);
    \draw (BR2) -- (TR2);
  \end{scope}
  \foreach \i in {(BM1), (BM2), (TML), (TMR)}
  \draw[thick] (c) -- \i;
  \node at (n1) {$\dotnewtoni[k]$};
  \node at (n2) {$\dotnewtoni[\ell]$};
\end{scope}}}.
\end{align}

\end{exa}

\subsection{Action of \texorpdfstring{$\sll_2$}{sl(2)}}
\label{sec:action-from-sll_2}

Lemma~\ref{lem:sl2-to-witt} gives an embedding of $\sll_2$ in
$\ourwitt$. Hence the action of $\ourwitt$ defined above induces an
action of $\sll_2$ on $\gll_N$-foams. Some of the parameters used
before become redundant and the necessity of inverting $2$ vanishes.
We include formula for the action of $\Le$, $\Lf$ and $\Lh$ (via
some operators denoted by $\de$, $\df$ and $\dh$). They can of course be
deduced from the ones in Section~\ref{sec:action-from-witt} via the
injection of Lemma~\ref{lem:sl2-to-witt}. For the rest of the section, we fix three parameters $\tone, \ttwo, \tthree \in \scalars$. 

As before, $\de$, $\df$ and $\dh$ satisfy
the Leibniz rule with respect to composition of foams and map
traces of isotopies to $0$.  The operator $\de$ acts via $\Ln[-1]$ on
polynomials and by $0$ on any other basic foam. The operator $\dh$ is
defined as follows.\begin{gather}
\label{eq:h-act-pol} \dh\left(\NB{\tikz[scale=1.5, font=\small]{}}\right) 
  =-\deg{R}\cdot\  \NB{\tikz[scale=1.5, font=\small]{}} \\
\label{eq:h-act-assoc}  \dh\left(\NB{\tikz[scale=0.6, font=\tiny]{}}\right)=
  \dh\left(\NB{\tikz[scale=0.6,font=\tiny]{}} \right) =0 \\
\label{eq:h-act-dig-cup}  \dh\left( \NB{\tikz[font=\tiny]{}}\right)\ =\
  ab(\tone+\ttwo)\cdot\ \NB{\tikz[font=\tiny]{}} \\
\label{eq:h-act-dig-cap}  \dh\left( \NB{\tikz[font=\tiny]{}}\right)\ =\
 ab(\overline{\tone}+\overline{\ttwo})\cdot\  \NB{\tikz[font=\tiny]{}} \\
\label{eq:h-act-dig-zip}  \dh\left( \NB{\tikz[font=\tiny]{}}\right)\ =\
-ab(\overline{\tone} + \overline{\ttwo}) \cdot\ \NB{\tikz[font=\tiny]{}} 
 \\
\label{eq:h-act-dig-unzip} \dh\left( \NB{\tikz[font=\tiny]{}}\right)\ =\
 -ab({\tone} + {\ttwo}) \cdot\ \NB{\tikz[font=\tiny]{}} 
  \\
\label{eq:h-act-cup}  \dh\left( \NB{\tikz[font=\tiny, scale=1.2]{}}\right)\ =\
      2a(N-a)\tthree \cdot\ \NB{\tikz[font=\tiny, scale=1.2]{}}
  \\[3pt]
\label{eq:h-act-cap}  \dh\left( \NB{\tikz[font=\tiny, scale=1.2]{}}\right)\ =\
    2a(N-a)\overline{\tthree} \cdot\ \NB{\tikz[font=\tiny,
    scale=1.2]{}} 
\end{gather}
Finally $\df$ is given as
follows.

\begin{gather}
\label{eq:e-act-pol}  \df\left(\NB{\tikz[scale=1.5, font=\small]{}}\right)
  =-\ \NB{\tikz[scale=1.5, font=\small]{\begin{scope}
  \draw (0,0) rectangle (1,1) coordinate [midway] (A);
  \fill (A) circle (0.5mm) node[below] {$\Ln[1](R)$};
\end{scope}}} \\
\label{eq:e-act-assoc}   \df\left(\NB{\tikz[scale=0.6, font=\tiny]{}}\right)=
  \df\left(\NB{\tikz[scale=0.6,font=\tiny]{}} \right) =0 \\
\label{eq:e-act-dig-cup}  \df\left( \NB{\tikz[font=\tiny]{}}\right)\ =\
 - \tone\cdot\ \NB{\tikz[font=\tiny]{\begin{scope}[font=\tiny]
  \begin{scope}
    \coordinate (L) at (0,0);
    \coordinate (R) at (2,0);
    \coordinate (ML) at (0.5, 0);
    \coordinate (MR) at (1.5, 0);
    \draw[->-] (L) -- (ML);
    \draw[->-] (MR) -- (R) node[right] {$a+b$};
    \draw[->-] (ML).. controls + (0.4, 0.4) and +(-0.2, 0.4) .. (MR)
    node[above, pos=0.7 ] {$a$} node[below, pos =0.3] {$\dotnewtoni[1]$};
    \draw[->-] (ML).. controls +(0.2, -0.4) and +(-0.4, -0.4) .. (MR)
    node[below, pos =0.3] {$b$} node[below, pos =0.75] {$\dotnewtoni[0]$};
  \end{scope}  
 \begin{scope}[yshift = -1.3cm]
    \coordinate (LB) at (0,0);
    \coordinate (RB) at (2,0);
    \draw[->-] (LB) -- (RB);
  \end{scope}  
  \draw (R) -- (RB);
  \draw (L) -- (LB);
  \draw[thick] (ML) .. controls +(0, -1) and +(0, -1) .. (MR);
\end{scope}

}} 
  - \ttwo\cdot \ \NB{\tikz[font=\tiny]{\begin{scope}[font=\tiny]
  \begin{scope}
    \coordinate (L) at (0,0);
    \coordinate (R) at (2,0);
    \coordinate (ML) at (0.5, 0);
    \coordinate (MR) at (1.5, 0);
    \draw[->-] (L) -- (ML);
    \draw[->-] (MR) -- (R) node[right] {$a+b$};
    \draw[->-] (ML).. controls + (0.4, 0.4) and +(-0.2, 0.4) .. (MR)
    node[above, pos=0.7 ] {$a$} node[below, pos =0.3] {$\dotnewtoni[0]$};
    \draw[->-] (ML).. controls +(0.2, -0.4) and +(-0.4, -0.4) .. (MR)
    node[below, pos =0.3] {$b$} node[below, pos =0.75] {$\dotnewtoni[1]$};
  \end{scope}  
 \begin{scope}[yshift = -1.3cm]
    \coordinate (LB) at (0,0);
    \coordinate (RB) at (2,0);
    \draw[->-] (LB) -- (RB);
  \end{scope}  
  \draw (R) -- (RB);
  \draw (L) -- (LB);
  \draw[thick] (ML) .. controls +(0, -1) and +(0, -1) .. (MR);
\end{scope}

}}  \\
\label{eq:e-act-dig-cap}   \df\left( \NB{\tikz[font=\tiny]{}}\right)\ =\
   - \overline{\tone}
  \cdot\ \NB{\tikz[font=\tiny]{\begin{scope}[font =\tiny]
  \begin{scope}
    \coordinate (L) at (0,0);
    \coordinate (R) at (2,0);
    \coordinate (ML) at (0.5, 0);
    \coordinate (MR) at (1.5, 0);
    \draw[->-] (L) -- (ML);
    \draw[->-] (MR) -- (R) node[right] {$a+b$};
    \draw[->-] (ML).. controls + (0.4, 0.4) and +(-0.2, 0.4) .. (MR)
    node[above, pos =0.7] {$a$}     node[above, pos =0.3] {$\dotnewtoni[1]$};
    \draw[->-] (ML).. controls + (0.2, -0.4) and +(-0.4, -0.4) .. (MR)
    node[left, pos =0.3] {$b$} node[above, pos =0.7, yshift = -0.5mm] {$\dotnewtoni[0]$};
  \end{scope}  
 \begin{scope}[yshift = 1.3cm]
    \coordinate (LB) at (0,0);
    \coordinate (RB) at (2,0);
    \draw (LB) -- (RB);
  \end{scope}  
  \draw (R) -- (RB);
  \draw (L) -- (LB);
  \draw[thick] (ML) .. controls +(0, 1) and +(0, 1) .. (MR);
\end{scope}

}}  
  \!\!\!\! - \overline{\ttwo}
  \cdot \ \NB{\tikz[font=\tiny]{\begin{scope}[font =\tiny]
  \begin{scope}
    \coordinate (L) at (0,0);
    \coordinate (R) at (2,0);
    \coordinate (ML) at (0.5, 0);
    \coordinate (MR) at (1.5, 0);
    \draw[->-] (L) -- (ML);
    \draw[->-] (MR) -- (R) node[right] {$a+b$};
    \draw[->-] (ML).. controls + (0.4, 0.4) and +(-0.2, 0.4) .. (MR)
    node[above, pos =0.7] {$a$}     node[above, pos =0.3] {$\dotnewtoni[0]$};
    \draw[->-] (ML).. controls + (0.2, -0.4) and +(-0.4, -0.4) .. (MR)
    node[left, pos =0.3] {$b$} node[above, pos =0.7, yshift = -0.5mm] {$\dotnewtoni[1]$};
  \end{scope}  
 \begin{scope}[yshift = 1.3cm]
    \coordinate (LB) at (0,0);
    \coordinate (RB) at (2,0);
    \draw (LB) -- (RB);
  \end{scope}  
  \draw (R) -- (RB);
  \draw (L) -- (LB);
  \draw[thick] (ML) .. controls +(0, 1) and +(0, 1) .. (MR);
\end{scope}

}}  \\
\label{eq:e-act-zip}   \df\left( \NB{\tikz[font=\tiny]{}}\right)\ =\
    \overline{\tone} \cdot\ \NB{\tikz[font=\tiny]{\begin{scope}
  \begin{scope}
    \coordinate (L1) at (0.2,0.4);
    \coordinate (L2) at (0,0);
    \coordinate (R1) at (2.2,0.4);
    \coordinate (R2) at (2,0);
    \coordinate (ML) at (0.6, 0.2);
    \coordinate (MR) at (1.6, 0.2);
    \draw[->-] (ML) -- (MR) node[above, midway] {$a+b$};
    \draw (MR) .. controls +(0, 0) and +(-0.3,0) .. (R1) ;
    \draw (MR) .. controls +(0, 0) and +(-0.3,0) .. (R2);
    \draw (L1) .. controls +( 0.3, 0) and +(0,0) .. (ML);
    \draw (L2) .. controls +( 0.3, 0) and +(0,0) .. (ML);
  \end{scope}  
 \begin{scope}[yshift = -1cm]
    \coordinate (L1B) at (0.2,0.4);
    \coordinate (L2B) at (0,0);
    \coordinate (R1B) at (2.2,0.4);
    \coordinate (R2B) at (2,0);
    \draw[->-] (L1B) .. controls +( 0, 0) and +(0,0) .. (R1B) node [right, pos
    = 1] {$a$};
    \draw[->-] (L2B) .. controls +( 0, 0) and +(0,0) .. (R2B) node [right, pos
    = 1] {$b$}   node [pos = 0.2, above] {$\dotnewtoni[0]$};
 \end{scope}  
  \draw (R1) -- (R1B) node [pos = 0.2, left] {$\dotnewtoni[1]$};
  \draw (R2) -- (R2B);
  \draw (L1) -- (L1B);
  \draw (L2) -- (L2B);
  \draw[thick] (ML) .. controls +(0, -0.6) and +(0, -0.6) .. (MR);
\end{scope}

}} 
  +  \overline{\ttwo}\cdot \ \NB{\tikz[font=\tiny]{\begin{scope}
  \begin{scope}
    \coordinate (L1) at (0.2,0.4);
    \coordinate (L2) at (0,0);
    \coordinate (R1) at (2.2,0.4);
    \coordinate (R2) at (2,0);
    \coordinate (ML) at (0.6, 0.2);
    \coordinate (MR) at (1.6, 0.2);
    \draw[->-] (ML) -- (MR) node[above, midway] {$a+b$};
    \draw (MR) .. controls +(0, 0) and +(-0.3,0) .. (R1) ;
    \draw (MR) .. controls +(0, 0) and +(-0.3,0) .. (R2);
    \draw (L1) .. controls +( 0.3, 0) and +(0,0) .. (ML);
    \draw (L2) .. controls +( 0.3, 0) and +(0,0) .. (ML);
  \end{scope}  
 \begin{scope}[yshift = -1cm]
    \coordinate (L1B) at (0.2,0.4);
    \coordinate (L2B) at (0,0);
    \coordinate (R1B) at (2.2,0.4);
    \coordinate (R2B) at (2,0);
    \draw[->-] (L1B) .. controls +( 0, 0) and +(0,0) .. (R1B) node [right, pos
    = 1] {$a$};
    \draw[->-] (L2B) .. controls +( 0, 0) and +(0,0) .. (R2B) node [right, pos
    = 1] {$b$}   node [pos = 0.2, above] {$\dotnewtoni[1]$};
 \end{scope}  
  \draw (R1) -- (R1B) node [pos = 0.2, left] {$\dotnewtoni[0]$};
  \draw (R2) -- (R2B);
  \draw (L1) -- (L1B);
  \draw (L2) -- (L2B);
  \draw[thick] (ML) .. controls +(0, -0.6) and +(0, -0.6) .. (MR);
\end{scope}

}}  \\
\label{eq:e-act-unzip}   \df\left( \NB{\tikz[font=\tiny]{}}\right)\ =\
    {\tone}\cdot\ \NB{\tikz[font=\tiny]{\begin{scope}
  \begin{scope}
    \coordinate (L1) at (0.2,0.4);
    \coordinate (L2) at (0,0);
    \coordinate (R1) at (2.2,0.4);
    \coordinate (R2) at (2,0);
    \coordinate (ML) at (0.6, 0.2);
    \coordinate (MR) at (1.6, 0.2);
    \draw[->-] (ML) -- (MR) node[below, midway] {$a+b$};
    \draw (MR) .. controls +(0, 0) and +(-0.3,0) .. (R1) ;
    \draw (MR) .. controls +(0, 0) and +(-0.3,0) .. (R2);
    \draw (L1) .. controls +( 0.3, 0) and +(0,0) .. (ML);
    \draw (L2) .. controls +( 0.3, 0) and +(0,0) .. (ML) node [above,
    pos=0.2] {$\dotnewtoni[0]$};
  \end{scope}  
 \begin{scope}[yshift = 1cm]
    \coordinate (L1B) at (0.2,0.4);
    \coordinate (L2B) at (0,0);
    \coordinate (R1B) at (2.2,0.4);
    \coordinate (R2B) at (2,0);
    \draw[->-] (L1B) .. controls +( 0, 0) and +(0,0) .. (R1B) node [left, pos
    = 0] {$a$} node
  [pos=0.8, below] {$\dotnewtoni[1]$};
    \draw[->-] (L2B) .. controls +( 0, 0) and +(0,0) .. (R2B) node [left, pos
    = 0] {$b$};
 \end{scope}  
  \draw (R1) -- (R1B);
  \draw (R2) -- (R2B);
  \draw (L1) -- (L1B);
  \draw (L2) -- (L2B);
  \draw[thick] (ML) .. controls +(0, 0.6) and +(0, 0.6) .. (MR);
\end{scope}

}} 
   +  {\ttwo}\cdot \ \NB{\tikz[font=\tiny]{\begin{scope}
  \begin{scope}
    \coordinate (L1) at (0.2,0.4);
    \coordinate (L2) at (0,0);
    \coordinate (R1) at (2.2,0.4);
    \coordinate (R2) at (2,0);
    \coordinate (ML) at (0.6, 0.2);
    \coordinate (MR) at (1.6, 0.2);
    \draw[->-] (ML) -- (MR) node[below, midway] {$a+b$};
    \draw (MR) .. controls +(0, 0) and +(-0.3,0) .. (R1) ;
    \draw (MR) .. controls +(0, 0) and +(-0.3,0) .. (R2);
    \draw (L1) .. controls +( 0.3, 0) and +(0,0) .. (ML);
    \draw (L2) .. controls +( 0.3, 0) and +(0,0) .. (ML) node [above,
    pos=0.2] {$\dotnewtoni[1]$};
  \end{scope}  
 \begin{scope}[yshift = 1cm]
    \coordinate (L1B) at (0.2,0.4);
    \coordinate (L2B) at (0,0);
    \coordinate (R1B) at (2.2,0.4);
    \coordinate (R2B) at (2,0);
    \draw[->-] (L1B) .. controls +( 0, 0) and +(0,0) .. (R1B) node [left, pos
    = 0] {$a$} node
  [pos=0.8, below] {$\dotnewtoni[0]$};
    \draw[->-] (L2B) .. controls +( 0, 0) and +(0,0) .. (R2B) node [left, pos
    = 0] {$b$};
 \end{scope}  
  \draw (R1) -- (R1B);
  \draw (R2) -- (R2B);
  \draw (L1) -- (L1B);
  \draw (L2) -- (L2B);
  \draw[thick] (ML) .. controls +(0, 0.6) and +(0, 0.6) .. (MR);
\end{scope}

}}  
  \\
\label{eq:e-act-cup}   \df\left( \NB{\tikz[font=\tiny, scale=1.2]{}}\right)\ =\
     - \tthree \cdot\ \NB{\tikz[font=\tiny, scale=1.2]{\begin{scope}
  \draw (0,0) arc (180 :0: 0.5cm and 0.2cm) node[above, pos =
  0.5] {$a$};
  \draw[very thin] (0,0) arc (180 :0: 0.5cm and -0.6cm) node[pos=0.5,
  above] {$\dotnewtoni[0] \wdotnewtoni[1]$};
  \draw (0,0) arc (180 :0: 0.5cm and -0.2cm);
\end{scope}

}}
    \ 
   -\   \overline{\tthree} \cdot\ \NB{\tikz[font=\tiny, scale=1.2]{\begin{scope}
  \draw (0,0) arc (180 :0: 0.5cm and 0.2cm) node[above, pos =
  0.5] {$a$};
  \draw[very thin] (0,0) arc (180 :0: 0.5cm and -0.6cm) node[pos=0.5,
  above] {$\dotnewtoni[1] \wdotnewtoni[0]$};
  \draw (0,0) arc (180 :0: 0.5cm and -0.2cm);
\end{scope}

}}  \\[3pt]
\label{eq:e-act-cap}   \df\left( \NB{\tikz[font=\tiny, scale=1.2]{}}\right)\ =\
    -\overline{\tthree} \cdot\ \NB{\tikz[font=\tiny,
    scale=1.2]{\begin{scope}
  \draw (0,0) arc (180 :0: 0.5cm and 0.2cm);
  \draw[very thin] (0,0) arc (180 :0: 0.5cm and 0.6cm) node[pos=0.5,
  below] {$\dotnewtoni[0] \wdotnewtoni[1]$};
  \draw (0,0) arc (180 :0: 0.5cm and -0.2cm)node[ below, pos =
  0.5] {$a$};
\end{scope}

}} \ 
  - \  \tthree \cdot\ \NB{\tikz[font=\tiny, scale=1.2]{\begin{scope}
  \draw (0,0) arc (180 :0: 0.5cm and 0.2cm);
  \draw[very thin] (0,0) arc (180 :0: 0.5cm and 0.6cm) node[pos=0.5,
  below] {$\dotnewtoni[1] \wdotnewtoni[0]$};
  \draw (0,0) arc (180 :0: 0.5cm and -0.2cm)node[ below, pos =
  0.5] {$a$};
\end{scope}

}} 
\end{gather}

Note that using the embedding of $\sll_2$ in $\ourwitt$ of
Lemma~\ref{lem:sl2-to-witt}, the relations
between $s$, $\parone{}$, $\partwo{}$ and $\parthree{}$ are given by:
\begin{equation}
  \label{eq:19}
  t_1=\parone{1}+s, \qquad t_2=\partwo{1}+s,\qquad \textrm{and} \qquad
  t_3
  =\parthree{1} +\frac{1}{2}.
\end{equation}

The same proof as that of Lemma~\ref{lem:witt-acts-good-position}
gives the following.
\begin{lem}\label{lem:sl2-acts-good-position}
  Mapping $\Le$ to $\de$, $\Lh$ to $\dh$ and $\Lf$ to $\df$ defines an
  action of $\sll_2$ on the $\scalars$-module generated by
  \spherical{} foams in good position.
\end{lem}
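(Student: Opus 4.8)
The plan is to rerun the argument of Lemma~\ref{lem:witt-acts-good-position}, now checking the three defining relations of $\sll_2$ in \eqref{eq:sl2-relations}, i.e.\ $[\de,\df]=\dh$, $[\dh,\de]=2\de$ and $[\dh,\df]=-2\df$, as identities of operators on the $\scalars$-module freely spanned by \spherical{} foams in good position. The first step is the same formal reduction used there: each of $\de$, $\df$, $\dh$ satisfies the Leibniz rule with respect to composition of foams, and the commutator of two Leibniz operators is again Leibniz (the same one-line computation $[\delta_1,\delta_2](F\circ G)=([\delta_1,\delta_2]F)\circ G+F\circ([\delta_1,\delta_2]G)$ appearing in Lemma~\ref{lem:witt-acts-good-position}), as are $\dh$, $2\de$ and $-2\df$. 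Hence it suffices to verify the three relations on basic foams.

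Then I would go through the basic foams case by case. On a trace of an isotopy all three operators vanish, so the three commutators vanish trivially. On a facet carrying a polynomial decoration $R$ in $a$ variables, $\de$, $\df$ and $\dh$ act by $-\sum_k\partial/\partial x_k$, $+\sum_k x_k^2\,\partial/\partial x_k$ and $-\deg{R}$ respectively, see \eqref{eq:h-act-pol} and \eqref{eq:e-act-pol} together with $\eqref{eq:Ln-on-pol}$; equivalently, these are the images of the standard $\sll_2$-triple of Lemma~\ref{lem:sl2-to-witt} under the $\ourwitt$-action \eqref{eq:witt-on-poly}. A short computation (using that $\de$ lowers, and $\df$ raises, the degree by $2$) shows the three $\sll_2$-relations hold there. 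Finally, on each of the six genuinely non-trivial basic foams — zip, unzip, digon-cup, digon-cap, cup, cap — the relations unwind, exactly as the cup foam is treated in the proof of Lemma~\ref{lem:witt-acts-good-position}, into elementary identities among the parameters $\tone,\ttwo,\tthree$ and their bars (the only inputs being $\tone+\overline{\tone}=1$ etc.) together with telescoping manipulations of power sums.

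An alternative, valid when $2$ is invertible in $\scalars$, is to deduce the statement for free from the Witt case: the operators $\Ln[-1],\Ln[0],\Ln[1]$ of Lemma~\ref{lem:witt-acts-good-position}, for the Witt-sequences pinned down by $\parone{1}=\tone-s$, $\partwo{1}=\ttwo-s$, $\parthree{1}=\tthree-\tfrac12$ (cf.\ \eqref{eq:19}), already satisfy the $\ourwitt$-relations, so pulling this action back along the embedding $\iota$ of Lemma~\ref{lem:sl2-to-witt} gives an $\sll_2$-action in which $\Le$ acts by $\Ln[-1]$, $\Lh$ by $2\Ln[0]$ and $\Lf$ by $-\Ln[1]$. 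It then remains to check $\de=\Ln[-1]$, $\dh=2\Ln[0]$, $\df=-\Ln[1]$ as operators, which by Leibniz need only be checked on basic foams, where it is immediate on comparing \eqref{eq:h-act-pol}--\eqref{eq:h-act-cap} and \eqref{eq:e-act-pol}--\eqref{eq:e-act-cap} with the specializations of \eqref{eq:Ln-on-pol}--\eqref{eq:Ln-on-cap} at $n=-1,0,1$ (remembering $\parone{-1}=\partwo{-1}=\parthree{-1}=0$ and that $\sum_{k+\ell=-1}$ is empty). Since the lemma does not assume $2$ invertible, I would nonetheless present the first route, which works over an arbitrary $\scalars$; the factor $\tfrac12$ that appears in $\Ln$ is absent from $\de$, $\df$, $\dh$, which is precisely why the hypothesis on $2$ becomes unnecessary.

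I do not expect a genuine obstacle: all the real content already lies in Lemma~\ref{lem:witt-acts-good-position} and its setup, and what remains is bookkeeping. The one point requiring care is the low-index/sign bookkeeping — keeping the $\bar s$, $\overline{\tone}$, $\overline{\ttwo}$, $\overline{\tthree}$ conventions consistent in the digon and zip/unzip cases, and treating the degenerate values of power sums ($\dotnewtoni[0]=a$, empty sums) correctly — exactly the kind of subtlety already flagged in the proofs of Lemma~\ref{lem:witt-acts-good-position} and Proposition~\ref{prop:witt-acts-good-pos}.
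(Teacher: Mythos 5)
Your primary route is exactly what the paper does: it proves this lemma by the words ``the same proof as that of Lemma~\ref{lem:witt-acts-good-position}'', i.e.\ reduce via the Leibniz rule to checking the relations \eqref{eq:sl2-relations} on basic foams, which is your first argument, and your remark that this avoids inverting $2$ matches Remark~\ref{rmk:coefficients}. The alternative deduction via $\iota$ of Lemma~\ref{lem:sl2-to-witt} is a reasonable aside, but you rightly set it apart since it needs $2$ invertible; so the proposal is correct and essentially coincides with the paper's proof.
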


The same proofs as those of Proposition \ref{prop:witt-acts-good-pos} and Theorem~\ref{thm:wittaction-spherical}
give the following.

\begin{prop}\label{prop:sl2action-spherical}
  For any web $\web$, the operators $\{\de, \dh, \df\}$ induce an
  action of $\sll_2$ on $\estatespaceN[\scalars, s]{\web}$.\end{prop}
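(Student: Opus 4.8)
The plan is to mirror the proof of Theorem~\ref{thm:wittaction-spherical} line by line, so I organize it around the same two ingredients. The first is the $\sll_2$-analogue of Proposition~\ref{prop:witt-acts-good-pos}: for every closed \spherical{} foam $\foam$ in good position one has $\bracketN{\de\cdot\foam}=\Le\cdot\bracketN{\foam}$, $\bracketN{\df\cdot\foam}=\Lf\cdot\bracketN{\foam}$ and $\bracketN{\dh\cdot\foam}=\Lh\cdot\bracketN{\foam}$, where $\Le,\Lf,\Lh$ act on $\KN=\scalars[X_1,\dots,X_N]^{S_N}$ by $\Le\cdot Q=-\sum_i\partial Q/\partial X_i$, $\Lf\cdot Q=\sum_i X_i^2\,\partial Q/\partial X_i$ and $\Lh\cdot Q=-2\sum_i X_i\,\partial Q/\partial X_i$ (so $\Lh$ acts by $-\deg{Q}$ on homogeneous $Q$). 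The second ingredient is the descent from the universal construction, which is purely formal and identical to the corresponding part of Theorem~\ref{thm:wittaction-spherical}: fixing $X\in\{\de,\dh,\df\}$ with matching polynomial operator $\bar X\in\{\Le,\Lh,\Lf\}$, and a relation $\sum_i\lambda_i\foam_i=0$ in $\bracketNs{\web}$ among \spherical{} $\web$-foams in good position, I would apply $\bar X$ to each identity $\sum_i\lambda_i\bracketN{G\circ\foam_i}=0$ (one per \spherical{} $G\co\web\to\emptyset$) and use: the Leibniz rule for $\bar X$ on $\KN$; the first ingredient applied to the closed \spherical{} foams $G\circ\foam_i$; the foam-level Leibniz identity $X(G\circ\foam_i)=X(G)\circ\foam_i+G\circ X(\foam_i)$; and the vanishing $\sum_i\lambda_i\bracketN{X(G)\circ\foam_i}=0$, which holds because $X(G)$ is a $\scalars$-linear combination of \spherical{} foams $\web\to\emptyset$ and $\sum_i\lambda_i\foam_i=0$ in $\bracketNs{\web}$. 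This gives $\sum_i\lambda_i\bracketN{G\circ X(\foam_i)}+\sum_i(\bar X\cdot\lambda_i)\bracketN{G\circ\foam_i}=0$, i.e.\ $X$ preserves $\Ker\bracketN{\cdot;\cdot}$ and descends to $\estatespaceN[\scalars,s]{\web}$. As $\de,\dh,\df$ already satisfy the relations \eqref{eq:sl2-relations} as operators on \spherical{} foams in good position by Lemma~\ref{lem:sl2-acts-good-position}, and $\Le,\Lh,\Lf$ satisfy them on $\KN$, the descended operators assemble into the asserted $\sll_2$-action.

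It remains to establish the first ingredient. When $2$ is invertible in $\scalars$ this is immediate: by the dictionary \eqref{eq:19} the operators $\de,\df,\dh$ are exactly $\Ln[-1]$, $-\Ln[1]$, $2\Ln[0]$ for the half-Witt action attached to suitable Witt-sequences, and under the embedding of Lemma~\ref{lem:sl2-to-witt} the operators $\Le,\Lf,\Lh$ act on $\KN$ through $\LLn[-1]$, $-\LLn[1]$, $2\LLn[0]$, so the claim is the $n\in\{-1,0,1\}$ cases of Proposition~\ref{prop:witt-acts-good-pos}. For a general ground ring (cf.~Remark~\ref{rmk:coefficients}) I would instead rerun the proof of Proposition~\ref{prop:witt-acts-good-pos} verbatim, replacing \eqref{eq:Ln-on-pol}--\eqref{eq:Ln-on-cap} by \eqref{eq:e-act-pol}--\eqref{eq:e-act-cap} and \eqref{eq:h-act-pol}--\eqref{eq:h-act-cap}: for a closed \spherical{} foam $\foam$ in good position and a coloring $c$ one writes $P(X(\foam),c)=\bar X\cdot P(\foam,c)+R(\foam,c)\,P(\foam,c)$, decomposes $R(\foam,c)$ into the local contributions of the basic pieces occurring in $\foam$ (the $\sll_2$-analogue of Table~\ref{tab:contrib-R}), and applies Lemmas~\ref{lem:cupcap-bi} and~\ref{lem:zip-unzip-merge-split} to see that the $X_i$-type coefficients vanish and the $(X_i,X_j)$-type coefficients sum to the correct local Euler-characteristic quantity; together with the computation of $\bar X\cdot Q(\foam,c)$ carried out exactly as in Proposition~\ref{prop:witt-acts-good-pos}, this yields $\bracketN{X(\foam),c}=\bar X\cdot\bracketN{\foam,c}$, and summing over $c$ finishes the ingredient. (The $\dh$ case is the $n=0$ instance; there the $\tone,\ttwo,\tthree$-dependent contributions of paired basic foams cancel, as in Lemmas~\ref{lem:cupcap-mono}--\ref{lem:zip-unzip-merge-split}, and one is left with the degree formula of Proposition~\ref{prop:evaluation}.)

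The one point that genuinely needs checking, and which I expect to be the main obstacle, is the integrality in the first ingredient over a ring in which $2$ is not invertible: one must verify that the $\sll_2$-version of Table~\ref{tab:contrib-R} read off from \eqref{eq:e-act-pol}--\eqref{eq:e-act-cap}, \eqref{eq:h-act-pol}--\eqref{eq:h-act-cap} has coefficients in $\ZZ$ rather than $\ZZ[\tfrac12]$, i.e.\ that the halves appearing in \eqref{eq:Ln-on-cup}--\eqref{eq:Ln-on-cap} have genuinely been absorbed into the integral parameters $\tone,\ttwo,\tthree$ under \eqref{eq:19}. Once this is confirmed the computation goes through over $\ZZ$ and hence over any $\scalars$, and everything else is a mechanical transcription of the half-Witt arguments.
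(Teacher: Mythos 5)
Your proposal matches the paper's own proof, which is simply the one-line assertion that ``the same proof as that of Theorem~\ref{thm:wittaction-spherical} gives the following,'' i.e.\ the combination of the $\sll_2$-analogue of Proposition~\ref{prop:witt-acts-good-pos} (resting on Lemma~\ref{lem:sl2-acts-good-position}) with the universal-construction descent — exactly the two ingredients you isolate. The integrality worry you flag does resolve quickly: the formulas \eqref{eq:e-act-pol}--\eqref{eq:e-act-cap} and \eqref{eq:h-act-pol}--\eqref{eq:h-act-cap} are visibly integral in $\tone,\ttwo,\tthree$, and the target quantity $\tfrac{1}{2}\chi(\foam_{ij}(c))$ is integral since $\foam_{ij}(c)$ is a closed orientable surface (with Lemma~\ref{lem:cupcap-bi} making the cup/cap contribution even), so the argument goes through verbatim over any $\scalars$ as asserted in Remark~\ref{rmk:coefficients}.
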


\begin{rmk}\label{rmk:coefficients}
  In contrast with the operators $\Ln$, the definition of the action of  $\de$, $\dh$ and
  $\df$, do not require $2$ to be invertible in
  $\scalars$. Lemma~\ref{lem:sl2-acts-good-position} and
  Proposition~\ref{prop:sl2action-spherical} remain valid
  without this assumption. 
\end{rmk}

\subsection{$p$-DG structure}
\label{sec:pdg}

In this section, we fix $p$ a prime number and we assume that
$\scalars=\Fp$. We aim to endow $\gll_N$-state spaces with a
$p$-DG-structure, that is an $H$-module structure (see Section~\ref{sec:p-dg-structure}).
Namely, we will establish the following proposition.

\begin{prop}\label{prop:pDG-structure}
  For any web $\web$, mapping $\dif$ to $\df$ endows the state space
  $\estatespaceN[\Fp, s]{\web}$ with an $H$-module structure. 
\end{prop}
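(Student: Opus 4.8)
The plan is to show that $\df^{p} = 0$ as an operator on $\estatespaceN[\Fp, s]{\web}$. Indeed, since $\scalars = \Fp$ the Hopf algebra $H = H'/(\dif^{p}, p)$ is just $\Fp[\dif]/(\dif^{p})$, so an $H$-module structure on the graded space $\estatespaceN[\Fp, s]{\web}$ is exactly the datum of a degree-$2$ operator which becomes $0$ after $p$ iterations; and $\df$ has degree $2$ and is already a well defined operator on $\estatespaceN[\Fp, s]{\web}$ by Proposition~\ref{prop:sl2action-spherical}. Since $\df$ is defined on the free $\KN$-module $\protostatespaceN[s]{\web}$ on (isotopy classes of) decorated \spherical{} $\web$-foams in good position and descends to the quotient $\estatespaceN[\Fp, s]{\web}$, it suffices to prove $\df^{p} = 0$ on $\protostatespaceN[s]{\web}$. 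Writing a generic element as $\sum_{i}\lambda_{i}\foam_{i}$ with $\lambda_{i}\in\KN$ and using that $\df$ obeys the Leibniz rule for the $\KN$-module structure together with the vanishing of $\binom{p}{j}$ in $\Fp$ for $0<j<p$, one gets $\df^{p}(\lambda_{i}\foam_{i}) = (\df^{p}\lambda_{i})\foam_{i} + \lambda_{i}\,\df^{p}(\foam_{i})$. On $\KN = \Fp[X_{1},\dots,X_{\myN}]^{S_{\myN}}$ the operator $\df$ acts, via the embedding $\iota$ of Lemma~\ref{lem:sl2-to-witt} and \eqref{eq:witt-on-poly}, as the derivation $\sum_{k=1}^{\myN}X_{k}^{2}\,\partial/\partial X_{k}$, so $\df^{p}\lambda_{i} = 0$ by Lemma~\ref{lem:action-H-polynomial} and the corollary following it. Thus the problem reduces to proving $\df^{p}(\foam) = 0$ for every decorated \spherical{} $\web$-foam $\foam$ in good position.

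Next I would use that $\foam$ is a composition of basic foams $\foam = \foam^{(1)}\circ\cdots\circ\foam^{(\ell)}$ and that $\df$ is a derivation for composition: expanding $\df^{p}$ and discarding the mixed terms (whose coefficients are multinomial coefficients $\binom{p}{k_{1},\dots,k_{\ell}}$, which vanish in $\Fp$ unless some $k_{j}=p$) gives $\df^{p}(\foam^{(1)}\circ\cdots\circ\foam^{(\ell)}) = \sum_{j=1}^{\ell}\foam^{(1)}\circ\cdots\circ\df^{p}(\foam^{(j)})\circ\cdots\circ\foam^{(\ell)}$. Hence it is enough to prove $\df^{p}=0$ on each basic foam. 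For traces of isotopies and the two associativity foams this is immediate, since $\df$ annihilates them; for the polynomial foam with a decoration on a facet of thickness $a$, formula~\eqref{eq:e-act-pol} together with \eqref{eq:witt-on-poly} shows that $\df$ acts on the decoration by the derivation $D := \sum_{k=1}^{\myN}x_{k}^{2}\,\partial/\partial x_{k}$ of $A := \Fp[x_{1},\dots,x_{\myN}]^{S_{a}\times S_{\myN-a}}$ (with generalized decorations understood as in Section~\ref{sec:new-decorations}), and $D^{p}=0$ by Lemma~\ref{lem:action-H-polynomial} and its corollary. The content is therefore concentrated in the six remaining basic foams, which moreover come in pairs exchanged by the upside-down involution (cup/cap, zip/unzip, digon-cup/digon-cap), so only two genuine computations are needed.

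For the cup foam (and dually the cap), whose newly created facet has some thickness $a$, I would use that the $\Fp$-subspace spanned by the cup decorated by an arbitrary element of $A := \Fp[x_{1},\dots,x_{\myN}]^{S_{a}\times S_{\myN-a}}$ on that facet is stable under $\df$ and contains the bare cup; by \eqref{eq:e-act-cup} (resp.\ \eqref{eq:e-act-cap}) together with \eqref{eq:e-act-pol}, $\df$ acts on this subspace as $\dif_{A^{t}}$ for a homogeneous degree-$2$ polynomial $t$ which is invariant under $S_{a}\times S_{\myN-a}$, read off from the $\tthree$-terms on the right-hand side. Lemma~\ref{lem:twist-pDG-alg} together with Remark~\ref{rmk:twist-works-on-invariants} then gives $\dif_{A^{t}}^{p} = 0$, so $\df^{p}$ of the bare cup (resp.\ cap) vanishes. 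For the zip, unzip, digon-cup and digon-cap foams, which carry two thin facets of thicknesses $a$ and $b$, the analogous $\df$-stable subspace is spanned by decorations in $A_{a}\otimes_{\Fp}A_{b}$ with $A_{a} = \Fp[\dots]^{S_{a}\times S_{\myN-a}}$ and $A_{b} = \Fp[\dots]^{S_{b}\times S_{\myN-b}}$, and by \eqref{eq:e-act-dig-cup}--\eqref{eq:e-act-unzip} and \eqref{eq:e-act-pol} the operator $\df$ acts on it as $\dif_{A_{a}^{t_{a}}}\otimes\id + \id\otimes\dif_{A_{b}^{t_{b}}}$ for homogeneous degree-$2$ invariant polynomials $t_{a}$ and $t_{b}$; here $t_{a}$ only involves the first thin facet and $t_{b}$ only the second, so the two summands commute, and each is $p$-nilpotent by Lemma~\ref{lem:twist-pDG-alg} and Remark~\ref{rmk:twist-works-on-invariants}, whence in characteristic $p$ their sum satisfies $\df^{p} = \dif_{A_{a}^{t_{a}}}^{p}\otimes\id + \id\otimes\dif_{A_{b}^{t_{b}}}^{p} = 0$. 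This finishes the verification that $\df^{p}=0$ on basic foams, hence on $\protostatespaceN[s]{\web}$, hence on $\estatespaceN[\Fp, s]{\web}$.

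The step I expect to be the main obstacle — and the place where the particular choices of the parameters $\tone$, $\ttwo$, $\tthree$ are used — is the stability claim and the identification of the $\df$-action on those decorated-foam subspaces with a twisted polynomial action $\dif_{A^{t}}$ (resp.\ $\dif_{A_{a}^{t_{a}}}\otimes\id + \id\otimes\dif_{A_{b}^{t_{b}}}$): one has to check carefully, using \eqref{eq:e-act-dig-cup}--\eqref{eq:e-act-cap}, that iterating $\df$ on a basic foam never produces decorations on facets other than the designated ones, and that the twisting polynomials are genuinely homogeneous of degree $2$ and invariant under the relevant product of symmetric groups, so that Lemma~\ref{lem:twist-pDG-alg} and Remark~\ref{rmk:twist-works-on-invariants} apply. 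Everything else is the elementary fact that a sum of commuting $p$-nilpotent operators is again $p$-nilpotent in characteristic $p$, applied three times (to the $\KN$-coefficients versus the foams, to the composition expansion, and to the two thin facets of a zip-type foam), together with the results of Section~\ref{sec:p-dg-structure}.
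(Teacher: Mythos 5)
Your proof follows the same strategy as the paper's: reduce to basic foams via the Leibniz rule and the vanishing of binomial/multinomial coefficients in characteristic $p$, observe $\df^p=0$ trivially on traces of isotopies, (co)associativity, and polynomial foams, and then for the remaining basic foams identify $\df$ with a twisted derivation $\dif_{A^t}$ on a suitable polynomial algebra so that Lemma~\ref{lem:twist-pDG-alg} and Remark~\ref{rmk:twist-works-on-invariants} apply. The one difference in bookkeeping is that for the digon-cup, digon-cap, zip and unzip you use the tensor product $A_a\otimes A_b$ of two generalized-decoration algebras and the fact that a sum of two commuting $p$-nilpotent operators is $p$-nilpotent, whereas the paper packages the same information in a single algebra $A_{(a,b)}=\Fp[x_1,\dots,x_a,y_1,\dots,y_b]^{S_a\times S_b}$ of classical two-facet decorations with a single twist $t=t_a+t_b$; the latter is a bit leaner (and avoids having to argue that the tensor-factor identification is compatible with the evaluation of generalized decorations on overlapping pigment sets), but both routes land on the same twisted-derivation lemma. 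One small caveat on the framing: $\df^p$ does not literally vanish on the \emph{free} module $\protostatespaceN[s]{\web}$, since additivity and multiplicativity of decorations (and dot migration) are not relations there; the vanishing of $\dif_{A^t}^p(1)$ as a polynomial gives vanishing of $\df^p(\foam)$ only after passing to the state space, which is where the paper (working ``up to dot migration'' and then invoking the universal-construction quotient) places the computation as well. Since your subsequent argument is carried out exactly in that quotient, this is a harmless slip of wording rather than a gap.
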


In what follows, we will denote $\df$ by $\dif$ in order to facilitate
the reading. 

\begin{proof}
  We already know that the action of $\dif$ on $\statespaceN[s]{\web}$
  is well-defined. We only need to show that $\dif^p$ acts trivially on
  $\estatespaceN[\Fp, s]{\web}$. By the Leibniz rule,
in characteristic $p$, it
  suffices to show that $\dif^p(\foam) = 0$ for any basic foam.

  For traces of isotopies, associativity and coassociativity, this is
  obvious since $\dif$ acts trivially on these basic foams. For
  polynomials, this follows from the computaton
  $\dif^p(z)=p! z^{p+1}=0 $.
  We now consider the remaining basic foams. The computations for all
  of these are similar and
  can be done as in the proof \cite[Lemma 3.10]{QRSW1}. We give here an alternative approach and focus
  first on the the digon-cup. 
  Denote $\Fp[x_1,\dots,x_a, y_1,\dots, y_b]^{S_a\times S_b}$ by $A_{(a,b)}$,
  and consider the set $M$ of linear combinations of foams which are decorated, (we
  restrict here to ``classical'' decorations) 
  \[
    \NB{\tikz[scale=0.8, font= \tiny]{}}
  \]
  up to dot migration (see Example~\ref{exa:dot-migration}). Since dot
  migration is a local relation in state space, it is enough to show
  that $\dif^p$ acts trivially on $M$.  The
  $\Fp$-module $M$ is naturally endowed with an
  $A_{(a,b)}$-module structure where symmetric polynomials in the
  first $a$ variables act on the $a$-thick facet and polynomials in the
  last $b$ variables act on the $b$-thick facet. The module $M$ is
  free of rank one and has a generator $1_M$ which is the foam
    \[
    \NB{\tikz[scale=0.8, font= \tiny]{}}.
  \]
  For $P\in A_{(a,b)}$, the element $P\cdot 1_M$ of $M$ is denoted
  $P_M$. Recall that $A_{(a,b)}$ is endowed with an $H$-module
  structure by setting
  \begin{equation}\dif(P) = \df(P) = \sum_{i=1}^{a}
    x_i^2\frac{\partial}{\partial x_i} P + \sum_{i=1}^{b}
    y_i^2\frac{\partial}{\partial y_i}P. \label{eq:dif-acts-on-Aab}
  \end{equation}
  Recall that $\dotnewtoni[0]$ on a facet of thickness $a$ equal the integer $a$.
  Following \eqref{eq:e-act-dig-cup}, we  endow $M$ with an $H'$-module structure by imposing 
\begin{equation}
  \dif(P_M) = \dif(P) - \left(b\tone  p_1(x_1, \dots x_a) + a\ttwo
    p_1(y_1, \dots y_b) \right)P . \label{eq:1}
\end{equation}
In other words, the action of $\dif$ on $M$ is that on
$A_{(a,b)}$ twisted by
\[-\left( b\tone p_1(x_1, \dots x_a) + a\ttwo p_1(y_1, \dots y_b)
\right).\]
Thus, the 
twist we imposed on $M$ precisely matches the definition of $\dif =\df$ on
this basic foam. Hence proving that $\dif^p(1_M) =0$ is enough for our
purposes. The fact that $\dif^p(1_M) =0$ follows directly from
Lemma~\ref{lem:twist-pDG-alg} and Remark~\ref{rmk:twist-works-on-invariants}.

The analysis for the other basic foams is similar. However note that for
cups and caps, one considers the algebra $A_{(a, N-a)}$. \end{proof}

In contrast with other actions we have discussed so far, this
extends to the non-equivariant setting.
See \cite[Proposition 3.8]{Wang} for conditions where the action of $\de $ extends in the non-equivariant setting.
\begin{cor}\label{cor:pDG-structure}
  For any web $\web$, mapping $\dif$ to $\df$ endows the state space
  $\zstatespaceN[\Fp,s]{\web}$ with an $H$-module structure. \end{cor}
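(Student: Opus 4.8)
The plan is to rerun the arguments of Theorem~\ref{thm:wittaction-spherical} and Proposition~\ref{prop:pDG-structure}, now over $\Fp$ and after composing with the augmentation $\varphi_0 \co \KN \to \Fp$, $E_i \mapsto 0$, that underlies the non-equivariant construction. The one new ingredient that makes this go through for $\df$ (and, as \cite[Proposition~3.8]{Wang} illustrates, not for $\de$ in general) is that $\df$ is homogeneous of $\myN$-degree $+2$: under Lemma~\ref{lem:sl2-to-witt} it matches $-\LLn[1]$, and each of the formulas \eqref{eq:e-act-pol}--\eqref{eq:e-act-cap} raises the degree of a decoration by $2$. Since $\Ker \varphi_0 = (E_1, \dots, E_\myN)$ is exactly the positive-degree part of $\KN$, a degree-$(+2)$ operator maps it into itself and annihilates $\KN$ in degree $0$; hence $\varphi_0(\df \cdot Q) = 0$ for all $Q \in \KN$. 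By contrast $\de = -\sum_k \partial / \partial X_k$ has degree $-2$ and already fails this (it sends $E_1$ to a nonzero constant), which is why the non-equivariant statement is special to $\df$.

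First I would check that $\df$ is well defined on $\zstatespaceN[\Fp,s]{\web}$. Suppose $\sum_i \lambda_i \foam_i$, with $\lambda_i \in \Fp$ and $\foam_i$ a spherical foam in good position $\emptyset \to \web$, lies in $\Ker(\varphi_0 \circ \bracketN{\cdot;\cdot})$, i.e. $\varphi_0\big(\sum_i \lambda_i \bracketN{G \circ \foam_i}\big) = 0$ for every spherical $G \co \web \to \emptyset$ in good position; I must show the same for $\sum_i \lambda_i \df(\foam_i)$. By Proposition~\ref{prop:witt-acts-good-pos} in the $\sll_2$ form $\bracketN{\df \cdot \foam'} = \df \cdot \bracketN{\foam'}$ for closed spherical foams, together with the Leibniz rule $\df(G \circ \foam_i) = \df(G) \circ \foam_i + G \circ \df(\foam_i)$, one obtains $\sum_i \lambda_i \bracketN{\df(G) \circ \foam_i} + \sum_i \lambda_i \bracketN{G \circ \df(\foam_i)} = \df \cdot Q_G$, where $Q_G := \sum_i \lambda_i \bracketN{G \circ \foam_i} \in \KN$. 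By hypothesis $\varphi_0(Q_G) = 0$, so $Q_G$ has positive degree, hence so does $\df \cdot Q_G$, hence $\varphi_0(\df \cdot Q_G) = 0$; and $\df(G)$ is an $\Fp$-linear combination of spherical foams in good position $\web \to \emptyset$, so $\varphi_0\big(\sum_i \lambda_i \bracketN{\df(G) \circ \foam_i}\big) = 0$ by the kernel hypothesis applied to each of them. Therefore $\varphi_0\big(\sum_i \lambda_i \bracketN{G \circ \df(\foam_i)}\big) = 0$ for all such $G$, as needed.

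It remains to see that $\df^p = 0$ on $\zstatespaceN[\Fp,s]{\web}$. Here I would either copy the computation in the proof of Proposition~\ref{prop:pDG-structure} verbatim — the basic-foam modules and their twists (e.g. twist by $-(b \tone p_1(x_1, \dots, x_a) + a \ttwo p_1(y_1, \dots, y_b))$ for the digon-cup), together with Lemma~\ref{lem:twist-pDG-alg}, are combinatorial and make sense over $\Fp$, dot migration still being a local relation in $\zstatespaceN[\Fp,s]{\web}$ — or, more economically, observe that the base-change map $\pi \co \estatespaceN[\Fp,s]{\web} \twoheadrightarrow \zstatespaceN[\Fp,s]{\web}$, $[\sum_i \lambda_i \foam_i] \mapsto [\sum_i \varphi_0(\lambda_i)\foam_i]$ (well defined because $\Ker \bracketN{\cdot;\cdot}$ maps into $\Ker(\varphi_0 \circ \bracketN{\cdot;\cdot})$), is surjective and intertwines $\df$ — the latter precisely because $\varphi_0 \circ \df = 0$ on $\KN$ — so $\df^p = 0$ on the target follows from $\df^p = 0$ on the source (Proposition~\ref{prop:pDG-structure}). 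Since $\df$ is $\Fp$-linear, this promotes the $H'$-action to the desired $H$-action. I expect the main obstacle to be the bookkeeping in the middle paragraph; everything else is forced once one isolates the degree-$(+2)$ property of $\df$ and its consequence $\varphi_0 \circ \df = 0$.
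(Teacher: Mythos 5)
Your argument is correct, and it is essentially the paper's proof rephrased. The paper proves the contrapositive (if $\df(\foam)\neq 0$ then $\foam\neq 0$), reducing to homogeneous $\foam$ and $G$ so that $\deg\foam+\deg G=-2$ forces $\kup{G\circ\foam}=0$, whereupon the Leibniz rule in $(\Fp)_N$ gives $\kup{G\circ\df(\foam)}=-\kup{\df(G)\circ\foam}$ and the conclusion follows. You instead isolate the cleaner observation $\varphi_0\circ\df=0$ on $\KN$ (a degree-$(+2)$ derivation annihilates $\KN_0$ and sends $\Ker\varphi_0$ into itself), which yields $\varphi_0\left(\kup{G\circ\df(\foam)}\right)=-\varphi_0\left(\kup{\df(G)\circ\foam}\right)$ directly, without passing to homogeneous pieces; that is the same degree argument packaged more economically. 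Your remark on why the argument fails for $\de$ (degree $-2$, with $\de\cdot E_1$ a nonzero constant) matches the paper's cross-reference to Wang. For $\df^p=0$, the paper's proof of the corollary does not actually revisit this point, relying implicitly on the foam-level computation in Proposition~\ref{prop:pDG-structure}; your alternative via the base-change surjection $\pi\colon\estatespaceN[\Fp,s]{\web}\twoheadrightarrow\zstatespaceN[\Fp,s]{\web}$, which intertwines $\df$ precisely because $\varphi_0\circ\df=0$, is a valid and perhaps tidier justification of the same fact.
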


\begin{proof}
In order to prove that the
  action of $\dif$ is well-defined on $\zstatespaceN[\Fp,s]{\web}$,
  one should prove that if a (linear combination of) \spherical{}
  foam(s) $\foam$ is equal to $0$ in $\zstatespaceN[\Fp,s]{\web}$, then
  $\dif(\foam) =0$ in $\zstatespaceN[\Fp,s]{\web}$. We will prove the
  contrapositive. Suppose that $\dif \foam \neq 0$ in
  $\zstatespaceN[\Fp,s]{\Gamma}$. This means that there exists a
  \spherical{} foam $G\co\Gamma \to \emptyset$ such that
  $\varphi_0(\kup{G\circ \dif(\foam)})\neq 0 \in \Fp$, where
  $\varphi_0\co \RN \to \Fp$ is the unique ring morphism mapping
  $E_i$ to $0$ for all $1\leq i \leq \myN$. We can suppose that both
  $\foam$ and $G$ are homogeneous. Since
  $\varphi_0(\kup{G\circ \dif\foam})\neq 0$ and $\dif$ is of degree
  $2$, one necessarily has $\deg \foam + \deg G = -2$ and in particular
  $\kup{G\circ \foam} =0$. Since the evaluation commutes with $\partial$, this implies that
  $\kup{G\circ \dif(\foam)} = - \kup{\dif(G) \circ \foam}$ and
  therefore that $\varphi_0(\kup{\dif(G) \circ \foam})\neq 0$ and
  finally, that $\foam$ is not zero in $\zstatespaceN[\Fp,s]{\web}$.
\end{proof}

\subsection{Saddles}
\label{sec:saddles}

When dropping the \spherical{} conditions on foams,
Lemma~\ref{lem:cupcap-mono} is not valid anymore and this forces us to
set $\parthree{n} = 0$ for all $n\in \NNN$ (or $\tthree=\frac{1}{2}$ for the $\sll_2$ and $p$-DG
cases). Therefore there is no option but to invert $2$ in order to
have an $\sll_2$-action or a $p$-DG structure on state spaces. We give
details below.

For $n\in \NNN,$ define 
\begin{align}
  \Ln\left( \NB{\tikz[font = \tiny]{\begin{scope}[scale=0.6]
\tdplotsetmaincoords{70}{25}
\begin{scope}[scale = 1.5, tdplot_main_coords]
  \tikzset{yxplane/.style={canvas is xy plane at z=#1}}
  \begin{scope}[yxplane=1]
    \coordinate (AT) at ({cos(  45)}, {sin(  45)});
    \coordinate (BT) at ({cos(135)}, {sin(135)});
    \coordinate (CT) at ({cos(225)}, {sin(225)});
    \coordinate (DT) at ({cos(315)}, {sin(315)});
    \coordinate (aT) at (  0.3, 0);
    \coordinate (bT) at ( -0.3, 0);
    \draw (AT) .. controls (aT) and (bT) .. (BT)
    coordinate[pos=0.5] (eT) node [pos =0.5, above] {$a$};
    \draw (DT) .. controls (aT) and (bT) .. (CT) coordinate[pos=0.5] (fT);
  \end{scope}

  \begin{scope}[yxplane=0]
    \coordinate (AM) at ({cos(  45)}, {sin(  45)});
    \coordinate (BM) at ({cos(135)}, {sin(135)});
    \coordinate (CM) at ({cos(225)}, {sin(225)});
    \coordinate (DM) at ({cos(315)}, {sin(315)});
    \coordinate (aM) at (0, 0.3);
    \coordinate (bM) at (0, -0.3);
    \draw (AM) .. controls (aM) and (bM) .. (DM) coordinate[pos=0.5] (eM);
    \draw (BM) .. controls (aM) and (bM) .. (CM) coordinate[pos=0.5] (fM);
\end{scope}
  \coordinate (OT) at (0,0, 0.5);
\draw (AM) -- (AT);
  \draw (BM) -- (BT);
  \draw (CM) -- (CT);
  \draw (DM) -- (DT);
  \draw[densely dotted] (eM) ..controls +(0,0,0.2) and + (0.1,0,0).. (OT);
  \draw[densely dotted] (fM) ..controls +(0,0,0.2) and + (-0.1,0,0).. (OT);
  \draw[densely dotted] (eT) ..controls +(0,0,-0.2) and + (0,0.1,0).. (OT);
  \draw[densely dotted] (fT) ..controls +(0,0,-0.2) and + (0,-0.1,0).. (OT);
\end{scope}  
\end{scope}

}}\right)
  &= - \frac{1}{2}\sum_{k+\ell=n} \NB{\tikz[font = \tiny]{\begin{scope}[scale=0.6]
\tdplotsetmaincoords{70}{25}
\begin{scope}[scale = 1.5, tdplot_main_coords]
  \tikzset{yxplane/.style={canvas is xy plane at z=#1}}
  \begin{scope}[yxplane=1]
    \coordinate (AT) at ({cos(  45)}, {sin(  45)});
    \coordinate (BT) at ({cos(135)}, {sin(135)});
    \coordinate (CT) at ({cos(225)}, {sin(225)});
    \coordinate (DT) at ({cos(315)}, {sin(315)});
    \coordinate (aT) at (  0.3, 0);
    \coordinate (bT) at ( -0.3, 0);
    \draw (AT) .. controls (aT) and (bT) .. (BT)
    coordinate[pos=0.5] (eT) node [pos =0.5, above] {$a$};
    \draw (DT) .. controls (aT) and (bT) .. (CT) coordinate[pos=0.5] (fT);
  \end{scope}
  \node at (-0.75, -0.1, 0.3) {$\dotnewtoni[k]$};
    \node at (0.5, 0.5, 0.3) {$\wdotnewtoni[\ell]$};
  \begin{scope}[yxplane=0]
    \coordinate (AM) at ({cos(  45)}, {sin(  45)});
    \coordinate (BM) at ({cos(135)}, {sin(135)});
    \coordinate (CM) at ({cos(225)}, {sin(225)});
    \coordinate (DM) at ({cos(315)}, {sin(315)});
    \coordinate (aM) at (0, 0.3);
    \coordinate (bM) at (0, -0.3);
    \draw (AM) .. controls (aM) and (bM) .. (DM) coordinate[pos=0.5] (eM);
    \draw (BM) .. controls (aM) and (bM) .. (CM) coordinate[pos=0.5] (fM);
\end{scope}
  \coordinate (OT) at (0,0, 0.5);
\draw (AM) -- (AT);
  \draw (BM) -- (BT);
  \draw (CM) -- (CT);
  \draw (DM) -- (DT);
  \draw[densely dotted] (eM) ..controls +(0,0,0.2) and + (0.1,0,0).. (OT);
  \draw[densely dotted] (fM) ..controls +(0,0,0.2) and + (-0.1,0,0).. (OT);
  \draw[densely dotted] (eT) ..controls +(0,0,-0.2) and + (0,0.1,0).. (OT);
  \draw[densely dotted] (fT) ..controls +(0,0,-0.2) and + (0,-0.1,0).. (OT);
\end{scope}  
\end{scope}

}}
  \\[3pt]
\Ln\left( \NB{\tikz[font=\tiny, scale=1.2]{}}\right)\ =\
  &  \frac{1}{2} \sum_{k+\ell=n} \NB{\tikz[font=\tiny, scale=1.2]{}}
  \\[3pt]
\Ln\left( \NB{\tikz[font=\tiny, scale=1.2]{}}\right)\ =\
 &\frac{1}{2} \sum_{k+\ell=n}\NB{\tikz[font=\tiny,
 scale=1.2]{}}
\end{align}
and recycle the definition of $\Ln$ on other basic foams given by \eqref{eq:Ln-on-pol}--\eqref{eq:Ln-on-unzip}.

The same proofs as those of Lemma~\ref{lem:witt-acts-good-position}
and Theorem~\ref{thm:wittaction-spherical} give the following proposition.

\begin{thm}\label{thm:witt-acts-nons-spherical}
  For any web \web, mapping $\LLn$ to $\Ln$ defines an action of $\ourwitt$ on the
  (not necessarily \spherical{}) state space $\estatespaceN[\scalars]{\web}$.
\end{thm}

Define 
\begin{align}
  \de\left( \NB{\tikz[font = \tiny]{}}\right)\ =\ 
    \de\left( \NB{\tikz[font=\tiny, scale=1.2]{}}\right)\ =\
  \de\left( \NB{\tikz[font=\tiny, scale=1.2]{}}\right)\ =\ 0,
\end{align}
\begin{align}
  \df\left( \NB{\tikz[font = \tiny]{}}\right)
  &= \frac{1}{2}\cdot\ \NB{\tikz[font = \tiny]{\begin{scope}[scale=0.6]
\tdplotsetmaincoords{70}{25}
\begin{scope}[scale = 1.5, tdplot_main_coords]
  \tikzset{yxplane/.style={canvas is xy plane at z=#1}}
  \begin{scope}[yxplane=1]
    \coordinate (AT) at ({cos(  45)}, {sin(  45)});
    \coordinate (BT) at ({cos(135)}, {sin(135)});
    \coordinate (CT) at ({cos(225)}, {sin(225)});
    \coordinate (DT) at ({cos(315)}, {sin(315)});
    \coordinate (aT) at (  0.3, 0);
    \coordinate (bT) at ( -0.3, 0);
    \draw (AT) .. controls (aT) and (bT) .. (BT)
    coordinate[pos=0.5] (eT) node [pos =0.5, above] {$a$};
    \draw (DT) .. controls (aT) and (bT) .. (CT) coordinate[pos=0.5] (fT);
  \end{scope}
  \node at (-0.75, -0.1, 0.3) {$\dotnewtoni[0]$};
    \node at (0.5, 0.5, 0.3) {$\wdotnewtoni[1]$};
  \begin{scope}[yxplane=0]
    \coordinate (AM) at ({cos(  45)}, {sin(  45)});
    \coordinate (BM) at ({cos(135)}, {sin(135)});
    \coordinate (CM) at ({cos(225)}, {sin(225)});
    \coordinate (DM) at ({cos(315)}, {sin(315)});
    \coordinate (aM) at (0, 0.3);
    \coordinate (bM) at (0, -0.3);
    \draw (AM) .. controls (aM) and (bM) .. (DM) coordinate[pos=0.5] (eM);
    \draw (BM) .. controls (aM) and (bM) .. (CM) coordinate[pos=0.5] (fM);
\end{scope}
  \coordinate (OT) at (0,0, 0.5);
\draw (AM) -- (AT);
  \draw (BM) -- (BT);
  \draw (CM) -- (CT);
  \draw (DM) -- (DT);
  \draw[densely dotted] (eM) ..controls +(0,0,0.2) and + (0.1,0,0).. (OT);
  \draw[densely dotted] (fM) ..controls +(0,0,0.2) and + (-0.1,0,0).. (OT);
  \draw[densely dotted] (eT) ..controls +(0,0,-0.2) and + (0,0.1,0).. (OT);
  \draw[densely dotted] (fT) ..controls +(0,0,-0.2) and + (0,-0.1,0).. (OT);
\end{scope}  
\end{scope}

}}\ + \
    \frac{1}{2}\cdot\ \NB{\tikz[font = \tiny]{\begin{scope}[scale=0.6]
\tdplotsetmaincoords{70}{25}
\begin{scope}[scale = 1.5, tdplot_main_coords]
  \tikzset{yxplane/.style={canvas is xy plane at z=#1}}
  \begin{scope}[yxplane=1]
    \coordinate (AT) at ({cos(  45)}, {sin(  45)});
    \coordinate (BT) at ({cos(135)}, {sin(135)});
    \coordinate (CT) at ({cos(225)}, {sin(225)});
    \coordinate (DT) at ({cos(315)}, {sin(315)});
    \coordinate (aT) at (  0.3, 0);
    \coordinate (bT) at ( -0.3, 0);
    \draw (AT) .. controls (aT) and (bT) .. (BT)
    coordinate[pos=0.5] (eT) node [pos =0.5, above] {$a$};
    \draw (DT) .. controls (aT) and (bT) .. (CT) coordinate[pos=0.5] (fT);
  \end{scope}
  \node at (-0.75, -0.1, 0.3) {$\dotnewtoni[1]$};
    \node at (0.5, 0.5, 0.3) {$\wdotnewtoni[0]$};
  \begin{scope}[yxplane=0]
    \coordinate (AM) at ({cos(  45)}, {sin(  45)});
    \coordinate (BM) at ({cos(135)}, {sin(135)});
    \coordinate (CM) at ({cos(225)}, {sin(225)});
    \coordinate (DM) at ({cos(315)}, {sin(315)});
    \coordinate (aM) at (0, 0.3);
    \coordinate (bM) at (0, -0.3);
    \draw (AM) .. controls (aM) and (bM) .. (DM) coordinate[pos=0.5] (eM);
    \draw (BM) .. controls (aM) and (bM) .. (CM) coordinate[pos=0.5] (fM);
\end{scope}
  \coordinate (OT) at (0,0, 0.5);
\draw (AM) -- (AT);
  \draw (BM) -- (BT);
  \draw (CM) -- (CT);
  \draw (DM) -- (DT);
  \draw[densely dotted] (eM) ..controls +(0,0,0.2) and + (0.1,0,0).. (OT);
  \draw[densely dotted] (fM) ..controls +(0,0,0.2) and + (-0.1,0,0).. (OT);
  \draw[densely dotted] (eT) ..controls +(0,0,-0.2) and + (0,0.1,0).. (OT);
  \draw[densely dotted] (fT) ..controls +(0,0,-0.2) and + (0,-0.1,0).. (OT);
\end{scope}  
\end{scope}

}},
  \\[3pt]
\df\left( \NB{\tikz[font=\tiny, scale=1.2]{}}\right)\ =\
  &  -\frac{1}{2}\cdot\  \NB{\tikz[font=\tiny, scale=1.2]{}}\
    - \ \frac{1}{2}\cdot\  \NB{\tikz[font=\tiny, scale=1.2]{}},
  \\[3pt]
\df\left( \NB{\tikz[font=\tiny, scale=1.2]{}}\right)\ =\
  & - \frac{1}{2}\cdot\   \NB{\tikz[font=\tiny, scale=1.2]{}}\
    - \ \frac{1}{2}\cdot\  \NB{\tikz[font=\tiny, scale=1.2]{}},
\end{align}
\begin{align}
  \dh\left( \NB{\tikz[font = \tiny]{}}\right)
  &= -a(N-a)\cdot\ \NB{\tikz[font = \tiny]{}},
  \\[3pt]
\dh\left( \NB{\tikz[font=\tiny, scale=1.2]{}}\right)\ =\
  &  a(N-a)\cdot\  \NB{\tikz[font=\tiny, scale=1.2]{}},\
  \\[3pt]
\dh\left( \NB{\tikz[font=\tiny, scale=1.2]{}}\right)\ =\
  &   a(N-a)\cdot\ \NB{\tikz[font=\tiny, scale=1.2]{}},
\end{align}
and recycle the definition of $\de, \df$ and $\dh$ on other basic foams given by \eqref{eq:e-act-pol}--\eqref{eq:e-act-unzip}.

Again, the same proofs as those of
Lemma~\ref{lem:witt-acts-good-position},
Theorem~\ref{thm:wittaction-spherical} and
Corollary~\ref{cor:pDG-structure} give the following.

\begin{prop}\label{prop:sl2-action-non-spherical}
  For any web $\web$, mapping $\Le, \Lf$ and $\Lh$ to $\de, \df$ and
  $\dh$ respectively defines an action of $\sll_2$ on the
  not necessarily \spherical{} state space $\estatespaceN[\scalars]{\web}$.
\end{prop}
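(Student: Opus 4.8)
The plan is to repeat, \emph{mutatis mutandis}, the three-stage template already used in the \spherical{} case (Lemma~\ref{lem:sl2-acts-good-position}, Proposition~\ref{prop:sl2action-spherical}), keeping track of only two modifications: the saddle foam is now a genuine basic foam, and the cup/cap formulas are the recycled ones, i.e.\ with $\parthree{n}=0$ (equivalently $\tthree=\tfrac12$, via \eqref{eq:19}). \emph{Stage one}: show that $\Le\mapsto\de$, $\Lf\mapsto\df$, $\Lh\mapsto\dh$ defines an $\sll_2$-action on the free $\scalars$-module generated by foams in good position. As in the proof of Lemma~\ref{lem:witt-acts-good-position}, the Leibniz rule reduces the relations $[\de,\df]=\dh$, $[\dh,\de]=2\de$, $[\dh,\df]=-2\df$ to a verification on basic foams; for polynomials, (co)associativity, the digon-cup/cap and the zip/unzip this is literally the computation of Lemma~\ref{lem:sl2-acts-good-position}, so the only new checks are on the saddle and on the cup and cap in their recycled form, and these are short computations entirely parallel to the cup computation spelled out in Lemma~\ref{lem:witt-acts-good-position}. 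An equivalent and more economical route is to check on basic foams that $\de=\Ln[-1]$, $\df=-\Ln[1]$ and $\dh=2\Ln[0]$ for the operators $\Ln$ of Section~\ref{sec:saddles} (this uses $\parone{-1}=\partwo{-1}=0$, the empty summation ranges for $n=-1$, and $\parthree{n}\equiv 0$), and then to restrict the $\ourwitt$-action along the embedding $\iota\co\sll_2\hookrightarrow\ourwitt$ of Lemma~\ref{lem:sl2-to-witt}, invoking Proposition~\ref{prop:witt-acts-nons-spherical}.

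\emph{Stage two}: establish compatibility with evaluation for closed foams in good position, i.e.\ the non-\spherical{} analogue of Proposition~\ref{prop:witt-acts-good-pos}: $\bracketN{\Ln\cdot\foam}=\LLn\cdot\bracketN{\foam}$ for every $n\in\NNN$ and hence the corresponding identities for $\de,\df,\dh$. The proof is the same bookkeeping with $P(\foam,c)$, $Q(\foam,c)$ and the correction term written as $R(\foam,c)=\sum_i r_i\,\newtoni[n](X_i)+\sum_{i<j}r_{ij}\,h_n(X_i,X_j)$; the saddle adds one row to Table~\ref{tab:contrib-R}, contributing $-\tfrac12\sum_{i\in I,\,j\notin I}h_n(X_i,X_j)$ (and no $\parthree{n}$-term), exactly matching a saddle's $-1$ contribution (a Morse saddle) to $\chi(\foam_{ij}(c))$ in the non-\spherical{} refinement of \eqref{eq:euler-basic}. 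The point to be careful about is that Lemma~\ref{lem:cupcap-mono}, and hence Lemma~\ref{lem:cupcap-bi}, are no longer available; but Lemma~\ref{lem:cupcap-bi} was needed only to kill the $\parthree{n}$-terms in $r_i$, and setting $\parthree{n}=0$ does this for free (which is precisely why $\parthree{n}=0$ is forced here), while the surviving part of $r_i$ still vanishes by Lemma~\ref{lem:zip-unzip-merge-split}, which holds without sphericity. For $r_{ij}$ one checks, exactly as before and still using only Lemma~\ref{lem:zip-unzip-merge-split}, that $r_{ij}=\chi(\foam_{ij}(c))/2$, the new saddle/cup/cap rows contributing $\mp\tfrac12$ in lockstep with the $\mp1$ they contribute to the (refined) Euler characteristic.

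\emph{Stage three}: descend to the state space by the universal-construction argument of Theorem~\ref{thm:wittaction-spherical}, verbatim. If $\sum_i\lambda_i\foam_i=0$ in $\estatespaceN[\scalars]{\web}$, then for every foam $G\co\web\to\emptyset$ one applies $\LLn$ to the identity $\sum_i\lambda_i\bracketN{G\circ\foam_i}=0$, using its Leibniz rule on $\RN$ together with stage two applied to $G$ and to each $G\circ\foam_i$, and deduces that $\sum_i(\LLn\cdot\lambda_i)\foam_i+\sum_i\lambda_i\Ln(\foam_i)=0$ in $\estatespaceN[\scalars]{\web}$; the same works with $\Le,\Lf,\Lh$ in place of $\LLn$. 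Hence $\de,\df,\dh$ are well defined on $\estatespaceN[\scalars]{\web}$ and satisfy the $\sll_2$-relations there.

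The one genuinely new ingredient, and the step I expect to be the main obstacle, is the non-\spherical{} accounting in stage two: verifying that with $\parthree{n}=0$ all the terms that previously relied on Lemma~\ref{lem:cupcap-mono} disappear, and that the saddle contributes compatibly to both $\chi(\foam_{ij}(c))$ and to $R(\foam,c)$ with the $\tfrac12$'s matching up. This matching is exactly what makes the $\tfrac12$ coefficients in the saddle/cup/cap formulas unavoidable, so the standing hypothesis that $2$ is invertible in $\scalars$ is essential here, in contrast with Proposition~\ref{prop:sl2action-spherical} (cf.\ Remark~\ref{rmk:coefficients}).
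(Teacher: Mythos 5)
Your proposal is correct and follows the same route as the paper, whose proof of Proposition~\ref{prop:sl2-action-non-spherical} is precisely "the same proofs as those of Lemma~\ref{lem:witt-acts-good-position} and Theorem~\ref{thm:wittaction-spherical}" rerun with the recycled saddle/cup/cap formulas. Your extra bookkeeping — that $\parthree{n}=0$ removes the only place where Lemmas~\ref{lem:cupcap-mono} and~\ref{lem:cupcap-bi} were needed, that Lemma~\ref{lem:zip-unzip-merge-split} survives without sphericity, and that the saddle's $-\tfrac12$ row matches its $-1$ contribution to $\chi(\foam_{ij}(c))$ — is exactly the implicit content of that citation.
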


\begin{prop}\label{prop:pDG-structure-nons-perical}
  Let $p>2$ be a prime number. For any web $\web$, mapping $\dif$ to $\df$ endows the state space
  $\estatespaceN[\Fp]{\web}$ with an $H$-module structure. 
\end{prop}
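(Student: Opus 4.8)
The plan is to mirror the proof of Proposition~\ref{prop:pDG-structure}; as there, we abbreviate the foam operator $\df$ by $\dif$. By Proposition~\ref{prop:sl2-action-non-spherical} the operator $\dif$ is already well-defined on the non-spherical state space $\estatespaceN[\Fp]{\web}$ and obeys the Leibniz rule with respect to composition of foams. Since $\scalars = \Fp$, to upgrade this to an action of $H = H'/(\dif^p, p)$ it remains only to check that $\dif^p$ acts as $0$. By the Leibniz rule, in characteristic $p$, it suffices to verify $\dif^p(\foam) = 0$ for every basic foam.

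For traces of isotopies, associativity and coassociativity this is immediate since $\dif$ kills them, and for polynomial foams it follows from $\dif^p(z) = p!\,z^{p+1} = 0$. For the digon-cup, digon-cap, zip and unzip foams the operator $\dif$ is given by the recycled formulas \eqref{eq:e-act-dig-cup}--\eqref{eq:e-act-unzip}, so the argument is word for word that of Proposition~\ref{prop:pDG-structure}: one presents the span of the dot-migration classes (Example~\ref{exa:dot-migration}) of the decorated basic foam under consideration as a free rank-one module $M$ over $A_{(a,b)} = \Fp[x_1,\dots,x_a,y_1,\dots,y_b]^{S_a\times S_b}$, observes that $\dif$ acts on $M$ as the $H$-action \eqref{eq:dif-acts-on-Aab} on $A_{(a,b)}$ twisted by the explicit $S_a\times S_b$-invariant homogeneous degree-$2$ polynomial of \eqref{eq:1}, and concludes $\dif^p(1_M) = 0$ from Lemma~\ref{lem:twist-pDG-alg} together with Remark~\ref{rmk:twist-works-on-invariants}.

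The genuinely new cases are the saddle, cup and cap, for which $\dif$ is now defined by the formulas of Section~\ref{sec:saddles} (that is, with the forced value $\tthree = \tfrac12$, hence also $\overline{\tthree} = \tfrac12$ since $p>2$). In each of these three cases the foam has a single facet, of some thickness $a$; allowing generalized decorations as in Convention~\ref{conv:new-dec}, the span of its dot-migration classes is a free rank-one module $M$ over $A_{(a,\myN-a)}$, with the power sum $p_1$ of the $a$ facet-variables acting by the decoration $\dotnewtoni[1]$ and the power sum $p_1$ of the $\myN-a$ complementary variables acting by $\wdotnewtoni[1]$. Reading off the formulas of Section~\ref{sec:saddles}, the action of $\dif$ on $M$ is exactly the $H$-action on $A_{(a,\myN-a)}$ given by $\sum_{i=1}^a x_i^2\partial_{x_i} + \sum_{j=1}^{\myN-a} y_j^2\partial_{y_j}$, twisted by
\[
  t = \pm\tfrac12\bigl(p_1(x_1,\dots,x_a) + p_1(y_1,\dots,y_{\myN-a})\bigr),
\]
the sign being $+$ for the saddle and $-$ for the cup and the cap. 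As $t$ is homogeneous of degree $2$ and invariant under $S_a\times S_{\myN-a}$, Lemma~\ref{lem:twist-pDG-alg} and Remark~\ref{rmk:twist-works-on-invariants} give $\dif^p(1_M) = 0$, so $\dif^p$ vanishes on all of $M$, i.e. on these basic foams.

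Since $\dif^p$ kills every basic foam and satisfies the Leibniz rule, $\dif^p = 0$ on $\estatespaceN[\Fp]{\web}$, which therefore carries the asserted $H$-module structure. The hypothesis $p>2$ is used exactly once: the twist $t$ for the saddle, cup and cap (equivalently the value $\tthree = \tfrac12$) has coefficient $\tfrac12$, so $2$ must be invertible in $\Fp$ for $\dif$ to be defined on these foams in the first place. The step requiring the most care is the bookkeeping for the saddle/cup/cap cases: identifying the correct twist polynomial $t$ and, in particular, checking that it lies in the $S_a\times S_{\myN-a}$-invariants so that Remark~\ref{rmk:twist-works-on-invariants} applies — but this is a short computation once $M$ is presented as a free $A_{(a,\myN-a)}$-module via the generalized decorations of Convention~\ref{conv:new-dec}.
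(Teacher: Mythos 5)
Your proposal is correct and follows essentially the same route as the paper, which proves Proposition~\ref{prop:pDG-structure-nons-perical} simply by invoking the arguments of Lemma~\ref{lem:witt-acts-good-position}, Theorem~\ref{thm:wittaction-spherical} and the proof of Proposition~\ref{prop:pDG-structure}: well-definedness from the non-spherical $\sll_2$-action, then $\dif^p=0$ checked on basic foams via the twisted $H$-module structure on $A_{(a,b)}$, using Lemma~\ref{lem:twist-pDG-alg} and Remark~\ref{rmk:twist-works-on-invariants}. Your explicit treatment of the saddle, cup and cap with twist $\pm\tfrac12\bigl(p_1(\underline{x})+p_1(\underline{y})\bigr)$ over $A_{(a,\myN-a)}$, and the observation that $p>2$ is needed precisely to make sense of the coefficient $\tfrac12$, is exactly the detail the paper leaves implicit.
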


\begin{cor}\label{cor:pDG-structure-non-spherical}
  Let $p>2$ be a prime number. For any web $\web$, mapping $\dif$ to
  $\df$ endows the state space $\zstatespaceN[\Fp]{\web}$ with an
  $H$-module structure.
\end{cor}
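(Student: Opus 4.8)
The plan is to run the same two-part argument that established Proposition~\ref{prop:pDG-structure} and Corollary~\ref{cor:pDG-structure}, after observing that sphericity was never essential to the parts that matter. Throughout, $\df$ will denote the operator from Proposition~\ref{prop:sl2-action-non-spherical}, and $\varphi_0\co\RN\to\Fp$ the ring map sending every $E_i$ to $0$, so that $\zstatespaceN[\Fp]{\web}$ is the quotient of the free $\Fp$-module on foams $\emptyset\to\web$ by the kernel of $\varphi_0\circ\bracketN{\cdot;\cdot}$. I will use repeatedly that $\df$ obeys the Leibniz rule under composition of foams and raises the $\myN$-degree by $2$; the hypothesis $p>2$ is what makes the coefficients $\tfrac12$ in the definition of $\df$ on cups, caps and saddles available over $\Fp$, hence what allows $\df$ to be defined at all once sphericity is dropped.

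First I would show that $\df$ descends to a well-defined operator on $\zstatespaceN[\Fp]{\web}$, arguing by contraposition exactly as in the proof of Corollary~\ref{cor:pDG-structure}. Given an $\Fp$-linear combination $\foam$ of (not necessarily \spherical{}) foams $\emptyset\to\web$ with $\df(\foam)\neq 0$ in $\zstatespaceN[\Fp]{\web}$, one picks a foam $G\co\web\to\emptyset$ with $\varphi_0(\bracketN{G\circ\df(\foam)})\neq 0$ and may take $\foam$ and $G$ homogeneous. By Proposition~\ref{prop:evaluation} the evaluation of a closed foam is a homogeneous symmetric polynomial whose degree equals its $\myN$-degree; since $\df$ raises degrees by $2$, the non-vanishing forces $\deg\foam+\deg G=-2$, so in particular $\bracketN{G\circ\foam}=0$ as a closed foam of negative $\myN$-degree evaluates to $0$. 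The Leibniz rule applied to $G\circ\foam$ then gives $\bracketN{G\circ\df(\foam)}=-\bracketN{\df(G)\circ\foam}$, whence $\varphi_0(\bracketN{\df(G)\circ\foam})\neq 0$ and therefore $\foam\neq 0$ in $\zstatespaceN[\Fp]{\web}$. I want to stress that this uses nothing beyond the Leibniz rule and Proposition~\ref{prop:evaluation}, the latter being valid for all closed foams.

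Next I would check that $\df^p=0$ on $\zstatespaceN[\Fp]{\web}$. Since $p$ is prime, the Leibniz rule reduces this to $\df^p(\foam)=0$ for each basic foam. This is immediate for traces of isotopies, associativity and coassociativity, and follows for polynomial decorations from $\df^p(z)=p!\,z^{p+1}=0$ (Lemma~\ref{lem:action-H-polynomial}). For each remaining basic foam — zip, unzip, digon-cup, digon-cap, cup, cap, and now also the saddle — I would argue as in the proof of Proposition~\ref{prop:pDG-structure}: work in the rank-one free module $M$, over the relevant ring $A_{(a,b)}$ (with $b=\myN-a$ for cups, caps and saddles), of decorated copies of that foam taken modulo dot migration; there the $H'$-action is the standard one of Lemma~\ref{lem:action-H-polynomial} twisted by an explicit homogeneous polynomial of degree $2$, so $\df^p(1_M)=0$ by Lemma~\ref{lem:twist-pDG-alg} and Remark~\ref{rmk:twist-works-on-invariants}. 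Since dot migration is a local relation valid in $\zstatespaceN[\Fp]{\web}$, this yields $\df^p=0$ there, and combining with the previous step proves the corollary. I do not expect a serious obstacle here: the only genuinely new input relative to the \spherical{} case is the saddle model, which is dispatched by the same twisted-module computation, and the point most worth double-checking is simply that the degree bookkeeping of the first step remains valid without sphericity, which it does because Proposition~\ref{prop:evaluation} applies to all closed foams.
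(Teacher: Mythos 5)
Your proposal is correct and follows essentially the same route as the paper: the paper proves this corollary by declaring it follows from ``the same proofs'' as Proposition~\ref{prop:pDG-structure} and Corollary~\ref{cor:pDG-structure}, i.e.\ the contrapositive degree/Leibniz descent argument for $\varphi_0$ combined with the twisted-module computation of $\df^p=0$ on basic foams (now including the saddle, with twist available since $p>2$), which is exactly what you carry out.
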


\begin{rmk}
  If $\web_1$ and $\web_2$ are two webs, then since $\etqftfunc[\scalars]$ is
  monoidal, $\estatespaceN[\scalars]{\web_1 \sqcup \web_2}$ is naturally
  isomorphic to $\estatespaceN[\scalars]{\web_1} \otimes
  \estatespaceN[\scalars]{\web_2}$. Then
  by its very construction, the action of $\ourwitt$ on
  $\estatespaceN[\scalars]{\web_1 \sqcup \web_2}$ satisfies the Leibniz rule in
  the sense that for any $n \in \NNN$ and any $\foam_1 \otimes
  \foam_2$ in $\estatespaceN[\scalars]{\web_1} \otimes \estatespaceN[\scalars, s]{\web_2}$,
  \[
    \LLn \cdot (\foam_1 \otimes \foam_2) = (\LLn\cdot \foam_1) \otimes
    \foam_2 + \foam_1 \otimes (\LLn \cdot \foam_2).
  \]
  The same is true for the action of $\sll_2$ and for that of $H$.
\end{rmk}

\subsection{Related work}
\label{sec:comp-lit}

The aim of this section is to state how to tune parameters to recover
structures already present in the literature in the context of
Soergel bimodules and their Hochschild homologies. Since the equivalence
of bi-categories between Soergel bimodules (in type $\mathsf{A}$) and foams subject to
ad-hoc relations is not formally established yet (details will appear in
\cite{KRW1}), we do not aim to be very precise here. In the context of
Soergel bimodules, there are no saddles nor caps nor cups. So namely we
have to deal with parameters $s$, $\parone{}$, $\partwo{}$ for
$\ourwitt$ and $t_1$ and $t_2$ for $\sll_2$. In the context of Soergel
bimodules, the zip and unzip foams correspond to bimodule
homomorphisms, that are typically denoted by $\rb$ and $\br$ respectively.

We spotted three places with various conventions \cite{KRWitt},
\cite{EQ3, EliasQisl2}  and \cite{QiSussanLink, QRSW1} which
fit into this context. 

As already mentioned, Khovanov and Rozansky \cite{KRWitt} exhibited a
(half) Witt action on triply-graded link homology. They use a slightly
different presentation of the Witt algebra
($\LLn^{\text{\cite{KRWitt}}} \leftrightarrow -\LLn$). The maps $\rb$
and $\br$ are denoted $\chi_+$ and $\chi_-$ respectively (see
\cite[Equation (3.8)]{KRWitt}). The fact that the map $\chi_-$ has no
twist in its definition, corresponds to the choice $\parone{n}
=\partwo{n} = s =0$ for all $n \in \NNN$.

The structures considered in \cite{EQ3, EliasQisl2} deal with more
general Soergel bimodules than the type $\mathsf{A}$ Soergel bimodules
to which we can relate to with foams. Elias and Qi endow these general Soergel bimodules 
with an $\sll_2$ action (in their language, $\Le = \mathbf{d}$ and
$\Lf = \mathbf{z}$). Restricting to the type $\mathsf{A}$ case, the
formulas \cite[Equations (4.1a--i)]{EliasQisl2} correspond to the setting $t_1=1$ and $t_2=0$.

It would be interesting to relate the $\sll_2$-actions on foams to the $\sll_2$-actions on quantum groups \cite{EliasQisl2} via categorical skew Howe duality \cite{LQR, QueffelecRoseFoam}.

The papers \cite{QiSussanLink, QRSW1} are only concerned with $p$-DG
structure. The absence of twist in formula of \cite[Lemma 3.3 (ii)]{QiSussanLink}
corresponds to setting $t_1=t_2 =0$.

\bibliographystyle{alphaurl}
\bibliography{biblio}

\begin{thebibliography}{BPRW22}

\bibitem[BC18]{BCSteenrod}
Anna Beliakova and Benjamin Cooper.
\newblock Steenrod structures on categorified quantum groups.
\newblock {\em Fund. Math.}, 241(2):179--207, 2018.
\newblock \href {http://arxiv.org/abs/1304.7152} {\path{arXiv:1304.7152}},
  \href {https://doi.org/10.4064/fm307-3-2017}
  {\path{doi:10.4064/fm307-3-2017}}.

\bibitem[BHMV95]{BHMV-tqft}
Christian Blanchet, Nathan Habegger, Gregor Masbaum, and Pierre Vogel.
\newblock Topological quantum field theories derived from the {K}auffman
  bracket.
\newblock {\em Topology}, 34(4):883--927, 1995.
\newblock \href {https://doi.org/10.1016/0040-9383(94)00051-4}
  {\path{doi:10.1016/0040-9383(94)00051-4}}.

\bibitem[BN05]{BN}
Dror Bar-Natan.
\newblock Khovanov's homology for tangles and cobordisms.
\newblock {\em Geom. Topol.}, 9:1443--1499, 2005.
\newblock \href {http://arxiv.org/abs/math/0410495}
  {\path{arXiv:math/0410495}}, \href {https://doi.org/10.2140/gt.2005.9.1443}
  {\path{doi:10.2140/gt.2005.9.1443}}.

\bibitem[BPRW22]{BPRW}
Anna Beliakova, Krzysztof~K. Putyra, Louis-Hadrien Robert, and Emmanuel Wagner.
\newblock A proof of {D}unfield--{G}ukov--{R}asmussen conjecture, 2022.
\newblock \href {http://arxiv.org/abs/2210.00878} {\path{arXiv:2210.00878}}.

\bibitem[Cau17]{Cautisremarks}
Sabin Cautis.
\newblock Remarks on coloured triply graded link invariants.
\newblock {\em Algebr. Geom. Topol.}, 17(6):3811--3836, 2017.
\newblock \href {http://arxiv.org/abs/1611.09924} {\path{arXiv:1611.09924}},
  \href {https://doi.org/10.2140/agt.2017.17.3811}
  {\path{doi:10.2140/agt.2017.17.3811}}.

\bibitem[CF94]{CF}
Louis Crane and Igor~B. Frenkel.
\newblock Four dimensional topological quantum field theory, {H}opf categories,
  and the canonical bases.
\newblock {\em J. Math. Phys.}, 35(10):5136--5154, 1994.
\newblock \href {http://arxiv.org/abs/hep-th/9405183}
  {\path{arXiv:hep-th/9405183}}, \href {https://doi.org/10.1063/1.530746}
  {\path{doi:10.1063/1.530746}}.

\bibitem[Eli21]{EliasTalk}
Ben Elias.
\newblock $\mathfrak{sl}(2)$ action on {S}oergel bimodules, 2021.
\newblock Talk at ICERM.
\newblock URL:
  \url{https://icerm.brown.edu/events/htw-21-fe/#schedule-item-3365}.

\bibitem[EQ16]{EQ2}
Ben Elias and You Qi.
\newblock A categorification of quantum sl(2) at prime roots of unity.
\newblock {\em Adv. Math.}, 299:863--930, 2016.
\newblock \href {http://arxiv.org/abs/1503.05114} {\path{arXiv:1503.05114}},
  \href {https://doi.org/10.1016/j.aim.2016.06.002}
  {\path{doi:10.1016/j.aim.2016.06.002}}.

\bibitem[EQ20]{EQ3}
Ben Elias and You Qi.
\newblock Categorifying {H}ecke algebras at prime roots of unity, part {I},
  2020.
\newblock \href {http://arxiv.org/abs/2005.03128} {\path{arXiv:2005.03128}}.

\bibitem[EQ21]{EliasQisl2}
Ben Elias and You Qi.
\newblock Actions of $\mathfrak{sl}_2$ on algebras appearing in
  categorification, 2021.
\newblock To appear in {\textit{Quantum Topolog.}}
\newblock \href {http://arxiv.org/abs/2103.00048} {\path{arXiv:2103.00048}}.

\bibitem[ETW18]{ETW}
Michael Ehrig, Daniel Tubbenhauer, and Paul Wedrich.
\newblock Functoriality of colored link homologies.
\newblock {\em Proc. Lond. Math. Soc. (3)}, 117(5):996--1040, 2018.
\newblock \href {http://arxiv.org/abs/1703.06691} {\path{arXiv:1703.06691}},
  \href {https://doi.org/10.1112/plms.12154} {\path{doi:10.1112/plms.12154}}.

\bibitem[Kho00]{KhJones}
Mikhail Khovanov.
\newblock A categorification of the {J}ones polynomial.
\newblock {\em Duke Math. J.}, 101(3):359--426, 2000.
\newblock \href {http://arxiv.org/abs/math/9908171}
  {\path{arXiv:math/9908171}}, \href
  {https://doi.org/10.1215/S0012-7094-00-10131-7}
  {\path{doi:10.1215/S0012-7094-00-10131-7}}.

\bibitem[Kho07]{KR3}
Mikhail Khovanov.
\newblock Triply-graded link homology and {H}ochschild homology of {S}oergel
  bimodules.
\newblock {\em Internat. J. Math.}, 18(8):869--885, 2007.
\newblock \href {http://arxiv.org/abs/math/0510265}
  {\path{arXiv:math/0510265}}, \href
  {https://doi.org/10.1142/S0129167X07004400}
  {\path{doi:10.1142/S0129167X07004400}}.

\bibitem[Kho12]{KhovNote}
Mikhail Khovanov.
\newblock One-half of the {W}itt algebra in categorification of quantum groups,
  2012.
\newblock Unpublished notes.

\bibitem[Kho16]{Hopforoots}
Mikhail Khovanov.
\newblock {H}opfological algebra and categorification at a root of unity: {T}he
  first steps.
\newblock {\em J. Knot Theory Ramifications}, 25(3):359--426, 2016.
\newblock \href {http://arxiv.org/abs/math/0509083}
  {\path{arXiv:math/0509083}}, \href
  {https://doi.org/10.1142/S021821651640006X}
  {\path{doi:10.1142/S021821651640006X}}.

\bibitem[KQ15]{KQ}
Mikhail Khovanov and You Qi.
\newblock An approach to categorification of some small quantum groups.
\newblock {\em Quantum Topol.}, 6(2):185--311, 2015.
\newblock \href {http://arxiv.org/abs/1208.0616} {\path{arXiv:1208.0616}},
  \href {https://doi.org/10.4171/QT/63} {\path{doi:10.4171/QT/63}}.

\bibitem[KR08a]{KR1}
Mikhail Khovanov and Lev Rozansky.
\newblock Matrix factorizations and link homology.
\newblock {\em Fund. Math.}, 199(1):1--91, 2008.
\newblock \href {http://arxiv.org/abs/math/0401268}
  {\path{arXiv:math/0401268}}, \href {https://doi.org/10.4064/fm199-1-1}
  {\path{doi:10.4064/fm199-1-1}}.

\bibitem[KR08b]{KR2}
Mikhail Khovanov and Lev Rozansky.
\newblock Matrix factorizations and link homology. {II}.
\newblock {\em Geom. Topol.}, 12(3):1387--1425, 2008.
\newblock \href {http://arxiv.org/abs/math/0505056}
  {\path{arXiv:math/0505056}}, \href {https://doi.org/10.2140/gt.2008.12.1387}
  {\path{doi:10.2140/gt.2008.12.1387}}.

\bibitem[KR16]{KRWitt}
Mikhail Khovanov and Lev Rozansky.
\newblock Positive half of the {W}itt algebra acts on triply graded link
  homology.
\newblock {\em Quantum Topol.}, 7(4):737--795, 2016.
\newblock \href {http://arxiv.org/abs/1305.1642} {\path{arXiv:1305.1642}},
  \href {https://doi.org/10.4171/QT/84} {\path{doi:10.4171/QT/84}}.

\bibitem[KRW22]{KRW1}
Mikhail Khovanov, Louis-Hadrien Robert, and Emmanuel Wagner.
\newblock {S}oergel bimodules, {H}ochschild homology and foams, 2022.
\newblock In preparation.

\bibitem[LQR15]{LQR}
Aaron Lauda, Hoel Queffelec, and David E.~V. Rose.
\newblock {K}hovanov homology is a skew {H}owe 2–representation of
  categorified quantum $\mathfrak{sl}_m$.
\newblock {\em Algebr. Geom. Topol.}, 15(5):2517--2608, 2015.
\newblock \href {http://arxiv.org/abs/1212.6076} {\path{arXiv:1212.6076}},
  \href {https://doi.org/10.2140/agt.2015.15.2517}
  {\path{doi:10.2140/agt.2015.15.2517}}.

\bibitem[OS04]{HFK-OS}
Peter Ozsv\'{a}th and Zolt\'{a}n Szab\'{o}.
\newblock Holomorphic disks and knot invariants.
\newblock {\em Adv. Math.}, 186(1):58--116, 2004.
\newblock \href {http://arxiv.org/abs/math/0209056}
  {\path{arXiv:math/0209056}}, \href
  {https://doi.org/10.1016/j.aim.2003.05.001}
  {\path{doi:10.1016/j.aim.2003.05.001}}.

\bibitem[Qi14]{QYHopf}
You Qi.
\newblock Hopfological algebra.
\newblock {\em Compos. Math.}, 150(01):1--45, 2014.
\newblock \href {http://arxiv.org/abs/1205.1814} {\path{arXiv:1205.1814}},
  \href {https://doi.org/10.1112/S0010437X13007380}
  {\path{doi:10.1112/S0010437X13007380}}.

\bibitem[QR16]{QueffelecRoseFoam}
Hoel Queffelec and David E.~V. Rose.
\newblock The {$\mathfrak{sl}_n$} foam 2-category: a combinatorial formulation
  of {K}hovanov--{R}ozansky homology via categorical skew {H}owe duality.
\newblock {\em Adv. Math.}, 302:1251--1339, 2016.
\newblock \href {http://arxiv.org/abs/1405.5920} {\path{arXiv:1405.5920}},
  \href {https://doi.org/10.1016/j.aim.2016.07.027}
  {\path{doi:10.1016/j.aim.2016.07.027}}.

\bibitem[QRS18]{QRS}
Hoel Queffelec, David E.~V. Rose, and Antonio Sartori.
\newblock Annular evaluation and link homology, 2018.
\newblock \href {http://arxiv.org/abs/1802.04131} {\path{arXiv:1802.04131}}.

\bibitem[QRSW21]{QRSW1}
You Qi, Louis-Hadrien Robert, Joshua Sussan, and Emmanuel Wagner.
\newblock A categorification of the colored {J}ones polynomial at a root of
  unity, 2021.
\newblock \href {http://arxiv.org/abs/2111.13195} {\path{arXiv:2111.13195}}.

\bibitem[QRSW22]{QRSW3}
You Qi, Louis-Hadrien Robert, Joshua Sussan, and Emmanuel Wagner.
\newblock Symmetries of exterior $\mathfrak{gl}_n$ link homology, 2022.
\newblock In preparation.

\bibitem[QS17]{QiSussan2}
You Qi and Joshua Sussan.
\newblock Categorification at prime roots of unity and hopfological finiteness.
\newblock In {\em Categorification and higher representation theory}, volume
  683 of {\em Contemp. Math.}, pages 261--286. Amer. Math. Soc., Providence,
  RI, 2017.
\newblock \href {http://arxiv.org/abs/1509.00438} {\path{arXiv:1509.00438}},
  \href {https://doi.org/10.1090/conm/683} {\path{doi:10.1090/conm/683}}.

\bibitem[QS22]{QiSussanLink}
You Qi and Joshua Sussan.
\newblock On some $p$-differential graded link homologies.
\newblock {\em Forum of Mathematics, Pi}, 10:e26, 2022.
\newblock \href {http://arxiv.org/abs/2009.06498} {\path{arXiv:2009.06498}},
  \href {https://doi.org/10.1017/fmp.2022.19} {\path{doi:10.1017/fmp.2022.19}}.

\bibitem[Ras03]{HFK-R}
Jacob~Andrew Rasmussen.
\newblock {\em Floer homology and knot complements}.
\newblock ProQuest LLC, Ann Arbor, MI, 2003.
\newblock Thesis (Ph.D.)--Harvard University.
\newblock \href {http://arxiv.org/abs/math/0306378}
  {\path{arXiv:math/0306378}}.

\bibitem[Rou17]{Roulink}
Raphaël Rouquier.
\newblock Khovanov--{R}ozansky homology and 2-braid groups.
\newblock In {\em Categorification and higher representation theory}, volume
  683 of {\em Contemp. Math.}, pages 141--147. Amer. Math. Soc., Providence,
  RI, 2017.
\newblock \href {http://arxiv.org/abs/1203.5065} {\path{arXiv:1203.5065}},
  \href {https://doi.org/10.1090/conm/683} {\path{doi:10.1090/conm/683}}.

\bibitem[RT91]{RT-3mfd}
Nikolai~Y. Reshetikhin and Vladimir~G. Turaev.
\newblock Invariants of {$3$}-manifolds via link polynomials and quantum
  groups.
\newblock {\em Invent. Math.}, 103(3):547--597, 1991.
\newblock \href {https://doi.org/10.1007/BF01239527}
  {\path{doi:10.1007/BF01239527}}.

\bibitem[RW19]{RW3}
Louis-Hadrien Robert and Emmanuel Wagner.
\newblock A quantum categorification of the {A}lexander polynomial, 2019.
\newblock \href {http://arxiv.org/abs/1902.05648} {\path{arXiv:1902.05648}}.

\bibitem[RW20a]{RW1}
Louis-Hadrien Robert and Emmanuel Wagner.
\newblock A closed formula for the evaluation of foams.
\newblock {\em Quantum Topol.}, 11(3):411--487, 2020.
\newblock \href {http://arxiv.org/abs/1702.04140} {\path{arXiv:1702.04140}},
  \href {https://doi.org/10.4171/qt/139} {\path{doi:10.4171/qt/139}}.

\bibitem[RW20b]{RW2}
Louis-Hadrien Robert and Emmanuel Wagner.
\newblock Symmetric {K}hovanov--{R}ozansky link homologies.
\newblock {\em J. \'{E}c. polytech. Math.}, 7:573--651, 2020.
\newblock \href {http://arxiv.org/abs/1801.02244} {\path{arXiv:1801.02244}},
  \href {https://doi.org/10.5802/jep.124} {\path{doi:10.5802/jep.124}}.

\bibitem[Swe69]{SweedlerBookHopfAlgebras}
Moss~E. Sweedler.
\newblock {\em Hopf algebras}.
\newblock Mathematics Lecture Note Series. W. A. Benjamin, Inc., New York,
  1969.

\bibitem[Wan21]{Wang}
Joshua Wang.
\newblock On $\mathfrak{sl}({N})$ homology with mod {$N$} coefficients, 2021.
\newblock \href {http://arxiv.org/abs/2111.02287} {\path{arXiv:2111.02287}}.

\bibitem[Wed19]{WedrichExponential}
Paul Wedrich.
\newblock Exponential growth of colored {HOMFLY}-{PT} homology.
\newblock {\em Adv. Math.}, 353:471--525, 2019.
\newblock \href {http://arxiv.org/abs/1602.02769} {\path{arXiv:1602.02769}},
  \href {https://doi.org/10.1016/j.aim.2019.06.023}
  {\path{doi:10.1016/j.aim.2019.06.023}}.

\bibitem[Wit89]{Witten-qft}
Edward Witten.
\newblock Quantum field theory and the {J}ones polynomial.
\newblock {\em Comm. Math. Phys.}, 121(3):351--399, 1989.
\newblock URL: \url{http://projecteuclid.org/euclid.cmp/1104178138}.

\bibitem[Wu14]{Wu}
Hao Wu.
\newblock A colored {$\mathfrak {sl}(N)$} homology for links in
  {$\mathbb{S}^3$}.
\newblock {\em Dissertationes Math.}, 499:217, 2014.
\newblock \href {http://arxiv.org/abs/0907.0695} {\path{arXiv:0907.0695}},
  \href {https://doi.org/10.4064/dm499-0-1} {\path{doi:10.4064/dm499-0-1}}.

\end{thebibliography}

\end{document}